\newcommand{\PreserveBackslash}[1]{\let\temp=\\#1\let\\=\temp}
\newcolumntype{C}[1]{>{\PreserveBackslash\centering}p{#1}}
\newcolumntype{R}[1]{>{\PreserveBackslash\raggedleft}p{#1}}
\newcolumntype{L}[1]{>{\PreserveBackslash\raggedright}p{#1}}
\DeclareMathOperator*{\argmin}{\ensuremath{arg\,min}}
\DeclareMathOperator*{\Sgn}{\ensuremath{Sgn}}
\DeclareMathOperator*{\vol}{\ensuremath{vol}}
\newcommand{\gen}{\mathrm{genus}}
\newcommand{\B}{\ensuremath{\mathcal{B}}}
\def\wbar{\accentset{{\cc@style\underline{\mskip8mu}}}}
\renewcommand{\vec}[1]{\mbox{\boldmath \small $#1$}}
\newcommand{\power}{\ensuremath{\mathcal{P}}}
\DeclareMathSymbol{\mdot}{\mathord}{symbols}{"01}
\newcommand{\supp}{\ensuremath{\mathrm{supp}}}
\newcommand{\R}{\ensuremath{\mathbb{R}}}
\newcommand{\A}{\ensuremath{\mathcal{A}}}
\theoremstyle{plain}
\newtheorem{theorem}{Theorem}[section]
\newtheorem{defn}{Definition}[section]
\newtheorem{lemma}{Lemma}[section]
\newtheorem{remark}{Remark}
\newtheorem{cor}{Corollary}[section]
\newtheorem{pro}{Proposition}[section]
\newtheorem{example}{Example}[section]
\newtheorem{Conj}{Conjecture}
\def\D{{\mathcal D}}
\begin{document}
\bibliographystyle{unsrt}
\title{Discrete-to-Continuous Extensions: piecewise multilinear extension, min-max theory and spectral theory} 
\author{J\"urgen Jost\footnotemark[1], \and Dong Zhang\footnotemark[1]}
\footnotetext[1]{Max Planck Institute for Mathematics in the Sciences, Inselstrasse 22, 04103 Leipzig,
Germany. \\Email addresses:
{\tt  jost@mis.mpg.de}  (J\"urgen Jost),\; {\tt dzhang@mis.mpg.de}\, and {\tt 13699289001@163.com} (Dong Zhang).
}
\date{}
\maketitle

\begin{abstract}
\small

We introduce 
the 
homogeneous and piecewise multilinear extensions and the eigenvalue problem for locally Lipschitz function pairs, in order to develop a systematic framework for relating discrete and continuous min-max problems. 
This  also enables us to investigate spectral properties for pairs of $p$-homogeneous functions and to propose a critical point theory for  zero-homogeneous functions.
The main contributions are:

\begin{enumerate}[(1)]
    \item We provide several min-max relations between an  original discrete formulation and its
piecewise multilinear 
extension. We introduce  the concept of perfect domain pairs to view  comonotonicity on vectors  as an extension of inclusion chains on sets.   The piecewise multilinear extension is  (slice-)rank preserving, which closely relates to Tao's lemma on diagonal tensors.  More discrete-to-continuous equalities are obtained, including a general form involving log-concave polynomials. 
  And by employing  these fundamental  correspondences,   we get further results and applications on  tensors,  Tur\'an's  problem, signed (hyper-)graphs, etc. 
    \item 
    We derive the mountain pass characterization, linking theorems, nodal domain inequalities,  inertia bounds,    duality theorems
   and distribution of eigenvalues for pairs of $p$-homogeneous functions. We
   establish a new property on the subderivative of a convex function which
   relates to the Gauss map of the graph of the convex function.   Based on
   these fundamental results, we can analyze the  structure of
   eigenspaces in depth. For example, we show a simple one-to-one correspondence between
   the nonzero  eigenvalues of the  vertex  $p$-Laplacian  
and the 
 edge 
$p^*$-Laplacian of a graph.  We can also apply the theory to  Cheeger inequalities and  $p$-Laplacians on  oriented   hypergraphs and simplicial complexes.  
    Also, the first nonlinear analog of Huang's approach for   hypergraphs is provided. 
    
\end{enumerate}

%

\vspace{0.2cm}

\noindent\textbf{Keywords:}
piecewise multilinear extension; 
combinatorial optimization;
saddle point problem; 
min-max principle; 
critical point theory; 
inertia bound; 
 Tur\'an problems; adjacency tensors
 
 \vspace{0.2cm}

\noindent\textbf{Mathematics Subject Classification:}   90C47, 47J10, 34K08   
\end{abstract}

\tikzstyle{startstop} = [rectangle, rounded corners, minimum width=1cm, minimum height=1cm,text centered, draw=black, fill=red!0]
\tikzstyle{io1} = [rectangle, trapezium left angle=80, trapezium right angle=100, minimum width=1cm, minimum height=1cm, text centered, draw=black, fill=blue!0]
\tikzstyle{io2} = [trapezium,  rounded corners, trapezium left angle=100, trapezium right angle=100, minimum width=1cm, minimum height=1cm, text centered, draw=black, fill=yellow!0]
\tikzstyle{process} = [rectangle, minimum width=1cm, minimum height=1cm, text centered, draw=black, fill=orange!0]
\tikzstyle{decision} = [circle, minimum width=1cm, minimum height=1cm, text centered, draw=black, fill=green!0]
\tikzstyle{decision2} = [ellipse, rounded corners=10mm, minimum width=2cm, minimum height=2cm, text centered, draw=black, fill=green!0]
\tikzstyle{arrow} = [thick,->,>=stealth]
\tableofcontents
\section{Introduction}\label{sec:introduction} 


In his millennium paper  \cite{Lovasz00},  Lov\'asz wrote: \textit{Connections
  between discrete and continuous may be the subject of mathematical study on
  their own right.} In fact, over the last few decades, many firm
  bridges between the discrete data world and the field of continuous
  mathematics have been established, and they are not only interesting in
  themselves, but also  helpful and stimulating in both pure and applied mathematics. 

\vspace{0.2cm}

A natural and important idea for connecting discrete  and continuous problems is by considering the discrete space as  a subset of some Euclidean space and then extending or interpolating discrete functions to continuous ones. A systematic scheme for such extension was provided in the  fundamental    works of Choquet and Lov\'asz  \cite{Choquet54,Lovasz}. It was further developed in  a series of works of the machine learning group of Hein  \cite{HS11,TVhyper-13}, and in  recent works by the authors \cite{CSZZ,JostZhang-Morse,JostZhang-PL}. For example,  concerning optimization, there have been various schemes to solve combinatorial optimization problems
by means of continuous optimization methods,  including continuous reformulations \cite{Lovasz,HS11,SheraliAdams90,Burer09} and continuous relaxations \cite{LawlerWood66,GoemansWilliamson95,BillionnetElloumiLambert12}, which turn out to be powerful. In this work, we also present some results for both the reformulations  and relaxations via our extension theory. 

\vspace{0.2cm}

In the continuous case, the  functions that are  best suited for optimization are the convex ones. In fact, convex analysis  has become a well developed and important subjects, with ramifications in most areas of mathematics.  In the discrete case, a  role that is in some sense analogous to that of convex functions is played by the submodular functions \cite{Choquet54,Lovasz}, and their analysis 
  was systematically
developped by many mathematicians (such as Fujishige \cite{F05-book}, Murota
\cite{Murota98,Murota03book}, Dress   et al \cite{DressWenzel92})  from
different viewpoints. In fact,  submodularity is  a kind of `discrete convexity', and the Lov\'asz extension
turns  submodular functions into  convex ones.  This scheme has been
applied in many areas like game theory, matroid theory, stochastic processes,
electrical networks, computer vision and machine learning.  In this
  direction, we have systematically investigated  submodularity  via multi-way
Lov\'asz extensions in \cite{JostZhang-PL}. 

\vspace{0.2cm}

Extension theory is useful, however, also beyond the convex or submodular setting. It can be more widely applied to  optimization
\cite{HS11,TVhyper-13,JostZhang-PL}, critical point theory and Morse theory
\cite{JostZhang-Morse}, to cite just some examples. It is therefore natural to systematically consider min-max theory from the perspective of extension theory. Such a min-max theory  includes, for instance, 
saddle point problems, von Neumann’s minimax theorem
\cite{Neumann28,Sion58,Krantz14} or  Lusternik-Schnirelmann theory. A main   contribution of this paper therefore is to reveal the connections between discrete  min-max problems and continuous min-max reformulations from  different  viewpoints.

\vspace{0.2cm}

In particular, in graph theory and its extensions, many problems can be considered as saddle point problems. For example, eigenvalues are characterized as critical values of Rayleigh quotients, and more generally, problems related to eigenvectors, such as properties of nodal domains, can also be seen from that perspective. The same holds for other fundamental graph theoretical problems, like Cheeger cuts. Such problems find their natural place in nonlinear spectral theory. 
Previous research  indicates
that we can explore the corresponding nonlinear spectral graph theory with
the help of the corresponding  continuous objective function.  Accordingly, besides the practical need of designing continuous optimization algorithms
for combinatorial problems,  
these  continuous versions enable 
us to reconsider the combinatorial  problems from the viewpoint of 
spectral theory. Thus, we develop a systematic  spectral theory for a pair of homogeneous and locally  Lipschitz functions,  as a  solid foundation of extension methods. 

\vspace{0.2cm}

Based on the extension theory and the spectral theory, we provide general, yet
user-friendly tools  that can be used when attacking discrete models arising
in applications. More concretely, continuing the study in \cite{JostZhang-PL},
we systematically  develop further   applications,  such as the nodal domain theorem and inertia bounds involving adjacency tensors of uniform hypergraphs, inertia bounds for the  graph $p$-Laplacian, the $k$-way Cheeger inequality on oriented hypergraphs, the eigenvalues of tensors, 
 Cheeger-type inequalities for Hodge Laplacians on simplicial complexes, and  spectral estimates for signed hypergraphs.  These results  indicate that the extension theory might be an excellent universal approach to understand the
discrete problems via their continuous extensions. 

\vspace{0.2cm}

The general structure that we are exploring can be compactly represented in the following diagram:

\begin{center}
\begin{tikzpicture}[node distance=3.6cm]

\node (submodular) [startstop] {  submodularity };

\node (optimal) [startstop, below of=submodular, xshift=0cm, yshift=2.1cm]  { \textbf{combinatorial optimization} };

\node (minmax) [startstop, below of=optimal, xshift=0cm, yshift=2.1cm]  { \color{black} 
\textbf{combinatorial min-max}};

\node (critical) [startstop, below of=minmax, xshift=0cm, yshift=2.1cm]  { discrete Morse theory };

\node (quantity) [startstop, below of=critical, xshift=0cm, yshift=2.1cm]  { \textbf{combinatorial  quantities} };

\node (convex) [startstop, right of=submodular, xshift=5cm, yshift=0cm] {  convexity };

\node (optimal-) [startstop, below of=convex, xshift=0cm, yshift=2.1cm]  { \textbf{continuous optimization} };

\node (minmax-) [startstop, below of=optimal-, xshift=0cm, yshift=2.1cm]  { \color{black} 
\textbf{saddle point problem} };

\node (Morse) [startstop, below of=minmax-, xshift=0cm, yshift=2.1cm]  { non-smooth Morse theory };

\node (spectral) [startstop, below of=Morse, xshift=0cm, yshift=2.1cm]  { \color{black} \textbf{eigenvalue problem} }; 

\draw [arrow](submodular) --node[anchor=south] { \small  } (optimal);
\draw [arrow](optimal) --node[anchor=south] { \small  } (minmax);
\draw [arrow](minmax) --node[anchor=south] { \small  }(critical);
\draw [arrow](critical) --node[anchor=south] { \small  }(quantity);

\draw [arrow](convex) --node[anchor=south] { \small  } (optimal-);
\draw [arrow](optimal-) --node[anchor=south] { \small  } (minmax-);
\draw [arrow](minmax-) --node[anchor=south] { \small  }(Morse);
\draw [arrow](Morse) --node[anchor=south] { \small  }(spectral);

\draw [arrow](submodular) --node[anchor=south] { \small  piecewise linear extension  } (convex);
\draw [arrow](convex) --node[anchor=north] { \small  one-homogeneous  extension 
} (submodular);
\draw [arrow](optimal) --node[anchor=south] { \small  piecewise multilinear  } (optimal-);
\draw [arrow](optimal-) --node[anchor=north] { \small extension
} (optimal);
\draw [arrow](minmax) --node[anchor=south] { \small piecewise bilinear  }(minmax-);
\draw [arrow](minmax-) --node[anchor=north] { \small extension }(minmax);
\draw [arrow](critical) --node[anchor=south] { \small Lov\'asz extension 
}(Morse);
\draw [arrow](Morse) --node[anchor=north] { \small  }(critical);
\draw [arrow](quantity) --node[anchor=south] { \small homogeneous extension }(spectral);
\draw [arrow](spectral) --node[anchor=north] { \small 
}(quantity);
\end{tikzpicture}
\end{center}

Some of the above relations are discussed and investigated in \cite{JostZhang-Morse,JostZhang-PL}, and in this paper, we will complete the above framework in particular by relating discrete and continuous saddle points and eigenvalue problems. 


For simplicity, we begin with the following {\sl \textbf{piecewise bilinear extension}}:

Given $V=\{1,\cdots,n\}$ and its power set $\mathcal{P}(V)$, for $\vec x=(x_1,\cdots,x_n)$ and $\vec y=(y_1,\cdots,y_n)$ in $\R^n$, let  $\sigma,\tau:V\cup\{0\}\to V\cup\{0\}$ be  permutations such that $ x_{\sigma(1)}\le x_{\sigma(2)} \le \cdots\le x_{\sigma(n)}$,  $y_{\tau(1)}\le \cdots\le y_{\tau(n)}$ and $\sigma(0)=\tau(0)=0$ with $x_0:=y_0:=0$. 

For a discrete function $f:\mathcal{P}(V)\times \mathcal{P}(V)\to \R$, the {\sl piecewise bilinear  extension} of $f$ at $(\vec x,\vec y)$ is  
\begin{equation}\label{eq:PQ-extension}
f^{Q}(\vec x,\vec y)=\sum_{i,j=0}^{n-1}(x_{\sigma(i+1)}-x_{\sigma(i)})(y_{\tau(j+1)}-y_{\tau(j)})f(V^{\sigma(i)}(\vec x),V^{\tau(j)}(\vec y)),
\end{equation}
where  
 $V^{\sigma(i)}(\vec x):=\{j\in V: x_{j}> x_{\sigma(i)}\},\; i=1,\cdots,n-1,\; V^0(\vec x)=V$, and the definition of $V^{\tau(j)}(\vec y)$ is analogous. 
We can rewrite   \eqref{eq:PQ-extension}
 in an integral form as \begin{align*}\label{eq:PQ-extension2}
 f^Q(\vec x,\vec y)=&\int_{\min\vec y}^{\max\vec y}\int_{\min\vec x}^{\max\vec x}f(V^{t}(\vec x),V^{s}(\vec y))dt ds+\min \vec x\int_{\min\vec y}^{\max\vec y}f(V,V^{s}(\vec y))ds
\\&\; +\min \vec y\int_{\min\vec x}^{\max\vec x}f(V^{t}(\vec x),V)dt+\min \vec x\min \vec yf(V,V),
 \end{align*}
 where $\min\vec x:=\min\limits_{i=1,\cdots,n}x_i$, $\max\vec x:=\max\limits_{i=1,\cdots,n}x_i$, and $V^t(\vec x):=\{i\in V:x_i>t\}$. For a function $f:\power(V_1)\times \power(V_2)\to\R$, one can define $f^Q(\vec x,\vec y)$ in the same way. It is therefore not necessary to write  the details here. 
 
Clearly,  the piecewise bilinear extension 
is  $2$-homogeneous, and it constitutes a generalization of the original Lov\'asz extension. In fact, taking  $\vec y=\vec 1$, we have $f^Q(\vec x,\vec 1)=\tilde{f}^L(\vec x)$, where $\tilde{f}(A):=f(A,V)$ for any $A\in\mathcal{P}(V)$.  
The bilinear  extension  and its generalizations (see Section   \ref{sec:extension}) possess 
many connections with various fields like optimization, saddle point problems, critical point theory and spectral graph theory. 

\vspace{0.16cm}

\textbf{Connections with saddle point problems}
 
\begin{theorem}[Theorem \ref{thm:min-max-ABxy} and Proposition \ref{pro:min-max-submodular}]
 \label{thm:quadratic-saddle} 
Let $f:\power(V_1)\times \power(V_2)\to \R$, $g:\power(V_1)\times \power(V_2)\to \R_{\ge 0}$,  $n=\#V_1$ and $m=\#V_2$.  Denote by $\R^m_+=(0,+\infty)^m$ and $\R^n_{\ge0}=[0,+\infty)^n$. Then  \begin{equation}\label{eq:continuous-ex}
\inf\limits_{\vec x\in\R^n_{\ge0}\setminus\{\vec0\}}\sup\limits_{\vec y\in\R^m_+}\frac{f^Q(\vec x,\vec y)}{g^Q(\vec x,\vec y)}=\sup\limits_{\vec y\in\R^m_+}\inf\limits_{\vec x\in\R^n_{\ge0}\setminus\{\vec0\}}\frac{f^Q(\vec x,\vec y)}{g^Q(\vec x,\vec y)}
\end{equation}
if either of the followings holds. 
\begin{itemize}
    \item[(a)]  $g$ is positive, and  \begin{equation}\label{eq:discrete-ex}
\min\limits_{A\in\power(V_1)\setminus\{\varnothing\}}\max\limits_  {B\in\power(V_2)\setminus\{\varnothing\}}\frac{f(A,B)}{g(A,B)}=\max\limits_{B\in\power(V_2)\setminus\{\varnothing\}} \min\limits_{A\in\power(V_1)\setminus\{\varnothing\}}\frac{f(A,B)}{g(A,B)}    \end{equation}
And in this case, \eqref{eq:discrete-ex} and \eqref{eq:continuous-ex} coincide. Moreover,  $(A^*,B^*)$ is a saddle point of $f/g$ if and only if $(\vec 1_{A^*},\vec 1_{B^*})$ is a saddle point of $f^Q/g^Q$.

\item[(b)]   $g$ is  modular on each component with $g(\{i\},V_2)>0$ and $g(V_1,\{j\})>0$ for any $i\in V_1,j\in V_2$, and $f$ satisfies  the following conditions:
\begin{itemize}
    \item[$\bullet$] $f$ is submodular  in its first  component;
    \item[$\bullet$] $f$ is  supermodular in its  second component.
\end{itemize}
\end{itemize}
\end{theorem}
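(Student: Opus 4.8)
The plan is to reduce the saddle-point identity for the piecewise bilinear extension to its discrete counterpart, treating cases (a) and (b) in parallel but with different combinatorial inputs. The common backbone is that $f^Q(\vec x,\vec y)$ is a convex combination (with nonnegative weights summing, together with the boundary terms, in a controlled way) of the values $f(V^{\sigma(i)}(\vec x),V^{\tau(j)}(\vec y))$, together with the analogous representation for $g^Q$; because both extensions are $2$-homogeneous, the Rayleigh-type quotient $f^Q/g^Q$ is $0$-homogeneous in $(\vec x,\vec y)$ jointly, so we may normalize freely. Throughout, I would use the fact (already recorded around \eqref{eq:PQ-extension}) that restricting to $\vec y=\vec 1$ recovers the Lovász extension of $\tilde f$, and more generally that evaluating at indicator vectors $(\vec 1_A,\vec 1_B)$ returns $f(A,B)$ and $g(A,B)$ exactly. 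This gives the two easy inequalities for free: $\inf_{\vec x}\sup_{\vec y}$ of the continuous problem is $\le$ the discrete $\min_A\max_B$ after plugging in indicator test vectors on the outer variable and optimizing the inner one, and symmetrically $\sup\inf \ge \max\min$; combined with the trivial weak-duality bound $\sup\inf\le\inf\sup$, everything collapses once we show the reverse inequalities tying the continuous quantities back to the discrete ones.

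For case (a), the key step is a layer-cake / ``rounding'' argument: for fixed $\vec y\in\R^m_+$, the map $\vec x\mapsto f^Q(\vec x,\vec y)$ is piecewise linear and, on the ordered cone $\{x_{\sigma(1)}\le\cdots\le x_{\sigma(n)}\}$, is an explicit linear functional whose coefficients are differences $f(V^{\sigma(i)},\,\cdot\,)-f(V^{\sigma(i-1)},\,\cdot\,)$ evaluated against the (positive) $\vec y$-weights. One shows that $\inf_{\vec x\in\R^n_{\ge0}\setminus\{0\}} f^Q(\vec x,\vec y)/g^Q(\vec x,\vec y)$ is attained (up to taking limits) at an indicator vector $\vec x=\vec 1_A$: since $g$ is positive, $g^Q(\cdot,\vec y)$ is bounded below by a positive constant times $\|\vec x\|$, the quotient is continuous on the sphere intersected with $\R^n_{\ge0}$, and the piecewise-linear structure in $\vec x$ forces the optimum of a linear-over-linear quotient to sit at an extreme ray of the normal cone decomposition, i.e.\ at some $V^{\sigma(i)}(\vec x)$, which is a set. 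Dually, for fixed $\vec x$, $\sup_{\vec y\in\R^m_+} f^Q(\vec x,\vec y)/g^Q(\vec x,\vec y)$ is attained at an indicator $\vec 1_B$. Feeding these two reductions into each other yields
\[
\inf_{\vec x}\sup_{\vec y}\frac{f^Q}{g^Q}=\min_{A}\max_{B}\frac{f(A,B)}{g(A,B)},\qquad \sup_{\vec y}\inf_{\vec x}\frac{f^Q}{g^Q}=\max_{B}\min_{A}\frac{f(A,B)}{g(A,B)},
\]
so the continuous minimax equals the continuous maximin precisely because \eqref{eq:discrete-ex} holds, and the common value is the shared discrete one; the saddle-point correspondence then follows by unwinding: $(\vec 1_{A^*},\vec 1_{B^*})$ realizes both optima, and conversely any continuous saddle point can be rounded coordinate-wise using the same extreme-point argument without changing the value, hence projects to a discrete saddle point.

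For case (b), I would instead invoke the submodular/supermodular structure directly, in the spirit of the Lovász-extension characterization of submodularity used in \cite{JostZhang-PL}: submodularity of $f$ in its first component makes $\vec x\mapsto f^Q(\vec x,\vec y)$ convex on $\R^n_{\ge0}$ for each fixed $\vec y$ (this is the classical fact that the Lovász/Choquet extension of a submodular function is convex, applied after fixing the second slot and absorbing the positive $\vec y$-weights), while supermodularity of $f$ in its second component makes $\vec y\mapsto f^Q(\vec x,\vec y)$ concave on $\R^m_{\ge0}$; since $g$ is modular on each component with the stated positivity, $g^Q(\vec x,\vec y)=\langle \vec a,\vec x\rangle\langle \vec b,\vec y\rangle$ for positive vectors $\vec a,\vec b$ (the modular extension is linear), so after the substitution $\vec u=\vec x/\langle\vec b,\vec y\rangle$-type rescaling the quotient $f^Q/g^Q$ becomes quasi-convex in the first variable and quasi-concave in the second on convex domains. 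Then Sion's minimax theorem \cite{Sion58} applies — one restricts to a compact convex slice such as $\{\vec x\in\R^n_{\ge0}:\langle\vec a,\vec x\rangle=1\}$ and $\{\vec y\in\R^m_{\ge0}:\langle\vec b,\vec y\rangle=1\}$, checks the needed semicontinuity, and pushes the boundary $\R^m_+$ versus $\R^m_{\ge 0}$ discrepancy through by a continuity/perturbation argument — giving \eqref{eq:continuous-ex}.

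The main obstacle I anticipate is the boundary behavior: the inner supremum is over the open orthant $\R^m_+$ while the outer infimum is over $\R^n_{\ge0}\setminus\{\vec0\}$, and $g^Q$ can degenerate (vanish) on parts of the boundary of $\R^n_{\ge0}$, so attainment is not automatic and one must argue that the relevant suprema/infima are unchanged by passing to the closure or that the optimizing sequences stay in a region where $g^Q$ is bounded away from $0$. In case (a) positivity of $g$ handles the $g^Q>0$ issue but one still must justify that the sup over the open orthant equals the max over nonempty subsets (a short limiting argument: $\vec 1_B$ with $B\ne\varnothing$ is a limit of points in $\R^m_+$, and conversely any point of $\R^m_+$ gives a convex combination of such indicators). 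In case (b) the degeneracy of $g^Q$ on the boundary is the delicate point, and I would deal with it by first proving the identity on the open orthants where everything is smooth and then taking limits, using the modularity of $g$ to control the rate at which $g^Q\to 0$. This boundary bookkeeping, rather than the minimax principle itself, is where the real work lies.
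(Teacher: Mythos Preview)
Your overall approach matches the paper's: case (a) is exactly the sandwich
\[
\min_A\max_B \frac{f}{g} \;\ge\; \inf_{\vec x}\sup_{\vec y}\frac{f^Q}{g^Q} \;\ge\; \sup_{\vec y}\inf_{\vec x}\frac{f^Q}{g^Q} \;\ge\; \max_B\min_A\frac{f}{g}
\]
obtained by plugging indicators into the outer variable and using the Lov\'asz-extension identity for the inner one (the paper packages this as Theorem~\ref{thm:min-max-ABxy}, resting on Theorem~\ref{thm:optimal-identity-fg}), then collapsing with \eqref{eq:discrete-ex}; case (b) is Sion's theorem after noting submodularity/supermodularity make $f^Q$ convex/concave in the respective slots and $g^Q$ is linear in each slot, so $f^Q/g^Q$ is quasi-convex--concave (Proposition~\ref{pro:min-max-submodular}).

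Two corrections. In case (b) you write $g^Q(\vec x,\vec y)=\langle\vec a,\vec x\rangle\langle\vec b,\vec y\rangle$; this is false. Modularity on each component gives only $g^Q(\vec x,\vec y)=\sum_{i,j}g(\{i\},\{j\})x_iy_j$, a general bilinear form, not rank one. Your rescaling trick is therefore unavailable---but also unnecessary: $f^Q$ convex and $g^Q$ linear and positive in $\vec x$ (for fixed $\vec y$) already force $f^Q/g^Q$ to be quasi-convex in $\vec x$, since the sublevel set $\{f^Q\le c\,g^Q\}=\{f^Q-c\,g^Q\le 0\}$ is convex. The paper then checks $g^Q>0$ on $(\R^n_{\ge0}\setminus\{\vec0\})\times\R^m_+$ directly from $g(\{i\},V_2)>0$ (pick $i$ with $x_i>0$, then some $j$ with $g(\{i\},\{j\})>0$, so $g^Q\ge g(\{i\},\{j\})x_iy_j>0$), and applies Sion on simplex slices. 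No limiting or perturbation argument is needed; the boundary bookkeeping you anticipate as ``the real work'' is just this one-line positivity check.

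In case (a), your phrase ``feeding these two reductions into each other yields'' the \emph{equalities} $\inf_{\vec x}\sup_{\vec y}=\min_A\max_B$ and $\sup_{\vec y}\inf_{\vec x}=\max_B\min_A$ overstates what rounding alone gives. For fixed $\vec y$ the inner optimum in $\vec x$ is indeed attained at an indicator, but after taking $\sup_{\vec y}$ (or $\inf_{\vec y}$) the resulting function of $\vec x$ is a max (or min) of such quotients and need not have its extremum at an indicator. You only get the one-sided inequalities---which you already stated correctly in your first paragraph---and the equalities follow only after invoking \eqref{eq:discrete-ex}. The paper's Example~\ref{ex:saddle-conterexample} (outside condition (a), since $g$ vanishes) shows $\inf_{\vec x}\sup_{\vec y}f^Q/g^Q\ne\min_A\max_B f/g$ can genuinely occur.
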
 

Theorem  \ref{thm:quadratic-saddle} also holds when we replace the piecewise bilinear  extension by some other extensions (see Theorem  \ref{inthm:min-max-ABxy} and  similar results in Section  \ref{sec:extension}). The condition (b) in Theorem  \ref{thm:quadratic-saddle} makes contact with Sion's  min-max theorem, 
and it closely relates to the corresponding topics in  game theory. Moreover,
the  formulation \eqref{eq:continuous-ex} allows us to deal with the Collatz-Wielandt formula (see Lemma \ref{lemma:Collatz-Wielandt} and Example \ref{ex:saddle-conterexample}) and von Neumann's minimax theorem  for matrices (see Example \ref{ex:saddle-Two-Person-Zero-Sum}) in a single, 
unifying mathematical framework. 

As a systematic  research on the extension theory,  we  introduce several  homogeneous extensions of a discrete function (see Definitions \ref{defn:piece-multilinear}, \ref{defn:piece-polynomial} and \ref{def:multiple-integral}). Below, we present the {\sl piecewise multilinear   extension}:

For a discrete function $f:\mathcal{P}(V)^k\to \R$, we have the piecewise multilinear function  $f^M:(\R^{n})^k\to\R$ defined by
\begin{equation*}
f^M(\vec x^1,\cdots,\vec x^k)=\sum_{i_1,\cdots,i_k=0}^{n-1}\prod_{l=1}^k(x_{(i_l+1)}^l-x_{(i_l)}^l)f(V^{(i_1)}(\vec x^1),\cdots,V^{(i_k)}(\vec x^k)),
\end{equation*}
where $V^{(i_l)}(\vec x^l):=\{j\in V: x_j^l> x_{(i_l)}^l\}$ and $x_{(1)}^l\le x_{(2)}^l\le\ldots\le x_{(n)}^l$ is a rearrangement of $\vec x^l:=(x_1^l,\ldots,x_n^l)$ in non-decreasing order,  $x_{(0)}^l:=0$ and $V^{(0)}(\vec x^l):=V$, $l=1,\cdots,k$,  
 $\vec x^1,\cdots,\vec x^k\in\R^n$. For $k=2$, this reduces of course to \eqref{eq:PQ-extension}.

\begin{theorem}[Theorem \ref{thm:min-max-ABxy}]
\label{inthm:min-max-ABxy}
Suppose $f^M, g^M$ are piecewise multilinear  extensions of $f,g:\power^{k+l}(V)\to\R$, where $k$ and $l$ are positive integers. 
If
\begin{equation}\label{eq:min-max-k-l}
\min\limits_{B\in(\power(V)\setminus\{\varnothing\})^l}\max\limits_{A\in(\power(V)\setminus\{\varnothing\})^k} \frac{f(A,B)}{g(A,B)}=\max\limits_{A\in(\power(V)\setminus\{\varnothing\})^k} \min\limits_{B\in(\power(V)\setminus\{\varnothing\})^l}\frac{f(A,B)}{g(A,B)},
\end{equation}
then 
\begin{equation*}
\inf\limits_{\vec y\in\R^{ln}_+}\sup\limits_{\vec x\in\R^{kn}_+}\frac{f^M(\vec x,\vec y)}{g^M(\vec x,\vec y)}=\sup\limits_{\vec x\in\R^{kn}_+}\inf\limits_{\vec y\in\R^{ln}_+}\frac{f^M(\vec x,\vec y)}{g^M(\vec x,\vec y)}
\end{equation*}
which coincides with  \eqref{eq:min-max-k-l}. 
Moreover, $(A^*,B^*)$ is a saddle point of $f/g$ if and only if $(\vec 1_{A^*},\vec 1_{B^*})$ is a saddle point of $f^M/g^M$.
\end{theorem}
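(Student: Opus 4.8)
The plan is to push everything through one structural property of the piecewise multilinear extension: \emph{in each block of variables separately, $f^M$ is just the one-homogeneous (Lov\'asz-type) extension of the set function obtained by freezing the other blocks at their current values}. Grouping the defining sum by the first summation index shows that, with $\vec x^2,\dots,\vec x^k,\vec y$ held fixed, the map $\vec x^1\mapsto f^M(\vec x^1,\vec x^2,\dots,\vec x^k,\vec y)$ is exactly the one-homogeneous extension of $A\mapsto f^M(\vec 1_A,\vec x^2,\dots,\vec x^k,\vec y)$, and symmetrically for every other block; in particular $f^M$ evaluated on a tuple of indicator vectors of nonempty sets returns the discrete value of $f$, and likewise for $g$. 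I would first record this, together with the fact that positivity of $g$ (which I assume, as elsewhere in the paper, so that the ratios make sense) propagates to $g^M>0$ on the set $(\R^n_{\ge0}\setminus\{\vec 0\})^{k+l}$ where every block is nonzero; on that set $f^M/g^M$ is continuous, for the purpose of $\inf$ and $\sup$ it may be replaced by the open orthants $\R^{kn}_+,\R^{ln}_+$ in the statement, and it contains all indicator tuples $\vec 1_A=(\vec 1_{A_1},\dots,\vec 1_{A_k})$.

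\textbf{Step 1 (collapse of the inner optimizations to the discrete level).} Fix $\vec y$ and write any $\vec x^1$ through its chain decomposition $\vec x^1=\sum_j c_j\vec 1_{A^{(j)}}$ with $c_j>0$ and $A^{(1)}\supsetneq A^{(2)}\supsetneq\cdots$ nonempty. One-homogeneity in the first block then exhibits $f^M(\vec x^1,\cdot)/g^M(\vec x^1,\cdot)$ as a weighted mediant of the numbers $f^M(\vec 1_{A^{(j)}},\cdot)/g^M(\vec 1_{A^{(j)}},\cdot)$, hence it lies between their minimum and their maximum, each of which is attained at an indicator vector; this is the basic correspondence between a set function and its one-homogeneous extension (Section~\ref{sec:extension}). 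Consequently $\sup_{\vec x^1}$ of the ratio equals $\max_{A_1\neq\varnothing}$ of the ratio at $\vec x^1=\vec 1_{A_1}$, and iterating over the $k$ blocks of $\vec x$ (using that a supremum commutes with a finite maximum) gives $\sup_{\vec x}f^M(\vec x,\vec y)/g^M(\vec x,\vec y)=\max_{A}f^M(\vec 1_A,\vec y)/g^M(\vec 1_A,\vec y)$ over $A\in(\power(V)\setminus\{\varnothing\})^k$; the mirror argument on the $\vec y$-blocks gives $\inf_{\vec y}f^M(\vec x,\vec y)/g^M(\vec x,\vec y)=\min_{B}f^M(\vec x,\vec 1_B)/g^M(\vec x,\vec 1_B)$ over $B\in(\power(V)\setminus\{\varnothing\})^l$, for every fixed $\vec x$.

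\textbf{Step 2 (min-max sandwich and saddle points).} Substituting the first identity of Step 1 into $\inf_{\vec y}\sup_{\vec x}$ and then restricting the remaining $\inf$ to indicator tuples $\vec y=\vec 1_B$, and using $f^M(\vec 1_A,\vec 1_B)=f(A,B)$, $g^M(\vec 1_A,\vec 1_B)=g(A,B)$, gives $\inf_{\vec y}\sup_{\vec x}f^M/g^M\le\min_B\max_A f(A,B)/g(A,B)=:\lambda^*$; symmetrically, restricting the outer $\sup$ to indicator tuples gives $\sup_{\vec x}\inf_{\vec y}f^M/g^M\ge\max_A\min_B f(A,B)/g(A,B)$, which equals $\lambda^*$ by the hypothesis \eqref{eq:min-max-k-l}. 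Since $\sup_{\vec x}\inf_{\vec y}\le\inf_{\vec y}\sup_{\vec x}$ always holds, the two bounds force both quantities to equal $\lambda^*$, which is exactly the asserted identity together with its coincidence with \eqref{eq:min-max-k-l}. For the saddle-point statement I would argue directly in both directions: restricting a saddle point $(\vec 1_{A^*},\vec 1_{B^*})$ of $f^M/g^M$ to indicator tuples of nonempty sets yields the discrete saddle inequalities at $(A^*,B^*)$; conversely, if $(A^*,B^*)$ is a discrete saddle point, then Step 1 turns the equalities $\sup_{\vec x}f^M(\vec x,\vec 1_{B^*})/g^M(\vec x,\vec 1_{B^*})=\max_A f(A,B^*)/g(A,B^*)=f(A^*,B^*)/g(A^*,B^*)$ and its dual for $\inf_{\vec y}$ into precisely the two saddle inequalities for $f^M/g^M$ at $(\vec 1_{A^*},\vec 1_{B^*})$.

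\textbf{Main obstacle.} The step I expect to carry all the weight is Step 1 — verifying that the multilinear extension collapses, block by block, to the discrete $\max$ and $\min$. This rests on the one-homogeneity in each block together with strict positivity of $g$, which is what legitimizes the mediant estimate (every denominator appearing in the chain decomposition must be positive), plus a little bookkeeping about passing between $\R^n_+$ and $\R^n_{\ge0}\setminus\{\vec 0\}$ — the latter being what contains the indicator vectors while yielding the same infima and suprema. Once Step 1 is in hand, Step 2 is the elementary sandwich together with a direct check of the saddle-point equivalence; and since the argument uses only that the extension is one-homogeneous in each block and interpolates $f$ on indicator tuples, the same proof applies verbatim to the other extensions introduced in Section~\ref{sec:extension}.
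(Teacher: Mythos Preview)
Your proposal is correct and follows essentially the same route as the paper. The paper packages your Step~1 collapse as an application of Theorem~\ref{thm:optimal-identity-fg} (invoking that $(\A\times\{B\},\D_\A\times\{\vec 1_B\})$ is a perfect domain pair, which amounts to the same mediant/chain-decomposition argument you spell out block by block via Proposition~\ref{pro:L-P}), and then proceeds with the identical sandwich $\min_B\max_A\ge\inf_y\sup_x\ge\sup_x\inf_y\ge\max_A\min_B$ and the same direct verification of the saddle-point equivalence.
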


It should be noted that Theorem \ref{inthm:min-max-ABxy} is a generalization of the equivalence between a combinatorial optimization and the fractional programming  produced by the  multi-way Lov\'asz extension (Theorem A in  \cite{JostZhang-PL}). The detailed reason is shown  in  Remark \ref{remark:von-generalize-optimization}. On the other hand, Theorem \ref{inthm:min-max-ABxy} provides the first relation between a discrete saddle point  problem and its homogeneous extension, which closely relates to von Neumann's minimax theorem.

\vspace{0.16cm}

\textbf{Connections with  Lusternik-Schnirelmann theory }

We set up a min-max relation in the style of  Lusternik-Schnirelmann theory in Section \ref{sec:LS}. It is convenient to state the result in the context of the {\sl multiple integral extension}:

For a function $f:\power_2(V)^{k}\to \R$, we define $f^M:(\R^{n})^k\to\R$ as 
\begin{align*}
&f^M(\vec x^1,\cdots,\vec x^k)\\=~&\int_0^{\|\vec x^k\|_\infty}\cdots\int_0^{\|\vec x^1\|_\infty}f(V^{t_1}_+(\vec x^1),V^{t_1}_-(\vec x^1),\cdots,V^{t_k}_+(\vec x^k),V^{t_k}_-(\vec x^k))dt_1\cdots dt_k,    
\end{align*}
where $\power_2(V)=\{(A_+,A_-):A_+,A_-\subset V,A_+\cap A_-=\varnothing\}$,  and $V^{t_l}_\pm(\vec x^l)=\{j\in V:\pm x^l_j>t_l\}$, $l=1,\cdots,k$. 

\begin{remark}
The multiple integral extension $f^M$ of a function $f:\power_2(V)^k\to\R$ and the previous piecewise  multilinear extension $h^M$ of a function $h:\power(V)^k\to\R$ have the following   relations:
\begin{enumerate}[(a)]
\item  If $f(A_{1+},A_{1-},\cdots,A_{k+},A_{k-})=h(A_{1+},A_{2+},\cdots,A_{k+})$,\\ $\forall (A_{1+},A_{1-},\cdots,A_{k+},A_{k-})\in \power_2(V)^k$, then $f^M(\vec x)=h^M(\vec x)$, $\forall \vec x\in[0,\infty)^{nk}$. 
\item If $f(A_{1+},A_{1-},\cdots,A_{k+},A_{k-})=h(A_{1+}\cup A_{1-},\cdots,A_{k+}\cup A_{k-})$,\\ $\forall (A_{1+},A_{1-},\cdots,A_{k+},A_{k-})\in \power_2(V)^k$, then  $f^M(\vec x)=h^M(|\vec x|)$, $\forall \vec x\in (\R^{n})^k$. 
\end{enumerate}
Moreover, given a function 
$f:\power_2(V)^k\to\R$,   define $\tilde{f}:\power(V\sqcup V')^k\to\R$ by  $\tilde{f}(A_1,\cdots,A_k)=f(A_1\cap V\setminus \phi(A_1\cap V'),\phi(A_1\cap V')\setminus (A_1\cap V),\cdots,A_k\cap V\setminus \phi(A_k\cap V'),\phi(A_k\cap V')\setminus (A_k\cap V))$, where $V'$ is a copy of $V$, and $\phi:V'\to V$ is the bijection satisfying  $i'\mapsto i$, $\forall i'\in V'$. 
Then, $f^M(\vec x^1,\cdots,\vec x^k)=\tilde{f}^M(\vec x^1_+,\vec x^1_-,\cdots,\vec x^k_+,\vec x^k_-)$, where $\vec x^i\in\R^n$ and $\vec x_\pm:=(\pm \vec x)\vee \vec0\in[0,+\infty)^n$.

In summary, we can embed $\power(V)^k$ into $\power_2(V)^k$, and embed $\power_2(V)^k$ into $\power(V\sqcup V')^k$. The multiple integral extension  agrees with the piecewise multilinear extension on the first quadrant, and their relations can be reduced to the correspondences  between the original Lov\'asz extension and the disjoint-pair version. 
\end{remark}  

\begin{theorem}[Section \ref{sec:LS}]\label{thm:Cheeger-type}
Under the notions in Section \ref{sec:extension}, for 
$f,g:\tilde{P}_1(V)\to\R_+$,  we have
\begin{equation}\label{eq:minmax-union}
\min_{\{A^j\}\in \tilde{P}_{m}(V)}\max_{A\in \Sigma\{A^j\}} \frac{f(A)}{g(A)} \ge \inf_{\gen(X)\ge m}\sup\limits_{\vec x\in X} \frac{f^M(\vec x)}{g^M(\vec x)} \ge \max_{\{A^j\}\in \tilde{P}_{n+1-m}(V)}\min_{A\in \Sigma\{A^j\}} \frac{f(A)}{g(A)}.
\end{equation}
If we further assume that  $f$ is  submodular and symmetric as well as $g$ is   supermodular and symmetric,  then 
$$\min_{\{A^j\}\in \tilde{P}_{m}(V)}\max_{i=1,\cdots,m} \frac{f(A^i)}{g(A^i)} \ge \inf_{\gen(X)\ge m}\sup\limits_{\vec x\in X} \frac{f^L(\vec x)}{g^L(\vec x)}:=\lambda_m\ge \min_{\{A^j\}\in \tilde{P}_{k_m}(V)}\max_{i=1,\cdots,m} \frac{f(A^i)}{g(A^i)}$$
where $k_m$ is the largest  number of  nodal domains of  eigenvectors w.r.t. the $m$-th min-max  eigenvalue $\lambda_m$ of the function pair $(f^L,g^L)$, and  $f^L$ represents the disjoint-pair  Lov\'asz extension of $f$. 
\end{theorem}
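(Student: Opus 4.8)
The plan is to prove the two-sided bound \eqref{eq:minmax-union} first, and then derive the refined estimate under the submodularity/symmetry hypotheses as a corollary. For the first chain of inequalities, the key observation is that the multiple integral extension $f^M$ is homogeneous (of degree equal to the number of integrations, hence the ratio $f^M/g^M$ is $0$-homogeneous), and that the ``layer-cake'' structure of $f^M$ lets us control $f^M(\vec x)/g^M(\vec x)$ by the discrete ratios $f(A)/g(A)$ evaluated on the super-level sets of $\vec x$. Concretely, for the \emph{upper} bound I would start from an arbitrary admissible family $\{A^j\}\in\tilde{P}_m(V)$, build from it a symmetric set $X$ in the space of directions (e.g. by taking the set of normalized combinations $\sum_j t_j\vec 1_{A^j}$ with appropriate sign choices, or the image of an $(m-1)$-sphere under the natural map), show $\gen(X)\ge m$ using the standard properties of the Krasnoselskii genus (the map is odd and continuous), and then bound $\sup_{\vec x\in X}f^M(\vec x)/g^M(\vec x)$ by $\max_{A\in\Sigma\{A^j\}}f(A)/g(A)$ using the integral representation: each $\vec x\in X$ has all its super-level sets inside $\Sigma\{A^j\}$, so $f^M(\vec x)$ is an average of values $f(\text{level set})$ and likewise for $g^M$, whence the ratio is squeezed between the min and max of the discrete ratios over $\Sigma\{A^j\}$. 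The \emph{lower} bound is the mirror image: given any $X$ with $\gen(X)\ge m$, one uses a Borsuk--Ulam / intersection argument to produce, for every candidate family $\{A^j\}\in\tilde{P}_{n+1-m}(V)$, a point $\vec x\in X$ whose level sets all lie in $\Sigma\{A^j\}$, and then applies the same integral squeezing in the reverse direction. This is the step I expect to be the main obstacle: matching the genus parameter $m$ against the combinatorial parameter $n+1-m$ requires a careful dimension count, and the intersection argument has to be set up so that the relevant subsets of the sphere (or of $X$) are forced to meet — this is where the precise definitions of $\tilde P_m(V)$ and $\Sigma\{A^j\}$ from Section \ref{sec:extension}, which I am taking as given, must be used with care.

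For the second, sharper statement, the extra hypotheses (submodular and symmetric $f$, supermodular and symmetric $g$) are exactly what is needed to invoke the earlier correspondence between the disjoint-pair Lov\'asz extension and the discrete functional: by the results quoted from \cite{JostZhang-PL} (submodularity $\Leftrightarrow$ convexity via Lov\'asz extension, and the analogous fact that the min-max over $\Sigma\{A^j\}$ collapses to $\max_{i\le m} f(A^i)/g(A^i)$ when $f$ is submodular and $g$ supermodular), the general bound \eqref{eq:minmax-union} specializes, replacing $\sup_{\vec x\in X} f^M/g^M$ by $\sup_{\vec x\in X} f^L/g^L$ and replacing the $\Sigma$-maxima by the simpler $\max_{i=1,\dots,m}$ expressions. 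The upper bound then reads $\min_{\{A^j\}\in\tilde P_m(V)}\max_{i\le m} f(A^i)/g(A^i)\ge\lambda_m$ directly. The lower bound requires one more input: the crude bound would give the parameter $n+1-m$, but the sharper value $k_m$ (the largest number of nodal domains among eigenvectors for $\lambda_m$) comes from feeding an actual eigenvector $\vec x$ attaining $\lambda_m$ into the argument — its nodal domains furnish a family in $\tilde P_{k_m}(V)$ on which the discrete ratio is controlled by $\lambda_m$, via the nodal domain theory for the pair $(f^L,g^L)$ developed earlier. So the plan for this half is: (i) take an optimal eigenvector, (ii) decompose it into its $k_m$ nodal domains, (iii) check that restricting $f^L/g^L$ to each nodal domain reproduces the discrete ratio $f(A^i)/g(A^i)$ (this uses additivity of the Lov\'asz extension across nodal pieces, again from submodularity/symmetry), and (iv) conclude $\lambda_m\ge\min_{\{A^j\}\in\tilde P_{k_m}(V)}\max_{i\le m} f(A^i)/g(A^i)$. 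The delicate point here is step (iii): one must verify that the eigenvector's behavior on a single nodal domain, when extended by zero, still realizes the extension of the discrete functional on that domain, which relies on the precise form of the disjoint-pair Lov\'asz extension and on the eigenvector being constant-sign on each nodal piece.
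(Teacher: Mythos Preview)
Your plan for \eqref{eq:minmax-union} and for the upper bound in the second display is essentially the paper's: build the linear span $X$ of the indicators $\vec 1_{A^j}$, observe $\gen(X)=\dim X\ge m$, and use the layer-cake structure (the paper invokes Theorem~\ref{thm:optimal-identity-fg}) to squeeze $f^M(\vec x)/g^M(\vec x)$ between the extremes of $f/g$ over $\Sigma\{A^j\}$; for the lower half one intersects an arbitrary $X\in\Gamma_m$ with the $(n+1-m)$-dimensional span $X'$ of the other family and argues the same way. Your reduction of the second display's upper bound to \eqref{eq:minmax-union} by collapsing $\max_{A\in\Sigma\{A^j\}}$ to $\max_i f(A^i)/g(A^i)$ via sub/super-additivity and symmetry is a valid variant of the paper's more direct estimate $f^L(\sum_j t_j\vec 1_{A^j})\le\sum_j f^L(t_j\vec 1_{A^j})$.

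The gap is in step (iii) of the lower bound for the second display. The claim that ``restricting $f^L/g^L$ to a nodal domain reproduces $f(A^i)/g(A^i)$'' is false in general: $\vec x|_{A^i}$ need not be a scalar multiple of $\vec 1_{A^i}$, so $f^L(\vec x|_{A^i})$ involves several level sets inside $A^i$, and neither additivity across nodal pieces nor constant sign on $A^i$ forces the ratio to equal $f(A^i)/g(A^i)$. The paper's route is different and uses the piecewise-linear structure of $f^L,g^L$: it introduces a pre-order $\vec a\prec\vec b$ meaning that the open cone $\triangle(\vec a)$ (on which $f^L,g^L$ are jointly linear) is contained in $\overline{\triangle(\vec b)}$, so that $\nabla f^L(\vec a)\supset\nabla f^L(\vec b)$ and likewise for $g^L$. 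Since $\vec 1_{A^i}\in\overline{\triangle(\vec x|_{A^i})}$ one gets $\vec 1_{A^i}\prec\vec x|_{A^i}$, and because $(\lambda_m,\vec x|_{A^i})$ is an eigenpair (by the nodal-domain definition) the inclusion of subgradients forces $(\lambda_m,\vec 1_{A^i})$ to be an eigenpair too, whence $f(A^i)/g(A^i)=\lambda_m$ exactly. This eigenpair-transfer argument, not an identity between $f^L(\vec x|_{A^i})/g^L(\vec x|_{A^i})$ and $f(A^i)/g(A^i)$, is the missing ingredient.
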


This is a general version of higher-order Cheeger-type inequalities for the couple of $f$ and $g$. 
And also, taking  $m\in\{1,n\}$  in \eqref{eq:minmax-union}, we get Theorem B in  \cite{JostZhang-PL}.

\vspace{0.16cm}

\textbf{Connections with  combinatorial optimization}

The piecewise multilinear extension also shows a way to get an equivalence between  discrete and continuous optimizations, which enhances   the corresponding  results in \cite{JostZhang-PL}. Basically, in Section \ref{sec:extension}, we introduce the perfect domain pair $(\A,\D)$ w.r.t. a given  homogeneous extension, denoted by `$\sim$' and defined by the property that 
$$\sup\limits_{A\in\A}\frac{f(A)}{g(A)}=\sup\limits_{\vec x\in\D}\frac{\widetilde{f}(\vec x)}{\widetilde{g}(\vec x)}\;\;\text{ and }\;\; \inf\limits_{A\in\A}\frac{f(A)}{g(A)}=\inf\limits_{\vec x\in\D}\frac{\widetilde{f}(\vec x)}{\widetilde{g}(\vec x)}$$
hold for all suitable functions $f,g$ and their extensions $\widetilde{f}, \widetilde{g}$ satisfying suitable properties.  This is our main idea  to realize a continuous reformulation of a discrete  optimization. 
Both the piecewise multilinear  extension and the multiple integral extension are investigated systematically along this direction. 
 For example, we can get a new continuous representation of the maxcut problem on graphs:   $$\max\limits_{S\subset V}|\partial S|=\max\limits_{x,y\in\R^n_{\ge0},x^\top  y=0}\frac{\sum_{i,j=1}^nw_{ij}x_iy_j}{\|\vec x\|_\infty\|\vec y\|_\infty},$$
where $(w_{ij})$ is the weighted adjacency matrix of the graph.  
We also have a new equivalent optimization of the dual Cheeger constant: 
$$\max\limits_{S\cup T\subset V,S\cap T=\varnothing }\frac{\# E(S,T)}{\vol(S)+\vol(T)}=\max\limits_{x,y\in\R^n_{\ge0},x^\top  y=0}\frac{\sum_{i,j=1}^nw_{ij}x_iy_j}{\|\vec x\|_\infty\sum_{i\in V}\deg_iy_i+\|\vec y\|_\infty\sum_{i\in V}\deg_ix_i}.$$

More interestingly, we obtain a  more general equality with the help of 
 log-concave polynomials \cite{AOV18,BH20-Lorentzian}: \begin{pro}\label{pro:log-concave-optimal}
For a log-concave  polynomial $P$ of degree $d$ in $n$ variables,  and for $f_1,\cdots,f_n:\A\to[0,+\infty)$, we have
$$\min_{A\in\A} \frac{P(f_1(A),\cdots,f_n(A))}{(f_1(A)+\cdots+f_n(A))^d}=\inf\limits_{\vec x\in\D}\frac{P(f_1^M(\vec x),\cdots,f_n^M(\vec x))}{(f_1^M(\vec x)+\cdots+f_n^M(\vec x))^d}$$
where $(\A,\D)$ forms a perfect  domain pair.
\end{pro}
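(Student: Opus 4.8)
The plan is to reduce Proposition~\ref{pro:log-concave-optimal} to the defining property of a perfect domain pair together with a single pointwise inequality coming from log-concavity. First I would fix notation: write $F_i(A)=f_i(A)$ and $F_i^M(\vec x)=f_i^M(\vec x)$ for the piecewise multilinear extensions, and set $S(A)=\sum_i f_i(A)$, $S^M(\vec x)=\sum_i f_i^M(\vec x)=(\sum_i f_i)^M(\vec x)$ (the last equality because the piecewise multilinear extension is linear in the discrete function). Because $P$ is log-concave and homogeneous of degree $d$ with nonnegative coefficients, the function $\Phi(\vec t)=P(\vec t)/(t_1+\cdots+t_n)^d$ is defined on $[0,\infty)^n\setminus\{\vec 0\}$, is $0$-homogeneous, and — this is the key analytic input — is \emph{quasi-concave} on the positive orthant (log-concavity of $P$ and concavity of $t\mapsto t^{1/d}$ give that $P^{1/d}$ is concave and positively homogeneous of degree $1$, hence $\Phi=(P^{1/d}/\sum t_i)^d$ is a monotone function of a ratio of a concave homogeneous function to a linear one). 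Thus on any line segment in the orthant $\Phi$ attains its minimum at an endpoint.

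The second step is to recall precisely what the perfect domain pair $(\A,\D)$ provides. By definition, for all admissible scalar-valued $h,g$ with their extensions $\widetilde h,\widetilde g$ one has $\inf_{A\in\A} h(A)/g(A)=\inf_{\vec x\in\D}\widetilde h(\vec x)/\widetilde g(\vec x)$. I would apply this with $g=S=\sum_i f_i$ (whose extension is $S^M$) and, crucially, exploit that every point of $\D$ is, up to the relevant normalization, a convex combination / limit of the "vertex" directions $\vec 1_A$ with $A\in\A$ that witness the sup/inf. Concretely, the structure underlying a perfect domain pair is that for each $\vec x\in\D$ the vector $(F_1^M(\vec x),\dots,F_n^M(\vec x))/S^M(\vec x)$ lies in the convex hull of the vectors $(f_1(A),\dots,f_n(A))/S(A)$ for $A$ ranging over the sets appearing in the layer-cake (superlevel-set) decomposition of $\vec x$ — this is exactly the content of the identity $f_i^M(\vec x)=\int f_i(V^{t}(\vec x))\,dt$ type formula, rewritten as a weighted average with weights summing to $S^M(\vec x)$. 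I would state and prove this "barycenter lemma" first: $\big(F_1^M(\vec x),\dots,F_n^M(\vec x)\big)=\sum_k c_k\big(f_1(A_k),\dots,f_n(A_k)\big)$ with $c_k\ge 0$, $A_k\in\A$, and $\sum_k c_k S(A_k)=S^M(\vec x)$.

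With the barycenter lemma in hand the proof is short. For the inequality $\inf_\D\ge\min_\A$: given $\vec x\in\D$, write the normalized extension vector as a convex combination $\sum_k \mu_k v_k$ of the normalized discrete vectors $v_k=(f_i(A_k))_i/S(A_k)$ with $\mu_k=c_k S(A_k)/S^M(\vec x)\ge 0$, $\sum_k\mu_k=1$; then by quasi-concavity of $\Phi$,
\[
\frac{P(F_1^M(\vec x),\dots,F_n^M(\vec x))}{S^M(\vec x)^d}=\Phi\!\left(\sum_k\mu_k v_k\right)\ge \min_k \Phi(v_k)=\min_k\frac{P(f_1(A_k),\dots,f_n(A_k))}{S(A_k)^d}\ge \min_{A\in\A}\frac{P(f_1(A),\dots,f_n(A))}{S(A)^d}.
\]
Taking the infimum over $\vec x\in\D$ gives one direction. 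For the reverse inequality $\inf_\D\le\min_\A$, I would simply note that the discrete points are realized inside $\D$: for $A\in\A$ the indicator $\vec 1_A$ (or whatever distinguished direction the perfect domain pair assigns to $A$) lies in $\D$ and satisfies $f_i^M(\vec 1_A)=f_i(A)$ for every $i$, hence the continuous ratio at $\vec 1_A$ equals the discrete ratio at $A$; taking infimum over $A$ shows $\inf_\D\le\inf_\A$. Combining the two inequalities yields equality. I expect the main obstacle to be making the barycenter lemma fully rigorous in the generality of an \emph{arbitrary} perfect domain pair rather than just $(\power(V)\setminus\{\varnothing\},\R^n_{\ge0}\setminus\{\vec 0\})$: one must check that the superlevel sets occurring in the layer-cake expansion of every $\vec x\in\D$ actually belong to $\A$ and that the weight-normalization is compatible with the homogeneities of $f_i$ and $g=S$ — i.e. that the defining equalities of the perfect domain pair are being applied to legitimate function pairs. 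The quasi-concavity of $\Phi$ is standard once log-concavity of $P$ is invoked, and the degree-$d$ homogeneity makes all the normalizations scale correctly, so those parts should be routine.
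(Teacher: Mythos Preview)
Your proposal is correct and follows essentially the same route as the paper. The paper observes that log-concavity of the $d$-homogeneous polynomial $P$ makes $P^{1/d}$ concave, hence $H(\vec t)=P(\vec t)/(\sum_i t_i)^d$ is zero-homogeneous and quasi-concave, and then invokes a general lemma (Theorem~\ref{thm:tilde-H-f-M}, whose proof is deferred to the companion paper) stating that for any such $H$ and any perfect domain pair $(\A,\D)$ one has $\min_{A\in\A} H(f_1(A),\dots,f_n(A))=\inf_{\vec x\in\D} H(f_1^M(\vec x),\dots,f_n^M(\vec x))$; your ``barycenter lemma'' plus the quasi-concavity step is exactly what that general lemma unpacks to, and your concern about the superlevel sets landing in $\A$ is precisely the content of the defining condition $\A=\A(\D)$ for a perfect domain pair.
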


In addition,  based on the equivalence and the extension approach, we can obtain some useful continuous relaxations like Theorem  \ref{thm:Turan-general} below. 

\vspace{0.16cm}

\textbf{Connections with the Tur\'an problem  and spectral graph theory }

For $f^M:(\R^{n})^k\to\R$, define $f^M_\triangle:\R^n\to\R$ by
$f^M_\triangle(\vec x):=f^M(\vec x,\cdots,\vec x)$, $\forall \vec x\in\R^n$.  
\begin{theorem}\label{thm:Turan-general}
 Given  $f:\power(V)^k\to \R$ and $g:\power(V)^k\to \R_+$,   as well as  their piecewise multilinear  extensions  $f^M$ and $g^M$, denote by  $f_{\triangle}(A)=f(A,\cdots,A)$ and $g_{\triangle}(A)=g(A,\cdots,A)$. Then
 \begin{align*}
\max\limits_{A\subset V}\frac{f_\triangle(A)}{g_\triangle(A)}&\le \max\limits_{\vec x\in \R^n_{\ge0}} \frac{f^M_\triangle(\vec x)}{g^M_\triangle(\vec x)}\le \max\limits_{\text{chain }\{A_1, A_2,\cdots, A_k\}}\frac{f(A_1,\cdots,A_k)}{g(A_1,\cdots,A_k)}\\&=
 \max\limits_{\text{comonotonic }\vec x^1,\cdots,\vec x^k\in\R^n_{\ge0}}
 \frac{f^M(\vec x^1,\cdots,\vec x^k)}{g^M(\vec x^1,\cdots,\vec x^k)}
 \end{align*}
where the chain is in the sense of inclusion, and the  vectors $\vec x$ and $\vec y$ are comonotonic if $(x_i-x_j)(y_i-y_j)\ge0$, $i,j\in V$. 
All  `$\le$' become  `$\ge$' if we change all `$\max$' to `$\min$'. 
\end{theorem}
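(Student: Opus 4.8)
The plan is to establish the chain of inequalities from the outside in, working separately on the three comparisons and then noting that the sign reversal for $\min$ is automatic by applying the $\max$-version to $-f$.

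First I would handle the rightmost equality
$\max_{\text{chain}}\frac{f(A_1,\dots,A_k)}{g(A_1,\dots,A_k)}=\max_{\text{comonotonic}}\frac{f^M(\vec x^1,\dots,\vec x^k)}{g^M(\vec x^1,\dots,\vec x^k)}$.
This is where I expect the conceptual heart of the argument to lie, and it should follow from the perfect-domain-pair machinery of Section \ref{sec:extension}: the key observation is that if $\vec x^1,\dots,\vec x^k$ are comonotonic and nonnegative, then there is a \emph{common} permutation $\sigma$ simultaneously sorting all of them into non-decreasing order, so in the defining sum for $f^M$ the only surviving terms $f(V^{(i_1)}(\vec x^1),\dots,V^{(i_k)}(\vec x^k))$ involve super-level sets that are nested along $\sigma$, i.e. form an inclusion chain; conversely every chain $\{A_1,\dots,A_k\}$ is realized by comonotonic indicator-type vectors. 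Thus the map ``comonotonic vectors $\leftrightarrow$ chains'' is exactly a perfect domain pair for the piecewise multilinear extension, and the stated equality is an instance of the perfect-domain-pair identity $\sup_{A\in\A}f/g=\sup_{\vec x\in\D}\widetilde f/\widetilde g$. The main obstacle here is bookkeeping the degenerate cases (ties among coordinates, coordinates equal to $0$, the $V^{(0)}=V$ convention) so that the bijection between surviving simplices and chains is clean; I would phrase it via a barycentric-type subdivision of $\R^n_{\ge0}$ into comonotonicity cones and check that $f^M$ is affine-multilinear on each cell, reducing everything to evaluation at the extreme rays, which are precisely (scalar multiples of) indicator vectors of chains.

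Next I would prove the middle inequality $\max_{\vec x\in\R^n_{\ge0}}\frac{f^M_\triangle(\vec x)}{g^M_\triangle(\vec x)}\le\max_{\text{comonotonic}}\frac{f^M(\vec x^1,\dots,\vec x^k)}{g^M(\vec x^1,\dots,\vec x^k)}$. This is immediate: setting $\vec x^1=\dots=\vec x^k=\vec x$ produces a $k$-tuple that is trivially comonotonic, and by definition $f^M_\triangle(\vec x)=f^M(\vec x,\dots,\vec x)$, $g^M_\triangle(\vec x)=g^M(\vec x,\dots,\vec x)$, so the left-hand supremum is taken over a subset of the configurations appearing on the right. (One should note $g^M_\triangle>0$ on the relevant domain, which follows from $g>0$ together with nonnegativity of the multilinear weights.)

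Finally, for the leftmost inequality $\max_{A\subset V}\frac{f_\triangle(A)}{g_\triangle(A)}\le\max_{\vec x\in\R^n_{\ge0}}\frac{f^M_\triangle(\vec x)}{g^M_\triangle(\vec x)}$, I would simply evaluate the right-hand side at $\vec x=\vec 1_A$: since $\vec 1_A$ is comonotonic with itself and each super-level set in the multilinear expansion collapses to either $A$ or $V$, but with the $(x_{(i_l+1)}-x_{(i_l)})$ weights only the single term with all arguments equal to $A$ survives with coefficient $1$ (using $x_{(0)}=0$), one gets $f^M_\triangle(\vec 1_A)=f(A,\dots,A)=f_\triangle(A)$ and likewise for $g$. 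Hence each discrete ratio $f_\triangle(A)/g_\triangle(A)$ is attained as a value of $f^M_\triangle/g^M_\triangle$, giving the inequality. Combining the three steps yields the full chain, and replacing $f$ by $-f$ (using that all extensions and $f_\triangle,g_\triangle$ are linear in $f$, while $g,g^M,g^M_\triangle$ stay fixed and positive) turns every $\max$ into $\min$ and reverses every $\le$, completing the proof.
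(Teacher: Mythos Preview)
Your proposal is correct and follows essentially the same route as the paper: the rightmost equality is obtained exactly as the paper does, by checking that (inclusion chains, pairwise comonotonic tuples) is a perfect domain pair and then invoking Theorem~\ref{thm:optimal-identity-fg}; the leftmost inequality is just evaluation at $\vec 1_A$, which is also how the paper encodes it (via the condition $\{\vec 1_A:A\in\A'\}\subset\D'$). The only cosmetic difference is in the middle step: you argue directly that the diagonal $\{(\vec x,\dots,\vec x)\}$ sits inside the comonotonic cone and then pull back through the already-proved equality, whereas the paper applies the two-sided inequality \eqref{eq:AD-pair-inequality} of Theorem~\ref{thm:optimal-identity-fg} to the non-perfect pair $(\A',\D')$ with $\widetilde{\A}=\A$ to get the bound against the discrete chain maximum in one shot; these are the same observation packaged two ways.
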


It is also a generalization of Theorem A in  \cite{JostZhang-PL} by taking $k=1$. In addition, it shows a way to rediscover the   Motzkin-Straus theorem and the Lagrangian method on  Tur\'an's problem (see Section \ref{sec:Turan}). Importantly, the identity in Theorem \ref{thm:Turan-general} indicates that  the comonotonicity on  vectors/functions can be regarded as an extension of the  inclusion relation.  
Roughly speaking, $$(\{\text{inclusion chains}\},\{\text{pairwise comonotonic vectors/functions}\})\text{ is a perfect domain pair.}$$

Next we give an example for the application of the above results to tensors.  
\begin{example}\label{ex:tensor}
An order-$k$  $n$-dimensional tensor  $(c_{i_1,\cdots,i_k})$ is a set of $n^k$ entries. It is nonnegative if $c_{i_1,\cdots,i_k}\ge 0$, and it is symmetric if $c_{i_1,\cdots,i_k}=c_{\sigma(i_1),\cdots,\sigma(i_k)}$ for any permutation $\sigma\in S_k$. Now we define a  function $f:\mathcal{P}^k(V)\to \R$ by $f(V_1,\cdots,V_k)=\sum_{i_1\in V_1,\cdots,i_k\in V_k}c_{i_1,\cdots,i_k}$ for any $V_1,\cdots,V_k\subset V$. Then $f^M_\triangle(\vec x)=\sum_{i_1,\cdots,i_k\in V}c_{i_1,\cdots,i_k}x_{i_1}\cdots x_{i_k}$. The tensor  $(c_{i_1,\cdots,i_k})$ is positive definite if 
$f^M_\triangle(\vec x)>0$ whenever $\vec x\ne\vec 0$. Now, let  $(c_{i_1,\cdots,i_k})$ be a symmetric tensor and  $(d_{i_1,\cdots,i_k})$ be a  symmetric and  positive definite tensor. Then all the classical results  on H-eigenvalues of tensors (see Qi \cite{Qi05}, Lim \cite{Lim05}, and Chang et al \cite{CPZ08,CPZ09}) can be obtained  directly  by our {\sl spectral extension theory}. 
Moreover, we  get some new relations on eigenvalues of symmetric tensors (see Proposition \ref{pro:inde-nodal-tensor} and  Theorem \ref{thm:generalized-inertia-hyper-Huang} 
for details), 
and we also apply Theorem \ref{thm:Turan-general}
 to the Turan problem (see Section \ref{sec:Turan}).
\end{example}

\vspace{0.16cm}

\textbf{Connections with inertia bounds }

The inertia bound for independence numbers is a basic result in algebraic graph theory \cite{GR01},  which appeared first in Cvetkovic's PhD thesis \cite{Cvetkovic71}. Its stronger variants have  been used to give a proof of the Sensitivity Conjecture \cite{Huang19}. We find that nodal domain theorems and inertia bounds for independence numbers   can be  absorbed into the following result. 
 Indeed, they are essentially the estimates of the size of the eigenspace of the function pair  $(f^M_\triangle,g^M_\triangle)$ which relate to the distribution of the  eigenvalues (see Section  \ref{sec:spectrum} for related concepts). 

\begin{defn}[independence number]\label{def:indep-fg}
The $\lambda$-level  independence number of the function pair  $(f,g)$ is  
$\alpha_\lambda:=\max\{\# A:f^M_\triangle(\vec x)/g^M_\triangle(\vec x)=\lambda,\forall \vec x \text{ satisfying } \mathrm{supp}(\vec x)\subset A\}$. For $\lambda=0$, the definition is independent of $g$, and then we denote the independence number of $f$ as  $\alpha_0:=\max\{\# A:f^M_\triangle(\vec x)=0,\forall \vec x \text{ satisfying } \mathrm{supp}(\vec x)\subset A\}$.
\end{defn}

For example, on a graph $(V,E)$, if we take  $f(A,B)=\#E(A,B)$, then $\alpha_0$ in Definition \ref{def:indep-fg} is the usual  independence number.

\begin{theorem}[Theorem \ref{thm:nodal-inertia-bound}]\label{thm:inertia-nodal} Given  $f,g:\power(V)^k\to\R$, denote by $\lambda_i$ the $i$-th min-max eigenvalue of the function pair  $(f^M_\triangle,g^M_\triangle)$, where we refer to Definitions \ref{def:eigenpair} and \ref{def:minmaxpair} for related concepts. 
Then we have the inertia bound  $$\alpha_\lambda\le \min\{\#\{\lambda_i\le \lambda\},\#\{\lambda_i\ge \lambda\}\}.$$
 
For any eigenvector $\vec x$ w.r.t. the eigenvalue $\lambda_k$ whose multiplicity is $r$, we have the nodal domain inequality (see Section  \ref{sec:inertia-nodal} for the related definitions) 
$$N(\vec x)\le \min\{ k+r-1, n-k+r\}.$$
\end{theorem}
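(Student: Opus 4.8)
The plan is to prove Theorem~\ref{thm:inertia-nodal} by treating both the inertia bound and the nodal domain inequality as consequences of a dimension-counting argument on the min-max eigenvalues of the pair $(f^M_\triangle, g^M_\triangle)$, following the Lusternik--Schnirelmann/Courant-type philosophy that is already available from Theorem~\ref{thm:Cheeger-type} and the spectral machinery of Section~\ref{sec:spectrum}. First I would recall precisely the min-max characterization $\lambda_i = \inf_{\gen(X)\ge i}\sup_{\vec x\in X} f^M_\triangle(\vec x)/g^M_\triangle(\vec x)$ together with the dual (``max-min'') characterization, so that both $\#\{\lambda_i\le\lambda\}$ and $\#\{\lambda_i\ge\lambda\}$ acquire a genus-theoretic meaning. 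The key observation is that for any $A\subset V$ with $\#A = \alpha_\lambda$, the subspace $W_A := \{\vec x : \supp(\vec x)\subset A\}$ on which $f^M_\triangle/g^M_\triangle\equiv\lambda$ is a linear space, hence (after intersecting with the relevant sphere or symmetric domain) has genus exactly $\alpha_\lambda$; since the Krasnoselskii genus of the unit sphere in $W_A$ equals $\dim W_A = \alpha_\lambda$, plugging $W_A$ into the min-max formula for $\lambda_{\alpha_\lambda}$ forces $\lambda_{\alpha_\lambda}\le\lambda$, and plugging it into the max-min formula forces $\lambda_{n+1-\alpha_\lambda}\ge\lambda$. Rearranging these two inequalities gives $\alpha_\lambda\le\#\{i:\lambda_i\le\lambda\}$ and $\alpha_\lambda\le\#\{i:\lambda_i\ge\lambda\}$, which is the inertia bound.

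For the nodal domain inequality I would use the standard interlacing technique from nonlinear spectral theory: given an eigenvector $\vec x$ for $\lambda_k$ of multiplicity $r$, decompose $V$ according to the nodal domains of $\vec x$ (as defined in Section~\ref{sec:inertia-nodal}), and use the nodal-domain pieces to build a test family of large genus lying below $\lambda_k$, giving the upper bound $N(\vec x)\le k+r-1$; symmetrically, one builds a test family of large genus lying above $\lambda_k$ using the complementary count, giving $N(\vec x)\le n-k+r$. Concretely, if $\vec x$ has $N$ nodal domains $D_1,\dots,D_N$, the truncations $\vec x|_{D_1},\dots,\vec x|_{D_N}$ are (after the submodularity/supermodularity-type estimates that underlie Theorem~\ref{thm:Cheeger-type}) subsolutions in the sense that their Rayleigh quotients are bounded by $\lambda_k$, and they span an $N$-dimensional space on which $f^M_\triangle/g^M_\triangle\le\lambda_k$; together with an $(r-1)$-dimensional contribution from the eigenspace orthogonal complement this produces a symmetric set of genus $\ge N+k-1-(r-1)$... forcing $N + k - 1 - (r-1)\le$ (the first index where the eigenvalue exceeds $\lambda_k$), i.e.\ $N\le k+r-1$. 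The dual bound $N\le n-k+r$ comes from running the same argument with the roles of ``$\le\lambda$'' and ``$\ge\lambda$'' exchanged, using the reversed min-max formula.

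The main obstacle I anticipate is the nodal-domain half, specifically verifying that the truncated vectors $\vec x|_{D_j}$ really do satisfy the required Rayleigh-quotient inequality $f^M_\triangle(\vec x|_{D_j})/g^M_\triangle(\vec x|_{D_j})\le\lambda_k$ in the piecewise multilinear (rather than merely Lov\'asz) setting. In the one-homogeneous Lov\'asz case this is a clean consequence of submodularity and the structure of $f^L$; here $f^M_\triangle$ is $k$-homogeneous and genuinely multilinear, so the truncation estimate requires a more careful expansion of $f^M_\triangle$ over the level sets $V^t(\vec x)$ and an argument that restricting the support can only decrease (resp.\ increase) the relevant sums in the right direction --- essentially the monotonicity content packaged into the notion of perfect domain pair and the chain/comonotonicity identity of Theorem~\ref{thm:Turan-general}. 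I would handle this by writing $f^M_\triangle(\vec x) = \int\cdots\int f(V^{t_1}(\vec x),\dots,V^{t_k}(\vec x))\,dt_1\cdots dt_k$ and observing that for $\vec x$ supported on a nodal domain the level sets form an inclusion chain, so the integrand is controlled by the discrete data, after which the eigenvalue equation for $\vec x$ propagates the bound to each truncation. A secondary but routine obstacle is bookkeeping the genus of the unions of linear subspaces (using subadditivity and the dimension formula for genus of spheres) so that the index shifts by exactly $r-1$ and not more; this is where the multiplicity $r$ enters and where care is needed to get the sharp constant rather than a lossy one.
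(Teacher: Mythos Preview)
Your inertia bound argument is correct and matches the paper's: the subspace $W_A=\{\vec x:\supp(\vec x)\subset A\}$ has dimension $\alpha_\lambda$ and lies entirely in the level set $\{f^M_\triangle/g^M_\triangle=\lambda\}$; feeding it into the min-max characterization yields $\lambda_{\alpha_\lambda}\le\lambda$, and an intersection argument (any set of genus $\ge n+1-\alpha_\lambda$ must meet $W_A$) yields $\lambda_{n+1-\alpha_\lambda}\ge\lambda$. The paper packages both directions into a single inequality $\dim_{in}\{F/G=\lambda\}\le\min\{\#\{\lambda_i\le\lambda\},\#\{\lambda_i\ge\lambda\}\}$ (Lemma~\ref{lemma:key-inertia-nodal}), but the content is the same. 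One small caution: your phrase ``plugging it into the max-min formula'' is loose, since the max-min values $\lambda'_k$ need not coincide with $\lambda_{n+1-k}$ in this nonlinear setting; what is actually used is the intersection property of genus against the \emph{same} min-max values.

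For the nodal domain inequality you are solving the wrong problem. Most of your effort goes into what you call the ``main obstacle'': establishing that the truncations $\vec x|_{D_j}$ have Rayleigh quotient at most $\lambda_k$, via submodularity, the integral representation of $f^M_\triangle$, chain monotonicity, and so on. But in the paper's framework this is \emph{definitional}. Definition~\ref{def:nice-nodal-domain} declares $\{U_i\}_{i=1}^N$ to be a family of nodal domains of $\vec x$ for the pair $(F,G)$ precisely when
\[
\frac{F\bigl(\sum_i t_i\,\vec x|_{U_i}\bigr)}{G\bigl(\sum_i t_i\,\vec x|_{U_i}\bigr)}=\frac{F(\vec x)}{G(\vec x)}\qquad\text{for all }t_1,\dots,t_N\in\R,
\]
so the $N(\vec x)$-dimensional span of the $\vec x|_{U_i}$ already lies in the level set $\{F/G=\lambda_k\}$ by fiat. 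No multilinear expansion, submodularity, or perfect-domain-pair machinery is needed, and your ``bookkeeping'' with $r-1$ index shifts is likewise unnecessary. Once this is observed, the very same lemma used for the inertia bound gives
\[
N(\vec x)\le\dim_{in}\{F/G=\lambda_k\}\le\min\{\#\{\lambda_i\le\lambda_k\},\#\{\lambda_i\ge\lambda_k\}\}\le\min\{k+r-1,\;n-k+r\},
\]
the last step being just the observation that if $\lambda_k$ has multiplicity $r$ then at most $k+r-1$ of the $\lambda_i$ are $\le\lambda_k$ and at most $n-k+r$ are $\ge\lambda_k$.

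The difficulty you anticipate --- showing that \emph{classical} nodal domains (connected components of the support) satisfy the equality in Definition~\ref{def:nice-nodal-domain} --- is a genuine one, but it belongs to the applications (Propositions~\ref{pro:inertia-k-uniform} and~\ref{pro:inertia-p-Lap}), not to the abstract Theorem~\ref{thm:inertia-nodal}. For arbitrary $f,g:\power(V)^k\to\R$ it can fail, which is exactly why the paper adopts the abstract definition.
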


Theorem \ref{thm:inertia-nodal} is the first nonlinear version of inertia bounds for the  independence number, and it also shows the first strong nodal domain inequality for general function pairs. 
One can easily apply Theorem \ref{thm:inertia-nodal} to Examples
\ref{ex:tensor} and \ref{ex:spectral-radius} to get an inertia bound on
$k$-uniform hypergraphs  (hypergraphs where each hyperedge contains exactly  $k$
  vertices):
\begin{pro}\label{pro:inertia-k-uniform}
The independence number of a $k$-uniform hypergraph $(V,E)$ is defined as $\alpha=\max\{\#U:U\subset V\text{ s.t. }U\text{ contains no hyperedge}\}$. Let $\lambda_i$ be the $i$-th minimax $H$-eigenvalue of the adjacency tensor of $(V,E)$. Then $\alpha\le\min\{\#\{\lambda_i\le 0\},\#\{\lambda_i\ge 0\}\}$. 

Moreover, for any $H$-eigenvector $\vec x$ w.r.t. $\lambda_i$ whose multiplicity is $r$, the number  of connected components of the support  of $\vec x$ is smaller than or equal to $\min\{ i+r-1, n-i+r\}$. 
\end{pro}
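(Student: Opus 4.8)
The plan is to obtain Proposition~\ref{pro:inertia-k-uniform} as a direct specialization of Theorem~\ref{thm:inertia-nodal} (equivalently Theorem~\ref{thm:nodal-inertia-bound}) to the function pair attached to the adjacency tensor, exactly along the lines indicated in Examples~\ref{ex:tensor} and~\ref{ex:spectral-radius}. First I would fix the data: let $(c_{i_1,\cdots,i_k})$ be the adjacency tensor of the $k$-uniform hypergraph $(V,E)$, so that $c_{i_1,\cdots,i_k}\ge 0$ and $c_{i_1,\cdots,i_k}>0$ precisely when the $i_1,\cdots,i_k$ are distinct and $\{i_1,\cdots,i_k\}\in E$. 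Define $f:\power(V)^k\to\R$ by $f(V_1,\cdots,V_k)=\sum_{i_1\in V_1,\cdots,i_k\in V_k}c_{i_1,\cdots,i_k}$, so that $f^M_\triangle(\vec x)=\sum_{i_1,\cdots,i_k\in V}c_{i_1,\cdots,i_k}x_{i_1}\cdots x_{i_k}$ is the polynomial form of the tensor, and let $g$ be the diagonal (identity) tensor, i.e. $g(V_1,\cdots,V_k)=\#(V_1\cap\cdots\cap V_k)$ and $g^M_\triangle(\vec x)=\sum_{i\in V}x_i^k$. By the computation recorded in Example~\ref{ex:tensor}, the eigenpairs of $(f^M_\triangle,g^M_\triangle)$ in the sense of Definition~\ref{def:eigenpair} are exactly the $H$-eigenpairs of the adjacency tensor, i.e. the solutions of $\sum_{i_2,\cdots,i_k}c_{i,i_2,\cdots,i_k}x_{i_2}\cdots x_{i_k}=\lambda x_i^{k-1}$, and the $i$-th min-max eigenvalue $\lambda_i$ of Definition~\ref{def:minmaxpair} is by definition the $i$-th minimax $H$-eigenvalue appearing in the proposition.

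Next I would identify the two combinatorial quantities occurring in Theorem~\ref{thm:inertia-nodal} with the hypergraph ones, starting with the independence number. If $U\subset V$ contains no hyperedge, then for every $\vec x$ with $\supp(\vec x)\subset U$ each monomial $c_{i_1,\cdots,i_k}x_{i_1}\cdots x_{i_k}$ with $i_1,\cdots,i_k\in U$ vanishes: a nonzero coefficient forces $\{i_1,\cdots,i_k\}\in E\cap\binom{U}{k}$, which is empty. Hence $f^M_\triangle(\vec x)=0$, so $U$ is admissible for the maximum defining $\alpha_0$ in Definition~\ref{def:indep-fg}; conversely, if $A$ contains a hyperedge $e$, then taking $\vec x=\vec 1_e$ we get $f^M_\triangle(\vec 1_e)=\sum_{i_1,\cdots,i_k\in e}c_{i_1,\cdots,i_k}>0$ since the $k!$ permutations of $e$ already contribute a positive amount and all remaining monomials are nonnegative. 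Therefore $\alpha_0=\alpha$, the hypergraph independence number. Applying Theorem~\ref{thm:inertia-nodal} with $\lambda=0$ then gives $\alpha=\alpha_0\le\min\{\#\{\lambda_i\le 0\},\#\{\lambda_i\ge 0\}\}$, which is the first assertion.

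For the nodal domain bound I would argue that, because all entries of the adjacency tensor are nonnegative, the sign (hyper-)graph underlying $f^M_\triangle$ that enters the definition of $N(\vec x)$ in Section~\ref{sec:inertia-nodal} carries no negative interactions, so for an $H$-eigenvector $\vec x$ the number $N(\vec x)$ of nodal domains coincides with the number of connected components of the sub-hypergraph of $(V,E)$ induced on $\supp(\vec x)$. Feeding into the inequality $N(\vec x)\le\min\{i+r-1,n-i+r\}$ of Theorem~\ref{thm:inertia-nodal} the eigenvector $\vec x$ associated with $\lambda_i$ of multiplicity $r$ yields the second assertion.

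The genuinely routine parts are these two translations; the only points requiring care are: (i) checking that Definition~\ref{def:eigenpair} for $(f^M_\triangle,g^M_\triangle)$ reproduces the classical $H$-eigenvalue equation (this is the content of Example~\ref{ex:tensor} and of Qi/Lim/Chang--Pearson--Zhang), and (ii) verifying that the abstract count $N(\vec x)$ degenerates to ``number of connected components of $\supp(\vec x)$'' in this purely nonnegative situation. I expect the main obstacle to be bookkeeping around homogeneity and the domain: one must make sure the min-max values $\lambda_i$ used in Theorem~\ref{thm:inertia-nodal} are set up over the same cone (and with the same parity conventions on $k$) on which $H$-eigenvectors of a nonnegative tensor are considered, so that counts such as $\#\{\lambda_i\le 0\}$ match the minimax $H$-spectrum literally; this should already be pinned down by the conventions fixed in Section~\ref{sec:spectrum}, so no new argument is needed.
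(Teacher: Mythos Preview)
Your proposal is correct and follows essentially the same route as the paper: both apply Theorem~\ref{thm:nodal-inertia-bound} to the pair $(F,G)=(\langle A\vec x^{k-1},\vec x\rangle,\langle D\vec x^{k-1},\vec x\rangle)$, obtain $\alpha\le\alpha_0$ from the vanishing of $F$ on vectors supported in an independent set, and then identify connected components of $\supp(\vec x)$ with nodal domains. The one place the paper is more concrete is your step~(ii): instead of appealing to a ``sign hypergraph with no negative interactions'', it verifies Definition~\ref{def:nice-nodal-domain} directly by noting that no hyperedge meets two connected components $U_j$ of $\supp(\vec x)$, so $\partial_v F(\vec x|_{U_j})=\partial_v F(\vec x)$ for $v\in U_j$ (and likewise for $G$, as $D$ is diagonal); the eigenvalue equation plus Euler's identity then give $F(\vec x|_{U_j})=\lambda_i G(\vec x|_{U_j})$, and additivity across components yields $F(\sum_j t_j\vec x|_{U_j})=\lambda_i G(\sum_j t_j\vec x|_{U_j})$ for all $t_j$, which is exactly the equality in Definition~\ref{def:nice-nodal-domain}.
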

The  definition of H-eigenvalue and the proofs of  Propositions \ref{pro:inertia-k-uniform} and \ref{pro:inertia-p-Lap} are given in Sections  \ref{sec:tensor} and \ref{sec:p-Lap}, respectively. 
By Theorem \ref{thm:nodal-inertia-bound} (a slight variant of Theorem \ref{thm:inertia-nodal}), we have\footnote{A generalized version of Proposition \ref{pro:inertia-p-Lap} in the setting of oriented hypergraphs  is presented in  Theorem \ref{thm:p-Lap-Cheeger}.} 
\begin{pro}\label{pro:inertia-p-Lap}
For a  graph, we have the inertia bound $\alpha\le\min\{\#\{\lambda_i(\Delta_p)\le 1\},\#\{\lambda_i(\Delta_p)\ge 1\}\} $, where $\lambda_i(\Delta_p)$ is the $i$-th minimax eigenvalue of the normalized graph $p$-Laplacian.

Besides, for any eigenvector $\vec x$ w.r.t. $\lambda_i(\Delta_p)$ whose multiplicity is $r$, the number  of connected components of the support  of $\vec x$ is smaller than or equal to $\min\{ i+r-1, n-i+r\}$.
\end{pro}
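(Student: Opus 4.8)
\textbf{Proof proposal for Proposition \ref{pro:inertia-p-Lap}.}

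The plan is to recognize the normalized graph $p$-Laplacian eigenvalue problem as an instance of the function-pair spectral theory and then invoke Theorem \ref{thm:inertia-nodal} (more precisely its slight variant Theorem \ref{thm:nodal-inertia-bound}). Concretely, for a graph $(V,E)$ with weights $w_{ij}$ and degrees $\deg_i$, the normalized $p$-Laplacian eigenvalue problem has Rayleigh quotient $R_p(\vec x) = \frac{\sum_{\{i,j\}\in E} w_{ij}|x_i-x_j|^p}{\sum_{i\in V}\deg_i|x_i|^p}$. So I would first write $R_p = F/G$ with $F$ the numerator and $G$ the denominator, and exhibit $F$ and $G$ as the piecewise multilinear extensions $f^M_\triangle$ and $g^M_\triangle$ (up to the usual absolute-value/disjoint-pair bookkeeping described in the Remark after Theorem \ref{thm:Cheeger-type}) of suitable discrete set functions $f,g$ on $\power(V)^k$. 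For instance $f(A,B)$ built from the cut function and $g$ built from the volume, chosen so that $f_\triangle$ and $g_\triangle$ reproduce $|x_i-x_j|^p$ and $\deg_i|x_i|^p$ after taking the diagonal; the exponent $p$ is absorbed by composing with the map $t\mapsto t^p$, which is exactly the kind of operation the extension framework is built to handle. Then $\lambda_i(\Delta_p)$ is by definition the $i$-th min-max eigenvalue $\lambda_i$ of the pair $(f^M_\triangle, g^M_\triangle)$ in the sense of Definitions \ref{def:eigenpair} and \ref{def:minmaxpair}.

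Next I would identify the relevant level set. The key observation is that $R_p(\vec x)=1$ precisely when the numerator equals the denominator, and for a vector $\vec x$ supported on an independent set $U$ (i.e. $U$ contains no edge), every term $|x_i-x_j|^p$ with $\{i,j\}\in E$ either vanishes or has exactly one endpoint in $U$. A short computation shows that for $\supp(\vec x)\subset U$ with $U$ independent, $\sum_{\{i,j\}\in E} w_{ij}|x_i-x_j|^p = \sum_{i\in U}\deg_i|x_i|^p$, hence $R_p(\vec x)=1$ on all such vectors. This says exactly that the independence number $\alpha$ of the graph is $\le$ the $\lambda$-level independence number $\alpha_\lambda$ of the pair $(f^M_\triangle,g^M_\triangle)$ at $\lambda=1$ (in fact equal, but one inequality suffices). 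Plugging $\lambda=1$ and this bound $\alpha\le\alpha_1$ into the inertia bound $\alpha_\lambda\le\min\{\#\{\lambda_i\le\lambda\},\#\{\lambda_i\ge\lambda\}\}$ of Theorem \ref{thm:inertia-nodal} yields $\alpha\le\min\{\#\{\lambda_i(\Delta_p)\le 1\},\#\{\lambda_i(\Delta_p)\ge 1\}\}$, which is the first assertion.

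For the nodal domain statement, I would apply the second half of Theorem \ref{thm:inertia-nodal}: for any eigenvector $\vec x$ with respect to $\lambda_i(\Delta_p)$ of multiplicity $r$, one has $N(\vec x)\le\min\{i+r-1,\,n-i+r\}$, where $N(\vec x)$ is the relevant nodal-domain count. Here I must match $N(\vec x)$ with the combinatorial quantity in the statement, namely the number of connected components of $\supp(\vec x)$ in the graph; this is the standard interpretation of (weak/strong) nodal domains for the graph $p$-Laplacian, and it follows from unravelling the definition of $N$ in Section \ref{sec:inertia-nodal} together with the fact that the relevant set functions $f,g$ only couple vertices joined by an edge.

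The main obstacle I anticipate is purely bookkeeping: writing down discrete $f,g:\power(V)^k\to\R$ whose diagonal piecewise-multilinear extensions $f^M_\triangle,g^M_\triangle$ are literally the $p$-th power numerator and denominator of $R_p$, handling signs via the disjoint-pair (i.e. $\power_2(V)$) formalism of the Remark after Theorem \ref{thm:Cheeger-type} so that $|x_i-x_j|$ and $|x_i|$ appear correctly, and checking that the min-max eigenvalues thereby defined coincide with the classical variational eigenvalues $\lambda_i(\Delta_p)$ of the normalized $p$-Laplacian. Once that dictionary is in place, the proposition is an immediate corollary of Theorem \ref{thm:inertia-nodal} with the single extra input $\alpha\le\alpha_1$ proved by the one-line cut computation above; no genuinely new analysis is needed. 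The generalization to oriented hypergraphs promised in the footnote (Theorem \ref{thm:p-Lap-Cheeger}) should follow the same template, with the cut and volume functions replaced by their oriented-hypergraph analogues.
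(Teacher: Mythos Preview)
Your proposal is correct and follows essentially the same route as the paper: show that $F(\vec x)/G(\vec x)=1$ for every $\vec x$ supported on an independent set (the one-line computation $|x_i-x_j|^p=|x_i|^p+|x_j|^p$ when at most one endpoint lies in $\supp(\vec x)$), deduce $\alpha\le\alpha_1$, and then invoke Theorem \ref{thm:nodal-inertia-bound}; for the nodal-domain half, verify that the connected components of $\supp(\vec x)$ satisfy Definition \ref{def:nice-nodal-domain} and apply the same theorem. The one simplification you are missing is that the ``bookkeeping obstacle'' you flag is a non-issue: Theorem \ref{thm:nodal-inertia-bound} is stated for an arbitrary even $p$-homogeneous Lipschitz pair $(F,G)$, so you may take $F(\vec x)=\sum_{\{i,j\}\in E}w_{ij}|x_i-x_j|^p$ and $G(\vec x)=\sum_i\deg_i|x_i|^p$ directly, with no need to realize them as $f^M_\triangle,g^M_\triangle$ for discrete $f,g$.
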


\vspace{0.16cm}

\textbf{Connections with a method by Huang }

The following   eigenvalue estimate shows a nonlinear generalization of the first ingredient of Huang's proof for  the Sensitivity Conjecture \cite{Huang19}, and it  can be applied to  adjacency tensors on hypergraphs (the method proposed by Huang  works for the case of matrices, and it  is based on the Cauchy interlacing lemma, but as far as we know, there is no interlacing lemma for tensors).

\begin{theorem}[Theorems \ref{thm:FG-p-homo-signed} and \ref{thm:generalized-inertia-Huang}]\label{thm:fg-Huang}
Given $f,g:\power(V)^k\to[0,+\infty)$,   let $S(f)=\{F:  |F(\vec x)|\le f^M_\triangle(|\vec x|) ,\forall \vec x\in\R^n\}$, where $|\vec x|:=(|x_1|,\cdots,|x_n|)$ for $\vec x=(x_1,\cdots,x_n)$. Then, for any $m=1,\cdots,n$,
$$\min\limits_{U\subset V,\#U=m}\max\limits_{\;\text{ chain }A_1,\cdots,A_k\text{ in } U}\frac{f(A_1,\cdots,A_k)}{g(A_1,\cdots,A_k)}\ge \sup\limits_{F\in S(f)}\max\{\lambda_m(F),-\lambda_m'(F)\},$$
where $\lambda_m(F)$ (resp. $\lambda_m'(F)$) indicates  the $m$-th min-max (resp. max-min)  eigenvalue of the function pair $(F,g^M_\triangle(|\cdot|))$. 
\end{theorem}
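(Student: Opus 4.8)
The plan is to deduce Theorem \ref{thm:fg-Huang} by combining the chain/comonotonicity identity of Theorem \ref{thm:Turan-general} with the min-max characterization of the eigenvalues in Definitions \ref{def:eigenpair} and \ref{def:minmaxpair}, applied on each coordinate subspace. First I would fix $U\subset V$ with $\#U=m$ and restrict everything to the subspace $\R^U:=\{\vec x\in\R^n:\mathrm{supp}(\vec x)\subset U\}$. On this subspace, the variational genus of $\R^U\cap(\text{unit sphere})$ is $m$, so by definition $\lambda_m(F')\le \sup_{\vec x\in\R^U}F'(\vec x)/g^M_\triangle(|\vec x|)$ for every admissible sphere-type set of genus $m$; taking the particular choice of the coordinate sphere in $\R^U$ gives $\lambda_m(F')\le\sup_{\vec x\in\R^U} F'(\vec x)/g^M_\triangle(|\vec x|)$, and symmetrically $-\lambda_m'(F')\le \sup_{\vec x\in\R^U}(-F'(\vec x))/g^M_\triangle(|\vec x|)$. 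Since $F'\in S(f)$ means $|F'(\vec x)|\le f^M_\triangle(|\vec x|)$ pointwise, both right-hand sides are bounded above by $\sup_{\vec x\in\R^U} f^M_\triangle(|\vec x|)/g^M_\triangle(|\vec x|)$, which (replacing $\vec x$ by $|\vec x|$) equals $\sup_{\vec x\in\R^U\cap\R^n_{\ge 0}} f^M_\triangle(\vec x)/g^M_\triangle(\vec x)$.

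The second step is to recognize this last supremum as the middle/right object in Theorem \ref{thm:Turan-general}, but localized to $U$. Running the proof of Theorem \ref{thm:Turan-general} inside the ground set $U$ instead of $V$ — which is legitimate because the piecewise multilinear extension of the restriction $f|_{\power(U)^k}$ is exactly $f^M$ restricted to $\R^U$ — yields
\[
\sup_{\vec x\in\R^U\cap\R^n_{\ge 0}}\frac{f^M_\triangle(\vec x)}{g^M_\triangle(\vec x)}\le \max_{\text{chain }A_1,\dots,A_k\text{ in }U}\frac{f(A_1,\dots,A_k)}{g(A_1,\dots,A_k)},
\]
where "chain in $U$" means $A_1\subset\cdots\subset A_k\subset U$ (up to reordering). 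Chaining the two steps gives, for the fixed $U$,
\[
\max\{\lambda_m(F'),-\lambda_m'(F')\}\le \max_{\text{chain }A_1,\dots,A_k\text{ in }U}\frac{f(A_1,\dots,A_k)}{g(A_1,\dots,A_k)}
\]
for every $F'\in S(f)$; taking $\sup_{F'\in S(f)}$ on the left and then $\min_{\#U=m}$ on the right delivers the claimed inequality.

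I expect the main obstacle to be the first step: making precise, from the genus-based min-max definition of $\lambda_m(F')$, that restricting to the coordinate subspace $\R^U$ with $\#U=m$ is an admissible competitor — i.e. that the unit sphere of $\R^U$ (or its intersection with the relevant constraint set cut out by $g^M_\triangle(|\cdot|)$) genuinely has genus $\ge m$, and that $F'$ and $g^M_\triangle(|\cdot|)$ behave well enough (zero-homogeneity of the quotient, lower semicontinuity, the $g$-positivity needed to avoid division issues) for the min-max value to be $\le$ the sup over that particular set. The handling of $g^M_\triangle(|\cdot|)$ possibly vanishing somewhere on $\R^U$ needs a short argument (restricting further to the open set where it is positive, or using the conventions already fixed in Section \ref{sec:spectrum}). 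Everything else is bookkeeping: the pointwise bound defining $S(f)$ is used verbatim, the reduction of $|\vec x|$-expressions to the nonnegative orthant is immediate from the definition of $f^M_\triangle$, and the localized form of Theorem \ref{thm:Turan-general} is just that theorem applied with ground set $U$. The max-min eigenvalue $\lambda_m'(F')$ is handled symmetrically by replacing $F'$ with $-F'$, using that $S(f)$ is symmetric under $F'\mapsto -F'$.
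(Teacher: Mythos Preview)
Your proposal is correct and follows essentially the same route as the paper. The only difference is packaging: the paper isolates your ``first step'' (bounding $\max\{\lambda_m(F'),-\lambda_m'(F')\}$ by $\sup_{\vec x\in X}f^M_\triangle(|\vec x|)/g^M_\triangle(|\vec x|)$ via the genus-$m$ coordinate subspace $X=\R^U$) as the separate Theorem~\ref{thm:FG-p-homo-signed}, then invokes Theorem~\ref{thm:Turan-general} for the second step, exactly as you do; your anticipated obstacle about the genus of $\R^U$ is not a real issue, since an $m$-dimensional linear subspace minus the origin has Krasnoselskii genus $m$ by definition.
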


The above result is stronger than the classical inertia bounds for independence numbers, and it also provides a nonlinear generalization of Huang's method.  We refer to  Sections \ref{sec:largest-eigen}
 and \ref{sec:signed-graph} for details. 
 
\vspace{0.16cm}

\textbf{Applications to $p$-Laplacians on hypergraphs}

Motivated by the total variation on hypergraphs, and its regularization  functionals \cite{TVhyper-13}, we provide a general {\sl Lov\'asz $p$-Laplacian} eigenvalue problem, and we apply it to  chemical hypergraphs (see Section \ref{sec:p-Lap}).  There is a direct way to define another $p$-Laplace operator induced by the incidence matrix of a chemical  hypergraph \cite{JMZ}, which is called the {\sl incidence  $p$-Laplacian} on  hypergraphs. 

The spectral theory for function pairs developed in Section \ref{sec:spectrum} can be applied to both the Lov\'asz $p$-Laplacian and the incidence  $p$-Laplacian, by which we have established Cheeger inequalities, inertia bounds and nodal domain properties for Lov\'asz $p$-Laplacian in Section \ref{sec:p-Lap}, and a  spectral duality theorem for incidence  $p$-Laplacian in Section \ref{sec:dual}. 

\vspace{0.16cm}

\textbf{Applications to Cheeger inequalities on simplicial complexes}

By constructing the associated signed graph for a simplicial complex, we establish $k$-way Cheeger inequalities involving the  eigenvalues of the $d$-th Hodge up-Laplacian in Section \ref{sec:simplicial-complex}.  Formally, these Cheeger inequalities  can be written as
$$C_{k,d} h_k(S_d)^2\le d+2-\lambda_{n+1-k}(\Delta^{up}_d)\le 2h_k(S_d)$$
where $h_k(S_d)$ is the so-called $k$-way Cheeger constant for $d$-simplices of a complex,  $\Delta^{up}_d$ is the normalized up Laplacian on $d$-simplices, 
 and the constant $C_{k,d}$ only depends on $k\ge 1$ and $d\ge 0$.

These Cheeger bounds for the  spectral gaps  reveal that the multiplicity of the possible eigenvalue $d+2$ equals the  number of balanced components of the associated signed graph.  We also  introduce  $p$-Laplacians on simplicial complexes, and based on the spectral theory for  function pairs developed in Sections \ref{sec:dual} and \ref{sec:structure-eigenspace}, we prove that the multiplicity of the possible eigenvalue $(d+2)^{p-1}$  equals the  number of balanced components of the associated signed graph if $p>1$, and for $p=1$,  the multiplicity of the eigenvalue 1 for the up  $1$-Laplacian is bounded by  some combinatorial quantities  involving the  balanced cliques of the associated signed graph. 
We then suggest a Cheeger constant $h(S_d)$  defined as the smallest nontrivial eigenvalue of the  1-Laplacian on the $d$-faces of  a simplicial complex, which  is positive if and only if the $d$-th reduced homology  vanishes. If the simplicial complex is  combinatorially  equivalent  to  a  uniform  triangulation  of  a $(d+1)$-dimensional,  orientable, compact, closed Riemannian manifold, we prove the Cheeger inequality $$ \frac{h^2(S_d)}{C}\le \lambda(\Delta_{d}^{up}) \le Ch(S_d), $$
in which $\lambda(\Delta_{d}^{up})$ is the smallest nontrivial eigenvalue of the $d$-th up Laplacian, and $C>1$ is a uniform constant. 
Such a Cheeger constant also closely relates to Gromov's filling profile \cite{Gromov}.  This  result may open up a new perspective on  the
 long-standing open problem regarding  Cheeger-type  inequalities on simplicial complexes \cite{DK12,GS15,GW16,PRT15,SKM14} (this open problem is a  discrete version of  Cheeger-Yau's open problem about building Cheeger-type  inequalities on differential $k$-forms \cite{Cheeger,Yau}).

\section{Spectral theory for homogeneous function pairs}
\label{sec:spectrum}

Spectral analysis has been widely used  in recent decades in numerous fields like digital image analysis, signal processing, machine learning and spectral clustering. In the linear setting, the well-known discrete Laplacian  attracts much attention \cite{TGHS20}.  Also, in smooth but nonlinear settings, there exists research on the  $p$-Laplacian eigenvalue problem and its generalized version \cite{HeinBuhler2009,LM18}. 

 For more  
 applications, some researchers turn to the 
 non-smooth setting where
  variational methods in nonlinear analysis have been proved to be very powerful.  For example, the study of  the  $1$-Laplacian eigenvalue problem $\vec 0\in \Delta_1 \vec x-\lambda  \Sgn(\vec x)$ and its signless analogue  is of great help to find better Cheeger cuts and dual Cheeger solutions \cite{Chang16,HeinBuhler2010,CSZ16,CSZ17}. In image science, many works \cite{Guy12,Guy14,Guy15,Guy16} 
 focus on the 
 eigenvalue problem in the form of $\lambda \vec u\in \nabla J(\vec u)$ where $J(\cdot)$ is convex and (absolutely) one-homogeneous, which can also be formulated as $\vec 0\in \nabla J(\vec u)-\lambda\nabla \|\vec u\|_2^2$, where $\nabla$ represents the {\sl Clarke derivative} operator.

All the above eigenvalue problems can be unified into the spectral theory for function pairs:
\begin{defn}[eigenpair]\label{def:eigenpair}
 Given a pair $(F,G)$ of two locally Lipschitz functions $F$ and $G$, we call $(\lambda,\vec x)\in \mathbb{R}\times \R^n$ an eigenpair of $(F,G)$ if \begin{equation}\label{eq:FG-eigenpair-o}\nabla F(\vec x)\cap \lambda \nabla G(\vec x)\neq\varnothing,\end{equation}
where $\vec x$ is called an eigenvector and $\lambda$ is the corresponding eigenvalue.  Using the notation of Minkowski summation, the eigenvalue problem \eqref{eq:FG-eigenpair-o} for $(F,G)$ can be written as
\begin{equation}\label{eq:FG-eigenpair}
   \vec 0\in \nabla F(\vec x)- \lambda \nabla G(\vec x). 
\end{equation}
\end{defn}

Moreover, it can be used in the variational analysis of functions on a convex
body. Given $p\ge 1$ and an $n$-dimensional convex body $P\subset
\mathbb{R}^n$ with the origin  in its interior, it is easy to show that there exists a unique $p$-homogeneous function $G:\mathbb{R}^n\to [0,\infty)$ such that $P=\{\vec x\in\mathbb{R}^n: G(\vec x)\le 1\}$ with its boundary $\partial P=G^{-1}(1)$. One way to study the variational properties of a given function $F$ on $\partial P$ is to analyse the function pair $(F,G)$ via the eigenvalue problem $\vec 0\in \nabla F(\vec x)-\lambda\nabla G(\vec x)$. In  many reasonable and valuable cases, the unit spheres of polyhedral Banach spaces (such as the polyhedrons determined by $\|\vec x\|_1=1$ or $\|\vec x \|_\infty=1$ in $\mathbb{R}^n$) attracted much attention \cite{Fonf81,Fonf90,DFH98}. 
For example, the case of $G(\vec x):=\|\vec x\|_{1,d}$ has been investigated in \cite{Chang16} and turns out to be effective in the study of Cheeger cuts and dual Cheeger problems, where $\|\cdot\|_{1,d}$ is a weighted one-norm on $\mathbb{R}^n$ (see \cite{CSZ15}).


Since the general eigenvalue problem  \eqref{eq:FG-eigenpair} is representative and useful and in view of the  lack of a  general study, in this section, we consider the spectral theory for a pair $(F,G)$ of Lipschitz functions $F$ and $G$,  
 which will be applied in the extension theory in Section  \ref{sec:extension}. 

\textbf{Unless otherwise stated, the functions $F,G:\R^n\to\R$ appearing in this section are at least} {\sl\textbf{locally Lipschitz}}. 

\begin{defn}[critical pair]
For a locally Lipschitz function $\frac FG:\R^n\to \R\cup\{\pm\infty\}$, we call $(\lambda,\vec x)\in \mathbb{R}\times \R^n$ a critical pair of $F/G$ if $$\vec 0\in \nabla \frac{F(\vec x)}{G(\vec x)} ,\;\text{ and }\lambda=\frac{F(\vec x)}{G(\vec x)},$$
where $\vec x$ is said to be a critical point and $\lambda$ is the corresponding critical value.
\end{defn}

It is known  that   $\{\text{critical points of }  F/G\}\subset \{\text{eigenvectors of }(F,G)\}$. 

\begin{defn}Given $p\in\mathbb{R}$, 
a function $F:\R^n\to \mathbb{R}$ is said to be {\sl $p$-homogeneous} if and only if
$$F(t\vec x)=t^pF(\vec x),\;\forall \vec x\in \R^n,\,\forall t>0.$$ 
\end{defn}

Let $A\subset \R^n\setminus\{ 0\}$ be a compact symmetric set, i.e., $-A=A$.  The Krasnoselskii $\mathbb{Z}_2$ {\sl genus} of $A$, denoted by $\gen(A)$, is defined to be
\begin{equation*}
\gen(A) =
\begin{cases}
\min\limits\{k\in\mathbb{Z}^+: \exists\; \text{odd continuous}\; h: A\to \mathbb{S}^{k-1}\}, & \text{if}\; A\ne\varnothing,\\
0, & \text{if}\; A=\varnothing.
\end{cases}
\end{equation*}
Let $\Gamma_k=\{ A\subset \R^n\setminus\{ 0\}: A\text{ is compact and symmetric with } \gen(A)\ge k\}$.

\begin{pro}\label{cor:LScritical}
Let $(F,G)$ be a function pair such that $F/G$ is even,  zero-homogenous and  locally Lipschitz continuous on $\R^n\setminus\{0\}$. Then, for any $k=1,2,\cdots$,
$$\lambda_k := \inf_{A\in\Gamma_k}\sup\limits_{\vec x\in A} \frac{F(\vec x)}{G(\vec x)}$$
is an eigenvalue of $(F,G)$. These eigenvalues satisfy $\lambda_1\le \lambda_2\le \cdots$, and if $\lambda=\lambda_{k+1}=\cdots=\lambda_{k+l}$ for $0\le k<k+l\le \dim X$, then $\gen(\{\text{eigenvectors w.r.t. }\lambda\})\ge l$. Similar properties hold for $\lambda_k ':= \sup\limits_{A\in\Gamma_k}\inf\limits_{\vec x\in A} \frac{F(\vec x)}{G(\vec x)}$.
\end{pro}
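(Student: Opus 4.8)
The plan is to establish Proposition \ref{cor:LScritical} as an instance of the classical Lusternik--Schnirelmann theory adapted to the nonsmooth, locally Lipschitz setting via the Clarke subdifferential. First I would reduce to working on a fixed compact symmetric manifold: since $F/G$ is zero-homogeneous and even, its values on $\R^n\setminus\{0\}$ are determined by its restriction to the unit sphere $\mathbb S^{n-1}$ (or, if one wants to avoid choosing a particular norm, any symmetric compact hypersurface enclosing the origin), so set $\Phi:=F/G|_{\mathbb S^{n-1}}$. Then $\Phi$ is locally Lipschitz and even on a compact manifold, and the min-max values $\lambda_k=\inf_{A\in\Gamma_k}\sup_{x\in A}\Phi(x)$ coincide with the stated ones because restriction to the sphere does not change the genus of symmetric sets (radial projection is an odd homeomorphism onto its image and genus is monotone under odd continuous maps). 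The monotonicity $\lambda_1\le\lambda_2\le\cdots$ is immediate from $\Gamma_{k+1}\subset\Gamma_k$.

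The heart of the argument is a deformation lemma: if $\lambda$ is a regular value of $\Phi$ in the sense that $0\notin\partial\Phi(x)$ for every $x$ in the level set $\Phi^{-1}(\lambda)\cap\mathbb S^{n-1}$, then for small $\varepsilon>0$ there is an odd continuous deformation pushing the sublevel set $\{\Phi\le\lambda+\varepsilon\}$ into $\{\Phi\le\lambda-\varepsilon\}$. In the Lipschitz setting this is built from a locally Lipschitz \emph{pseudo-gradient vector field} for $\Phi$ that is equivariant (odd), whose flow decreases $\Phi$; one obtains it by a partition-of-unity patching of local pseudo-gradients, using that $\partial\Phi$ is upper semicontinuous with nonempty compact convex values and that compactness of $\mathbb S^{n-1}$ supplies the (trivially satisfied) Palais--Smale condition. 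Granting the deformation lemma, the standard minimax argument shows each $\lambda_k$ is a critical value of $\Phi$: if not, the deformation applied to a near-optimal set $A\in\Gamma_k$ with $\sup_A\Phi<\lambda_k+\varepsilon$ would yield $A'\in\Gamma_k$ (genus is preserved by the odd deformation) with $\sup_{A'}\Phi\le\lambda_k-\varepsilon$, contradicting the definition of $\lambda_k$. A critical point $x$ of $\Phi$ on the sphere is then an eigenvector of $(F,G)$: by the Lagrange-multiplier rule for Clarke subdifferentials one gets $0\in\partial\Phi(x)$, and a short computation with the quotient rule for the generalized gradient together with the homogeneity of $F$ and $G$ converts $0\in\partial(F/G)(x)$ into $\nabla F(x)\cap\lambda\nabla G(x)\ne\varnothing$ with $\lambda=F(x)/G(x)=\lambda_k$; this is exactly the inclusion $\{\text{critical points of }F/G\}\subset\{\text{eigenvectors of }(F,G)\}$ already recorded in the excerpt.

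For the multiplicity statement, suppose $\lambda=\lambda_{k+1}=\cdots=\lambda_{k+l}$ and let $K_\lambda$ denote the set of eigenvectors (equivalently critical points on the sphere) at level $\lambda$; $K_\lambda$ is compact and symmetric. If $\gen(K_\lambda)\le l-1$, choose a symmetric open neighborhood $N$ of $K_\lambda$ with $\gen(\overline N)\le l-1$ (possible by the continuity/excision property of genus), and apply a refined deformation lemma that pushes $\{\Phi\le\lambda+\varepsilon\}\setminus N$ into $\{\Phi\le\lambda-\varepsilon\}$ — the only obstruction to decreasing $\Phi$ lives in $K_\lambda\subset N$. Taking $A\in\Gamma_{k+l}$ with $\sup_A\Phi<\lambda+\varepsilon$, the set $A\setminus N$ after deformation lands in $\{\Phi\le\lambda-\varepsilon\}$, and the subadditivity of genus gives $\gen(A\setminus N)\ge\gen(A)-\gen(\overline N)\ge(k+l)-(l-1)=k+1$, so $\lambda_{k+1}\le\lambda-\varepsilon<\lambda$, a contradiction. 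Hence $\gen(K_\lambda)\ge l$. Finally, the assertions for $\lambda_k'=\sup_{A\in\Gamma_k}\inf_{x\in A}\Phi$ follow verbatim by replacing $\Phi$ with $-\Phi$ (which is again even, zero-homogeneous, locally Lipschitz), turning inf-sup into sup-inf and min-max eigenvalues into max-min ones.

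I expect the main obstacle to be the nonsmooth deformation lemma with the equivariance (oddness) and the neighborhood-excision refinement handled simultaneously: constructing a globally defined, locally Lipschitz, odd pseudo-gradient flow that strictly decreases $\Phi$ away from $K_\lambda$ requires care with the upper semicontinuity of the Clarke subdifferential and with patching symmetric local data, and it is here that one must be precise rather than invoke the smooth theory as a black box. Everything else — the genus inequalities, the Lagrange rule for generalized gradients, and the quotient-rule computation linking critical points of $F/G$ to eigenpairs of $(F,G)$ — is routine once the deformation machinery is in place.
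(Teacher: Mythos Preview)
Your approach is correct and is exactly the standard route; the paper itself does not supply a proof of this proposition but treats it as a known consequence of nonsmooth Lusternik--Schnirelmann theory (the label \texttt{cor:LScritical} and the later citations to \cite{PereraAgarwalO'Regan}, \cite{Liusternik} and \cite{CSZZ21} signal this). Your reduction to the sphere via zero-homogeneity, the equivariant deformation lemma built from an odd pseudo-gradient for the Clarke subdifferential, and the genus-subadditivity argument for multiplicity are precisely the standard ingredients.

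One small point worth tightening: in the multiplicity step you write ``let $K_\lambda$ denote the set of eigenvectors (equivalently critical points on the sphere)''. These are not equivalent in general---the paper explicitly records the strict inclusion $\{\text{critical points of }F/G\}\subset\{\text{eigenvectors of }(F,G)\}$. Your deformation argument actually yields $\gen(\{\text{critical points at level }\lambda\})\ge l$, which is \emph{stronger} than the stated conclusion and implies it by monotonicity of genus under inclusion. So the argument is fine, but keep the distinction straight when writing it up.
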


\begin{defn}[min-max critical pair]\label{def:minmaxpair}
Under the conditions in Proposition  \ref{cor:LScritical},
$(\lambda_k,\vec x)\in \mathbb{R}\times \R^n$ is called a {\sl min-max critical pair} if 
$\vec x$ is a critical point with the additional condition that $\vec x\in S$ for some $S\in \Gamma_k$ with $$ \sup\limits_{\vec y\in S} \frac{F(\vec y)}{G(\vec y)}=\frac{F(\vec x)}{G(\vec x)}=\lambda_k.$$
A {\sl max-min critical pair} $(\lambda_k',\vec x)$ is defined in a similar way.
\end{defn}

Some basic and important facts are:
\begin{itemize}
    \item  Critical pairs of $F/G$ are eigenpairs of $(F,G)$. 
    
Remark:   The  eigenvectors of $(F,G)$ may not be the critical points of
$F/G$, because for a critical pair, we look at $\nabla$ of the quotient,
  whereas for an eigenpair, we require a relation between  the gradients of
  the two functions involved.  
In fact, if $F$ and $G$ are smooth, then the eigenvalue problem $  \nabla F(\vec x)=\lambda \nabla G(\vec x)$ closely relates to the local bifurcation for the system of equations induced by $\nabla F$ and $\nabla G$. For example, 
the eigenvalue problem of the pair $(F,G)$ with $F(x):=\sin x$ and $G(x):=x$ is  $\cos x= \lambda$; while the nonzero critical points of $F/G$  are determined by the equation $\cos x=\sin x/x$.  

For homogeneous $F$ and $G$,    there is a  counterexample involving the 1-Laplacian   (see details in \cite{CSZ15}).

    \item  If  $F/G$ is even, then the  min-max critical pairs of $F/G$ are critical pairs of $F/G$.

   \item Assume that $F$ is $p$-homogeneous, and  $G$ is $q$-homogeneous. If $G(\vec x)\ne 0$, and  $(\lambda,\vec x)$ is an eigenpair of $(F,G)$, then  $F(\vec x)/G(\vec x)=\frac{q}{p}\lambda$.
    
    Proof: Since $\vec 0\in \nabla F(\vec x)-\lambda \nabla G(\vec x)$, there
    exists $\vec u\in \nabla G(\vec x)$ such that $\lambda \vec u\in \nabla
    F(\vec x)$. Hence, by the  Euler identity for homogeneous Lipschitz  functions, we have
$pF(\vec x)=\langle \lambda \vec u,\vec x\rangle$ and $\langle \vec u,\vec x\rangle=qG(\vec x)\ne0$. Then, there is $$\frac{F(\vec x)}{G(\vec x)}=\frac{q\langle\lambda \vec u,\vec x\rangle}{p\langle \vec u,\vec x\rangle}=\frac{q\lambda}{p}.$$
\item If $(\lambda,\vec x)$ is an eigenpair of $(F,G)$, $G(\vec y)\ne 0$, $\nabla F(\vec x)\subset \nabla F(\vec y)$ and $\nabla G(\vec x)\subset \nabla G(\vec y)$, then $(\lambda,\vec y)$ is an  eigenpair.

\begin{proof}  Since $\nabla F(\vec x)\subset \nabla F(\vec y)$ and $\nabla G(\vec x)\subset \nabla G(\vec y)$, we deduce that $\vec 0\in \nabla F(\vec x)-\lambda \nabla G(\vec x)\subset \nabla F(\vec y)-\lambda \nabla G(\vec y)$ by the properties of Minkowski summation. Consequently, $(\lambda,\vec y)$ is an eigenpair of $(F,G)$. 
   \end{proof}
 \item  For smooth $p$-homogeneous functions $F$ and $G$,  $\{\text{critical pairs of }F/G\}=\{\text{eigenpairs of }(F,G)\}$. 
    
\end{itemize}


\textbf{From now on, we further assume that $F$ and $G$ are even and  $p$-homogeneous}. In this setting, we have
\begin{equation}\label{eq:three-class-eigenpair}
\{\text{min-max critical pairs of }  F/G\}\subset \{\text{critical pairs of }  F/G\}\subset \{\text{eigenpairs of }(F,G)\}.
\end{equation}

\begin{remark}
Let $\widetilde{F/G}:\mathbb{RP}^{n-1}\to \R$ be defined by $\widetilde{F/G}([ x])=F(t x)/G(t x)$ which is independent of $t\ne 0$. Then, the critical values of $F/G$ on $\R^n\setminus\{0\}$ reduce to the critical values of  $\widetilde{F/G}$ 
on $\mathbb{RP}^{n-1}$.
\end{remark}

\begin{defn}[multiplicity]
Denote by $K_\lambda$ the set of critical points of $\frac FG$ w.r.t. the critical  value $\lambda$, $S_\lambda$ the  set of eigenvectors w.r.t. the eigenvalue $\lambda$ of $(F,G)$,  and $\{\frac FG=\lambda\}$ the level set of $\frac FG$ at the level $\lambda$.  Clearly, $K_\lambda\subset S_\lambda\subset \{\frac FG=\lambda\}$, $\forall \lambda\in\R$, and these three kinds of sets are all centrally symmetric. We use  $\gen(S_\lambda)$ (resp. $\gen(K_\lambda)$) to denote the {\sl multiplicity} of the eigenvalue (resp. critical value)  $\lambda$.

\end{defn}

\begin{pro}\label{pro:odd-homeomorphism}
For an odd smooth homeomorphism $\varphi:\R^n\to\R^n$, 
$\lambda$ is an eigenvalue of $(F\circ\varphi,G\circ\varphi)$ if and only if it is an eigenvalue of $(F,G)$, and the multiplicities of $\lambda$ for $(F\circ\varphi,G\circ\varphi)$ and $(F,G)$ coincide. 
\end{pro}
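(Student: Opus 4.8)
The plan is to reduce the entire statement to the chain rule for the Clarke subdifferential, combined with the elementary fact that an invertible linear map is compatible with Minkowski arithmetic of sets and, in particular, detects the origin.

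First I would record the chain rule. Since $\varphi$ is smooth, it is strictly differentiable, so for every locally Lipschitz $H:\R^n\to\R$ one has the set identity $\nabla(H\circ\varphi)(\vec x)=D\varphi(\vec x)^{\top}\,\nabla H(\varphi(\vec x))$ --- an \emph{equality}, not merely an inclusion, precisely because $\varphi$ is (continuously, hence strictly) differentiable. Applying this to $H=F$ and $H=G$, and using that for a linear map $L$ and sets $A,B$ one has $L(A-\lambda B)=LA-\lambda LB$, the eigenvalue equation \eqref{eq:FG-eigenpair} for $(F\circ\varphi,G\circ\varphi)$ at a point $\vec x$ becomes
\[
\vec 0\in\nabla(F\circ\varphi)(\vec x)-\lambda\,\nabla(G\circ\varphi)(\vec x)=D\varphi(\vec x)^{\top}\bigl(\nabla F(\varphi(\vec x))-\lambda\,\nabla G(\varphi(\vec x))\bigr).
\]
Because $\varphi$ is a smooth homeomorphism of $\R^n$, the matrix $D\varphi(\vec x)$, and hence $D\varphi(\vec x)^{\top}$, is invertible; an invertible linear map carries $\vec 0$ to $\vec 0$ and nothing else, so the displayed membership holds if and only if $\vec 0\in\nabla F(\varphi(\vec x))-\lambda\,\nabla G(\varphi(\vec x))$. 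In other words, $(\lambda,\vec x)$ is an eigenpair of $(F\circ\varphi,G\circ\varphi)$ exactly when $(\lambda,\varphi(\vec x))$ is an eigenpair of $(F,G)$. Since $\varphi$ is a bijection of $\R^n\setminus\{0\}$, this already proves that the two pairs have the same set of eigenvalues.

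For the multiplicities, the equivalence just obtained says precisely that $\varphi$ restricts to a bijection from the eigenvector set $S_\lambda$ of $(F\circ\varphi,G\circ\varphi)$ onto the eigenvector set $S_\lambda$ of $(F,G)$. This restriction is an odd homeomorphism (both $\varphi$ and $\varphi^{-1}$ are continuous and odd, the oddness of $\varphi^{-1}$ following from that of $\varphi$), and the Krasnoselskii genus is invariant under odd homeomorphisms: if $h\colon S_\lambda(F\circ\varphi,G\circ\varphi)\to\mathbb S^{k-1}$ is odd and continuous, then $h\circ(\varphi|_{S_\lambda})^{-1}$ is an odd continuous map from $S_\lambda(F,G)$ to $\mathbb S^{k-1}$, and conversely; hence $\gen(S_\lambda(F\circ\varphi,G\circ\varphi))=\gen(S_\lambda(F,G))$, which is the asserted equality of multiplicities.

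The step that needs the most care, and the only real obstacle, is the chain-rule identity together with the invertibility invoked right after it: for a merely locally Lipschitz $F$ the Clarke chain rule gives only one inclusion, so both the two-sided identity and the invertibility of $D\varphi(\vec x)$ genuinely use the hypotheses that $\varphi$ is smooth and a homeomorphism --- in effect $\varphi$ must be read as a diffeomorphism of $\R^n$. Everything else is a formal manipulation of Clarke subdifferentials and of the definition of genus; the same argument, applied to the locally Lipschitz quotient $F/G$, also shows that $\varphi$ sends critical points of $(F\circ\varphi)/(G\circ\varphi)$ to critical points of $F/G$ and (since it preserves genus) transports the min-max values of Proposition~\ref{cor:LScritical} as well.
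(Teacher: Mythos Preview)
Your proof is correct and follows essentially the same approach as the paper's: apply the Clarke chain rule to transport the eigenvalue inclusion through $\varphi$, then use invariance of the Krasnoselskii genus under odd homeomorphisms to equate multiplicities. You are somewhat more careful than the paper on two points: you write the correct transpose $D\varphi(\vec x)^{\top}$ in the chain rule (the paper writes $J_x(\varphi)$), and you flag explicitly that the equivalence in both directions requires $D\varphi(\vec x)$ to be invertible, so that ``smooth homeomorphism'' really needs to be read as ``diffeomorphism'' (e.g.\ $x\mapsto x^3$ on $\R$ would otherwise be a counterexample); the paper glosses over this and simply asserts the converse direction.
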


\begin{proof}
Let $(\lambda,\varphi(\vec x))$ be an eigenpair of $(F,G)$, i.e., $ \vec 0\in \nabla F(\varphi)- \lambda \nabla G(\varphi)$. Then 
$$ \vec 0\in 
J_x(\varphi)(\nabla F(\varphi)- \lambda \nabla G(\varphi))=\nabla (F\circ \varphi)(\vec x)- \lambda \nabla (G\circ \varphi)(\vec x)$$
where $J_x(\varphi)$ is the Jacobi matrix of $\varphi$ at $\vec x$. Hence, $(\lambda,\vec x)$ is an eigenpair of  $(F\circ\varphi,G\circ\varphi)$. 
Therefore, it can be verified that $S_\lambda$ is the eigenspace w.r.t. $\lambda$ of $(F,G)$ if and only if $\varphi^{-1}(S_\lambda)$  is the eigenspace w.r.t. $\lambda$ of  $(F\circ\varphi,G\circ\varphi)$. Since $\varphi$ is homeomorphism and odd (i.e., $\varphi(-\vec x)=-\varphi(\vec x)$, $\forall\vec x\in\R^n$), we have $\gen(\varphi^{-1}(S_\lambda))=\gen(S_\lambda)$. The proof is completed.
\end{proof}

Proposition \ref{pro:odd-homeomorphism} and the above statements 
could be widely applied to 
the analysis 
of homogeneous functions 
including some useful special cases,  such as Lemma 2.1 in \cite{CSZ15}, Lemma 1 in \cite{CSZ16} and Lemma 6.3 in \cite{LM18}.

One reason for us to work on a pair of $p$-homogeneous functions is the  discrete-continuous equivalence of optimization and min-max relation:  
\begin{lemma}\label{lemma:0-homo-optimal}
Let $H:X\to \R$ be a zero-homogeneous continuous function,  where  $X\subset
\R^n\setminus\{\vec0\}$ is a cone \footnote{The cone $X$ doesn't need to be
  convex, but it should satisfy the condition for a  cone, i.e., $\vec x\in X\Rightarrow t\vec x\in X$, $\forall t>0$. }.  If we further assume that $X$ is  topologically  regular, i.e., $X\subset \overline{\mathrm{int}(X)}$, where $\overline{\mathrm{int}(X)}$ is the closure of the interior of $X$, 
then
$$\inf\limits_{\vec x\in X} H(\vec x)=\inf\limits_{\vec x\in X\cap \mathbb{Z}^n} H(\vec x)\;\;\;\text{ and }\;\;\; \sup\limits_{\vec x\in X} H(\vec x)=\sup\limits_{\vec x\in X\cap \mathbb{Z}^n} H(\vec x)$$
and moreover, 
$$\inf\limits_{ A\subset X,\mathrm{cat}(A)\ge k}\sup\limits_{\vec x\in A}H(\vec x)=\inf\limits_{A\subset X,\mathrm{cat}(A)\ge k}\sup\limits_{\vec x\in \mathrm{cone}(A)\cap \mathbb{Z}^n}H(\vec x)$$
is the $k$-th min-max critical value of $H$, 
where $\mathrm{cat}(A)$ is the Lusternik–Schnirelmann category of $A$, and $\mathrm{cone}(A):=\{t\vec x:t>0,\vec x\in A\}$ is the cone hull of $A$. 
\end{lemma}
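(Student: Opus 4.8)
\textbf{Proof plan for Lemma \ref{lemma:0-homo-optimal}.}

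The plan is to exploit zero-homogeneity to pass freely between a cone and its intersection with the rational directions, and then clear denominators to reach the integer lattice. First I would prove the infimum/supremum identities. The inclusion $X\cap\mathbb{Z}^n\subset X$ gives $\inf_{X}H\le\inf_{X\cap\mathbb{Z}^n}H$ trivially, so only the reverse inequality needs work. Fix $\vec x\in X$ and $\varepsilon>0$. Since $H$ is continuous and zero-homogeneous, $H$ is constant along rays, so $H(\vec x)=H(t\vec x)$ for all $t>0$; the issue is that $\vec x$ itself need not be a scalar multiple of an integer point. Here is where topological regularity enters: because $X\subset\overline{\mathrm{int}(X)}$, I can perturb $\vec x$ to a nearby point $\vec x'\in\mathrm{int}(X)$ with $|H(\vec x')-H(\vec x)|<\varepsilon$ (continuity of $H$ at $\vec x'$ combined with $\vec x$ being in the closure of the interior, run in the right order). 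Then, since $\vec x'$ is interior and $\mathbb{Q}^n$ is dense, I can find a rational point $\vec q$ close enough to $\vec x'$ that $\vec q\in X$ (openness) and $|H(\vec q)-H(\vec x')|<\varepsilon$. Finally scale: $\vec q=\frac1N\vec z$ for a suitable positive integer $N$ and $\vec z\in\mathbb{Z}^n$, and $\vec z=N\vec q\in X$ since $X$ is a cone, while $H(\vec z)=H(\vec q)$ by zero-homogeneity. This produces an integer point $\vec z\in X\cap\mathbb{Z}^n$ with $|H(\vec z)-H(\vec x)|<3\varepsilon$, giving $\inf_{X\cap\mathbb{Z}^n}H\le\inf_X H$; the supremum statement is identical with inequalities reversed.

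For the min-max identity I would argue that for every compact $A\subset X$ with $\mathrm{cat}(A)\ge k$ one has $\sup_{\vec x\in A}H(\vec x)=\sup_{\vec x\in\mathrm{cone}(A)\cap\mathbb{Z}^n}H(\vec x)$, after which taking the infimum over such $A$ on both sides yields the claim. The inclusion $\mathrm{cone}(A)\cap\mathbb{Z}^n\subset X$ (a cone hull of a subset of the cone $X$ stays in $X$) together with the fact that $H$ is constant on rays gives $\sup_{\mathrm{cone}(A)\cap\mathbb{Z}^n}H\le\sup_{\mathrm{cone}(A)}H=\sup_A H$. For the reverse direction, $\mathrm{cone}(A)$ is itself a cone contained in $X$, and one checks it is topologically regular relative to $X$ (or simply applies the sup-identity already proved to the cone $\mathrm{cone}(A)\cap X$, noting $\overline{\mathrm{int}(\mathrm{cone}(A))}\supset$ the relevant points because $A$ is compact and we may first shrink to interior directions as above); then $\sup_{\mathrm{cone}(A)}H=\sup_{\mathrm{cone}(A)\cap\mathbb{Z}^n}H$. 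That $\mathrm{cat}(A)\ge k$ is preserved is automatic since we do not change $A$. The statement that this quantity is the $k$-th min-max critical value of $H$ then follows from the standard Lusternik--Schnirelmann deformation argument applied to the locally Lipschitz even (well, zero-homogeneous) function $H$ on the cone, exactly as in Proposition \ref{cor:LScritical} after descending to the quotient $\mathbb{RP}^{n-1}$ or the relevant section of $X$.

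The main obstacle I anticipate is the density-plus-regularity step: zero-homogeneity alone does not let one reach integer points, because the rays through $\mathbb{Z}^n$ are only dense in $\mathbb{R}^n$, not all of it, so a point $\vec x\in X$ lying on the topological boundary of $X$ might not be approximable by integer points that remain inside $X$. This is precisely why the hypothesis $X\subset\overline{\mathrm{int}(X)}$ is imposed, and the delicate part is sequencing the three approximations (move into the interior, then to a rational direction, then clear the denominator) so that each stays inside $X$ and the total error in $H$ stays controlled; continuity of $H$ must be invoked at the interior point $\vec x'$, not at the original boundary point $\vec x$. Once that order is fixed the rest is bookkeeping, and the min-max part reduces to the sup-identity applied to the subcone $\mathrm{cone}(A)$.
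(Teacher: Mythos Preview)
Your argument for the first pair of identities is correct and is essentially what the paper does: pass from $X$ to $X\cap\mathbb{Q}^n$ by topological regularity and continuity, then from $\mathbb{Q}^n$ to $\mathbb{Z}^n$ by clearing denominators and invoking zero-homogeneity.

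For the min-max identity, however, your plan contains a genuine gap. You propose to show that $\sup_{A}H=\sup_{\mathrm{cone}(A)\cap\mathbb{Z}^n}H$ for each fixed admissible $A$, and then take the infimum. But this pointwise equality is false in general: take $A$ to be a single point on an irrational ray (or, for larger $k$, a compact set whose cone hull avoids every rational direction); then $\mathrm{cone}(A)\cap\mathbb{Z}^n=\varnothing$ and the right-hand side is $-\infty$. The fallback you suggest---applying the already-proved sup identity to the cone $\mathrm{cone}(A)$---fails for the same reason: $\mathrm{cone}(A)$ need not be topologically regular (its interior in $\mathbb{R}^n$ can be empty, as in the single-point example), so the hypothesis of the first part is not met. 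The parenthetical about ``shrinking to interior directions'' does not repair this, since shrinking $A$ can only lower $\mathrm{cat}(A)$ and still does not manufacture integer points inside $\mathrm{cone}(A)$.

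The paper proceeds differently. Rather than fixing $A$, it first \emph{enlarges} a near-optimal $A$ to an open neighborhood $U_A$: monotonicity of category gives $\mathrm{cat}(U_A)\ge\mathrm{cat}(A)\ge k$, and continuity of $H$ (together with compactness) ensures $\sup_{U_A}H<\sup_A H+\varepsilon$. The cone hull $\mathrm{cone}(U_A)$ is then an \emph{open} cone, hence trivially topologically regular, and the first part of the lemma applies to it, giving $\sup_{\mathrm{cone}(U_A)}H=\sup_{\mathrm{cone}(U_A)\cap\mathbb{Z}^n}H$. This shows that $c_k$ may be computed as an infimum over open cones, on which the two suprema coincide; the identity in the lemma therefore holds at the level of the infimum, not termwise in $A$. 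The missing idea in your proposal is precisely this thickening step, which converts an arbitrary $A$ into something to which the first part legitimately applies.
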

\begin{proof}
Since $X$ is topologically  regular, we have $X\subset \overline{(X\cap \mathbb{Q}^n)}$. Then, by the continuity of $H$, we have
$$\inf\limits_{\vec x\in X} H(\vec x)=\inf\limits_{\vec x\in X\cap \mathbb{Q}^n} H(\vec x)\;\;\;\text{ and }\;\;\; \sup\limits_{\vec x\in X} H(\vec x)=\sup\limits_{\vec x\in X\cap \mathbb{Q}^n} H(\vec x).$$
Note that for any $\vec x\in X\cap \mathbb{Q}^n$, there exists a positive integer $k$ such that $k\vec x\in X\cap \mathbb{Z}^n$, and by the zero-homogeneity of $H$, we have  $H(k\vec x)=H(\vec x)$. Hence, we have
$$\inf\limits_{x\in X\cap \mathbb{Q}^n} H(\vec x)=\inf\limits_{x\in X\cap \mathbb{Z}^n} H(\vec x)\;\;\;\text{ and }\;\;\; \sup\limits_{x\in X\cap \mathbb{Q}^n} H(\vec x)=\sup\limits_{x\in X\cap \mathbb{Z}^n} H(\vec x).$$

Denote by $c_k=\inf\limits_{A\in \mathrm{Cat}_k(X)}\sup\limits_{x\in A}H(\vec x)$ the $k$-th min-max critical value of $H$, where $\mathrm{Cat}_k(X)$ collects all subsets in $X$ with the Lusternik–Schnirelmann category at least $k$. For any $\epsilon>0$, there exists  $A\in \mathrm{Cat}_k(X)$ such that $\mathrm{cat}(A)\ge k$ and $\sup\limits_{x\in  A}H(\vec x)<c_k+\epsilon$, and 
there exists a neighborhood of $A$, denoted by $U_A$,  such that $\mathrm{cat}(U_A)\ge k$ and $\sup\limits_{x\in U_A}H(\vec x)<\sup\limits_{x\in  A}H(\vec x)+\epsilon<c_k+2\epsilon$. By the zero-homogeneity of $H$, we can replace $U_A$ by its cone hull $\mathrm{cone}(U_A)$, i.e.,  
$\sup\limits_{x\in \mathrm{cone}(U_A)}H(\vec x)=\sup\limits_{x\in U_A}H(\vec x)$. Thus, by the arbitrariness of $\epsilon>0$,  $c_k=\inf\limits_{\text{open cone }A\in \mathrm{Cat}_k(X)}\sup\limits_{x\in A} H(\vec x)$. The proof is completed. 
\end{proof}

\begin{remark}
We can always replace $\mathbb{Z}^n$ by any lattice $\{\sum_{i=1}^m n_i\vec v_i:n_i\in\mathbb{Z}\}$ with $\mathrm{span}(\vec v_1,\cdots,\vec v_m)=\R^n$. 
Furthermore, if both the cone $X$ and the zero-homogeneous function $H$ in Lemma \ref{lemma:0-homo-optimal} are  centrally symmetric (i.e., even), then we can replace $\mathrm{cat}(A)$ and  $\mathrm{Cat}_k(X)$ by $\gen(A)$ and  $\Gamma_k(X)$, respectively. 
\end{remark}

\begin{lemma}\label{lemma:0-homo-minmax}
Let $H:\R^n\setminus\{\vec0\}\times \R^m\setminus\{\vec0\}\to \R$ be a  continuous function which is zero-homogeneous on both components, and let    $X\subset \R^n\setminus\{\vec0\}$ and  $Y\subset \R^m\setminus\{\vec0\}$ be topologically  regular cones. 
Then 
$$\inf\limits_{ x\in X}\sup\limits_{y\in Y}H(\vec x,\vec y)=\sup\limits_{y\in Y}\inf\limits_{ x\in X}H(\vec x,\vec y)$$
if and only if
$$ \inf\limits_{ x\in X\cap \mathbb{Z}^n}\sup\limits_{y\in Y\cap \mathbb{Z}^m}H(\vec x,\vec y)=\sup\limits_{y\in Y\cap \mathbb{Z}^m}\inf\limits_{ x\in X\cap \mathbb{Z}^n}H(\vec x,\vec y).$$
\end{lemma}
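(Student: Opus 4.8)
The plan is to reduce the min-max identity for $H$ on the full cones $X\times Y$ to the identity on the lattice points $X\cap\mathbb{Z}^n\times Y\cap\mathbb{Z}^m$ by exploiting zero-homogeneity in each variable separately, in exactly the same spirit as Lemma~\ref{lemma:0-homo-optimal}. First I would establish the two one-sided ``density'' equalities for the iterated suprema and infima: for every fixed $\vec x\in X$,
\[
\sup\limits_{\vec y\in Y}H(\vec x,\vec y)=\sup\limits_{\vec y\in Y\cap\mathbb{Q}^m}H(\vec x,\vec y)=\sup\limits_{\vec y\in Y\cap\mathbb{Z}^m}H(\vec x,\vec y),
\]
where the first equality is continuity together with topological regularity of $Y$ (so $Y\subset\overline{Y\cap\mathbb{Q}^m}$) applied to the continuous function $\vec y\mapsto H(\vec x,\vec y)$, and the second is the rescaling trick: each $\vec y\in Y\cap\mathbb{Q}^m$ has a positive integer multiple in $Y\cap\mathbb{Z}^m$, and $H(\vec x,k\vec y)=H(\vec x,\vec y)$ by zero-homogeneity in the second slot. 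The same argument with the roles reversed gives, for every fixed $\vec y$, that $\inf_{\vec x\in X}H(\vec x,\vec y)=\inf_{\vec x\in X\cap\mathbb{Z}^n}H(\vec x,\vec y)$.

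Next I would combine these. Define $\varphi(\vec x):=\sup_{\vec y\in Y}H(\vec x,\vec y)$ and $\psi(\vec y):=\inf_{\vec x\in X}H(\vec x,\vec y)$. Since $\varphi$ is again zero-homogeneous on $X$ (a supremum of zero-homogeneous functions) and, being a sup of continuous functions, is at least lower semicontinuous, a short argument along the lines of Lemma~\ref{lemma:0-homo-optimal} shows $\inf_{\vec x\in X}\varphi(\vec x)=\inf_{\vec x\in X\cap\mathbb{Z}^n}\varphi(\vec x)$; but by the previous paragraph $\varphi(\vec x)=\sup_{\vec y\in Y\cap\mathbb{Z}^m}H(\vec x,\vec y)$ for every $\vec x\in X$, so the left-hand side of the lattice identity equals $\inf_{\vec x\in X}\sup_{\vec y\in Y}H(\vec x,\vec y)$. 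Symmetrically, the right-hand side of the lattice identity equals $\sup_{\vec y\in Y}\inf_{\vec x\in X}H(\vec x,\vec y)$. Hence the continuous min-max identity holds if and only if the lattice one does; this is an ``if and only if'' for free, because both sides of the equivalence have been shown to be, term by term, the same two numbers.

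The main obstacle is the semicontinuity issue in passing from $X$ to $X\cap\mathbb{Z}^n$ for the \emph{composite} functions $\varphi$ and $\psi$: $\varphi$ is a supremum of continuous functions, hence only lower semicontinuous in general, and $\psi$ is only upper semicontinuous, so the clean ``continuity $+$ regularity'' step used in Lemma~\ref{lemma:0-homo-optimal} does not literally apply. The fix is to avoid composing at all: instead of first forming $\varphi$ and then restricting its domain, I would interleave the two reductions — first replace the outer $\vec x$-domain $X$ by $X\cap\mathbb{Q}^n$ using regularity of $X$ and the fact that $\vec x\mapsto\sup_{\vec y\in Y}H(\vec x,\vec y)$ is lower semicontinuous (so that $\inf$ over a dense subset still matches, which requires a small argument: for the infimum we need \emph{upper} semicontinuity along sequences into $X\cap\mathbb{Q}^n$, which does \emph{not} follow from lower semicontinuity, so one must instead use that $H(\cdot,\vec y)$ is continuous for each fixed $\vec y$ and push the density through before taking the $\sup_{\vec y}$). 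Concretely: for $\vec x\in X$ pick $\vec x_k\in X\cap\mathbb{Q}^n$ with $\vec x_k\to\vec x$; then for each $\vec y$, $H(\vec x_k,\vec y)\to H(\vec x,\vec y)$, so $\sup_{\vec y}H(\vec x,\vec y)\le\liminf_k\sup_{\vec y}H(\vec x_k,\vec y)$, which already gives $\inf_{X}\varphi\ge\inf_{X\cap\mathbb{Q}^n}\varphi$, and the reverse inequality is trivial — so no upper semicontinuity is needed after all. Then the rescaling step moves $\mathbb{Q}^n$ to $\mathbb{Z}^n$ as before. Carrying out this interleaving carefully, and likewise for $\psi$, is the only delicate point; everything else is bookkeeping.
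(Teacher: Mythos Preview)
Your overall strategy---show directly that
\[
\inf_{\vec x\in X}\sup_{\vec y\in Y}H=\inf_{\vec x\in X\cap\mathbb{Z}^n}\sup_{\vec y\in Y\cap\mathbb{Z}^m}H
\quad\text{and}\quad
\sup_{\vec y\in Y}\inf_{\vec x\in X}H=\sup_{\vec y\in Y\cap\mathbb{Z}^m}\inf_{\vec x\in X\cap\mathbb{Z}^n}H,
\]
whence the equivalence is automatic---is exactly the paper's strategy. The first reduction step (replacing $Y$ by $Y\cap\mathbb{Z}^m$ for each fixed $\vec x$, via Lemma~\ref{lemma:0-homo-optimal}) is also the same.

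The gap is in your treatment of the outer variable. You correctly note that $\varphi(\vec x)=\sup_{\vec y\in Y}H(\vec x,\vec y)$ is in general only lower semicontinuous, and that passing from $\inf_X\varphi$ to $\inf_{X\cap\mathbb{Q}^n}\varphi$ requires upper semicontinuity. But your ``fix'' does not work: from $\varphi(\vec x)\le\liminf_k\varphi(\vec x_k)$ you cannot conclude $\inf_X\varphi\ge\inf_{X\cap\mathbb{Q}^n}\varphi$. Indeed, take $X=\R^2_+$ and $\varphi(\vec x)=0$ if $x_1/x_2=\pi$, $\varphi(\vec x)=1$ otherwise; this is zero-homogeneous and lower semicontinuous, yet $\inf_X\varphi=0<1=\inf_{X\cap\mathbb{Q}^2}\varphi$. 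Lower semicontinuity only tells you that rational approximants have values eventually \emph{above} $\varphi(\vec x)$, which is the wrong direction for controlling the infimum over rationals.

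The paper resolves this by upgrading $\varphi$ to a genuinely continuous function: since $H(\vec x,\cdot)$ is zero-homogeneous and continuous, one has $\varphi(\vec x)=\max_{\vec y\in\overline{Y}\cap\mathbb{S}^{m-1}}H(\vec x,\vec y)$, a maximum of a jointly continuous function over a \emph{compact} set, hence continuous in $\vec x$. Then Lemma~\ref{lemma:0-homo-optimal} applies directly to $\varphi$ itself. This compactness step is the missing ingredient in your argument; once you insert it, the rest of your plan goes through unchanged.
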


\begin{proof}
We shall prove that
$$\inf\limits_{ x\in X}\sup\limits_{y\in Y}H(\vec x,\vec y)=\inf\limits_{ x\in X\cap \mathbb{Z}^n}\sup\limits_{y\in Y\cap \mathbb{Z}^m}H(\vec x,\vec y)$$
and
$$\sup\limits_{y\in Y}\inf\limits_{ x\in X}H(\vec x,\vec y)=\sup\limits_{y\in Y\cap \mathbb{Z}^m}\inf\limits_{ x\in X\cap \mathbb{Z}^n}H(\vec x,\vec y).$$
Indeed, by Lemma \ref{lemma:0-homo-optimal}, $\sup\limits_{y\in Y\cap \mathbb{Z}^n}H(\vec x,\vec y)=\sup\limits_{y\in Y}H(\vec x,\vec y)$ for any $\vec x$. Since $H(\vec x,\vec y)$ is zero-homogeneous and continuous of $\vec y$, $$\sup\limits_{y\in Y}H(\vec x,\vec y)=\sup\limits_{y\in Y\cap \mathrm{S}^{m-1}}H(\vec x,\vec y)=\max\limits_{y\in \overline{Y}\cap \mathrm{S}^{m-1}}H(\vec x,\vec y).$$ It follows from the continuity of $H$ and the compactness of $\overline{Y}\cap \mathrm{S}^{m-1}$ that $\vec x\mapsto \max\limits_{y\in \overline{Y}\cap \mathrm{S}^{m-1}}H(\vec x,\vec y)$ is continuous. We are able to apply Lemma \ref{lemma:0-homo-optimal} again to derive that $\inf\limits_{ x\in X} \max\limits_{y\in \overline{Y}\cap \mathrm{S}^{m-1}}H(\vec x,\vec y)=\inf\limits_{ x\in X\cap \mathbb{Z}^n}\max\limits_{y\in \overline{Y}\cap \mathrm{S}^{m-1}}H(\vec x,\vec y) $. Thus, 
$$\inf\limits_{ x\in X}\sup\limits_{y\in Y}H(\vec x,\vec y)=\inf\limits_{ x\in X\cap \mathbb{Z}^n}\sup\limits_{y\in Y\cap \mathbb{Z}^m}H(\vec x,\vec y)$$ is proved. The proof of   $\sup\limits_{y\in Y}\inf\limits_{ x\in X}H(\vec x,\vec y)=\sup\limits_{y\in Y\cap \mathbb{Z}^m}\inf\limits_{ x\in X\cap \mathbb{Z}^n}H(\vec x,\vec y)$ is similar. 
\end{proof}

For $p$-homogeneous functions $F$ and $G$, taking $H=F/G$ in Lemma \ref{lemma:0-homo-optimal}, we find that the continuous optimization can be transformed into a discrete optimization   restricted on  $\mathbb{Z}^n$.  If we want to replace `$\sup$' by `$\max$', some necessary  conditions should be added, and at this time, $\mathbb{Z}^n$ can be changed to a certain finite feasible set like  $\{-N,\cdots,0,1,\cdots,N\}^n$.  Based on the piecewise linear extension in  \cite{JostZhang-PL} and  the general extension theory developed in Section \ref{sec:extension}, we can further replace $\mathbb{Z}^n$  by the simplest feasible set $\{-1,0,1\}^n$ or $\{0,1\}^n$. From this viewpoint, our extension theory makes progress on the converse of Lemma \ref{lemma:0-homo-optimal}.

\subsection{Characterization of the second eigenvalue}

We show the following characterization of the second (i.e., the first non-trivial)  eigenvalue of the function pair $(F,G)$, where we don't count the  multiplicity of the first eigenvalue. 
\begin{theorem}\label{thm:smallest-nonzero} Let $F$ and $G$ be   even and $p$-homogeneous nonnegative functions on $\R^n$. Suppose that $G$ is positive and convex, and $\Pi:=\{\text{zeros of }F/G\}\cup\{\vec0\}$ is a linear subspace, as well as $F(\vec x+\vec y)=F(\vec x)$, $\forall \vec y\in\Pi$, $\forall \vec x\in\R^n$.  Then 
\begin{equation}\label{eq:lambda-min}
    \min\limits_{x\in \Pi^\bot}\max\limits_{y\in\Pi}\frac{F(\vec x+\vec y)}{G(\vec x+\vec y)}=  \min\limits_{x\in \Pi^\bot}\frac{F(\vec x)}{\min\limits_{y\in \Pi}G(\vec x+\vec y)}=\min\limits_{x:\nabla G(\vec x)\cap\Pi^\bot\ne\varnothing}\frac{F(\vec x)}{G(\vec x)} =\lambda_{\dim\Pi+1}
\end{equation}
is the second smallest 
eigenvalue of $(F,G)$. 
\end{theorem}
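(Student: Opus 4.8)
The plan is to prove the three displayed quantities are equal and then pin the common value to $\lambda_{\dim\Pi+1}$ by two opposite inequalities. Write $d=\dim\Pi$. First I would record the soft facts that will be used repeatedly: since $G$ is positive, $p$-homogeneous and convex it is continuous and coercive ($G(\vec x)\ge(\min_{\|\vec u\|=1}G(\vec u))\|\vec x\|^p$ with a positive constant); $F\ge 0$ with $\{F=0\}=\Pi$, so $F>0$ on $\Pi^\perp\setminus\{\vec0\}$; and $F/G\ge 0$ with $F/G\equiv 0$ on $\Pi\setminus\{\vec0\}$. The \emph{first equality} is then immediate from the invariance $F(\vec x+\vec y)=F(\vec x)$ for $\vec y\in\Pi$: for $\vec x\in\Pi^\perp\setminus\{\vec0\}$ one has $\max_{\vec y\in\Pi}\tfrac{F(\vec x+\vec y)}{G(\vec x+\vec y)}=F(\vec x)\max_{\vec y\in\Pi}\tfrac1{G(\vec x+\vec y)}=\tfrac{F(\vec x)}{\min_{\vec y\in\Pi}G(\vec x+\vec y)}$, the inner extremum being attained and positive by coercivity of $G$ and the fact that $\vec x+\vec y\ne\vec0$ (else $\vec x\in\Pi\cap\Pi^\perp=\{\vec0\}$). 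Taking the minimum over $\vec x$ (the quotient is $0$-homogeneous on $\Pi^\perp$, so the minimum is attained on the compact $\Pi^\perp\cap\mathbb S^{n-1}$) gives the first equality; call the value $\mu$ and fix a minimizer $\vec x_0\in\Pi^\perp\setminus\{\vec0\}$.

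For the \emph{second equality} I would use convex first-order optimality "across a slice". For $\vec x\in\Pi^\perp$ let $\vec y^\ast$ minimize the convex coercive map $\vec y\mapsto G(\vec x+\vec y)$ on $\Pi$. Minimizing a convex function over the linear subspace $\Pi$ is characterized (sum rule, normal cone of a subspace $=\Pi^\perp$) by $\nabla G(\vec x+\vec y^\ast)\cap\Pi^\perp\ne\varnothing$; conversely, if $\nabla G(\vec z)\cap\Pi^\perp\ne\varnothing$ then, writing $\vec z=P_{\Pi^\perp}\vec z+P_\Pi\vec z$, the point $P_\Pi\vec z$ is a critical, hence global, minimizer of $G(P_{\Pi^\perp}\vec z+\cdot)$ on $\Pi$. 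Because $F$ is $\Pi$-translation invariant, the map $\vec x\mapsto\vec x+\vec y^\ast$ identifies the minimization defining the second expression with the one defining the third, preserving the value of $F/G$. One extra check: a minimizer $\vec z$ of the third expression cannot lie in $\Pi$, for if $\vec z\in\Pi\setminus\{\vec0\}$ and $\vec w\in\nabla G(\vec z)$, Euler's identity gives $\langle\vec w,\vec z\rangle=pG(\vec z)>0$, incompatible with $\vec w\in\Pi^\perp$; hence $P_{\Pi^\perp}\vec z\ne\vec0$ and the identification is genuine. This yields second $=$ third $=\mu$, and in particular $\mu>0$ (otherwise some $\vec x\in\Pi^\perp\setminus\{\vec0\}$ would satisfy $F(\vec x)=0$, i.e.\ $\vec x\in\Pi\cap\Pi^\perp=\{\vec0\}$).

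It remains to show $\lambda_{d+1}=\mu$. For $\lambda_{d+1}\le\mu$: on the $(d{+}1)$-dimensional subspace $W:=\Pi\oplus\R\vec x_0$ (note $\vec x_0\notin\Pi$), any $\vec z=t\vec x_0+\vec y$ with $t\ne0$, $\vec y\in\Pi$ satisfies $F(\vec z)=F(t\vec x_0)=|t|^pF(\vec x_0)$ and $G(\vec z)\ge\min_{\vec y'\in\Pi}G(t\vec x_0+\vec y')=|t|^p\min_{\vec y'\in\Pi}G(\vec x_0+\vec y')$ by homogeneity, so $F(\vec z)/G(\vec z)\le\mu$; and $F/G\equiv0$ on $W\cap\Pi\setminus\{\vec0\}$. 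Thus $A:=W\cap\mathbb S^{n-1}\in\Gamma_{d+1}$ with $\sup_AF/G\le\mu$, giving $\lambda_{d+1}\le\mu$. For $\lambda_{d+1}\ge\mu$: first, $\lambda_1=\cdots=\lambda_d=0$ (take $A=\Pi\cap\mathbb S^{n-1}$, where $F/G\equiv0$), and the set of eigenvectors with eigenvalue $0$ is exactly $\Pi\setminus\{\vec0\}$ — if $\vec0\in\nabla F(\vec x)$ then Euler's identity forces $pF(\vec x)=0$, so $\vec x\in\Pi$, and conversely every nonzero point of $\Pi$ minimizes $F\ge0$ — so its genus equals $d$, and Proposition~\ref{cor:LScritical} then forbids $\lambda_{d+1}=0$; hence $\lambda_{d+1}>0$. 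Take an eigenpair $(\lambda_{d+1},\vec x)$ from Proposition~\ref{cor:LScritical}. Since $F(\cdot+\vec y)=F(\cdot)$ for $\vec y\in\Pi$, every Clarke subgradient of $F$ is $\Pi$-orthogonal, i.e.\ $\nabla F(\vec x)\subseteq\Pi^\perp$; from $\vec0\in\nabla F(\vec x)-\lambda_{d+1}\nabla G(\vec x)$ we get $\vec u\in\nabla G(\vec x)$ with $\lambda_{d+1}\vec u\in\Pi^\perp$, and $\lambda_{d+1}\ne0$ gives $\vec u\in\nabla G(\vec x)\cap\Pi^\perp$, so $F(\vec x)/G(\vec x)\ge\mu$; the Euler-identity fact for the pair of $p$-homogeneous functions (with $G(\vec x)\ne0$) gives $\lambda_{d+1}=F(\vec x)/G(\vec x)\ge\mu$. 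Combining, $\lambda_{d+1}=\mu$, and since $\lambda_d=0<\mu$ it is the second (first nontrivial) eigenvalue.

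The main obstacle I expect is the second (convex-analytic) equality: making rigorous the equivalence between "$P_\Pi\vec z$ minimizes $G$ over the affine slice $P_{\Pi^\perp}\vec z+\Pi$" and "$\nabla G$ at $\vec z$ meets $\Pi^\perp$" — precisely the Gauss-map-of-the-graph phenomenon advertised in the introduction — together with the accompanying soft analysis (coercivity, continuity and attainment for the partial minimum $\vec x\mapsto\min_{\vec y\in\Pi}G(\vec x+\vec y)$, and the fact that no minimizer of the third expression sits inside $\Pi$). The homogeneity bookkeeping in the "$\le$" direction, the computation $\nabla F(\vec x)\subseteq\Pi^\perp$, and the Euler-identity argument excluding $\lambda_{d+1}=0$ are routine by comparison.
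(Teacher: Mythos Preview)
Your proof is correct, and the first two equalities together with the upper bound $\lambda_{d+1}\le\mu$ follow the same lines as the paper (the paper packages the convex-analytic equivalence ``$\nabla G(\vec z)\cap\Pi^\perp\ne\varnothing$ iff $\vec z$ minimizes $G$ on its $\Pi$-slice'' into a separate proposition about $G_\Pi(\vec x):=\inf_{\vec z\in\Pi}G(\vec x+\vec z)$, but the content is the same).

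The genuine difference is in the lower bound $\lambda_{d+1}\ge\mu$. The paper argues topologically: approximate $G$ by strictly convex $C^1$ functions $G_m$, for which the map $\vec x\mapsto\vec x-\vec y_{\vec x}$ (with $\vec y_{\vec x}$ the unique slice-minimizer) is an odd homeomorphism, then use the intersection property of the $\mathbb Z_2$-genus to force every $A\in\Gamma_{d+1}$ to meet the image of $\Pi^\perp$ under this map, and finally pass to the limit via $\Gamma$-convergence. You instead work directly with an eigenvector $\vec x$ at $\lambda_{d+1}$: the $\Pi$-invariance of $F$ gives $\nabla F(\vec x)\subset\Pi^\perp$, so the eigenvalue relation with $\lambda_{d+1}\ne0$ forces some subgradient of $G$ at $\vec x$ into $\Pi^\perp$, placing $\vec x$ in the feasible set of the third minimization. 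This is considerably more elementary and sidesteps both the approximation and the $\Gamma$-convergence entirely. Moreover, your argument uses only $\lambda\ne0$, so it actually shows that \emph{every} positive eigenvalue is $\ge\mu$; this gives the paper's separate step~(III) (no eigenvalue in $(0,\lambda_{d+1})$, proved there via weak-slope theory) for free. You should state this explicitly, since your closing sentence ``it is the second (first nontrivial) eigenvalue'' needs precisely that fact, not just $\lambda_d=0<\lambda_{d+1}$. The paper's topological route, on the other hand, would survive weakening the hypothesis $F(\vec x+\vec y)=F(\vec x)$ for $\vec y\in\Pi$, which your argument uses essentially through $\nabla F\subset\Pi^\perp$.
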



\begin{example}\label{exam:character-2nd-p-Lap}
For a weighted  graph $(V,W)$ with $V=\{1,\cdots,n\}$ and $W=(w_{ij})_{i,j\in V}$, let $F(\vec x)=\sum_{i,j\in V}w_{ij}|x_i-x_j|^p$ and $G(\vec x):=\|\vec x\|_p^p$. Suppose that the graph has $k$ connected components $U_1,\cdots,U_k\subset V$, and let $\Pi=\mathrm{span}\{\vec1_{U_i}:i=1,\cdots,k\}$. Then \eqref{eq:lambda-min} in Theorem \ref{thm:smallest-nonzero} reduces to
\begin{align*}
&\min\limits_{\vec x\not\in\mathrm{span}(\vec1_{U_1},\cdots,\vec1_{U_k})}\frac{\sum_{i,j}w_{ij}|x_i-x_j|^p}{\min\limits_{t_1,\cdots,t_k\in\R}\|\vec x- t_1\vec1_{U_1}-\cdots-t_k\vec1_{U_k}\|_p^p}
\\=~&\min\limits_{\vec x:\langle \nabla \|\vec x\|^p_p,\vec 1_{U_i} \rangle\ni 0,\forall i}\frac{\sum_{i,j}w_{ij}|x_i-x_j|^p}{\|\vec x\|_p^p} =\lambda_{k+1}    
\end{align*}
which is a generalization of the characterization for  the second eigenvalue
of the graph $p$-Laplacian (see Chung \cite{Chung},  Hein et al \cite{HeinBuhler2010} and  Chang \cite{Chang16}). 
\end{example}

The proof of Theorem \ref{thm:smallest-nonzero} is based on the following auxiliary proposition:
 
\begin{pro}\label{pro:GPi-property}
Given a convex function $G:\R^n\to \R$ and a linear subspace $\Pi$ of $\R^n$,  the convex function $G_\Pi$ defined by 
\begin{equation}\label{eq:def:G_Pi}
G_\Pi(\vec x):=\inf\limits_{\vec z\in\Pi}G(\vec x+\vec z)
\end{equation}
is  translation invariant along  $\Pi$, i.e., $G_\Pi(\vec x+\vec z)=G_\Pi(\vec x)$, $\forall \vec x\in \R^n$, $\forall\vec z\in\Pi$. And for any $\vec x$, 
\begin{equation}\label{eq:gradient-convex-G}
    \nabla G(\vec x)\cap \Pi^{\bot}\subset\nabla G_{\Pi}(\vec x)\ne\varnothing\;\text{ and }\;\nabla G_{\Pi}(\vec x)=\nabla G(\vec x_{\Pi})\cap \Pi^{\bot}\neq\varnothing,
\end{equation}
where $x_\Pi$ is a minimizer (if exists) of $G$ restricted on the affine plane $\vec x+\Pi$.  
Moreover, we have
\begin{equation}\label{eq:minimizer-gradient-G}
    \{\vec x\in\R^n:\nabla G(\vec x)\cap \Pi^\bot\ne\varnothing\}=\bigcup\limits_{\vec x\in\R^n}\{\text{minimizers  of }G|_{\Pi+\vec x}\}=\{\vec x\in\R^n:G(\vec x)=G_\Pi(\vec x)\}
\end{equation} 
and it is closed (but might be empty\footnote{ 
 For example, taking $G(\vec x)=e^{x_1}+e^{x_2}$, $\forall \vec x=(x_1,x_2)\in\R^2$, and $\Pi=\{(0,x_2):x_2\in\R\}$, one has  $G_\Pi(\vec x)=e^{x_1}$ and $\{\vec x:\nabla G(\vec x)\cap \Pi^\bot\ne\varnothing\}=\varnothing$. So, the set $\{\vec x:\nabla G(\vec x)\cap \Pi^\bot\ne\varnothing\}$ might be empty, but in any cases it is closed. }).
    \end{pro}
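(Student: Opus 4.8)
\textbf{Proof strategy for Proposition \ref{pro:GPi-property}.}

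The plan is to establish the four claims in the order they are stated, relying throughout on the basic convex-analytic fact that for a convex function $G$ the subdifferential $\nabla G(\vec x)$ (the Clarke subdifferential, which here coincides with the convex subdifferential) is nonempty, compact and convex at every point. First I would treat translation invariance of $G_\Pi$: since $\vec z\mapsto \vec x+\vec z$ is a bijection of $\Pi$ onto itself, the infimum defining $G_\Pi(\vec x+\vec z_0)$ ranges over exactly the same set of values as that defining $G_\Pi(\vec x)$, so the two are equal; convexity of $G_\Pi$ is the standard fact that infimal projection of a convex function along a subspace is convex (the partial infimum of a jointly convex function is convex). I would also note that $G_\Pi$ is finite-valued when the infimum is attained, and more generally proper, so its subdifferential is well-behaved.

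Next, for the subdifferential identities \eqref{eq:gradient-convex-G}, the key computation is to characterize $\nabla G_\Pi(\vec x)$ via the support/conjugate function: $G_\Pi = G \,\square\, \iota_{\Pi^\perp}^*$-type reasoning, or more directly, $\vec u\in\nabla G_\Pi(\vec x)$ iff $G_\Pi(\vec y)\ge G_\Pi(\vec x)+\langle \vec u,\vec y-\vec x\rangle$ for all $\vec y$. Testing this inequality along $\vec y=\vec x+\vec z$, $\vec z\in\Pi$, and using translation invariance forces $\langle \vec u,\vec z\rangle=0$, hence $\vec u\in\Pi^\perp$; this gives $\nabla G_\Pi(\vec x)\subset\Pi^\perp$. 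Conversely, if $\vec u\in\nabla G(\vec x)\cap\Pi^\perp$ then for all $\vec y,\vec z$ one has $G(\vec y+\vec z)\ge G(\vec x)+\langle\vec u,\vec y+\vec z-\vec x\rangle = G(\vec x)+\langle\vec u,\vec y-\vec x\rangle$, and taking the infimum over $\vec z$ on the left while noting $G_\Pi(\vec x)\le G(\vec x)$ does not immediately close the gap—so here I would instead argue at a minimizer $\vec x_\Pi$ of $G|_{\vec x+\Pi}$ (when it exists): there $\vec 0\in\nabla(G|_{\vec x+\Pi})(\vec x_\Pi)$, which by the sum rule means $\nabla G(\vec x_\Pi)\cap\Pi^\perp\ne\varnothing$, and one checks $\nabla G(\vec x_\Pi)\cap\Pi^\perp\subset\nabla G_\Pi(\vec x_\Pi)=\nabla G_\Pi(\vec x)$ by the subgradient inequality together with $G_\Pi(\vec y)\le G(\vec y_\Pi')$ for appropriate representatives. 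The nonemptiness $\nabla G_\Pi(\vec x)\ne\varnothing$ then follows because $G_\Pi$ is a finite convex function on $\R^n$. Chasing inclusions in both directions yields $\nabla G_\Pi(\vec x)=\nabla G(\vec x_\Pi)\cap\Pi^\perp$ and the inclusion $\nabla G(\vec x)\cap\Pi^\perp\subset\nabla G_\Pi(\vec x)$.

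For \eqref{eq:minimizer-gradient-G}, the middle set equals the right-hand set almost by definition: $\vec x$ minimizes $G$ on $\Pi+\vec x$ exactly when $G(\vec x)=\inf_{\vec z\in\Pi}G(\vec x+\vec z)=G_\Pi(\vec x)$; and $\vec x$ is such a minimizer iff $\vec 0\in\nabla(G|_{\Pi+\vec x})(\vec x)$ iff $\nabla G(\vec x)\cap\Pi^\perp\ne\varnothing$, by the optimality condition for the restriction of a convex function to an affine subspace (the normal cone to $\Pi+\vec x$ at any point is $\Pi^\perp$). Finally, closedness: the set equals $\{\vec x: G(\vec x)-G_\Pi(\vec x)\le 0\}$, and since $G$ is continuous (finite convex on $\R^n$) and $G_\Pi$ is convex hence also continuous wherever finite, $G-G_\Pi$ is upper semicontinuous on its domain, so this sublevel set is closed; the footnote example with $G(\vec x)=e^{x_1}+e^{x_2}$, $\Pi=\{0\}\times\R$ shows the set can be empty, which is consistent since $\varnothing$ is closed.

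The main obstacle I anticipate is the careful bookkeeping in \eqref{eq:gradient-convex-G}: proving the two inclusions $\nabla G(\vec x)\cap\Pi^\perp\subset\nabla G_\Pi(\vec x)$ and $\nabla G_\Pi(\vec x)\subset\nabla G(\vec x_\Pi)\cap\Pi^\perp$ requires handling the possibility that the infimum in \eqref{eq:def:G_Pi} is \emph{not} attained (so no $\vec x_\Pi$ exists), in which case one must argue directly from the subgradient inequalities and the translation invariance of $G_\Pi$, rather than from a first-order optimality condition. A clean way around this is to prove the identities first under the assumption that a minimizer exists, and then observe that when it does not, either side may still be compared via limiting arguments along minimizing sequences $\vec x+\vec z_k$ with $G(\vec x+\vec z_k)\downarrow G_\Pi(\vec x)$, using that any convergent subsequence of subgradients $\vec u_k\in\nabla G(\vec x+\vec z_k)\cap\Pi^\perp$ (if bounded) produces a subgradient of $G_\Pi$; unbounded cases are excluded precisely when $G$ is, say, coercive modulo $\Pi$, which holds in all the applications (in particular for the positive $p$-homogeneous $G$ of Theorem \ref{thm:smallest-nonzero}).
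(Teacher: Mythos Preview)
Your strategy is sound and largely parallels the paper's proof for translation invariance, convexity, and the subdifferential inclusions (the paper organizes \eqref{eq:gradient-convex-G} into four short Claims matching your outline). However, the ``gap'' you flag in proving $\nabla G(\vec x)\cap\Pi^\perp\subset\nabla G_\Pi(\vec x)$ is illusory: from $G_\Pi(\vec y)\ge G(\vec x)+\langle\vec u,\vec y-\vec x\rangle$ and the \emph{always-true} inequality $G(\vec x)\ge G_\Pi(\vec x)$, you immediately obtain $G_\Pi(\vec y)\ge G_\Pi(\vec x)+\langle\vec u,\vec y-\vec x\rangle$; no detour through $\vec x_\Pi$ or limiting sequences is needed. (Indeed $\vec u\in\nabla G(\vec x)\cap\Pi^\perp$ forces $G(\vec x+\vec z)\ge G(\vec x)$ for all $\vec z\in\Pi$, so $\vec x$ itself is already a minimizer.) The paper's Claim~4 takes the same route.

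Where your approach genuinely diverges is in \eqref{eq:minimizer-gradient-G} and in closedness. For the equivalence ``$\vec x$ minimizes $G|_{\vec x+\Pi}$ iff $\nabla G(\vec x)\cap\Pi^\perp\ne\varnothing$'', you invoke the standard constrained-optimality condition $\vec 0\in\nabla G(\vec x)+N_{\vec x+\Pi}(\vec x)=\nabla G(\vec x)+\Pi^\perp$; the paper instead gives a self-contained geometric argument, introducing the cones $N_{\vec x}^{\pm}=\{\vec w:\pm\langle\vec v,\vec w\rangle>0\ \forall\,\vec v\in\nabla G(\vec x)\}$, showing that minimality on $\vec x+\Pi$ forces $\Pi\cap N_{\vec x}^-=\varnothing$, and then deducing $\nabla G(\vec x)\cap\Pi^\perp\ne\varnothing$ via a separation-type step. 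For closedness, you use the level-set description $\{G=G_\Pi\}$ together with continuity of $G$ and $G_\Pi$ (note: you want $G-G_\Pi$ lower semicontinuous, not upper, for the sublevel set to be closed---though both hold once $G_\Pi$ is finite-valued convex), whereas the paper argues directly via outer semicontinuity of $\vec x\mapsto\nabla G(\vec x)$. Your routes are shorter and rely on textbook convex-analysis facts; the paper's are more elementary and self-contained.
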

    
    \begin{example}
Let $G(\cdot):=\|\cdot\|_p^p$ and $\Pi=\mathrm{span}\{\vec y\}$ for some $\vec y\in\R^n\setminus\{\vec0\}$.  Then we get a new interpretation for the $p$-median (or $p$-mean).  Indeed, the condition $\nabla G(\vec x)\cap \Pi^\bot\ne\varnothing$ is equivalent to $\langle \nabla \|\vec x\|^p_p,\vec y \rangle\ni 0$; while the term
$\mathop{\mathrm{argmin}}\limits_{t\in\R}\|\vec x-t\vec y\|_p^p$ indicates the $p$-median along the direction $\vec y$.

In many practical situations, we set $\vec y=\vec 1$. For example,  $\mathop{\mathrm{argmin}}\limits_{t\in\R}\|\vec x-t\vec 1\|_2^2$ is the average  of $\vec x$; while $\mathop{\mathrm{argmin}}\limits_{t\in\R}\|\vec x-t\vec 1\|_1$ is the median of $\vec x$, w.r.t. a prescribed  weight  \cite{HeinBuhler2010}. 
    \end{example}

\begin{proof}
By the definition \eqref{eq:def:G_Pi}, $G_\Pi(\vec x+\vec z)=\inf\limits_{\vec z'\in\Pi}G(\vec x+\vec z+\vec z')=\inf\limits_{\vec z''\in\Pi}G(\vec x+\vec z'')=G_\Pi(\vec x)$, where $\vec z'':=\vec z+\vec z'$. This confirms  the translating invariant property. 

\vspace{0.09cm}

Convexity of $G_\Pi$: for any $\vec x,\vec y\in \R^n$, $\forall t\in[0,1]$,
\begin{align*}
tG_\Pi(\vec x)+(1-t)G_\Pi(\vec y)
&=t\inf\limits_{\vec z_1\in\Pi}G(\vec x+\vec z_1)+(1-t)\inf\limits_{\vec z_2\in\Pi}G(\vec y+\vec z_2)
\\ &\ge\inf\limits_{\vec z_1,\vec z_2\in\Pi}G(t(\vec x+\vec z_1)+(1-t)(\vec y+\vec z_2))
\\&=\inf\limits_{\vec z_1,\vec z_2\in\Pi}G(t\vec x+(1-t)\vec y+t\vec z_1+(1-t)\vec z_2))
\\ &=\inf\limits_{\vec z\in\Pi}G(t\vec x+(1-t)\vec y+\vec z) =  G_\Pi(t\vec x+(1-t)\vec y).
 \end{align*}

Closedness of $\{\vec x\in\R^n:\nabla G(\vec x)\cap \Pi^\bot\ne\varnothing\}$: Suppose $\vec x_n\to \vec x$ with $\nabla G(\vec x_n)\cap \Pi^\bot\ne\varnothing$. By the u.s.c. of $\nabla G(\cdot)$, there exist a subsequence $\{n_m\}$  and $\vec y_{n_m}\in \nabla G(\vec x_{n_m})\cap \Pi^\bot$ such that $\vec y_{n_m}\to \vec y\in \nabla G(\vec x)$. Since $\Pi^\bot$ is closed, we have $\vec y\in \Pi^\bot$. This means  $\vec y\in \nabla G(\vec x)\cap \Pi^\bot\ne\varnothing$. 

The  relation \eqref{eq:gradient-convex-G}  is a combination of the following claims:
\begin{enumerate}[{Claim} 1.]
\item $\nabla G(\vec x_\Pi)\cap \Pi^\bot\ne\varnothing$:

It is deduced by \eqref{eq:minimizer-gradient-G}, which is proved in the next part. 

\item 
$\nabla G_\Pi(\vec x)\subset \Pi^\bot$:

Note that for any $\vec  y\in \nabla G_\Pi(\vec x)$, $0=G_\Pi(\vec x+\vec z)-G_\Pi(\vec x)\ge\langle \vec y,\vec z\rangle$, $\forall \vec z\in\Pi$. This implies that $0=\langle \vec y,\vec z\rangle$, $\forall \vec z\in\Pi$, i.e., $\vec y\bot \Pi$. Hence, $\nabla G_\Pi(\vec x)\subset \Pi^\bot$.

\item $\nabla G_\Pi(\vec x)\subset \nabla G(\vec x_\Pi)$:

For $\vec y\in \nabla G_\Pi(\vec x)$, for any $\vec x'\in \R^n$, $G(\vec x')-G(\vec x_\Pi)\ge G_\Pi(\vec x')-G(\vec x_\Pi)=G_\Pi(\vec x')-G_\Pi(\vec x)\ge\langle\vec  y,\vec x'-\vec x\rangle$, which derives  $\vec y\in \nabla G(\vec x_\Pi)$. Thus, $\nabla G_\Pi(\vec x)\subset \nabla G(\vec x_\Pi)$.





\item $\nabla G(\vec x)\cap \Pi^\bot\subset \nabla G_\Pi(\vec x)$:

For any $\vec y\in \nabla G(\vec x)\cap \Pi^\bot$, $G(\vec x')-G(\vec x)\ge \langle y,\vec x'-\vec x\rangle$.  Thus, for any $\vec z,\vec z'\in \Pi$ satisfying $G(\vec x+\vec z)\le G(\vec x)$,  $G(\vec x'+\vec z')-G(\vec x+\vec z)\ge G(\vec x'+\vec z')-G(\vec x)\ge \langle \vec y,\vec x'+\vec z'-\vec x\rangle=\langle \vec y,\vec x'-\vec x\rangle$. Letting $\vec z'$ and  $\vec z$ be such that $G(\vec x+\vec z)\to G_\Pi(\vec x)$ and $G(\vec x'+\vec z')\to G_\Pi(\vec x')$, we immediately get $G_\Pi(\vec x')-G_\Pi(\vec x)\ge \langle y,\vec x'-\vec x\rangle$. Therefore, $\vec y\in \nabla G_\Pi(\vec x)$. 
\end{enumerate}


We are ready to prove \eqref{eq:minimizer-gradient-G}, that is, $$ \{\vec x:\nabla G(\vec x)\cap \Pi^\bot\ne\varnothing\}=\{\vec y:G(\vec y)=G_\Pi(\vec x)\text{ with }\vec y-\vec x\in\Pi\text{ for some }\vec x\}.$$
 
Note that $\nabla G(\vec x)\cap \Pi^\bot\ne\varnothing$ 
$\Longleftrightarrow$ $\exists \vec v\in \nabla G(\vec x)$ with $\vec v\bot\Pi$ $\Longleftrightarrow$ $\exists\vec v\in\nabla G(\vec x)$ s.t. $G(\vec x+\vec z)-G(\vec x)\ge \langle \vec v,\vec z\rangle=0$, $\forall \vec z\in \Pi$  $\Longrightarrow$ $\vec x$ is a minimizer of $G$ on $\vec x+\Pi$. 

Conversely, suppose $\vec x$ is a minimizer of $G$ restricted  on $\vec x+\Pi$. Note that
\begin{align*}
    \{\vec y\in\R^n:\langle \vec v,\vec y-\vec x\rangle=0\text{ for some }\vec v\in \nabla G(\vec x)\}&=\vec x+\bigcup\limits_{\vec v\in \nabla G(\vec x)} \vec v^\bot
    \\&=\R^n\setminus (\vec x+N_{\vec x}^{+}\cup N_{\vec x}^{-})
\end{align*} 
where $N_{\vec x}^{\pm}=\{\vec w\in\R^n:\pm\langle\vec v,\vec w\rangle>0,\,\forall \vec v\in\nabla G(\vec x)\}$ 
and $\vec v^\bot=\{\vec x\in \R^n:\langle\vec x,\vec v\rangle=0\}$ is the orthogonal complement of $\vec v$.

\textbf{Statement}. $\vec x$ is the minimizer of $G$ restricted on the closed
cone $\R^n\setminus (\vec x+ N_{\vec x}^{-})$, and  $\vec x$ is also  the local  maximizer of $G$ restricted on the  cone  $\vec x+N_{\vec x}^{-}$.

\textbf{Proof}. For any $\vec y\in \R^n\setminus (\vec x+ N_{\vec x}^{-})$, there exists $\vec v\in\nabla G(\vec x)$ such that $\langle\vec v,\vec y-\vec x\rangle\ge 0$. Hence,  $G(\vec y)-G(\vec x)\ge \langle\vec v,\vec y-\vec x\rangle\ge 0$. 

For any $\vec y\in \vec x+ N_{\vec x}^{-}$, $\langle\vec v,\vec y-\vec x\rangle<0$,  $\forall \vec v\in \nabla G(\vec x)$. By the compactness of $\nabla G(\vec x)$, there exists $\delta>0$ such that  $\langle\vec v,\vec y-\vec x\rangle<-\delta$, $\forall \vec v\in \nabla G(\vec x)$. Thus, the directional derivative along the direction $\vec y-\vec x$ at $\vec x$ is
$$
\limsup_{t\to0^+,\vec x'\to \vec x}\frac{G(\vec x'+t(\vec y-\vec x))-G(\vec x')}{t}=\max_{\vec v\in \nabla G(\vec x)}\langle\vec v, \vec y-\vec x\rangle<-\frac{\delta}{2}.$$
Hence, there exists a neighborhood $U_{\vec x}$ of $\vec x$ such that $G(\vec x'+t(\vec y-\vec x))-G(\vec x')<-\frac\delta2t$ for sufficiently small $t>0$ and $\vec x'\in U_{\vec x}$. Particularly, $G(\vec x+t(\vec y-\vec x))<G(\vec x)$ for sufficiently small $t>0$.  Thus, we complete the proof.

\vspace{0.2cm}

 By the above statement, if $\vec x$ is a minimizer of $G$ restricted  on $\vec x+\Pi$, then $\Pi\cap N_{\vec x}^{-}=\varnothing$, i.e.,  $\Pi\subset \R^n\setminus (N_{\vec x}^{+}\cup N_{\vec x}^{-})=\bigcup\limits_{\vec v\in \nabla G(\vec x)} \vec v^\bot$, which implies $\Pi\subset \vec v^\bot$ for some $\vec v\in \nabla G(\vec x)$, that is, $\nabla G(\vec x)\cap \Pi^\bot\ne\varnothing$.  
\end{proof}

\begin{proof}[Proof of Theorem \ref{thm:smallest-nonzero}]
Let
\begin{equation}
\label{eq:linking-d+1}
   \tilde{\lambda}:=\inf\limits_{x\not\in \Pi}\frac{F_{\Pi}(\vec x)}{\min\limits_{y\in \Pi}G(\vec x-\vec y)}\;\;\;\text{ and }\;\;\;  \hat{\lambda}:=\min\limits_{x:\nabla G(\vec x)\cap\Pi^\bot\ne\varnothing}\frac{F_\Pi(\vec x)}{G(\vec x)} .
\end{equation}
We shall prove that both $ \tilde{\lambda}$ and $\hat{\lambda}$  coincide with  $\lambda_{d+1}$.  Denote by $d=\dim\Pi$. 
Proposition \ref{pro:GPi-property} derives  
$\{\vec x:\nabla G(\vec x)\cap \Pi^\bot\ne\varnothing\}=\{\vec x:G(\vec x)=G_\Pi(\vec x)\}$, and thus  
$$\hat{\lambda}=\min\limits_{\vec x:G(\vec x)=G_\Pi(\vec x)}\frac{F_\Pi(\vec x)}{G(\vec x)}=\min\limits_{\vec x:G(\vec x)=G_\Pi(\vec x)}\frac{F_\Pi(\vec x)}{G_\Pi(\vec x)}\ge \inf\limits_{\vec x\not\in\Pi}\frac{F_\Pi (\vec x)}{G_\Pi(\vec x)}=\inf\limits_{\vec x\in\Pi^\bot}\frac{F_\Pi (\vec x)}{G_\Pi(\vec x)}=\tilde{\lambda}.$$ 

Since $\Pi\in \Gamma_d$ and $\frac {F(\vec x)}{G(\vec x)}=0$, $\forall\vec x\in \Pi$, we have $\lambda_1=\ldots=\lambda_d=0$. According to the local compactness of $\Pi^\bot$, the zero-homogeneity of $\frac FG$ and the fact that $F(\vec x)>0$ whenever $\vec x\in  \Pi^\bot\setminus \Pi$,  we obtain  $\tilde{\lambda}>0$. The remaining part of the proof is divided into the following steps: 
\begin{enumerate}[(I)]
    \item $\lambda_{d+1}\ge \tilde{\lambda}$:\newline
    
    It is clear that  $\dim \Pi^\bot=n-d$. 
        We first suppose that $G$ is strictly convex and $C^1$-smooth.
    Then, 
    for each $\vec x$ there is a unique  $\vec y_x\in \Pi$ such that   $G(\vec x-\vec y_x)=\min_{y\in \Pi}G(\vec x-\vec y)$
    and the map $\varphi:\vec x\mapsto \vec x-\vec y_x$ is $C^1$-smooth.  Moreover, $\varphi|_{\Pi^\bot}:\Pi^\bot\to \varphi(\Pi^\bot)$ is  bicontinuous (i.e., homeomorphism). Clearly, $\varphi$ satisfies  $-\vec x\mapsto -\vec x-\vec y_{-x}=-\vec x+\vec y_x$, which implies that $\varphi$ is odd. Hence, if we let $\vec x'$ be the projection of $\vec x$ to $\Pi^\bot$, we get an odd homeomorphism $\psi:\R^n\to \R^n$, $\vec x\mapsto \vec x-\vec y_{x'}$ which is a natural extension of $\varphi|_{\Pi^\bot}$.\newline
    Thus, by the homotopy   property of the $\mathbb{Z}_2$-genus, for any $S\in \Gamma_{d+1}$,  $\psi^{-1}(S)\in \Gamma_{d+1}$. Moreover, by the intersection property of the $\mathbb{Z}_2$-genus, $\psi^{-1}(S)\cap \Pi^\bot\ne\varnothing$, which implies $S\cap \psi(\Pi^\bot)=\psi(\psi^{-1}(S)\cap \Pi^\bot)\ne\varnothing$. Also note that $\psi(\Pi^\bot)=\varphi(\Pi^\bot)$.   Hence for any $S\in \Gamma_{d+1}$, $$\sup\limits_{x\in S}\frac{F(\vec x)}{G(\vec x)}\ge \inf\limits_{x\in \varphi(\Pi^\bot)}\frac{F(\vec x)}{G(\vec x)}=\tilde{\lambda}.$$ This proves that $\lambda_{d+1}\ge \tilde{\lambda}$. 
    
Now for general $G$ that is $p$-homogeneous and convex, take a sequence $\{G_n\}_{n\ge 1}$ of strictly convex and $C^1$-smooth $p$-homogeneous  functions that   converges to $G$.  Then by the theory of Gamma-convergence \cite{Braides02,DM14},  $\lambda_{d+1}(F,G_n)\to \lambda_{d+1}$  and $\tilde{\lambda}(F,G_n)\to \tilde{\lambda}$,  where the constants  $\lambda_{d+1}(F,G_n)$ and $\tilde{\lambda}(F,G_n)$ are the  corresponding quantities for the function pair $(F,G_n)$.

     \item $\lambda_{d+1}\le \tilde{\lambda}$:


     For any $\vec x\in \Pi^\bot\setminus \Pi$, let $\Pi':=\mathrm{span}(\Pi\cup\{\vec x\})$. Then, $\Pi'\in \Gamma_{d+1}$ and
     $$\lambda_{d+1}\le \sup\limits_{x'\in \Pi'}\frac{F(\vec x')}{G(\vec x')}=\sup\limits_{y\in \Pi}\frac{F(\vec x)}{G(\vec x+\vec y)}= \frac{F(\vec x)}{\min\limits_{y\in \Pi}G(\vec x+\vec y)}.$$ Since this holds for all $\vec x\in \Pi^\bot$, we derive that $\lambda_{d+1}\le \tilde{\lambda}$.
     \item There is no positive eigenvalue between $\lambda_1=0$ and $\lambda_{d+1}>0$:\newline

 Suppose the contrary and let $\hat{\vec x}$ be an eigenfunction corresponding
 to an eigenvalue $\lambda\in(0,\tilde{\lambda})$. Then, $\hat{\vec
   x}\not\in\Pi$, and  $0\in\nabla F(\hat{\vec x})-\lambda \nabla G(\hat{\vec
   x})$. If $G(\hat{\vec x})=G_\Pi(\hat{\vec x})$,
 then $$\lambda=\frac{F(\hat{\vec x})}{G(\hat{\vec x})}=\frac{F(\hat{\vec
     x})}{G_{\Pi}(\hat{\vec x})}\ge \inf\limits_{x\not\in\Pi}\frac{F_\Pi(\vec
   x)}{G_\Pi(\vec x)}=\tilde{\lambda},$$ which contradicts  the assumption
 that  $\lambda<\tilde{\lambda}$. So, $G(\hat{\vec x})>G_\Pi(\hat{\vec
   x})$. And thus there exists a nonzero $\vec y_{\hat{x}}\in\Pi$ satisfying $G(\hat{\vec x}-\vec y_{\hat{x}})=G_\Pi(\hat{\vec x})$.  Now, consider a flow near  $\hat{\vec x}$ defined by $\eta(\vec x,t):=\vec x-t\vec y_x$, where $t\ge 0$ and $\vec x\in  \mathbb{B}_\delta(\hat{\vec x})$ for sufficiently small $\delta>0$. Note that $$ F(\vec x-t\vec y_x)-\lambda  G(\vec x-t\vec y_x)= F(\vec x)-\lambda  G(\vec x-t\vec y_x)$$ is an increasing function of $t\in [0,1]$, since $G(\vec x-\vec y_x)<G(\vec x)$ and $G(\cdot)$ is convex.  Consequently, with the help of the theory of  weak slope \cite{Liusternik}, it is  easy to verify that   $\vec0\not\in \nabla (F(\hat{\vec x})-\lambda G(\hat{\vec x}))$, 
 which is a contradiction. This completes the proof. 
\end{enumerate}
\end{proof}

Similarly, the second eigenvalue (counting multiplicity) of $(F,G)$ has a mountain pass characterization: 
\begin{pro}\label{pro:mountain-pass}
 Let $F$ and $G$ be   even and $p$-homogeneous  functions on $\R^n$. Given the first eigenpair  $(\lambda_1,\vec x)$ of the function pair $(F,G)$,  
we have
\begin{equation}
\lambda_2=\inf\limits_{\text{curve }\gamma:[-1,1]\to \mathbb{R}^n\setminus\{0\},\gamma(\pm 1)=\pm \vec x}\sup\limits_{\vec y\in \gamma([-1,1])}\frac{F(\vec y)}{G(\vec y)}. \label{eq:mountain-pass}
\end{equation}
If $G$ is further assumed to be positive and convex, and $F$ is further assumed to be nonnegative and $F(\vec x+\vec y)=F(\vec y)$, $\forall\vec y\in\R^n$, then
$$\lambda_2=\min\limits_{y\bot x}\max\limits_{t\in\R}\frac{F(\vec y-t\vec x)}{G(\vec y-t\vec x)} 
=\min\limits_{y\bot x}\frac{F(\vec y)}{\min\limits_{t\in\R}G(\vec y-t\vec x)}.
$$
\end{pro}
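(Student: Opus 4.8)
The plan is to prove the mountain–pass identity \eqref{eq:mountain-pass} by trapping its right-hand side $c:=\inf_{\gamma}\sup_{\gamma}F/G$ between $\lambda_2$ and itself, and then to read off the two formulas under the convexity hypotheses from Theorem \ref{thm:smallest-nonzero}. Throughout I use Proposition \ref{cor:LScritical}, so $\lambda_k=\inf_{A\in\Gamma_k}\sup_{\vec y\in A}F(\vec y)/G(\vec y)$, together with the Euler-identity fact: since $F$ and $G$ are $p$-homogeneous, any eigenpair $(\lambda_1,\vec x)$ has $F(\vec x)/G(\vec x)=\lambda_1$; as $\lambda_1=\inf_{\vec z\ne\vec 0}F(\vec z)/G(\vec z)$ (test $A=\{\pm\vec z\}\in\Gamma_1$), the vector $\vec x$ is a global minimizer of $F/G$.

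The inequality $\lambda_2\le c$ is the soft half. For an admissible curve $\gamma$, the symmetrized image $\Sigma_\gamma:=\gamma([-1,1])\cup(-\gamma([-1,1]))$ is compact, symmetric, avoids $\vec 0$, and is connected since both arcs contain $\pm\vec x$. A nonempty connected symmetric set missing the origin has $\gen\ge2$ (an odd continuous map into $\mathbb S^0$ would be constant, contradicting oddness), so $\Sigma_\gamma\in\Gamma_2$ and, by evenness of $F/G$, $\lambda_2\le\sup_{\Sigma_\gamma}F/G=\sup_{\gamma}F/G$; infimize over $\gamma$.

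The hard part will be $c\le\lambda_2$. Fix $\epsilon>0$ and pick $A\in\Gamma_2$ with $\sup_A F/G<\lambda_2+\epsilon$. Because $\gen(A)\ge2$, the $\mathbb Z_2$-covering $A\to A/\mathbb Z_2$ is nontrivial, so $A/\mathbb Z_2$ carries a loop nontrivial in $\pi_1\bigl((\R^n\setminus\{\vec 0\})/\mathbb Z_2\bigr)\cong\pi_1(\mathbb{RP}^{n-1})=\mathbb Z_2$; lifting it gives a path $\beta\subset A$ from an antipodal pair $\vec a$ to $-\vec a$, with $\sup_\beta F/G\le\sup_A F/G$. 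Then $\delta\ast\beta\ast\delta'$, where $\delta$ runs from $\vec x$ to $\vec a$ and $\delta'$ is the reverse of $-\delta$, is an admissible curve with supremum $\max(\sup_\delta F/G,\sup_\beta F/G)$, so it remains to produce $\delta$ from the minimizer $\vec x$ to $\vec a$ inside $\{F/G\le\lambda_2+\epsilon\}$; then let $\epsilon\downarrow0$. This connecting step is the main obstacle: one must show $\vec x$ and $\vec a$ lie in the same path-component of $\{F/G<\lambda_2+\epsilon\}$, i.e. that the component of the minimizer ``carries the genus'' at a level just above $\lambda_2$. I would attack it with an equivariant quantitative deformation lemma for the zero-homogeneous locally Lipschitz quotient $F/G$ — the weak-slope/deformation machinery used in the proof of Theorem \ref{thm:smallest-nonzero} — together with the absence of critical values of $F/G$ strictly between $\lambda_1$ and $\lambda_2$, argued as in step (III) there. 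Under the extra hypotheses of the second half ($G$ positive convex, $F\ge0$ with $F(\cdot+\vec x)=F(\cdot)$) this can be bypassed: the meridian curves $s\mapsto\cos\bigl(\tfrac{\pi}{2}(1-s)\bigr)\vec x+\sin\bigl(\tfrac{\pi}{2}(1-s)\bigr)\vec u$ with $\vec u\perp\vec x$ join $\vec x$ to $-\vec x$, sweep every direction of $\mathrm{span}(\vec x,\vec u)$ up to evenness, and have $\sup F/G$ over the curve equal to $F(\vec u)/\min_t G(\vec u-t\vec x)$, so $c\le\min_{\vec u\perp\vec x}F(\vec u)/\min_t G(\vec u-t\vec x)=\lambda_2$ by the second formula.

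It remains to prove the two formulas under those hypotheses. First, $F(\vec y+\vec x)=F(\vec y)$ plus $p$-homogeneity gives $F(\vec y+t\vec x)=F(\vec y)$ for every $t\in\R$ (for $t>0$, $F(\vec y+t\vec x)=t^pF(\vec y/t+\vec x)=t^pF(\vec y/t)=F(\vec y)$, and similarly for $t<0$), hence $F(\vec y-t\vec x)\equiv F(\vec y)$ and $\max_t F(\vec y-t\vec x)/G(\vec y-t\vec x)=F(\vec y)/\min_t G(\vec y-t\vec x)$, so the last two expressions coincide. If $\{F/G=0\}\cup\{\vec 0\}=\R\vec x$, this is Theorem \ref{thm:smallest-nonzero} applied with $\Pi=\R\vec x$: its hypotheses all hold, $\dim\Pi+1=2$, Proposition \ref{pro:GPi-property} yields $F_\Pi=F$ and $G_\Pi(\vec y)=\min_t G(\vec y-t\vec x)$, and \eqref{eq:lambda-min} becomes the claimed chain with $\lambda_{\dim\Pi+1}=\lambda_2$. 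In the degenerate case where $\{F/G=0\}$ strictly contains $\R\vec x$, any $\vec y_0\perp\vec x$ with $F(\vec y_0)=0$ shows both $\lambda_2=0$ (the zero eigenspace already has $\gen\ge2$) and that the right-hand side vanishes, so the identity persists. Everything but the connecting step above is bookkeeping with the genus and with homogeneity.
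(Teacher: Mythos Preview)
Your reduction of the second formula to Theorem \ref{thm:smallest-nonzero} with $\Pi=\R\vec x$ is exactly what the paper's ``Similarly'' intends, and your meridian-curve argument correctly gives $c\le\lambda_2$ under those extra hypotheses; the soft direction $\lambda_2\le c$ is fine throughout. The problem is \eqref{eq:mountain-pass} in the generality stated. The connecting step you flag is not a technicality to be patched by deformation---it is actually false. Take $n=3$, $G(\vec y)=\|\vec y\|_2^2$, and $F(\vec y)=\|\vec y\|_2^2\,h(|y_3|/\|\vec y\|_2)$ with $h:[0,1]\to[0,\infty)$ smooth, $h(1)=0$, $h(0)=1$, and a single interior maximum $h(0.9)=10$. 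On $\mathbb{S}^2$ the quotient $F/G$ depends only on latitude; the poles $\pm\vec x=(0,0,\pm1)$ give $\lambda_1=0$, the equator is a genus-$2$ circle with $\sup F/G=1$, while $\{F/G<1\}$ consists of two polar caps of genus $1$, so $\lambda_2=1$. Yet every curve from $\vec x$ to $-\vec x$ in $\R^3\setminus\{\vec 0\}$ projects radially to a path on $\mathbb{S}^2$ that must cross latitude $0.9$ and pick up value $10$, whence $c\ge10>\lambda_2$. Note there are \emph{no} critical values in $(\lambda_1,\lambda_2)=(0,1)$, so your deformation-plus-no-critical-values plan would formally apply---but the minimizer lives in a polar cap permanently separated from the equatorial genus-$2$ band by the barrier at latitude $0.9$, and no flow confined to $\{F/G<\lambda_2+\epsilon\}$ can join them.

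So \eqref{eq:mountain-pass} as written needs the translation invariance $F(\cdot+\vec x)=F(\cdot)$, or some substitute guaranteeing that the first eigenvector sits in the symmetric component of every sublevel set just above $\lambda_2$. The paper itself offers no argument for the general case beyond the word ``Similarly''; only under the second-half hypotheses---where your proof and the paper's intended one coincide---is the mountain-pass characterization actually valid.
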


 The  RatioDCA method   
introduced in \cite{HeinBuhler2010,HS11} (see also Section 3.3 in \cite{JostZhang-PL}) can be applied directly to  calculate 
the second smallest eigenvalue appearing in Theorem \ref{thm:smallest-nonzero} and Proposition \ref{pro:mountain-pass}. 
In detail,  these schemes can be rewritten in the following way:

Suppose that $F$  and $G$ satisfy the conditions in  Theorem \ref{thm:smallest-nonzero}, and we additionally assume that $F=F_1-F_2$ with $F_1$ and $F_2$ being convex and $p$-homogeneous. Then, applying the Dinkelbach-type scheme  to $\min\frac{F(\vec x)}{G_\Pi(\vec x)}$, and  by Proposition \ref{pro:GPi-property}, we have
\begin{subequations}
\label{iter1}
\begin{numcases}{}
 \tilde{\vec x}^{k+1}\in \argmin\limits_{\vec x\in \mathbb{B}} \{F_1(\vec x) -(\langle \vec u^k,\vec x\rangle+r^k \langle \vec v^k,\vec x\rangle) + H_{\vec x^k}(\vec x)\}, \label{eq:twostep_x2}
\\
\vec x^{k+1}= \tilde{\vec x}^{k+1}+\vec y^{k+1},\; \vec y^{k+1}\in \argmin\limits_{\vec y\in \Pi}G(\vec y+\tilde{\vec x}^{k+1})
\\
r^{k+1}=F( \vec x^{k+1})/G( \vec x^{k+1}),
\label{eq:twostep_r2}
\\
 \vec u^{k+1}\in\nabla F_2( \vec x^{k+1}),\;
 \vec v^{k+1}\in\nabla G( \vec x^{k+1})\cap\Pi^\bot,
\label{eq:twostep_s2}
\end{numcases}
\end{subequations}
and its modified version
\begin{subequations}
\label{iter2}
\begin{numcases}{}
\tilde{\vec x}^{k+1}\in \argmin\limits_{\vec x\in \R^n} \{F_1(\vec x) -(\langle \vec u^k,\vec x\rangle+r^k \langle \vec v^k,\vec x\rangle) + H_{\vec x^k}(\vec x)\}, \label{eq:2twostep_x2}
\\
\label{eq:2twostep_r2}
\hat{\vec x}^{k+1}= \tilde{\vec x}^{k+1}+\vec y^{k+1},\; \vec y^{k+1}\in \argmin\limits_{\vec y\in \Pi}G(\vec y+\tilde{\vec x}^{k+1})
\\
\vec x^{k+1}=\partial \mathbb{B}\cap\{t\hat{\vec x}^{k+1}:t\ge 0\} ,~~ r^{k+1}=F( \vec x^{k+1})/G( \vec x^{k+1})
\\
 \vec u^{k+1}\in\nabla F_2( \vec x^{k+1}),\;
 \vec v^{k+1}\in\nabla G( \vec x^{k+1})\cap\Pi^\bot,
\label{eq:2twostep_s2}
\end{numcases}
\end{subequations}
where $\mathbb{B}$ is the unit ball w.r.t.  a given norm. This generalizes the
inverse power method for the graph 1-Laplacian (see Algorithm 3 in \cite{HeinBuhler2010})

\subsection{Inertia bounds and a nodal domain inequality}
\label{sec:inertia-nodal}

We first provide the following technical lemma regarding  the distribution of min-max eigenvalues. Given $\lambda\in\R$, we use $\#\{\lambda_i=\lambda\}$ to denote the number of min-max eigenvalues that equals $\lambda$, i.e., $\#\{i\in\{1,\cdots,n\}:\lambda_i=\lambda\}$. Other notions such as  $\#\{\lambda_i\le\lambda\}$ and $\#\{\lambda_i\ge\lambda\}$ are defined similarly. For a centrally symmetric set $A$, 
denote by $\dim_{in} A:=\max\{\dim X:\text{linear subspace }X\subset A\cup\{\vec 0\}\}$.

\begin{lemma}\label{lemma:key-inertia-nodal} 
 For any $ \lambda\in\R$,  $\max\{\#\{\lambda_i=\lambda\}, \#\{\lambda_i'=\lambda\}\}\le \gen \{\vec x:F(\vec x)/G(\vec x) =\lambda \}$, and 
\begin{equation}\label{eq:lower-level-estimate}
\min\{\#\{\lambda_i\le \lambda\}, \#\{\lambda_i'\le\lambda\}\}\ge \dim_{in} \{\vec x:F(\vec x)/G(\vec x) \le \lambda \}    
\end{equation}  
and \eqref{eq:lower-level-estimate} still holds when we replace all `$\le \lambda$' by `$\ge \lambda$'.  In consequence, we have
\begin{equation}\label{eq:level-estimate}
\min\{\#\{\lambda_i\le \lambda\}, \#\{\lambda_i'\le\lambda\},\#\{\lambda_i\ge \lambda\}, \#\{\lambda_i'\ge\lambda\}\}\ge \dim_{in} \{\vec x:F(\vec x)/G(\vec x) = \lambda \}   . 
\end{equation}
\end{lemma}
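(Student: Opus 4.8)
The plan is to reduce all three assertions to a few standard properties of the Krasnoselskii $\mathbb{Z}_2$-genus --- its monotonicity under odd continuous maps, the identity $\gen(\mathbb{S}^{k-1})=k$, and the linking property these imply for linear subspaces --- combined with Proposition~\ref{cor:LScritical} and the inclusion $S_\lambda\subseteq\{F/G=\lambda\}$ recorded above. Throughout I may assume $d\ge 1$ in the assertions involving $\dim_{in}$, the case $d=0$ being vacuous.

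To prove $\max\{\#\{\lambda_i=\lambda\},\#\{\lambda_i'=\lambda\}\}\le\gen\{F/G=\lambda\}$, suppose $\#\{\lambda_i=\lambda\}=l\ge 1$. Since $\lambda_1\le\lambda_2\le\cdots$, there is $k$ with $0\le k<k+l\le n$ and $\lambda=\lambda_{k+1}=\cdots=\lambda_{k+l}$, so Proposition~\ref{cor:LScritical} gives $\gen(S_\lambda)\ge l$; monotonicity of the genus together with $S_\lambda\subseteq\{F/G=\lambda\}$ then upgrades this to $\gen\{F/G=\lambda\}\ge l$. The same argument for the (non-increasing) family $\{\lambda_i'\}$ bounds $\#\{\lambda_i'=\lambda\}$ identically, and taking the larger of the two yields the first inequality.

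For \eqref{eq:lower-level-estimate} in the ``$\le\lambda$'' form, set $d=\dim_{in}\{F/G\le\lambda\}$ and fix a $d$-dimensional subspace $X$ with $X\setminus\{\vec0\}\subseteq\{F/G\le\lambda\}$. On the one hand, $A:=X\cap\mathbb{S}^{n-1}$ is compact, symmetric and satisfies $\gen(A)=d$, so $A\in\Gamma_d$ and $\lambda_d\le\sup_{\vec x\in A}F(\vec x)/G(\vec x)\le\lambda$; since $k\mapsto\lambda_k$ is non-decreasing this forces $\#\{\lambda_i\le\lambda\}\ge d$. On the other hand, any $B\in\Gamma_{n-d+1}$ has $\gen(B)\ge n-d+1>n-d=\gen(X^{\bot}\cap\mathbb{S}^{n-1})$, so the orthogonal projection of $\R^n$ onto $X^{\bot}$, which is odd, continuous and has kernel $X$, cannot map $B$ into $X^{\bot}\setminus\{\vec0\}$; hence $B\cap X\neq\varnothing$, and choosing $\vec x\in B\cap X$ gives $\inf_{\vec y\in B}F(\vec y)/G(\vec y)\le F(\vec x)/G(\vec x)\le\lambda$. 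Taking the supremum over $B\in\Gamma_{n-d+1}$ shows $\lambda_{n-d+1}'\le\lambda$, so $\#\{\lambda_i'\le\lambda\}\ge d$. This proves \eqref{eq:lower-level-estimate}; the ``$\ge\lambda$'' version follows by the symmetric argument, or by applying the ``$\le$'' form to the pair $(-F,G)$, whose min--max and max--min families are $-\lambda_i'$ and $-\lambda_i$ and whose sublevel set at $\lambda$ is the superlevel set $\{F/G\ge-\lambda\}$. Finally, a $d$-dimensional subspace contained (minus the origin) in $\{F/G=\lambda\}$ lies in both $\{F/G\le\lambda\}$ and $\{F/G\ge\lambda\}$, so $\dim_{in}\{F/G=\lambda\}\le\min\{\dim_{in}\{F/G\le\lambda\},\dim_{in}\{F/G\ge\lambda\}\}$; substituting this into the two forms of \eqref{eq:lower-level-estimate} bounds each of $\#\{\lambda_i\le\lambda\}$, $\#\{\lambda_i'\le\lambda\}$, $\#\{\lambda_i\ge\lambda\}$, $\#\{\lambda_i'\ge\lambda\}$ below by $\dim_{in}\{F/G=\lambda\}$, giving \eqref{eq:level-estimate}.

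The step I expect to demand the most care is the linking property of the genus invoked for the max--min family: that a compact symmetric set whose genus exceeds $\dim X^{\bot}$ must meet $X$. This is a Borsuk--Ulam-type statement, provable via the projection onto $X^{\bot}$ and the normalization $\gen(\mathbb{S}^{k-1})=k$, but it is the one genuinely non-bookkeeping ingredient; the remaining manipulations only use the monotonicity of the families $\{\lambda_i\}$ and $\{\lambda_i'\}$ and the elementary fact that $\gen(X\cap\mathbb{S}^{n-1})=\dim X$.
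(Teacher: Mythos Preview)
Your proof is correct and follows essentially the same approach as the paper: Proposition~\ref{cor:LScritical} for the multiplicity bound, the genus of a linear subspace to handle $\#\{\lambda_i\le\lambda\}$, and the intersection/linking property of the genus to handle the complementary family. The only cosmetic difference is that the paper routes the superlevel estimates through an auxiliary quantity $\gen'(A)$ (the largest genus of a subset of $A$ oddly homeomorphic to a sphere) before specializing to $\dim_{in}$, whereas you work directly with the linear subspace $X$ and invoke the projection-onto-$X^\bot$ argument; both arguments are the same Borsuk--Ulam-type intersection principle, and your direct route is in fact slightly cleaner for the statement as written.
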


\begin{proof}We divide the proof into several claims:
\begin{enumerate}[{Claim} 1.]
    \item $   \#\{\lambda_i=\lambda\}\le \gen \{\vec x:F(\vec x)/G(\vec x) =\lambda \}$.
    
 Proof:   It follows from the relation \eqref{eq:three-class-eigenpair} and Proposition \ref{cor:LScritical}   that for any $\lambda\in\R$,
\begin{equation}\label{eq:genus=}
    \#\{\lambda_i=\lambda\}\le \gen (K_\lambda) \le \gen (S_\lambda) \le \gen \{\vec x:F(\vec x)=\lambda G(\vec x)\}.
\end{equation}

\item $ \#\{\lambda_i\le \lambda\}= \gen\{\vec x:F(\vec x)/G(\vec x)\le \lambda \}$. 

Proof: Let $k=\gen\{\vec x:F(\vec x)/G(\vec x)\le \lambda \}$. Then taking $A_0=\{\vec x:F(\vec x)/G(\vec x)\le \lambda \}$, we have $$\lambda_k = \inf_{A\in\Gamma_k}\sup\limits_{\vec x\in A} \frac{F(\vec x)}{G(\vec x)}\le \sup\limits_{\vec x\in A_0} \frac{F(\vec x)}{G(\vec x)}\le\lambda,$$which implies $ \#\{\lambda_i\le \lambda\}\ge k=\gen\{\vec x:F(\vec x)/G(\vec x)\le \lambda \}$. 
The inverse inequality  is also true (see \cite{PereraAgarwalO'Regan}). 

\item  
$ \#\{\lambda_i\ge \lambda\}\ge \gen'\{\vec x:F(\vec x)/G(\vec x)\ge \lambda \}$ where
$$\gen'(A)=\max\{k:A'\subset A,\mathrm{cone}(A')\cap \mathbb{S}^{n-1}\mathop{\cong}\limits^{odd}\mathbb{S}^{k-1} \}$$
where $\mathrm{cone}(A')\cap \mathbb{S}^{n-1}\mathop{\cong}\limits^{odd}\mathbb{S}^{k-1}$ means that there is an odd homeomorphism between $\mathrm{cone}(A')\cap \mathbb{S}^{n-1}$ and $\mathbb{S}^{k-1}$.

Proof: Suppose  $\gen'\{\vec x:F(\vec x)/G(\vec x)\ge \lambda \}= n-k+1$ for some $k\in\{1,\cdots,n\}$. Then there exist $A'\subset \{\vec x:F(\vec x)/G(\vec x)\ge \lambda \}$ and an odd homeomorphism $\psi:A'\to \mathbb{S}^{n-k}$. The  intersection property of $\mathbb{Z}_2$-genus implies that $A\cap \mathrm{cone}(A')\ne  \varnothing$ for any $A\in \Gamma_k$. Therefore, 
$$\lambda_k = \inf_{A\in\Gamma_k}\sup\limits_{\vec x\in A} \frac{F(\vec x)}{G(\vec x)}\ge \inf\limits_{\vec x\in \mathrm{cone}(A')} \frac{F(\vec x)}{G(\vec x)}\ge\lambda,$$
and this yields $\#\{\lambda_i\ge \lambda\}\ge n-k+1=\gen'\{\vec x:F(\vec x)/G(\vec x)\ge \lambda \}$.
\item $ \#\{\lambda_i'=  \lambda\}\le \gen\{\vec x:F(\vec x)/G(\vec x)= \lambda \}$, $ \#\{\lambda_i'\le \lambda\}\ge \gen'\{\vec x:F(\vec x)/G(\vec x)\le \lambda \}$ and $ \#\{\lambda_i'\ge \lambda\}= \gen\{\vec x:F(\vec x)/G(\vec x)\ge \lambda \}$.

We omit the proof because  it is similar to   Claims 1, 2 and 3.
\end{enumerate}

Note that $\gen(A)\ge \gen'(A)\ge \dim_{in}(A)$. In consequence, \eqref{eq:lower-level-estimate} holds, and thus  \eqref{eq:level-estimate} can be verified directly.

\end{proof}


Based on Lemma \ref{lemma:key-inertia-nodal}, we can get the inertia bound of the independence number, and the nodal domain estimate of an eigenvector.

\begin{defn}[nodal domains] \label{def:nice-nodal-domain}
Given $(F,G)$ and $\vec x$, a family of {\sl up nodal domains} of $\vec x$  w.r.t. $(F,G)$ consists of $k$ pairwise  disjoint nonempty subsets $U_1,\cdots,U_k$
 of the support  $\supp(\vec x)$   satisfying 
 \begin{equation}\label{eq:def-nodal}
 \frac{F(\sum_{i=1}^kt_i\vec x|_{U_i})}{G(\sum_{i=1}^kt_i\vec x|_{U_i})}\ge \frac{F(\vec x)}{G(\vec x)},\,\forall t_1,\cdots,t_k\in\R.   
 \end{equation}
 Similarly, we can define the {\sl down nodal domain} by instead    `$\ge$' in \eqref{eq:def-nodal} of `$\le$'. We call $\{U_i\}_{i=1}^k$ the family of {\sl  nodal domains} if   `$\ge$' in \eqref{eq:def-nodal} is replaced by `$=$`.

Denote by $N^+(\vec x)$ (resp. $N^-(\vec x)$) the largest possible $k$ such that there exists a family of $k$ up (resp. down) nodal domains of $\vec x$ w.r.t. $(F,G)$. And let $N(\vec x)$ be the number of nodal domains of $\vec x$.
\end{defn}

\begin{defn}[independence number]
The $c$-level  independence number of $(F,G)$ is  $\alpha_c:=\max\{k:\exists \text{pairwise disjoint }U_1,\cdots,U_k\subset\{1,\cdots,n\}\text{ s.t. }F(\vec x)/G(\vec x)=c,\forall \vec x\in \mathrm{span}(\vec1_{U_1},\cdots,\vec1_{U_k})\}$.  
\end{defn}

\begin{example}
For a simple graph determined by its adjacency matrix $A$, let $F(\vec x)=\vec x^\top A\vec x$ and $G(\vec x)=\vec x^\top\vec x$. Then one can check that  $\alpha_0$ is the usual independence number of the graph. 
\end{example}

\begin{example}For a graph $(V,E)$, taking  $f(A)=|\partial A|$ and $g(A)=\vol(A)$, considering the function pair $(f^L,g^L)$, then it is interesting that   $\alpha_0$ indicates the number of  connected components; while  $\alpha_1$ is the standard independence number.  \end{example}

\begin{theorem}\label{thm:nodal-inertia-bound} Let $F$ and $G$ be  even $p$-homogeneous Lipschitz functions on $\R^n$. Then we have the inertia bound  $$\alpha_c\le \min\{\#\{\lambda_i\le c\},\#\{\lambda_i\ge c\}\}.$$ 
For any eigenvector $\vec x$ w.r.t. the eigenvalue $\lambda_k$ whose multiplicity is $r$, we have 
$$N^-(\vec x)\le k+r-1 \text{ and } N^+(\vec x)\le n-k+r.$$ 
\end{theorem}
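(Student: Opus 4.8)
The plan is to deduce both statements from Lemma \ref{lemma:key-inertia-nodal} by a single bookkeeping device: each piece of combinatorial data (a family of pairwise disjoint index sets) produces a linear subspace of a prescribed dimension sitting inside a level set (resp.\ sublevel or superlevel set) of $F/G$, and then the conclusion is read off from the counting of min-max eigenvalues below/above the relevant level, together with the fact that $\lambda_1\le\lambda_2\le\cdots\le\lambda_n$ is monotone.

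For the inertia bound, let $U_1,\dots,U_{\alpha_c}$ be pairwise disjoint nonempty sets realizing $\alpha_c$. Disjointness and nonemptiness make $\vec1_{U_1},\dots,\vec1_{U_{\alpha_c}}$ linearly independent, so $\mathrm{span}(\vec1_{U_1},\dots,\vec1_{U_{\alpha_c}})$ is a subspace of dimension $\alpha_c$, and by the definition of $\alpha_c$ it lies in $\{\vec x:F(\vec x)/G(\vec x)=c\}\cup\{\vec 0\}$. Hence $\dim_{in}\{\vec x:F(\vec x)/G(\vec x)=c\}\ge\alpha_c$. Feeding this into \eqref{eq:level-estimate} gives $\min\{\#\{\lambda_i\le c\},\#\{\lambda_i'\le c\},\#\{\lambda_i\ge c\},\#\{\lambda_i'\ge c\}\}\ge\alpha_c$, and since discarding terms from a minimum only makes it larger, $\alpha_c\le\min\{\#\{\lambda_i\le c\},\#\{\lambda_i\ge c\}\}$.

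For the nodal domain inequality, let $\vec x$ be an eigenvector for $\lambda_k$. Because $F$ and $G$ are both $p$-homogeneous, one of the basic facts in Section \ref{sec:spectrum} (the Euler-identity computation, with $q=p$) gives $F(\vec x)/G(\vec x)=\lambda_k$. If $U_1,\dots,U_m$ is a family of $m$ down nodal domains of $\vec x$, then $\vec x|_{U_1},\dots,\vec x|_{U_m}$ are nonzero with pairwise disjoint supports, hence linearly independent, and \eqref{eq:def-nodal} puts their span, minus the origin, inside $\{\vec y:F(\vec y)/G(\vec y)\le\lambda_k\}$; taking $m=N^-(\vec x)$ yields $\dim_{in}\{F/G\le\lambda_k\}\ge N^-(\vec x)$. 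By the intermediate claims in the proof of Lemma \ref{lemma:key-inertia-nodal} we have $\#\{\lambda_i\le\lambda_k\}=\gen\{F/G\le\lambda_k\}\ge\dim_{in}\{F/G\le\lambda_k\}\ge N^-(\vec x)$, and symmetrically, using up nodal domains and $\gen'$, $\#\{\lambda_i\ge\lambda_k\}\ge\gen'\{F/G\ge\lambda_k\}\ge\dim_{in}\{F/G\ge\lambda_k\}\ge N^+(\vec x)$. It remains to express $\#\{\lambda_i\le\lambda_k\}$ and $\#\{\lambda_i\ge\lambda_k\}$ in terms of $k$ and $r$: writing the block of indices with $\lambda_i=\lambda_k$ as $\{j,j+1,\dots,j+r-1\}$, so that $j\le k\le j+r-1$, monotonicity gives $\#\{\lambda_i\le\lambda_k\}=j+r-1\le k+r-1$ and $\#\{\lambda_i\ge\lambda_k\}=n-j+1\le n-(k-r+1)+1=n-k+r$, which is exactly the asserted pair of bounds.

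I expect the whole argument to be soft once Lemma \ref{lemma:key-inertia-nodal} is available; the two points needing genuine care are the identity $F(\vec x)/G(\vec x)=\lambda_k$ for an actual eigenvector (this is precisely where equal homogeneity of $F$ and $G$ enters, and it tacitly presupposes $G(\vec x)\ne 0$), and the position bookkeeping for $k$ inside the length-$r$ block of repeated values of $\lambda_k$, which is what produces the asymmetry between $k+r-1$ and $n-k+r$. If one instead reads ``multiplicity $r$'' as $\gen(S_{\lambda_k})=r$, the same inequalities follow a fortiori, since then $\#\{\lambda_i=\lambda_k\}\le r$ by \eqref{eq:genus=}.
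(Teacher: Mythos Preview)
Your proof is correct and follows essentially the same route as the paper's: both arguments reduce the inertia bound and the nodal domain inequalities to Lemma~\ref{lemma:key-inertia-nodal} by exhibiting a linear subspace of the appropriate dimension inside the relevant level/sublevel/superlevel set of $F/G$. Your write-up is in fact more explicit than the paper's on two points the paper leaves tacit: the index bookkeeping that turns $\#\{\lambda_i\le\lambda_k\}$ and $\#\{\lambda_i\ge\lambda_k\}$ into $k+r-1$ and $n-k+r$, and the distinction between the two possible readings of ``multiplicity $r$'' (number of repeated min-max eigenvalues versus $\gen(S_{\lambda_k})$), which you correctly note only strengthens the conclusion.
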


\begin{proof}
By \eqref{eq:level-estimate} in Lemma \ref{lemma:key-inertia-nodal}, $\dim_{in} \{\vec x:F(\vec x)/G(\vec x) = c \}\le \min\{\#\{\lambda_i\le c\}, \#\{\lambda_i\ge c\}\}$. And by the definition of independence number, $\alpha_c\le \dim_{in} \{\vec x:F(\vec x)/G(\vec x)=c\}$. Thus, the inertia bound is proved.  Since  $(\lambda_k,\vec x)$ is an  eigenpair of $(F,G)$,  it follows from the definition of nodal domain that
$$ N^+(\vec x)\le \dim_{in} \{\vec x:F(\vec x)/G(\vec x) \ge\lambda_k \}\le \#\{\lambda_i\ge \lambda_k\}\le n-k+r, $$
$$ N^-(\vec x)\le \dim_{in} \{\vec x:F(\vec x)/G(\vec x) \le\lambda_k \}\le \#\{\lambda_i\le \lambda_k\}\le k+r-1.$$

Hence, the nodal domain inequality is proved. 
\end{proof}

\subsection{
Collatz-Wielandt formula for the largest eigenvalue}\label{sec:largest-eigen}

As a generalization of Collatz-Wielandt formula, we give a min-max characterization for the maximal eigenvalue of $(F,G)$:  
 \begin{lemma}\label{lemma:Collatz-Wielandt}
Let $F$ and $G$ be $p$-homogeneous Lipschitz functions  such that  $\max\limits_{x\in\R^n}\frac{F(\vec x)}{G(\vec x)}$  achieves its maximum at some $\vec x\in\R^n_{\ge0}\setminus\{\vec0\}$.  Then
$$\sup\limits_{x\in\R^n_+}\inf\limits_{y\in\R^n_+}\sup\frac{\langle\vec y,\nabla F(\vec x)\rangle}{\langle\vec y,\nabla G(\vec x)\rangle}:=\sup\limits_{x\in\R^n_+}\inf\limits_{y\in\R^n_+}\sup\left\{\frac{\langle\vec y,\vec u\rangle}{\langle\vec y,\vec v\rangle}:\vec u\in\nabla F(\vec x),\vec v\in\nabla G(\vec x)\right\}$$
is the maximum of $F/G$, and it is also the maximal eigenvalue of $(F,G)$.
\end{lemma}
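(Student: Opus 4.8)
\textbf{Proof plan for Lemma \ref{lemma:Collatz-Wielandt}.}

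The plan is to establish the two claims (the quantity equals $\max F/G$, and it equals the maximal eigenvalue of $(F,G)$) by sandwiching the sup-inf-sup expression between two copies of $\lambda_{\max}:=\max_{\vec x\in\R^n}F(\vec x)/G(\vec x)$. Write $\Phi(\vec x):=\inf_{\vec y\in\R^n_+}\sup\{\langle\vec y,\vec u\rangle/\langle\vec y,\vec v\rangle:\vec u\in\nabla F(\vec x),\vec v\in\nabla G(\vec x)\}$, so the target quantity is $\sup_{\vec x\in\R^n_+}\Phi(\vec x)$.

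First I would prove the lower bound $\sup_{\vec x\in\R^n_+}\Phi(\vec x)\ge\lambda_{\max}$. Let $\vec x^*\in\R^n_{\ge0}\setminus\{\vec0\}$ be the claimed maximizer of $F/G$. Since $F/G$ achieves a maximum at $\vec x^*$, it is a critical point, so $\vec0\in\nabla(F/G)(\vec x^*)$; by the quotient rule for Clarke gradients of $p$-homogeneous functions (as used throughout Section~\ref{sec:spectrum}, cf.\ the bullet on Euler's identity), this gives $\vec u^*=\lambda_{\max}\vec v^*$ for some $\vec u^*\in\nabla F(\vec x^*)$, $\vec v^*\in\nabla G(\vec x^*)$, i.e.\ $(\lambda_{\max},\vec x^*)$ is an eigenpair. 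Then for \emph{every} $\vec y\in\R^n_+$, choosing $\vec u=\vec u^*$ and $\vec v=\vec v^*$ shows $\sup\{\langle\vec y,\vec u\rangle/\langle\vec y,\vec v\rangle\}\ge\langle\vec y,\vec u^*\rangle/\langle\vec y,\vec v^*\rangle=\lambda_{\max}$ (using $G>0$ near $\vec x^*$ so that $\langle\vec y,\vec v^*\rangle>0$ for $\vec y\in\R^n_+$, which needs a short argument from $p$-homogeneity and positivity of $G$). Hence $\Phi(\vec x^*)\ge\lambda_{\max}$, and a fortiori the supremum over $\R^n_+$ is $\ge\lambda_{\max}$.

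Next I would prove the upper bound $\Phi(\vec x)\le\lambda_{\max}$ for \emph{every} $\vec x\in\R^n_+$, which finishes the first claim. Fix such an $\vec x$. The idea is to exhibit one good test vector $\vec y$: I claim $\vec y=\vec x$ works. By Euler's identity for homogeneous locally Lipschitz functions, $\langle\vec x,\vec u\rangle=pF(\vec x)$ for all $\vec u\in\nabla F(\vec x)$ and $\langle\vec x,\vec v\rangle=pG(\vec x)$ for all $\vec v\in\nabla G(\vec x)$; therefore $\sup\{\langle\vec x,\vec u\rangle/\langle\vec x,\vec v\rangle:\vec u\in\nabla F(\vec x),\vec v\in\nabla G(\vec x)\}=F(\vec x)/G(\vec x)\le\lambda_{\max}$. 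Taking the infimum over $\vec y\in\R^n_+$ only decreases this, so $\Phi(\vec x)\le F(\vec x)/G(\vec x)\le\lambda_{\max}$. Combining with the previous paragraph, $\sup_{\vec x\in\R^n_+}\Phi(\vec x)=\lambda_{\max}$.

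Finally, the second claim (this value is the maximal eigenvalue) reduces to: $\lambda_{\max}$ is an eigenvalue — established above, since $(\lambda_{\max},\vec x^*)$ is an eigenpair — and no eigenvalue exceeds $\lambda_{\max}$. The latter follows from the third bullet in the itemized list of Section~\ref{sec:spectrum}: if $(\lambda,\vec x)$ is an eigenpair with $G(\vec x)\ne0$ and $F,G$ are both $p$-homogeneous, then $F(\vec x)/G(\vec x)=\lambda$ (here $q=p$), so $\lambda=F(\vec x)/G(\vec x)\le\max_{\R^n}F/G=\lambda_{\max}$; one should also note the degenerate eigenpairs with $G(\vec x)=0$ do not contribute to ``largest eigenvalue'' in the relevant sense, or handle them by the standing nonnegativity/positivity hypotheses. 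The main obstacle I anticipate is not any single estimate but the careful bookkeeping with Clarke subdifferentials: justifying the quotient rule $\vec0\in\nabla(F/G)(\vec x^*)\Rightarrow\nabla F(\vec x^*)\cap\lambda_{\max}\nabla G(\vec x^*)\ne\varnothing$ at a maximum, and checking that $\langle\vec y,\vec v\rangle>0$ for $\vec y\in\R^n_+$ and $\vec v\in\nabla G(\vec x)$ whenever $G$ is positive, so that all the ratios appearing are well defined and the $\sup$/$\inf$ manipulations are legitimate.
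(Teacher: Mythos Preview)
Your proposal is correct and follows essentially the same approach as the paper: the lower bound comes from the maximizer $\vec x^*$ yielding an eigenpair $\vec u^*=\lambda_{\max}\vec v^*$, which makes the inner ratio constantly $\lambda_{\max}$ for any $\vec y$; the upper bound comes from testing $\vec y=\vec x$ and invoking Euler's identity $\langle\vec x,\nabla F(\vec x)\rangle=pF(\vec x)$. The paper's proof is in fact slightly terser than yours---it does not spell out the ``maximal eigenvalue'' part or the positivity caveats you flag---so your additional bookkeeping is welcome rather than redundant.
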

 
 \begin{proof}
 Since $F/G$ achieves its maximum at some $\vec x$,  we have $\nabla \frac{F(\vec x)}{G(\vec x)}\ni 0$ and thus $\nabla F(\vec x)-\lambda \nabla G(\vec x)\ni \vec 0$ with $\lambda= \frac{F(\vec x)}{G(\vec x)}$ being the maximum of $F/G$ on $\R^n$. 
 This implies that there exist $\vec u\in\nabla F(\vec x),\vec v\in\nabla G(\vec x)$ such that $\vec u=\lambda\vec v$, and thus 
$$\sup\limits_{z\in\R^n_+}\inf\limits_{y\in\R^n_+}\sup\frac{\langle\vec y,\nabla F(\vec z)\rangle}{\langle\vec y,\nabla G(\vec z)\rangle}  \ge  \inf\limits_{y\in\R^n_+}\sup\frac{\langle\vec y,\nabla F(\vec x)\rangle}{\langle\vec y,\nabla G(\vec x)\rangle}\ge \inf\limits_{y\in\R^n_+}\frac{\langle\vec y,\vec u\rangle}{\langle\vec y,\vec v\rangle}=\lambda. $$

On the other hand, since $F$ and $G$ are $p$-homogeneous, by the Euler identity $\langle\vec x,\nabla F(\vec x)\rangle=pF(\vec x)$, we have 
$$\sup\limits_{x\in\R^n_+}\inf\limits_{y\in\R^n_+}\sup\frac{\langle\vec y,\nabla F(\vec x)\rangle}{\langle\vec y,\nabla G(\vec x)\rangle}\le \sup\limits_{x\in\R^n_+}\sup\frac{\langle\vec x,\nabla F(\vec x)\rangle}{\langle\vec x,\nabla G(\vec x)\rangle}=\sup\limits_{x\in\R^n_+}\frac{pF(\vec x)}{pG(\vec x)}=\lambda.$$
The proof is completed.
 \end{proof}
 
  The condition of Lemma \ref{lemma:Collatz-Wielandt} is  satisfied in most of the interesting cases. For example, if $F(|\vec x|)\ge F(\vec x)$ and $G(|\vec x|)= G(\vec x)>0$ for any $\vec x=(x_1,\cdots,x_n)\in\R^n\setminus\{\vec0\}$, where 
  $|\vec x|:=(|x_1|,\cdots,|x_n|)$, then  $\max\limits_{x\in\R^n}\frac{F(\vec x)}{G(\vec x)}$ can achieve its maximum at some $\vec x\in\R^n_{\ge 0}\setminus\{\vec0\}$.  

\begin{example}
  Let $k$ be a positive even number, and let 
  $$F(\vec x)=\sum\limits_{i_1,\cdots,i_k=1}^nc_{i_1,\cdots,i_k}x_{i_1}\cdots x_{i_k}$$ and $G(\vec x)=\sum\limits_{i_1,\cdots,i_k=1}^nd_{i_1,\cdots,i_k}x_{i_1}\cdots x_{i_k}$ such that every monomial term of the polynomial $G(\vec x)$ is the square of some monomial, and $G(\vec x)>0$ whenever $\vec x\ne\vec 0$, where $c_{i_1,\cdots,i_k}\ge0$ and  $d_{i_1,\cdots,i_k}\ge0$. Then 
  $$\sup\limits_{x\in\R^n_+}\inf\limits_{y\in\R^n_+}\frac{\langle\vec y,C\vec x^{k-1}\rangle}{\langle\vec y,D\vec x^{k-1}\rangle}=\sup\limits_{x\in\R^n_+}\min\limits_i\frac{(C\vec x^{k-1})_i}{(D\vec x^{k-1})_i}$$ is the maximal H-eigenvalue of the tensor pair $(C,D)$, where $C=(c_{i_1,\cdots,i_k})$ and $D=(d_{i_1,\cdots,i_k})$ (see Section \ref{sec:tensor} for the definitions).  This gives a Collatz-Wielandt  formula for positive tensors.
\end{example}

As far as we know, all known 
generalizations of the Collatz-Wielandt  formula are about  
homogeneous
single valued maps \cite{GTH19}.  Lemma \ref{lemma:Collatz-Wielandt} might be the first version for  $(k-1)$-homogeneous set-valued maps $\nabla F$ and $\nabla G$.

Next we show a spectral lower bound for the largest eigenvalue of $(F,G)$  restricted on a subspace. 

\begin{theorem}\label{thm:FG-p-homo-signed}
Given $p$-homogeneous functions  $F,G:\R^n\to[0,+\infty)$, and a linear subspace $X\subset \R^n$,  we introduce the  set of pairs of functions 
$$S(F,G;X)=\{(F',G'):|F'(\vec x)|\le F(|\vec x|) \text{ and }G'(\vec x)=G(|\vec x|) ,\forall \vec x\in X\},$$ where\footnote{By $|\cdot|:\R\to\R$, we mean the absolute value. } $|\cdot|:\R^n\to\R^n$ is a map such that $\dim|X|=\dim X=m$, and $|X|:=\mathrm{span}\{|\vec x|:\vec x\in X\}$.   Then
$$\lambda_{\max}(F,G)|_{|X|}\ge \sup\limits_{(F',G')\in S(F,G;X)}\max\{\lambda_m(F',G'),-\lambda_m'(F',G')\}.$$
\end{theorem}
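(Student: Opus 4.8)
The plan is to reduce the statement, via an odd homeomorphism, to a min-max / max-min comparison on the subspace $|X|$, and then apply the abstract spectral machinery from Proposition~\ref{cor:LScritical} together with the genus-based counting of Lemma~\ref{lemma:key-inertia-nodal}. First I would fix a pair $(F',G')\in S(F,G;X)$ and observe that on $X$ we have the pointwise bounds $|F'(\vec x)|\le F(|\vec x|)$ and $G'(\vec x)=G(|\vec x|)$. Restricting attention to the subspace $|X|$ of dimension $m$, I want to compare the $m$-th min-max eigenvalue $\lambda_m(F',G')$ (and the max-min eigenvalue $\lambda_m'(F',G')$) with $\lambda_{\max}(F,G)|_{|X|}=\max_{\vec z\in |X|\setminus\{\vec0\}}F(\vec z)/G(\vec z)$. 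Since $|X|$ is spanned by vectors of the form $|\vec x|\ge\vec0$, and $F,G\ge0$ are $p$-homogeneous, the quotient $F/G$ is well-defined and zero-homogeneous on $|X|\setminus\{\vec0\}$, so $\lambda_{\max}(F,G)|_{|X|}$ is attained.

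The key step is the following: for any compact symmetric $A\subset X\setminus\{\vec0\}$ with $\gen(A)\ge m$, the image $|A|$ (interpreted appropriately as a subset of $|X|$) together with the sign-pattern data controls $F'/G'$ from above by $F(|\vec x|)/G(|\vec x|)\le \lambda_{\max}(F,G)|_{|X|}$. Concretely, for $\vec x\in X$,
\[
\frac{F'(\vec x)}{G'(\vec x)}\le \frac{|F'(\vec x)|}{G'(\vec x)}\le \frac{F(|\vec x|)}{G(|\vec x|)}\le \lambda_{\max}(F,G)|_{|X|},
\]
and symmetrically $F'(\vec x)/G'(\vec x)\ge -\lambda_{\max}(F,G)|_{|X|}$. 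Since $\dim X=m$, the subspace $X$ itself, intersected with a sphere, is an admissible competitor of genus $m$ in the variational characterization of $\lambda_m(F',G')$, giving $\lambda_m(F',G')\le \sup_{\vec x\in X\cap\mathbb S^{n-1}}F'(\vec x)/G'(\vec x)\le \lambda_{\max}(F,G)|_{|X|}$. For the max-min quantity, one uses that over any compact symmetric set of genus $\ge m$ inside $X$ the infimum of $F'/G'$ is $\ge -\lambda_{\max}(F,G)|_{|X|}$, whence $-\lambda_m'(F',G')\le \lambda_{\max}(F,G)|_{|X|}$. Taking the max over the two and then the supremum over $(F',G')\in S(F,G;X)$ yields the claimed inequality.

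The main obstacle I anticipate is making the passage between $X$ and $|X|$ genuinely rigorous: the map $\vec x\mapsto|\vec x|$ is not linear and not injective on all of $X$, so one must argue that the hypothesis $\dim|X|=\dim X=m$ forces $|\cdot|$ to restrict to an odd homeomorphism (or at least a genus-preserving map) between $X\cap\mathbb S^{n-1}$ and a symmetric subset of $|X|$, so that Proposition~\ref{pro:odd-homeomorphism} and the homotopy/intersection properties of the $\mathbb Z_2$-genus apply. A second delicate point is that $G$ may vanish somewhere on $|X|$; if so, $\lambda_{\max}(F,G)|_{|X|}=+\infty$ and the inequality is trivial, so one should dispose of that case at the outset and otherwise work on the compact set $\{\vec z\in|X|:G(\vec z)=1\}$ where everything is finite and continuous. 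Once these topological reductions are in place, the eigenvalue comparison is exactly the monotonicity of min-max values under pointwise domination of the Rayleigh-type quotient, which is immediate from Proposition~\ref{cor:LScritical}.
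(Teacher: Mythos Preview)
Your core argument is correct and is exactly what the paper does: for $(F',G')\in S(F,G;X)$ and $\vec x\in X$ one has $F'(\vec x)/G'(\vec x)\le F(|\vec x|)/G(|\vec x|)\le \lambda_{\max}(F,G)|_{|X|}$, and then $X$ itself (an $m$-dimensional subspace, hence of genus $m$) is a competitor in the infimum defining $\lambda_m(F',G')$, giving $\lambda_m(F',G')\le \sup_{\vec x\in X}F'(\vec x)/G'(\vec x)\le \lambda_{\max}(F,G)|_{|X|}$. For $-\lambda_m'$, the paper simply replaces $F'$ by $-F'$ (which still lies in $S(F,G;X)$ since $|{-F'}|=|F'|$) and repeats the same line; your version via the lower bound $F'/G'\ge -\lambda_{\max}$ on $X$ is equivalent.

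The obstacle you anticipate is not there. No odd homeomorphism between $X$ and $|X|$, no genus-preservation of $|\cdot|$, and no appeal to Proposition~\ref{pro:odd-homeomorphism} is needed. The only fact used about $|X|$ is the trivial containment $\{|\vec x|:\vec x\in X\}\subset |X|$, which gives $\max_{\vec x\in X}F(|\vec x|)/G(|\vec x|)\le \max_{\vec y\in |X|}F(\vec y)/G(\vec y)$ immediately. The hypothesis $\dim|X|=m$ plays no role in the inequality itself; only $\dim X=m$ is used, to ensure $X\setminus\{\vec0\}$ has genus $m$. So you can drop the entire paragraph about making the passage rigorous and the proof is complete with your displayed chain of inequalities plus the one-line observation that $X$ is admissible for $\lambda_m$.
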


\begin{proof}
For any $(F',G')\in S(F,G;X)$, $\frac{F(|\vec x|)}{G(|\vec x|)}\ge\frac{F'(\vec x)}{G'(\vec x)} $ and thus 
$$\lambda_{\max}(F,G)|_{|X|}:=\max\limits_{y\in |X|}\frac{F(\vec y)}{G(\vec y)}=\max\limits_{ y\in \mathrm{span}\{|\vec x|:\vec x\in X\}}\frac{F(\vec y)}{G(\vec y)}\ge\max\limits_{ x\in X} \frac{F(|\vec x|)}{G(|\vec x|)}\ge \max\limits_{ x\in X}\frac{F'(\vec x)}{G'(\vec x)} .$$
Since $\dim X=\dim |X|=m$, 
$$\lambda_{\max}(F,G)|_{|X|}\ge\max\limits_{ x\in X}\frac{F'(\vec x)}{G'(\vec x)}\ge \inf\limits_{\gen(A)\ge m}\max\limits_{ x\in A}\frac{F'(\vec x)}{G'(\vec x)}= \lambda_m(F',G').$$ 
Changing $F'$ to $-F'$, we also have \begin{align*}
\lambda_{\max}(F,G)|_{|X|}&\ge\max\limits_{ x\in X}\frac{-F'(\vec x)}{G'(\vec x)}\ge 
\inf\limits_{\gen(A)\ge m}\sup\limits_{ x\in A}\frac{-F'(\vec x)}{G'(\vec x)} \\&=-\sup\limits_{\gen(A)\ge m}\inf\limits_{ x\in A}\frac{F'(\vec x)}{G'(\vec x)} =-\lambda_m'(F',G').
\end{align*} 
The proof is completed. 
\end{proof}



\subsection{Structure of eigenspaces}\label{sec:structure-eigenspace}

The eigenspace of an eigenvalue $\lambda$ is the collection of all eigenvectors w.r.t. $\lambda$. We list below some useful  observations:
\begin{itemize}
\item If both $F$ and $G$ are even, then each eigenspace is centrally symmetric w.r.t. the center $\vec 0$.
\item If both $F$ and $G$ are $p$-homogeneous,  then each eigenspace is a cone. 
\item If both $F$ and $G$ are piecewise linear,  then each eigenspace is piecewise linear.
\item However, there exist convex functions $F$ and $G$ such that not every eigenspace is convex.
\end{itemize}

Below, we show further results on piecewise linear function pairs and convex function pairs, which will be used in the analysis of several typical applications in Section \ref{sec:application}.


\vspace{0.16cm}

Let $F$ and $G$ be continuous and piecewise linear functions on $\R^n$. We always assume that there are only finite pieces, i.e., there are convex polyhedral domains $\Omega_1,\cdots,\Omega_k$ with $\cup_{i=1}^k\Omega_i=\R^n$ and  $\Omega_i^o\cap \Omega_j^o=\varnothing$ $\forall i\ne j$, such that both $F$ and $G$ are linear restricted on each $\Omega_i$, $\forall i$. The extreme set of $(F,G)$ is defined to be union of all extreme points of $\Omega_i$ for $i=1,\cdots,k$. 

\begin{theorem}\label{piecewise-linear-vertex}
There are finitely many eigenvalues of $(F,G)$, and every eigenvalue has an eigenvector in the extreme set.
\end{theorem}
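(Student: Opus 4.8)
The plan is to exploit the piecewise-linear structure to reduce the eigenvalue problem to a finite combinatorial problem. First I would analyze the Clarke subdifferentials: on each open cell $\Omega_i^o$ both $F$ and $G$ are linear, so $\nabla F(\vec x)$ and $\nabla G(\vec x)$ are single points (the constant gradients on that cell), and on the lower-dimensional faces $\nabla F(\vec x)$ and $\nabla G(\vec x)$ are the convex hulls of the neighboring cells' gradients. Consequently, for a fixed face $\mathcal{F}$ (an intersection of some of the $\Omega_i$), the sets $\nabla F(\vec x)$ and $\nabla G(\vec x)$ are the \emph{same} polytopes for every $\vec x$ in the relative interior of $\mathcal{F}$. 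This means the eigenvalue condition $\vec 0\in\nabla F(\vec x)-\lambda\nabla G(\vec x)$, for $\vec x$ ranging over $\mathrm{relint}(\mathcal{F})$, depends only on $\mathcal{F}$ and $\lambda$, not on the specific point. So there are only finitely many faces, hence only finitely many possible "eigen-configurations," and in particular only finitely many eigenvalues: for each face, the set of $\lambda$ for which $\vec 0\in\nabla F_{\mathcal F}-\lambda\nabla G_{\mathcal F}$ is either empty, a single point, or (in degenerate cases) an interval; I would need a short argument — using $p$-homogeneity and the bulleted fact that $F(\vec x)/G(\vec x)=\lambda$ for a homogeneous eigenpair with $G(\vec x)\ne 0$ — to rule out genuine intervals of eigenvalues, or alternatively to argue that the eigenvalue is pinned down by $\lambda=F(\vec x)/G(\vec x)$ which is constant along rays. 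Since $F/G$ is piecewise-rational (ratio of two linear functions) on each cone cell and $p$-homogeneous, its value is determined up to finitely many choices, giving finiteness of the eigenvalue set.

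Next, for the "eigenvector in the extreme set" claim, fix an eigenvalue $\lambda$ and an eigenvector $\vec x$. Let $\mathcal{F}$ be the minimal face of the polyhedral decomposition containing $\vec x$ in its relative interior. By the observation above, every point of $\mathrm{relint}(\mathcal F)$ has the same subdifferentials, so the eigenvalue relation $\vec 0\in\nabla F(\vec y)-\lambda\nabla G(\vec y)$ holds for \emph{all} $\vec y\in\mathrm{relint}(\mathcal F)$ — this is exactly the last bulleted fact in the excerpt (if $\nabla F(\vec x)\subset\nabla F(\vec y)$ and $\nabla G(\vec x)\subset\nabla G(\vec y)$ then $(\lambda,\vec y)$ is an eigenpair), applied with equality of the subdifferentials. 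Now I would move within $\mathrm{relint}(\mathcal F)$ toward its boundary: because $\mathcal F$ is a cone (intersection of the cone cells $\Omega_i$, and $\vec 0$ is in all of them since $F,G$ are $p$-homogeneous), its extreme rays generate it, and one can slide $\vec x$ to a point lying on a lower-dimensional face while staying an eigenvector, using the subset relation $\nabla F(\vec x)\subseteq\nabla F(\vec y)$ that holds when $\vec y$ is on the boundary of the cell containing $\vec x$. Iterating this descent on the dimension of the containing face, I reach a face of dimension $\le 1$, i.e., an extreme point of some $\Omega_i$ (or a ray through one), which lies in the extreme set. A slight care point: I must make sure $G(\vec y)\ne 0$ along the descent, or handle the $G=0$ locus separately — but on the zero set of $G$ the ratio is $\pm\infty$ and those points are excluded from the eigenvalue discussion anyway, or are handled by restricting to where $G>0$.

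The main obstacle I anticipate is the descent step: showing rigorously that one can always decrease the dimension of the minimal containing face while preserving the eigenpair property. The subtlety is that as $\vec y$ moves to a lower face, $\nabla F(\vec y)$ and $\nabla G(\vec y)$ both \emph{grow} (more neighboring cells contribute), so $\vec 0\in\nabla F(\vec x)-\lambda\nabla G(\vec x)$ immediately implies $\vec 0\in\nabla F(\vec y)-\lambda\nabla G(\vec y)$ by Minkowski-sum monotonicity — this direction is easy and is precisely the cited bulleted lemma. So in fact the descent is forced and clean: pick any face $\mathcal{F}'$ of dimension one less than $\mathcal F$ with $\mathcal F'\subset\overline{\mathcal F}$, pick $\vec y\in\mathrm{relint}(\mathcal F')$, and $\vec y$ is again an eigenvector with the same $\lambda$. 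Repeating until the face is a ray, then scaling by $p$-homogeneity to land exactly on an extreme point of the corresponding $\Omega_i$, completes the proof. The only genuinely delicate bookkeeping is the finiteness-of-eigenvalues count, where one should phrase the argument as: the eigenvalue of any eigenpair equals $F(\vec x)/G(\vec x)$, which takes finitely many values because it is constant on each of the finitely many polyhedral cone cells (being a ratio of linear functions that is also $0$-homogeneous, hence constant along rays, and the cells are cones).
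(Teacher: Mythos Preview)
Your approach is essentially the same as the paper's, with one efficiency difference: rather than descending face by face, the paper jumps directly from an eigenvector $\vec x\in\Omega_i$ to any vertex $\vec v$ of $\Omega_i$ in a single step, observing that $\nabla F(\vec x)\subset\nabla F(\vec v)$ and $\nabla G(\vec x)\subset\nabla G(\vec v)$ (since every cell touching $\vec x$ also touches $\vec v$), whence $\vec 0\in\nabla F(\vec x)-\lambda\nabla G(\vec x)\subset\nabla F(\vec v)-\lambda\nabla G(\vec v)$. Your dimension-descent accomplishes the same containment inductively; the direct jump is shorter but not conceptually different. Your discussion of finiteness (pinning $\lambda$ to the value $F(\vec x)/G(\vec x)$ via homogeneity, which takes only finitely many values over the cone cells) is more explicit than the paper's, which leaves finiteness as an immediate consequence of the vertex claim.
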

\begin{proof}
We may assume that $F|_{\Omega_i}(\vec x)=\langle \vec a_i,\vec x\rangle+ c_i$ and $G|_{\Omega_i}(\vec x)=\langle \vec b_i,\vec x\rangle+ c_i'$, $i=1,\cdots,k$. If $\vec x$ is a relative interior point of $\cap_{i\in I}\overline{\Omega}_i$ for some index set $I\subset\{1,\cdots,k\}$, then by the properties of subderivative, we have  $\nabla F(\vec x)=\mathrm{conv}\{\vec a_i:i\in I\}$ and $\nabla G(\vec x)=\mathrm{conv}\{\vec b_i:i\in I\}$.

For any eigenpair $(\lambda,\vec x)$, we suppose $\vec x\in\Omega_i$. Then, $\nabla F(\vec x)\subset\nabla F(\vec v)$ and $\nabla G(\vec x)\subset\nabla G(\vec v)$, where   $\vec v$ is a vertex of $\Omega_i$. Therefore, 
$$\vec0\in\nabla F(\vec x)-\lambda\nabla G(\vec x)\subset \nabla F(\vec v)-\lambda \nabla G(\vec v)$$
implying that $(\lambda,\vec v)$ is also an eigenpair. The proof is completed.
\end{proof}

Note that $\{\Omega_i\}_{i=1}^k$ gives  $\R^n$ the structure of a complex. And if we regard $\R^n$ as the polyhedral  complex $\cup_{i=1}^k\Omega_i$, every eigenspace should be a  subcomplex.

\vspace{0.16cm}


Next, we show a result involving the subgradient  of a convex function on an inner product space $X$, which is a useful 
tool on convexity:

\begin{pro}\label{pro:convex-property}
Let $F:X\to\R$ be a convex function. Then, given $\vec x,\vec y\in X$, the following statements are equivalent:
\begin{enumerate}
\item[(1)] $\nabla F(\vec x)\cap \nabla F(\vec y)=\nabla F(t\vec x+(1-t)\vec y)$  whenever $0< t<1$;
\item[(1')] $\nabla F(\vec x)\cap \nabla F(\vec y)=\nabla F(t\vec x+(1-t)\vec y)$  for some $0<t<1$;
\item[(2)] $\nabla F(\vec x)\cap \nabla F(\vec y)\ne \varnothing$;
\item[(3)] $tF(\vec x)+(1-t)F(\vec y)=F(t\vec x+(1-t)\vec y)$ whenever $0< t<1$;
\item[(3')] $tF(\vec x)+(1-t)F(\vec y)=F(t\vec x+(1-t)\vec y)$ for some $0< t<1$.
\end{enumerate}

   \end{pro}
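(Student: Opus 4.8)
The plan is to establish the cyclic chain $(1)\Rightarrow(1')\Rightarrow(2)\Rightarrow(3)\Rightarrow(1)$ together with $(3')\Rightarrow(3)$ (the converse $(3)\Rightarrow(3')$ being trivial); since this passes through every listed statement, it gives the full equivalence. Throughout I use that for a convex $F$ the Clarke subdifferential $\nabla F(\vec z)$ agrees with the usual convex subdifferential, so $\vec g\in\nabla F(\vec z)$ means $F(\vec w)\ge F(\vec z)+\langle\vec g,\vec w-\vec z\rangle$ for all $\vec w\in X$, and that $\nabla F(\vec z)\ne\varnothing$ at every point (automatic here since $F$ is locally Lipschitz, or since finite convex functions on $\R^n$ are continuous). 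Abbreviate $\vec z_t:=t\vec x+(1-t)\vec y$.

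The steps $(1)\Rightarrow(1')$ and $(3)\Rightarrow(3')$ are immediate. For $(1')\Rightarrow(2)$: if $\nabla F(\vec x)\cap\nabla F(\vec y)=\nabla F(\vec z_{t_0})$ for some $t_0\in(0,1)$, the right-hand side is nonempty, hence so is the intersection. For $(2)\Rightarrow(3)$: take $\vec g\in\nabla F(\vec x)\cap\nabla F(\vec y)$; the two subgradient inequalities $F(\vec y)\ge F(\vec x)+\langle\vec g,\vec y-\vec x\rangle$ and $F(\vec x)\ge F(\vec y)+\langle\vec g,\vec x-\vec y\rangle$ add to an identity, so both are equalities, in particular $F(\vec y)-F(\vec x)=\langle\vec g,\vec y-\vec x\rangle$; the subgradient inequality at $\vec x$ applied to $\vec z_t$ then gives $F(\vec z_t)\ge tF(\vec x)+(1-t)F(\vec y)$, while convexity gives the reverse inequality, so $(3)$ holds. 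For $(3')\Rightarrow(3)$: restricting $F$ to the segment $[\vec x,\vec y]$ and regarding it as a convex function $\varphi$ of one real parameter, $(3')$ says $\varphi$ touches the chord through its endpoint values at an interior point; the standard argument --- if $\varphi$ were strictly below the chord at some parameter, convexity on a subinterval would push the value at the touching point strictly below the chord --- forces $\varphi$ to equal the chord throughout, which is $(3)$.

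The main work is $(3)\Rightarrow(1)$, proved by showing $\nabla F(\vec z_t)=\nabla F(\vec x)\cap\nabla F(\vec y)$ for each $t\in(0,1)$ via two inclusions. For $\nabla F(\vec z_t)\supseteq\nabla F(\vec x)\cap\nabla F(\vec y)$: given $\vec g$ in the intersection we again have $F(\vec y)-F(\vec x)=\langle\vec g,\vec y-\vec x\rangle$, and combining this with $(3)$ one checks $F(\vec z_t)+\langle\vec g,\vec w-\vec z_t\rangle=F(\vec x)+\langle\vec g,\vec w-\vec x\rangle\le F(\vec w)$ for every $\vec w$, so $\vec g\in\nabla F(\vec z_t)$. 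For the reverse inclusion: given $\vec g\in\nabla F(\vec z_t)$, substituting $\vec w=\vec x$ and $\vec w=\vec y$ in $F(\vec w)\ge F(\vec z_t)+\langle\vec g,\vec w-\vec z_t\rangle$, scaling by $t$ and $1-t$, and adding gives $tF(\vec x)+(1-t)F(\vec y)\ge F(\vec z_t)$, which is an equality by $(3)$; hence both scaled inequalities are equalities, and unwinding them yields $F(\vec x)=F(\vec z_t)+\langle\vec g,\vec x-\vec z_t\rangle$ and $F(\vec y)=F(\vec z_t)+\langle\vec g,\vec y-\vec z_t\rangle$, which upgrade the subgradient inequality at $\vec z_t$ to subgradient inequalities at $\vec x$ and at $\vec y$, giving $\vec g\in\nabla F(\vec x)\cap\nabla F(\vec y)$.

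The only delicate part is the double inclusion in $(3)\Rightarrow(1)$, where one must keep track of the affine identities holding along the segment; each such identity, however, is forced by the familiar observation that a sum of two inequalities of the form $a\ge b$ which happens to be an identity must consist of two equalities, so no real obstacle arises --- just careful algebra. The one genuine hypothesis-level point is the nonemptiness of subdifferentials used in $(1')\Rightarrow(2)$, which is exactly what the standing local Lipschitz assumption (or continuity of $F$) provides.
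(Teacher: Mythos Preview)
Your proof is correct. The cyclic chain $(1)\Rightarrow(1')\Rightarrow(2)\Rightarrow(3)\Rightarrow(1)$ together with the separate $(3')\Leftrightarrow(3)$ closes all the equivalences, and each individual step is sound: the subgradient-inequality manipulations in $(2)\Rightarrow(3)$ and in both inclusions of $(3)\Rightarrow(1)$ are accurate, and the one-variable chord argument for $(3')\Rightarrow(3)$ is standard.

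The paper proceeds quite differently. Instead of working directly with the subgradient inequality, it passes to the epigraph $\mathbf{epi}(F)$ and expresses $\nabla F(\vec x)$ as a horizontal slice of the normal cone $N_p(\mathbf{epi}(F))$ at $p=(\vec x,F(\vec x))$. A preliminary claim shows that for any convex body the normal cones satisfy $N_{tp+(1-t)q}=N_p\cap N_q$ whenever $0<t<1$; specializing to the epigraph and projecting back gives Claim~3, which is essentially the whole proposition in one formula. Your approach is more elementary and entirely self-contained --- it never leaves the subgradient inequality --- which makes it shorter and easier to verify line by line. The paper's route, on the other hand, isolates a purely geometric fact about normal cones that is of independent interest, and it is this viewpoint that the paper exploits immediately afterwards when discussing the Gauss map of $\mathbf{graph}(F)$. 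So your argument buys economy, while the paper's buys a reusable structural lemma.
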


\begin{proof}
The proof is organized in the following steps: 
\begin{enumerate}
    \item[Claim 1.] Denote by  $N_p:=\{\vec v:\langle \vec v,\vec p'-\vec p\rangle\le 0,\forall \vec p'\in \Omega\}$ the normal cone of a convex set $\Omega$ at $\vec p$. Then, for any $ \vec v\in N_p\cap N_q$, there hold $\vec v\bot (\vec p-\vec q)$, and $N_{tp+(1-t)q}=  N_p\cap N_q$ whenever $0<t<1$.
    
\begin{proof}
For any $ \vec v\in N_p\cap N_q$, $\langle \vec v,\vec r-\vec p \rangle\le 0$  and  $\langle \vec v,\vec r-\vec q \rangle\le 0$, $\forall \vec r\in \Omega$. Thus, $\langle \vec v,\vec r-t\vec p-(1-t)\vec q \rangle=t\langle \vec v,\vec r-\vec p \rangle+(1-t)\langle \vec v,\vec r-\vec q \rangle\le 0$ for any $\vec r\in \Omega$,  which means $\vec v\in N_{tp+(1-t)q}$. So, $N_p\cap N_q\subset N_{tp+(1-t)q}$.

Conversely, for every $\vec v\in N_{tp+(1-t)q}$, $\langle \vec v,t\vec r+(1-t)\vec r'-t\vec p-(1-t)\vec q \rangle\le 0$, $\forall \vec r,\vec r'\in \Omega$. Since $0<t<1$,  taking $\vec r'=\vec q$, we have $\langle \vec v,\vec r-\vec p\rangle=\frac1t\langle \vec v,t\vec r-t\vec p\rangle\le 0$, $\forall \vec r\in \Omega$. Hence, $\vec v\in N_p$. Similarly, $\vec v\in N_q$. Thus, $N_{tp+(1-t)q}\subset   N_p\cap N_q$.

Taking $\vec r=\vec p$, we have $\langle \vec v,\vec p-\vec q \rangle\le 0$. Similarly, $\langle \vec v,\vec q-\vec p \rangle\le 0$. Hence, $\langle \vec v,\vec q-\vec p \rangle= 0$.
\end{proof}

 \item[Claim 2.] 
$\nabla F(\vec x)=\mathrm{Proj}_{X}(N_p(\mathbf{epi}(F))\cap (X\times\{-1\})$, where $\mathbf{epi}(F):=\{(\vec x,c)\in X\times\R:\vec x\in X,c\ge F(\vec x)\}$,  $p:=(\vec x,F(\vec x))$, and $\mathrm{Proj}_{X}:X\times\R\to X$ is the  projection onto $X$. 

\begin{proof}
Let $\vec p=(\vec x,F(\vec x))$ and $\Omega=\mathbf{epi}(F)$.  Since  $\nabla F(\vec x)=\{\vec u:\langle\vec u,\vec x'-\vec x\rangle\le F(\vec x')-F(\vec x)\}$, we have\footnote{This fact is known to the experts \cite{Clarke}. }
\begin{align*}
N_p&=\{\vec v=(\vec v',\xi)\in X\times\R:\langle \vec v,\vec p'-\vec p\rangle\le 0,\forall \vec p'=(\vec x',c)\in\mathbf{epi}(F) \}
\\&= \{(\vec v',\xi)\in X\times(-\infty,0):\langle \vec v',\vec x'-\vec x\rangle+\xi(c-F(\vec x))\le 0,\forall \vec x'\in X,\forall c\ge F(\vec x')\}\cup\{\vec0\}
\\&=\{(\vec v',\xi)\in X\times(-\infty,0):\langle  \frac{\vec v'}{|\xi|},\vec x'-\vec x\rangle\le F(\vec x')-F(\vec x),\forall \vec x'\in X\}\cup\{\vec0\}
\\&=\{t(\vec u,-1):\vec u\in \nabla F(\vec x),t\ge 0\}.
\end{align*}
The  equality $N_p(\mathbf{epi}(F))=\{t(\vec u,-1):\vec u\in \nabla F(\vec x),t\ge 0\}$ then implies Claim 2.
\end{proof}    
 

\item[Claim 3.]  $\nabla F(\vec x)\cap \nabla F(\vec y)=\begin{cases}
\varnothing, &\text{ if }tF(\vec x)+(1-t)F(\vec y)>F(t\vec x+(1-t)\vec y),
\\ \nabla F(t\vec x+(1-t)\vec y)&\text{ if }tF(\vec x)+(1-t)F(\vec y)=F(t\vec x+(1-t)\vec y).
\end{cases}
$ 

\begin{proof}
According to Claims 1 and 2, we have
\begin{align*}
&\nabla F(\vec x)\cap \nabla F(\vec y)\\=~&\mathrm{Proj}_{X}(N_p(\mathbf{epi}(F))\cap N_q(\mathbf{epi}(F))\cap (X\times\{-1\}) \\=~& \mathrm{Proj}_{X}(N_{tp+(1-t)q}(\mathbf{epi}(F))\cap (X\times\{-1\}) 
\\=~&\begin{cases}
\varnothing, &\text{ if }tp+(1-t)p\ne(t\vec x+(1-t)\vec y,F(t\vec x+(1-t)\vec y)),
\\\nabla F(t\vec x+(1-t)\vec y)&\text{ if }tp+(1-t)p=(t\vec x+(1-t)\vec y,F(t\vec x+(1-t)\vec y)).
\end{cases}
\end{align*}
\end{proof}
\end{enumerate}
The proof is completed. 
\end{proof}


We are also  interested in  the converse of Proposition \ref{pro:convex-property}. 
 
\begin{Conj}
A Lipschitzian function $F:X\to\R$ is convex if and only if $\nabla F(\vec x)\cap \nabla F(\vec y)=\nabla F(t\vec x+(1-t)\vec y)$  whenever $0\le t\le1$ and $\nabla F(\vec x)\cap \nabla F(\vec y)\ne \varnothing$. 
\end{Conj}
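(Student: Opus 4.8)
The plan splits into two parts. The ``only if'' direction needs nothing new: it is the implication (2)$\Rightarrow$(1) of Proposition~\ref{pro:convex-property}, by which a convex $F$ automatically satisfies $\nabla F(\vec x)\cap\nabla F(\vec y)=\nabla F(t\vec x+(1-t)\vec y)$ for all $0<t<1$ whenever the left-hand side is nonempty. I would first remark that the range of $t$ must be read as the \emph{open} interval: already for $F(\vec x)=\|\vec x\|$, taking $\vec x=\vec 0$ gives $\nabla F(\vec 0)\cap\nabla F(\vec y)$ a singleton while $\nabla F(\vec 0)$ is the whole unit ball, so equality fails at the endpoint $t=1$.

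For the converse the natural route is a reduction to one variable: it suffices to show that $\phi(s):=F(\vec z+s\vec d)$ is convex for every line, a Lipschitz function of one real variable being convex precisely when its a.e.-derivative is nondecreasing, equivalently when its Clarke subdifferential is a monotone set-valued map. Via the chain inclusion $\nabla\phi(s)\subseteq\langle\nabla F(\vec z+s\vec d),\vec d\rangle$, it would then be enough to deduce from the hypothesis the monotonicity of $\nabla F$ as a set-valued map, which for locally Lipschitz $F$ is precisely convexity. The hard part --- in fact a fatal one --- is that this deduction is \emph{false}: the hypothesis says nothing about pairs $\vec x,\vec y$ with $\nabla F(\vec x)\cap\nabla F(\vec y)=\varnothing$, which is exactly where non-convex behaviour can hide. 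It is vacuous, for instance, for every smooth strictly concave function --- whose subdifferential is a single-valued injective map, so that $\nabla F(\vec x)\cap\nabla F(\vec y)\ne\varnothing$ already forces $\vec x=\vec y$ and the condition holds trivially --- such as $F(x)=-\sqrt{1+x^2}$; it holds for the non-smooth $F(\vec x)=-\|\vec x\|$, whose only overlapping pairs are the positively proportional ones (on which $\nabla F$ is constant) and those involving $\vec 0$ (on which $\nabla F(\vec 0)$ is the whole unit ball), the required equality being immediate in each case; and, most tellingly, it holds for $F(\vec x)=|x_1|-|x_2|$ on $\R^2$, which is neither convex nor concave, since one checks coordinatewise that on every non-vacuous pair $\nabla F(t\vec x+(1-t)\vec y)$ agrees with $\nabla F(\vec x)\cap\nabla F(\vec y)$. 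So the conjecture is false as stated, under either reading of the range of $t$.

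What I would actually aim to prove is therefore a repaired statement, and the real work is to find the right extra hypothesis. Merely asking that every Clarke subgradient be a Fr\'echet subgradient does not help --- $-\sqrt{1+x^2}$ still qualifies --- so one needs a genuinely \emph{global}, sign-definite condition, for instance that every $\vec u\in\nabla F(\vec x)$ satisfy $F(\vec z)\ge F(\vec x)+\langle\vec u,\vec z-\vec x\rangle$ for all $\vec z$; but then $F$ is the supremum of its affine minorants, hence convex, and the intersection condition plays no role. The honest reformulation is to characterise the locally Lipschitz functions obeying the intersection condition --- a class strictly larger than the convex functions, containing for example every separable sum $F(\vec x)=F_1(x_1)+\dots+F_n(x_n)$ with each $F_i$ a one-variable convex \emph{or} concave function --- and the plan for that would again begin from the line-restriction reduction above, now aiming to show that every line restriction of $F$ is convex or concave.
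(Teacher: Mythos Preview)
The paper does not prove this statement: it is explicitly labelled a \emph{Conjecture}, offered as a hoped-for converse to Proposition~\ref{pro:convex-property}, with no argument given. So there is no ``paper's own proof'' to compare against.

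Your analysis is correct and in fact resolves the conjecture in the negative. The observation that the endpoint values $t\in\{0,1\}$ must be excluded (else already the convex $F=\|\cdot\|$ fails the condition at $\vec x=\vec 0$) is a genuine correction to the statement. More importantly, your counterexamples to the ``if'' direction are valid: for any $C^1$ function with injective gradient map --- in particular any smooth strictly concave function such as $F(x)=-\sqrt{1+x^2}$ --- the hypothesis $\nabla F(\vec x)\cap\nabla F(\vec y)\ne\varnothing$ forces $\vec x=\vec y$, so the intersection condition is vacuous while $F$ is not convex. The example $F(\vec x)=|x_1|-|x_2|$ is a clean non-smooth witness that is neither convex nor concave; the coordinatewise verification you sketch goes through because for a separable Lipschitz function the Clarke subdifferential factors as a product, and in each coordinate $\mathrm{Sgn}(a)\cap\mathrm{Sgn}(b)=\mathrm{Sgn}(ta+(1-t)b)$ whenever $ab\ge 0$ and $0<t<1$.

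Your diagnosis of \emph{why} the conjecture fails --- the intersection condition constrains nothing on pairs with disjoint Clarke subdifferentials, which is precisely where non-convexity can live --- is the right structural point, and your closing remark that the class of functions satisfying the condition contains every separable $\sum_i F_i(x_i)$ with each $F_i$ convex or concave is a useful start toward the honest characterisation problem. This goes well beyond what the paper contains.
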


In the proof of Proposition \ref{pro:convex-property}, we use the section  $N_p(\mathbf{epi}(F))\cap (X\times\{-1\})$, while we note that in  \cite{MartinezPintea20}, the authors investigate the spherical  section $N_p(\mathbf{epi}(F))\cap \mathbb{S}^{\dim X}$, i.e., the Gauss map of $\mathbf{graph}(F)$ at $\vec p$. 
By Proposition \ref{pro:convex-property} and the results in  \cite{MartinezPintea20}, we have
\begin{pro}
If $\dim X<\infty$,  then the range of the Gauss map of the graph of  a convex function $F:X\to\R$ is open if and only if $\{\vec y\in X:\nabla F(\vec y)=\nabla F(\vec x)\}$ is bounded for any $\vec x\in X$,   if and only if every convex subset $\Omega$ with  $F|_\Omega$ being  linear is bounded. 
\end{pro}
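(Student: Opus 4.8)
The plan is to read the Gauss map off from the computation of normal cones to $\mathbf{epi}(F)$ already carried out inside the proof of Proposition~\ref{pro:convex-property}, and then to route the results of \cite{MartinezPintea20} through that identification together with two purely convex-analytic reformulations, again supplied by Proposition~\ref{pro:convex-property}. By Claim~2 in the proof of Proposition~\ref{pro:convex-property}, for every $\vec p=(\vec x,F(\vec x))\in\mathbf{graph}(F)$ one has $N_{\vec p}(\mathbf{epi}(F))=\{t(\vec u,-1):\vec u\in\nabla F(\vec x),\,t\ge 0\}$, so the spherical section $N_{\vec p}(\mathbf{epi}(F))\cap\mathbb{S}^{\dim X}$ is exactly $\Phi(\nabla F(\vec x))$, where $\Phi:X\to\mathbb{S}^{\dim X}$, $\Phi(\vec u)=(\vec u,-1)/\sqrt{|\vec u|^2+1}$, is a homeomorphism of $X$ onto the open lower hemisphere $\{(\vec w,s)\in\mathbb{S}^{\dim X}:s<0\}$. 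As $\vec p$ runs over $\mathbf{graph}(F)$ the point $\vec x$ runs over all of $X$, so the range of the Gauss map equals $\Phi\bigl(\bigcup_{\vec x\in X}\nabla F(\vec x)\bigr)$; since the open lower hemisphere is open in $\mathbb{S}^{\dim X}$ and $\Phi$ is a homeomorphism onto it, the range of the Gauss map is open in $\mathbb{S}^{\dim X}$ if and only if $R:=\bigcup_{\vec x\in X}\nabla F(\vec x)=\operatorname{range}(\nabla F)$ is open in $X$.

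\emph{Reformulating ``$R$ open'' via Proposition~\ref{pro:convex-property}.} For $\vec x\in X$ put $C_{\vec x}:=\{\vec y\in X:\nabla F(\vec y)=\nabla F(\vec x)\}$. I would first record that $C_{\vec x}$ is convex and $F|_{C_{\vec x}}$ is affine: indeed $\nabla F(\vec y)=\nabla F(\vec z)=\nabla F(\vec x)$ implies $\nabla F(\vec y)\cap\nabla F(\vec z)\ne\varnothing$, hence $\nabla F(t\vec y+(1-t)\vec z)=\nabla F(\vec y)\cap\nabla F(\vec z)=\nabla F(\vec x)$ for $0<t<1$ by the implication $(2)\Rightarrow(1)$ of Proposition~\ref{pro:convex-property}, while $(2)\Rightarrow(3)$ of the same proposition, together with the fact that a fixed $\vec a\in\nabla F(\vec x)$ lies in every $\nabla F(\vec y)$ with $\vec y\in C_{\vec x}$, shows $F|_{C_{\vec x}}=\langle\vec a,\cdot\rangle+\mathrm{const}$. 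I would then prove the equivalence: $C_{\vec x}$ is bounded for every $\vec x$ if and only if every convex $\Omega\subset X$ with $F|_\Omega$ affine is bounded. The direction ``$\Leftarrow$'' is immediate from the previous sentence. For ``$\Rightarrow$'', given such an $\Omega$, for any two points of $\operatorname{relint}\Omega$ one extends the segment joining them a little within $\Omega$ and applies $(3')\Rightarrow(1)$ of Proposition~\ref{pro:convex-property}; this forces $\nabla F$ to be constant on $\operatorname{relint}\Omega$, so $\operatorname{relint}\Omega\subset C_{\vec x_0}$ for any $\vec x_0\in\operatorname{relint}\Omega$, whence $\Omega\subset\overline{\operatorname{relint}\Omega}$ is bounded.

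\emph{Invoking \cite{MartinezPintea20} and the main obstacle.} It remains to connect openness of $R=\operatorname{range}(\nabla F)$ with boundedness of the fibres $C_{\vec x}$, and this is what \cite{MartinezPintea20} provides once transported through the identification of the first step: their analysis of the Gauss map of graphs of convex functions yields that $R$ is open if and only if $C_{\vec x}$ is bounded for every $\vec x\in X$. Chaining this with the two equivalences of the second step gives the stated three-way equivalence. I expect this last link to be the hard part: the delicate direction is ``all $C_{\vec x}$ bounded $\Rightarrow$ $R$ open'', which amounts to showing that $\operatorname{argmin}(F-\langle\vec a,\cdot\rangle)$ survives sufficiently small perturbations of $\vec a$ whenever it is compact — a compactness statement about recession cones of sublevel sets — and it is exactly this that \cite{MartinezPintea20} supplies, so the remaining work is to route their theorem through the dictionary of the first two steps rather than to reprove it.
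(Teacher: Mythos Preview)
Your proposal is correct and follows precisely the route the paper indicates: the paper's own proof is the single sentence ``By Proposition~\ref{pro:convex-property} and the results in \cite{MartinezPintea20}'', and your write-up is a faithful unpacking of that sentence—using Claim~2 of Proposition~\ref{pro:convex-property} to identify the Gauss range with $\Phi(\operatorname{range}\nabla F)$, using the equivalences $(1)\Leftrightarrow(2)\Leftrightarrow(3)$ of that proposition to pass between constancy of $\nabla F$ and affineness of $F$ on convex sets, and deferring the openness $\Leftrightarrow$ bounded-fibres step to \cite{MartinezPintea20}. There is nothing to add.
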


\begin{pro}
For a convex function $F:X\to\R$ with $\dim X<\infty$, if $F$ is one-homogeneous, then the range of the Gauss map of $\mathbf{graph}(F)$ is closed; while, if $F$ is $p$-homogeneous with $p>1$, and $F(\vec x)>0$ whenever $\vec x\ne\vec 0$, then  the range of the Gauss map of $\mathbf{graph}(F)$ is open. 
\end{pro}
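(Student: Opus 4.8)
The plan is to handle the two homogeneity regimes separately, reducing each to a structural feature of the epigraph $K:=\mathbf{epi}(F)\subseteq X\times\R$; throughout I will use that $F$, being finite and convex on the finite-dimensional space $X$, is continuous, so that $K$ is closed, $\partial K=\mathbf{graph}(F)$, and every subdifferential $\nabla F(\vec x)$ is nonempty and compact. Consequently the range of the Gauss map is $\bigcup_{\vec p\in\partial K}\bigl(N_{\vec p}(K)\cap\mathbb S^{\dim X}\bigr)$, where $N_{\vec p}(K)$ is the normal cone of $K$ at $\vec p$.

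For the one-homogeneous case I would first note that $K$ is then a closed convex cone with apex at $\vec 0':=(\vec 0,0)\in\mathbf{graph}(F)$ (indeed $F(\vec 0)=0$, and $c\ge F(\vec x)$ implies $sc\ge sF(\vec x)=F(s\vec x)$ for $s>0$). The key observation is that for a convex cone this union of normal cones collapses to the polar cone $K^{\circ}:=\{\vec v:\langle\vec v,\vec q\rangle\le 0\ \forall \vec q\in K\}$: from $\langle\vec v,\vec q-\vec p\rangle\le 0$ for all $\vec q\in K$, testing against $\vec q=\vec 0\in K$ and $\vec q=2\vec p\in K$ gives $\langle\vec v,\vec p\rangle=0$, whence $\langle\vec v,\vec q\rangle\le 0$ for all $\vec q\in K$; thus $N_{\vec p}(K)\subseteq K^{\circ}$ for every $\vec p\in\partial K$, while the apex realizes $N_{\vec 0'}(K)=K^{\circ}$ exactly. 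Therefore the range of the Gauss map equals $K^{\circ}\cap\mathbb S^{\dim X}$, which is closed because the polar cone $K^{\circ}$ is closed. (Alternatively, using Claim~2 in the proof of Proposition~\ref{pro:convex-property} one checks directly that one-homogeneity forces $\nabla F(\vec x)\subseteq\nabla F(\vec 0)$ for every $\vec x$, so that the range is the continuous image of the compact set $\nabla F(\vec 0)$ under $\vec u\mapsto(\vec u,-1)/\sqrt{\|\vec u\|^2+1}$, hence compact.)

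For the case $p>1$ I would invoke the preceding proposition, which reduces the claim to showing that every convex set $\Omega\subseteq X$ on which $F$ restricts to an affine function is bounded. Suppose it is not; passing to $\overline{\Omega}$ (legitimate by continuity of $F$), we obtain a closed convex unbounded set, hence one containing a ray $L=\{\vec a+t\vec d:t\ge 0\}$ with $\vec d\neq\vec 0$, along which $F(\vec a+t\vec d)=\alpha+\beta t$ is affine in $t$. On the other hand $p$-homogeneity gives $F(\vec a+t\vec d)=t^{p}F(\vec d+\vec a/t)$ for $t>0$, and continuity together with $F(\vec d)>0$ yields $F(\vec a+t\vec d)/t^{p}\to F(\vec d)>0$ as $t\to\infty$; so $F$ grows like $t^{p}$ along $L$, contradicting $p>1$ and the at-most-linear growth $\alpha+\beta t$. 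Hence $\Omega$ is bounded and, by the preceding proposition, the range of the Gauss map is open.

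I expect the $p=1$ statement to require the only genuinely new idea, since the preceding proposition speaks only about openness of the Gauss map range and offers no direct criterion for closedness; the point that makes it go through is the collapse of the boundary normal cones of a convex cone onto the automatically closed polar cone. The $p>1$ part, by contrast, is essentially the elementary growth estimate above, and the only care needed there is the reduction to a single recession ray of $\overline{\Omega}$.
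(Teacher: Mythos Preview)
Your proof is correct. The paper states this proposition without proof, but your approach is exactly the one the surrounding text sets up: for $p>1$ you invoke the preceding proposition (the openness criterion from \cite{MartinezPintea20}) and rule out unbounded affine pieces via the growth estimate $F(\vec a+t\vec d)\sim t^{p}F(\vec d)$, while for the one-homogeneous case you correctly observe that $\mathbf{epi}(F)$ is a closed convex cone, so all boundary normal cones collapse into the polar cone $K^{\circ}=N_{\vec 0'}(K)$, giving a closed Gauss range.
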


Based on Proposition \ref{pro:convex-property}, we obtain the following results on eigenpairs, which are very similar to Theorem \ref{piecewise-linear-vertex}.

\begin{cor}
For two convex functions $F$ and $G$, if 
$F(t\vec x+(1-t)\vec y)=tF(\vec x)+(1-t)F(\vec y)$ and $G(t\vec x+(1-t)\vec y)=tG(\vec x)+(1-t)G(\vec y)$  and 
$(\lambda,t\vec x+(1-t)\vec y)$ is an eigenpair of $(F,G)$ for some $0<t<1$, then both $(\lambda,\vec x)$ and $(\lambda,\vec y)$ are eigenpairs. \end{cor}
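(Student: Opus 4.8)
The plan is to reduce the corollary directly to Proposition~\ref{pro:convex-property}. The hypotheses give two convex functions $F$ and $G$ together with points $\vec x,\vec y\in\R^n$ and a parameter $0<t<1$ such that the affine-combination identity holds for both functions at $(\vec x,\vec y,t)$, namely $F(t\vec x+(1-t)\vec y)=tF(\vec x)+(1-t)F(\vec y)$ and similarly for $G$. These are exactly conditions (3') in Proposition~\ref{pro:convex-property} applied to $F$ and to $G$ respectively. Since that proposition asserts the equivalence of (1), (1'), (2), (3), (3'), I would first invoke it to pass from (3') to (1), obtaining
$$\nabla F(\vec x)\cap\nabla F(\vec y)=\nabla F(s\vec x+(1-s)\vec y)\quad\text{for every }0<s<1,$$
and likewise $\nabla G(\vec x)\cap\nabla G(\vec y)=\nabla G(s\vec x+(1-s)\vec y)$ for every $0<s<1$. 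In particular, taking $s=t$, we get $\nabla F(t\vec x+(1-t)\vec y)\subset\nabla F(\vec x)$ and $\nabla F(t\vec x+(1-t)\vec y)\subset\nabla F(\vec y)$, and the same two inclusions for $G$.

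The second step is to feed these inclusions into the Minkowski-summation monotonicity argument already used twice in the excerpt (in the bullet ``If $(\lambda,\vec x)$ is an eigenpair\ldots'' and in the proof of Theorem~\ref{piecewise-linear-vertex}). Writing $\vec z:=t\vec x+(1-t)\vec y$, the eigenpair hypothesis says $\vec0\in\nabla F(\vec z)-\lambda\nabla G(\vec z)$. Because $\nabla F(\vec z)\subset\nabla F(\vec x)$ and $\nabla G(\vec z)\subset\nabla G(\vec x)$, the properties of Minkowski sums give
$$\vec0\in\nabla F(\vec z)-\lambda\nabla G(\vec z)\subset\nabla F(\vec x)-\lambda\nabla G(\vec x),$$
so $(\lambda,\vec x)$ is an eigenpair of $(F,G)$. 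Replacing $\vec x$ by $\vec y$ throughout yields $(\lambda,\vec y)$ as well, completing the proof.

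I do not expect a genuine obstacle here; the corollary is essentially a packaging of Proposition~\ref{pro:convex-property} plus the Minkowski-set-inclusion trick. The only point requiring a little care is making sure the linearity hypotheses are stated in the form ``(3')'' rather than ``(3)'' — but (3) $\Rightarrow$ (3') is trivial, so either way Proposition~\ref{pro:convex-property} applies and delivers the needed gradient inclusions at the single value $s=t$ (one does not even need the ``for all $s$'' version). One should also note that $\lambda$ may be $0$ or negative and that $G(\vec z)$ need not be nonzero, but none of this interferes with the purely set-theoretic containment $\nabla F(\vec z)-\lambda\nabla G(\vec z)\subset\nabla F(\vec x)-\lambda\nabla G(\vec x)$, which is all that is used.
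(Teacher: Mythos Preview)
Your proposal is correct and follows exactly the route the paper intends: the corollary is stated without proof immediately after Proposition~\ref{pro:convex-property}, and the implicit argument is precisely the one you give --- use (3') $\Rightarrow$ (1) to get $\nabla F(\vec z)\subset\nabla F(\vec x)$, $\nabla F(\vec z)\subset\nabla F(\vec y)$ (and likewise for $G$), then apply the Minkowski-inclusion trick already recorded in the bulleted facts and in the proof of Theorem~\ref{piecewise-linear-vertex}. Your remark that the sign of $\lambda$ and the value $G(\vec z)$ play no role is also accurate; the containment argument is purely set-theoretic.
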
   

\begin{cor}\label{cor:vertex-critical}
For two convex functions $F$ and $G$, if 
$F$ and $G$ are linear on a  convex polyhedron  $\triangle$,  and 
$(\lambda,\vec x)$ is an eigenpair of $(F,G)$ for some relative interior point $\vec x$ in $ \triangle$, then  for any vertex $\vec v$ of $\triangle$,  $(\lambda,\vec v)$ is also an eigenpair. \end{cor}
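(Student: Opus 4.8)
The plan is to mimic the proof of Theorem \ref{piecewise-linear-vertex}: it suffices to establish the subgradient inclusions $\nabla F(\vec x)\subseteq \nabla F(\vec v)$ and $\nabla G(\vec x)\subseteq \nabla G(\vec v)$, after which the conclusion follows by monotonicity of Minkowski sums under inclusion, exactly as there. The only genuinely new point compared with Theorem \ref{piecewise-linear-vertex} is that $\triangle$ need not be one of the linearity cells of a global piecewise-linear structure, so the explicit subderivative description $\nabla F(\vec x)=\mathrm{conv}\{\vec a_i\}$ is not directly available; instead I would exploit that $F$ and $G$ are affine on $\triangle$ together with Proposition \ref{pro:convex-property}.

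First I would realize $\vec x$ as a proper interior point of a segment in $\triangle$ pointing away from $\vec v$. Since $\vec x$ lies in the relative interior of $\triangle$ and $\vec v\in\triangle$, there is $\epsilon>0$ with $\vec w:=\vec x+\epsilon(\vec x-\vec v)\in\triangle$, and then $\vec x=t\vec v+(1-t)\vec w$ with $t=\epsilon/(1+\epsilon)\in(0,1)$. Because $F$ and $G$ are affine on $\triangle$ and $\vec v,\vec w,\vec x\in\triangle$, we get $F(\vec x)=tF(\vec v)+(1-t)F(\vec w)$ and $G(\vec x)=tG(\vec v)+(1-t)G(\vec w)$; i.e.\ condition (3') of Proposition \ref{pro:convex-property} holds for the pair $(\vec v,\vec w)$, for both $F$ and $G$.

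Applying the equivalence (3')$\Leftrightarrow$(1) of Proposition \ref{pro:convex-property} then yields $\nabla F(\vec x)=\nabla F(\vec v)\cap\nabla F(\vec w)\subseteq\nabla F(\vec v)$, and likewise $\nabla G(\vec x)\subseteq\nabla G(\vec v)$. Finally, since $(\lambda,\vec x)$ is an eigenpair, $\vec 0\in\nabla F(\vec x)-\lambda\nabla G(\vec x)\subseteq\nabla F(\vec v)-\lambda\nabla G(\vec v)$, so $(\lambda,\vec v)$ is an eigenpair; this last inclusion step is the same Minkowski-summation observation already used in the proof of Theorem \ref{piecewise-linear-vertex}.

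I do not expect a real obstacle. The only two points needing a sentence of justification are the segment-extension step, which is the standard characterization of relative interior points of a convex set, and the observation that $F,G$ being affine on $\triangle$ is precisely what puts us in case (3') of Proposition \ref{pro:convex-property}; the remainder is a direct citation of that proposition and of the inclusion-monotonicity of Minkowski sums.
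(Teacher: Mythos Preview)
Your proposal is correct and follows precisely the route the paper intends: the corollary is stated without an explicit proof, as a direct consequence of Proposition~\ref{pro:convex-property} (the implication $(3')\Rightarrow(1)$) combined with the Minkowski-sum inclusion already used in Theorem~\ref{piecewise-linear-vertex}. Your relative-interior segment construction and the identification of affinity on $\triangle$ with condition $(3')$ are exactly the details one fills in.
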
  

\subsection{Duality and convex conjugate}
\label{sec:dual}
\begin{defn}[conjugate]
The convex conjugate of a convex function $F:\R^n\to\R$ is defined as $F^\star(\vec x)=\sup\limits_{y\in\R^n}\{\langle \vec x,\vec y\rangle-F(\vec y)\}$, $\forall \vec x\in\R^n$. 
\end{defn}
The convex conjugate is also known as Legendre  transformation or Fenchel dual
(see \cite{Zeidler}). If we restrict ourselves to a  convex $p$-homogeneous
function $F:\R^n\to\R$ with the additional positive-definiteness condition
that $F(\vec x)>0$ whenever $\vec x\ne\vec0$, then  $F^\star$ is  convex,
$p^*$-homogeneous and positive-definite, where $p,p^*>1$ satisfy
$\frac1p+\frac{1}{p^*}=1$. It should be noted that the convex conjugate is
useless  for  the one-homogeneous  case. For this case, we introduce the concept of convex duality as follows.
\begin{defn}[duality]
For a convex one-homogeneous function $G:\R^n\to[0,+\infty)$ with  $G(\vec x)>0$ whenever $\vec x\ne\vec0$, we define its dual function $G^*(\vec x)=\sup\limits_{y\ne 0}\frac{\langle \vec x,\vec y\rangle}{G(y)}$. 
\end{defn}
 It is clear that $G^*$ is also a convex one-homogeneous function with the positive-definiteness  property that  $G^*(\vec x)>0$ whenever $\vec x\ne\vec0$. 
 
\begin{defn}[projection]
For a function $G:\R^n\to\R$, and a linear map $T:\R^n\to\R^m$ with $\mathrm{Range}(T)\ne\vec0$, define the function  $G_{\inf}:\mathrm{Range}(T)\to\R$  by $G_{\inf}(\vec y):=\inf\limits_{x\in T^{-1}(y)}G(\vec x)$, and define $G_{\mathrm{Ker}(T)}:\R^n\to\R$ by  $G_{\mathrm{Ker}(T)}(\vec x):=\inf\limits_{\vec z\in \mathrm{Ker}(T)}G(\vec x+\vec z)$. We call $G_{\inf}$ the projection of $G$ to $\mathrm{Range}(T)$, and $G_{\mathrm{Ker}(T)}$ the projection of $G$ to $\mathrm{Ker}(T)^\bot$. 
\end{defn}

\begin{remark}
When we think of $G$ as a norm on $\R^n$, then  $G_{\inf}$ is a norm on $\mathrm{Range}(T)$, and $T$ maps the unit ball in $\R^n$ with the norm  $G$ to the unit ball in $\mathrm{Range}(T)$ equipped with the norm $G_{\inf}$. We note that $G_{\inf}$ is called the filling norm in \cite{Gromov} when $G$ is a norm.  Likewise, $G_{\mathrm{Ker}(T)}$ induces a norm on $\mathrm{Ker}(T)^\bot$, and the unit ball in $\mathrm{Ker}(T)^\bot$ that has  the norm $G_{\mathrm{Ker}(T)}$ is the projection of the  $G$-norm unit ball 
to $\mathrm{Ker}(T)^\bot$, in which the  $G$-norm unit ball means the unit ball in $\R^n$ under the  norm $G$.  This is the reason why we call $G_{\inf}$ and $G_{\mathrm{Ker}(T)}$ the projections of $G$. 
\end{remark}

We have the following useful  lemma which reveals the  connections among the non-vanishing eigenvalues of 
function pairs 
involving duality and  projection.  

 \begin{lemma}\label{lem:dual}
Let $F:\R^m\to[0,+\infty)$ and $G:\R^n\to[0,+\infty)$ be positive-definite  one-homogeneous convex functions. 
 Let $T:\R^n\to\R^m$ be a linear map  (regarding as a matrix $T\in \R^{m\times n}$). Then, the   eigenvalue problems of the  function pairs $(F\circ T,G)$,  $(G^*\circ T^\top,F^*)$,  $(F\circ T,G_{\mathrm{Ker}(T)})$,  $(G^*_{\inf},F^*)$, and  $(F,G_{\inf})$,   are equivalent in the sense that their nonzero eigenvalues are the same. 
 \end{lemma}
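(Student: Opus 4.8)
The plan is to reduce each of the five eigenvalue problems to a subdifferential inclusion — using that the Clarke derivative of a finite convex function is its convex subdifferential, together with the chain rule $\nabla(\Phi\circ L)(\vec x)=L^\top\nabla\Phi(L\vec x)$ (which needs no constraint qualification since $\Phi$ is everywhere finite) — and then to connect them by a short chain of elementary equivalences. Two preliminary remarks make the bookkeeping uniform. Every function in sight is one-homogeneous, so eigenvectors form cones and may be rescaled; and we adopt the convention that a \emph{nonzero} eigenvalue is witnessed only by an eigenvector at which the second function of the pair does not vanish (this is forced by the Euler identity whenever that function is positive there). For $(F\circ T,G)$, $(G^*\circ T^\top,F^*)$, $(F,G_{\inf})$, $(G^*_{\inf},F^*)$ this is automatic since $G,F^*,G_{\inf},G^*_{\inf}$ are positive-definite on the relevant space; for $(F\circ T,G_{\mathrm{Ker}(T)})$ it amounts to discarding the eigenvectors lying in $\mathrm{Ker}(T)$, on which both functions vanish and every $\lambda$ is trivially an eigenvalue. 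The one analytic ingredient is the \emph{contact relation} for a positive-definite one-homogeneous convex $\Phi$: for $\vec x\neq 0$, $\vec u\in\nabla\Phi(\vec x)$ iff $\langle\vec u,\vec x\rangle=\Phi(\vec x)$ and $\Phi^*(\vec u)=1$, and then $\vec x/\Phi(\vec x)\in\nabla\Phi^*(\vec u)$; since $\Phi^{**}=\Phi$ this relation is symmetric in $(\Phi,\vec x)\leftrightarrow(\Phi^*,\vec u)$. (It also gives $0\in\nabla\Phi(\vec 0)=\{\Phi^*\le 1\}$ and $0\notin\nabla\Phi(\vec x)$ for $\vec x\neq 0$.)

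The core equivalence is $(F\circ T,G)\sim(G^*\circ T^\top,F^*)$. Given an eigenpair $(\lambda,\vec x)$ with $\lambda\neq 0$, the chain rule yields $\vec u\in\nabla F(T\vec x)$ and $\vec v\in\nabla G(\vec x)$ with $T^\top\vec u=\lambda\vec v$ and $T\vec x\neq 0$. The contact relation for $G$ at $\vec x$ gives $\vec x/G(\vec x)\in\nabla G^*(\vec v)=\nabla G^*(T^\top\vec u)$ (by $0$-homogeneity of $\nabla G^*$ and $\lambda>0$), hence $T\vec x/G(\vec x)\in T\nabla G^*(T^\top\vec u)=\nabla(G^*\circ T^\top)(\vec u)$; the contact relation for $F$ at $T\vec x$ gives $T\vec x/F(T\vec x)\in\nabla F^*(\vec u)$, and since $F(T\vec x)=\lambda G(\vec x)$ this reads $T\vec x/G(\vec x)\in\lambda\nabla F^*(\vec u)$. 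Thus $T\vec x/G(\vec x)\in\nabla(G^*\circ T^\top)(\vec u)\cap\lambda\nabla F^*(\vec u)\neq\varnothing$, so $(\lambda,\vec u)$ is a nonzero eigenpair of $(G^*\circ T^\top,F^*)$. The converse is the same argument applied to the data $(G^*,F^*,T^\top)$ using $F^{**}=F$, $G^{**}=G$, $(T^\top)^\top=T$. In particular, $T=\mathrm{id}$ gives the base duality $(A,B)\sim(B^*,A^*)$ for any positive-definite one-homogeneous convex pair on a fixed space.

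Next, $(F\circ T,G)\sim(F\circ T,G_{\mathrm{Ker}(T)})$: with $\Pi=\mathrm{Ker}(T)$, $\Pi^\bot=\mathrm{Range}(T^\top)$, Proposition \ref{pro:GPi-property} gives $\nabla G(\vec x)\cap\Pi^\bot\subset\nabla G_\Pi(\vec x)=\nabla G(\vec x_\Pi)\cap\Pi^\bot$, where $\vec x_\Pi$ minimizes $G$ on $\vec x+\Pi$. For a nonzero eigenpair of $(F\circ T,G)$, $T^\top\vec u=\lambda\vec v$ forces $\vec v\in\Pi^\bot$, hence $\vec v\in\nabla G(\vec x)\cap\Pi^\bot\subset\nabla G_\Pi(\vec x)$ and $(\lambda,\vec x)$ is an eigenpair of $(F\circ T,G_\Pi)$; conversely, from a nonzero eigenpair of $(F\circ T,G_\Pi)$ (so $T\vec x\neq 0$, $\vec x\notin\Pi$) one gets $\vec v\in\nabla G(\vec x_\Pi)$ with $T^\top\vec u=\lambda\vec v$, and $T\vec x_\Pi=T\vec x$ gives $\vec u\in\nabla F(T\vec x_\Pi)$, so $(\lambda,\vec x_\Pi)$ is a nonzero eigenpair of $(F\circ T,G)$. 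Then $(F\circ T,G_{\mathrm{Ker}(T)})\sim(F,G_{\inf})$: both $F\circ T$ and $G_{\mathrm{Ker}(T)}$ are $\Pi$-invariant with subdifferentials inside $\Pi^\bot$, so the problem (for $\lambda\neq 0$) restricts to $\Pi^\bot$; the linear isomorphism $\vec x\mapsto T\vec x$ from $\Pi^\bot$ onto $W:=\mathrm{Range}(T)$ carries $F\circ T$ to $F|_W$ and $G_{\mathrm{Ker}(T)}$ to $G_{\inf}$ (since $G_{\mathrm{Ker}(T)}(\vec x)=\inf\{G(\vec x'):T\vec x'=T\vec x\}=G_{\inf}(T\vec x)$), and Proposition \ref{pro:odd-homeomorphism}, valid verbatim for an odd linear isomorphism between subspaces of equal dimension, transports the spectrum. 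Finally, $(F,G_{\inf})\sim(G^*_{\inf},F^*)$: both are positive-definite one-homogeneous convex pairs on $W$, and, reading $G^*_{\inf}$ as the dual on $W$ of the quotient norm $G_{\inf}$, this is exactly the base duality $(A,B)\sim(B^*,A^*)$ on $W$; if instead $G^*_{\inf}$ denotes the projection of $G^*$, one inserts the standard identification of the dual of a quotient norm with the restriction of the dual norm, again composed with a linear isomorphism. Chaining the four equivalences links all five pairs.

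The step I expect to be the main obstacle is this last one, together with the matching of domains throughout: one must verify that one-homogeneity, positive-definiteness and the involutivity $(\cdot)^{**}=(\cdot)$ all survive the restriction, projection and quotient constructions, and that the identifications $\R^n/\mathrm{Ker}(T)\cong\mathrm{Range}(T^\top)\cong\mathrm{Range}(T)$ intertwine the pairs in precisely the right normalization — that is, a careful statement and proof of the restriction–quotient duality for these functions, which is where the delicate, non-formal part of the argument lies. By contrast, once the contact relation and Proposition \ref{pro:GPi-property} are in hand, $(F\circ T,G)\sim(G^*\circ T^\top,F^*)$ and $(F\circ T,G)\sim(F\circ T,G_{\mathrm{Ker}(T)})\sim(F,G_{\inf})$ are essentially bookkeeping.
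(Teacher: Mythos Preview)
Your contact relation and the core equivalence $(F\circ T,G)\sim(G^*\circ T^\top,F^*)$ match the paper's Claims~1--2 and its main computation exactly. For linking $(F\circ T,G_{\mathrm{Ker}(T)})$ and $(F,G_{\inf})$ you go directly through Proposition~\ref{pro:GPi-property} and the isomorphism $T|_{\mathrm{Ker}(T)^\bot}$; this route is correct (it is in fact how the paper argues the analogous step in the proof of Lemma~\ref{lem:conjugate}), though the paper's proof of the present lemma takes a different path, deriving instead the functional identities $G^*\circ T^\top=G^*_{\inf}$ and $G^*_{\mathrm{Ker}(T)}\circ T^\top=G^*\circ T^\top$ and then re-applying the core duality. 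Either route handles those three pairs.

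The gap is your step~5. In the lemma, $G^*_{\inf}$ denotes $(G_{\inf})^*$ extended from $W=\mathrm{Range}(T)$ to all of $\R^m$, so $(G^*_{\inf},F^*)$ is a pair on $\R^m$, not on $W$. Your base duality on $W$ produces $\bigl((G_{\inf})^*_W,(F|_W)^*_W\bigr)$; the first component does agree with $G^*_{\inf}|_W$, but the second is not $F^*$: dualizing a restriction is not the same as restricting the dual (e.g.\ $F(y_1,y_2)=|y_1|+2|y_2|$ with $W=\{(t,t)\}$ gives $(F|_W)^*_W=\tfrac{2}{3}|\cdot|$ versus $F^*|_W=|\cdot|$). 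So base duality on $W$ does not deliver the pair $(G^*_{\inf},F^*)$. The clean fix is precisely the ``restriction--quotient duality'' you flag as the main obstacle, and it is a two-line computation the paper carries out: for every $\vec x\in\R^m$,
\[
G^*_{\inf}(\vec x)=\sup_{\vec z\in W,\;G_{\inf}(\vec z)\le 1}\langle\vec x,\vec z\rangle=\sup_{\vec y\in\R^n,\;G(\vec y)\le 1}\langle\vec x,T\vec y\rangle=\sup_{G(\vec y)\le 1}\langle T^\top\vec x,\vec y\rangle=G^*(T^\top\vec x),
\]
i.e.\ $G^*_{\inf}=G^*\circ T^\top$ as functions on $\R^m$. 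This makes $(G^*_{\inf},F^*)$ literally equal to $(G^*\circ T^\top,F^*)$, already linked by your core equivalence, and replaces your step~5 entirely.
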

 
This lemma has many interesting applications. For example, taking $F(\vec
y)=\|\vec y\|_p$ and $G(\vec x)=\|\vec x\|_q$, the positive eigenvalues of
$(\|T\cdot\|_p,\|\cdot\|_q)$ and $(\|T^\top\cdot\|_{q^*},\|\cdot\|_{p^*})$
coincide. In particular, we have
\begin{example}
For a real matrix $T$ of the order $m\times n$, for $p,q\in[1,\infty]$, we have
$$\max\limits_{x\in\R^n\setminus\{ 0\}}\frac{\|T\vec x\|_p}{\|\vec x\|_q}=\max\limits_{y\in\R^m\setminus\{ 0\}}\frac{\|T^\top\vec x\|_{q^*}}{\|\vec x\|_{p^*}},$$
where $p^*,q^*$ are the H\"older conjugates of $p,q$. Taking $p=2$ and $q=\infty$, we immediately obtain the equality on the  $l^1$-polarization  constant (Proposition 3  in \cite{AmbrusNietert}).
\end{example}
 Since a hypergraph is uniquely determined  by its incidence matrix, we can directly define the eigenvalues of $p$-Laplacians  on vertices (resp. hyperedges) as the spectrums of $(\|T \cdot\|_p^p,\|\cdot\|_p^p)$ (resp.  $(\|T^\top \cdot\|_p^p,\|\cdot\|_p^p)$)   by means of the incidence matrix $T$. 
More interestingly, we can prove that there is a simple one-to-one
correspondence between the nonzero  eigenvalues of the vertex  $p$-Laplacian  
and the 
 edge 
$p^*$-Laplacian on a (hyper-)graph. 
This is quite important because it offers us two alternative ways to  estimate the nonvanishing eigenvalues, either through the $p$-Laplacian on vertices or through  the  
$p^*$-Laplacian on edges.
\begin{pro}\label{pro:p-Lap-dual}
The nonzero eigenvalues of the vertex 1-Laplacian and the edge $\infty$-Laplacian coincide.  
For $p>1$, denote by $\Delta_p^V$ and $\Delta_{p^*}^E$ the vertex  $p$-Laplacian and edge  $p^*$-Laplacian, respectively. Then
\begin{equation}\label{eq:1/p-eigen-p-Lap}
\{\lambda^{\frac 1p}:\lambda\text{ is a positive eigenvalue of }\Delta_p^V\}=\{\lambda^{\frac {1}{p^*}}:\lambda\text{ is a positive eigenvalue of }\Delta_{p^*}^E\}.    
\end{equation}
\end{pro}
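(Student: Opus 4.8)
The plan is to reduce the statement to the abstract duality Lemma \ref{lem:dual} after first passing from the $p$-homogeneous eigenvalue problems to one-homogeneous ones. Write $T\in\R^{m\times n}$ for the (signed) incidence matrix of the hypergraph, so that by definition the spectrum of $\Delta_p^V$ is the spectrum of the pair $(\|T\cdot\|_p^p,\|\cdot\|_p^p)$ on $\R^n$ and the spectrum of $\Delta_{p^*}^E$ is that of $(\|T^\top\cdot\|_{p^*}^{p^*},\|\cdot\|_{p^*}^{p^*})$ on $\R^m$; in the boundary case $p=1$ the vertex $1$-Laplacian is the pair $(\|T\cdot\|_1,\|\cdot\|_1)$ and the edge $\infty$-Laplacian is $(\|T^\top\cdot\|_\infty,\|\cdot\|_\infty)$.

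The first step is an elementary rescaling lemma: for locally Lipschitz $p$-homogeneous $F,G\ge 0$ and $\lambda\neq 0$, the pair $(\lambda,\vec x)$ is an eigenpair of $(F,G)$ if and only if $(\lambda^{1/p},\vec x)$ is an eigenpair of $(F^{1/p},G^{1/p})$. Indeed, at any eigenvector $\vec x$ with $\lambda\neq 0$ one has $F(\vec x),G(\vec x)>0$, so near $\vec x$ the increasing $C^1$ function $t\mapsto t^{1/p}$ composes with $F$ and $G$ via the Clarke chain rule, giving $\nabla(F^{1/p})(\vec x)=\tfrac1p F(\vec x)^{1/p-1}\nabla F(\vec x)$ and similarly for $G$; substituting into $\vec 0\in\nabla F(\vec x)-\lambda\nabla G(\vec x)$ and using $F(\vec x)/G(\vec x)=\lambda$ (the Euler-identity fact for $p$-homogeneous pairs recorded in Section \ref{sec:spectrum}) yields exactly $\vec 0\in\nabla(F^{1/p})(\vec x)-\lambda^{1/p}\nabla(G^{1/p})(\vec x)$. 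As a consequence the positive eigenvalues of $\Delta_p^V$ are precisely the $p$-th powers of the positive eigenvalues of $(\|T\cdot\|_p,\|\cdot\|_p)$, and the positive eigenvalues of $\Delta_{p^*}^E$ are precisely the $p^*$-th powers of the positive eigenvalues of $(\|T^\top\cdot\|_{p^*},\|\cdot\|_{p^*})$.

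The second step is to apply Lemma \ref{lem:dual} with $F=\|\cdot\|_p$ on $\R^m$ and $G=\|\cdot\|_p$ on $\R^n$, both of which are positive-definite, one-homogeneous and convex, and with the linear map $T$. Since the conjugate (in the paper's sense of duality for one-homogeneous functions) of the $\ell_p$-norm is the $\ell_{p^*}$-norm, we have $F\circ T=\|T\cdot\|_p$, $F^*=\|\cdot\|_{p^*}$ and $G^*\circ T^\top=\|T^\top\cdot\|_{p^*}$, so the lemma gives that the pairs $(\|T\cdot\|_p,\|\cdot\|_p)$ and $(\|T^\top\cdot\|_{p^*},\|\cdot\|_{p^*})$ share the same nonzero eigenvalues; denote this common set by $\Lambda$. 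Combining with the first step, the positive spectrum of $\Delta_p^V$ equals $\{\lambda^p:\lambda\in\Lambda\}$ and that of $\Delta_{p^*}^E$ equals $\{\lambda^{p^*}:\lambda\in\Lambda\}$, which is exactly the asserted identity \eqref{eq:1/p-eigen-p-Lap}; for $p=1$ no rescaling is needed and Lemma \ref{lem:dual} immediately yields that the nonzero eigenvalues of the vertex $1$-Laplacian and the edge $\infty$-Laplacian coincide.

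The main obstacle I anticipate is making the $p$-to-$1$ homogeneous reduction rigorous in the non-smooth setting: one must check that the Clarke subdifferential chain rule indeed applies to $F^{1/p}$ even though $t\mapsto t^{1/p}$ degenerates at the origin (which is why one restricts to the region $\{F>0\}$, automatically containing every eigenvector with nonzero eigenvalue), and one must confirm that this identification is consistent with the genuinely set-valued subdifferentials of the non-differentiable norms $\|\cdot\|_1$ and $\|\cdot\|_\infty$. The remaining ingredients — Lemma \ref{lem:dual} and the identification of the dual $\ell_p$-norm — are routine.
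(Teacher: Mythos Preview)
Your proposal is correct and follows essentially the same route as the paper: identify the $p$-Laplacian eigenvalue problems with the pairs $(\|T\cdot\|_p^p,\|\cdot\|_p^p)$ and $(\|T^\top\cdot\|_{p^*}^{p^*},\|\cdot\|_{p^*}^{p^*})$, pass to the one-homogeneous pairs $(\|T\cdot\|_p,\|\cdot\|_p)$ and $(\|T^\top\cdot\|_{p^*},\|\cdot\|_{p^*})$ via the $\lambda\mapsto\lambda^{1/p}$ rescaling, and then invoke Lemma~\ref{lem:dual} together with the $\ell_p$--$\ell_{p^*}$ duality. The paper merely states the rescaling identity $\{\lambda^{1/p}:\lambda\text{ eigenvalue of }(\|T\cdot\|_p^p,\|\cdot\|_p^p)\}=\{\text{eigenvalues of }(\|T\cdot\|_p,\|\cdot\|_p)\}$ without justification, so your Clarke chain-rule argument (restricted to $\{F>0\}$, which indeed contains all eigenvectors with $\lambda\neq 0$ since $G=\|\cdot\|_p^p$ is positive-definite) is a welcome addition rather than a deviation.
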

\begin{proof}
Let $T$ be the vertex-edge incidence  matrix of the graph.  
According to Lemma \ref{lem:dual}, the positive eigenvalues  of
$(\|T\cdot\|_1,\|\cdot\|_1)$ and
$(\|T^\top\cdot\|_{\infty},\|\cdot\|_{\infty})$ coincide. It is known that the
unnormalized  eigenvalue problem of the vertex 1-Laplacian (resp. the  edge $\infty$-Laplacian) agrees with the eigenvalue problem of $(\|T\cdot\|_1,\|\cdot\|_1)$ (resp. $(\|T^\top\cdot\|_{\infty},\|\cdot\|_{\infty})$). 

Next, we consider the case of  $p\in(1,+\infty)$. By Lemma \ref{lem:dual}, the positive eigenvalues of $(\|T\cdot\|_p,\|\cdot\|_p)$ and $(\|T^\top\cdot\|_{p^*},\|\cdot\|_{p^*})$ coincide. 
And it can be checked that the unnormalized eigenvalue problem of the vertex $p$-Laplacian $\Delta_p^V$ is nothing but the eigenvalue problem of the $p$-homogeneous  function pair $(\|T\cdot\|_p^p,\|\cdot\|_p^p)$. Also,  
$$\{\lambda^{\frac 1p}:\lambda\text{ is an eigenvalue of } (\|T\cdot\|_p^p,\|\cdot\|_p^p)\}=\{\text{eigenvalues  of } (\|T\cdot\|_p,\|\cdot\|_p)\}.$$
Similar statements hold for $(\|T^\top\cdot\|_{p^*}^{p^*},\|\cdot\|_{p^*}^{p^*})$. These facts deduce  the desired relation  \eqref{eq:1/p-eigen-p-Lap}. 

For the normalized version, we need to consider
$(\|T\cdot\|_p,\|\cdot\|_{\deg,p})$ and
$(\|T^\top\cdot\|_{\deg,p,*},\|\cdot\|_{p^*})$, where $\|\vec
x\|_{\deg,p}=(\sum_{i\in V}\deg_i|x_i|^p)^{\frac1p}$, $p\ge1$. The previous
discussion  still works. The proof is then  completed. As a supplement,  the case of $p>1$ can also be proved via Lemma \ref{lem:conjugate}.
\end{proof}

In the setting of convex conjugates, we have the following analog of Lemma \ref{lem:dual}.
 \begin{lemma}\label{lem:conjugate}
Let $F:\R^m\to[0,+\infty)$ and $G:\R^n\to[0,+\infty)$ be positive-definite
$p$-homogeneous convex functions with $p>1$. Let  $T:\R^n\to\R^m$ be a linear
map  (i.e., a matrix $T\in \R^{m\times n}$). Then $\lambda$ is a  nonzero
eigenvalue of $(F\circ T,G)$ if and only if $\lambda^{p^*-1}$ is a  nonzero
eigenvalue of $(G^\star\circ T^\top,F^\star)$. Also, the nontrivial spectra of $(F\circ T,G)$,  $(F\circ T,G_{\mathrm{Ker}(T)})$ and   $(F,G_{\inf})$ coincide.  
 \end{lemma}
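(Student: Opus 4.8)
The plan is to follow the template of Lemma~\ref{lem:dual}, with the one-homogeneous dual $G^{*}$ replaced by the convex conjugate $G^{\star}$ and an extra exponent appearing because $\nabla G^{\star}$ is $(p^{*}-1)$-homogeneous rather than $0$-homogeneous. First I would record the elementary facts needed about a positive-definite, $p$-homogeneous convex function $G$ on $\R^{n}$: (i) as noted above, $G^{\star}$ is positive-definite, convex and $p^{*}$-homogeneous; (ii) Fenchel--Young duality gives $\vec v\in\nabla G(\vec x)\iff\vec x\in\nabla G^{\star}(\vec v)$, so $\nabla G^{\star}$ is the inverse multifunction of $\nabla G$; (iii) differentiating the homogeneity identities yields $\nabla G(t\vec x)=t^{p-1}\nabla G(\vec x)$ and $\nabla G^{\star}(t\vec v)=t^{p^{*}-1}\nabla G^{\star}(\vec v)$ for $t>0$, where $(p-1)(p^{*}-1)=1$; and (iv) the exact subdifferential chain rule $\nabla(G\circ T)(\vec x)=T^{\top}\nabla G(T\vec x)$, valid since $G$ is finite-valued and convex. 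Since $F\circ T\ge 0$ and $G\ge 0$ are $p$-homogeneous, every eigenvalue of $(F\circ T,G)$ is $\ge 0$, so ``nonzero'' means ``positive'' and $\lambda^{p^{*}-1}$ is well defined.

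For the first assertion, let $(\lambda,\vec x)$ be an eigenpair of $(F\circ T,G)$ with $\lambda>0$; by (iv) there are $\vec u\in\nabla F(T\vec x)$ and $\vec v\in\nabla G(\vec x)$ with $T^{\top}\vec u=\lambda\vec v$. Then $\vec u\neq\vec 0$: otherwise $\lambda\vec v=T^{\top}\vec u=\vec 0$ forces $\vec v=\vec 0$, hence $G(\vec x)=\tfrac1p\langle\vec x,\vec v\rangle=0$ and $\vec x=\vec 0$ by positive-definiteness, a contradiction. By (ii), $T\vec x\in\nabla F^{\star}(\vec u)$ and $\vec x\in\nabla G^{\star}(\vec v)$, so using (iii)--(iv),
$$\nabla(G^{\star}\circ T^{\top})(\vec u)=T\,\nabla G^{\star}(T^{\top}\vec u)=T\,\nabla G^{\star}(\lambda\vec v)=\lambda^{p^{*}-1}\,T\,\nabla G^{\star}(\vec v)\ni\lambda^{p^{*}-1}T\vec x,$$
whence $\vec 0\in\nabla(G^{\star}\circ T^{\top})(\vec u)-\lambda^{p^{*}-1}\nabla F^{\star}(\vec u)$, i.e. $(\lambda^{p^{*}-1},\vec u)$ is an eigenpair of $(G^{\star}\circ T^{\top},F^{\star})$. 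Because $(G^{\star})^{\star}=G$, $(F^{\star})^{\star}=F$, $(T^{\top})^{\top}=T$ and $\mu\mapsto\mu^{p-1}$ inverts $\lambda\mapsto\lambda^{p^{*}-1}$, applying the same construction to $(G^{\star}\circ T^{\top},F^{\star})$ gives the reverse implication, and the first assertion follows.

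For the coincidence of the nontrivial spectra of $(F\circ T,G)$, $(F\circ T,G_{\mathrm{Ker}(T)})$ and $(F,G_{\inf})$, write $\Pi:=\mathrm{Ker}(T)$, so $G_{\mathrm{Ker}(T)}=G_{\Pi}$ in the notation of Proposition~\ref{pro:GPi-property}, and proceed in two steps. Step~1: since $\nabla(F\circ T)(\vec x)=T^{\top}\nabla F(T\vec x)\subset\mathrm{Range}(T^{\top})=\Pi^{\bot}$, for any eigenpair $(\lambda,\vec x)$ of $(F\circ T,G)$ with $\lambda\neq 0$ the chosen $\vec v\in\nabla G(\vec x)$ must lie in $\nabla G(\vec x)\cap\Pi^{\bot}\subset\nabla G_{\Pi}(\vec x)$, so $(\lambda,\vec x)$ is an eigenpair of $(F\circ T,G_{\Pi})$; conversely, if $\vec v\in\nabla G_{\Pi}(\vec x)=\nabla G(\vec x_{\Pi})\cap\Pi^{\bot}$ with $\vec x_{\Pi}$ a minimizer of $G$ on $\vec x+\Pi$ (which exists because $G$ is coercive), then $\lambda\vec v\in\nabla(F\circ T)(\vec x)=\nabla(F\circ T)(\vec x_{\Pi})$ as $T\vec x_{\Pi}=T\vec x$, so $(\lambda,\vec x_{\Pi})$ is an eigenpair of $(F\circ T,G)$, and $\vec x_{\Pi}\neq\vec 0$ (otherwise $G_{\Pi}(\vec x)=F(T\vec x)/\lambda=0$, the degenerate case where the eigenvalue relation holds for every $\lambda$, which we exclude from the nontrivial spectrum). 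Step~2: $G_{\Pi}=G_{\inf}\circ T$, so the chain rule gives $\nabla G_{\Pi}(\vec x)=T^{\top}\nabla G_{\inf}(T\vec x)$; regarding $T$ as a surjection onto $\mathrm{Range}(T)$, the map $T^{\top}$ is injective on $\mathrm{Range}(T)$, so $\vec 0\in T^{\top}\bigl(\nabla F(T\vec x)-\lambda\nabla G_{\inf}(T\vec x)\bigr)$ is equivalent to $\vec 0\in\nabla F(T\vec x)-\lambda\nabla G_{\inf}(T\vec x)$, which is exactly the eigenvalue relation for $(F,G_{\inf})$ (with $F$ restricted to $\mathrm{Range}(T)$) at $T\vec x$; tracking $\vec x\leftrightarrow T\vec x$, which is nonzero on nontrivial eigenpairs, identifies the nonzero spectra of $(F\circ T,G_{\Pi})$ and $(F,G_{\inf})$. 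Combining the two steps gives the claim.

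I expect the only genuine difficulty to be the bookkeeping around degenerate and boundary cases: verifying that each constructed eigenvector is actually nonzero, being consistent about which eigenpairs are excluded from the ``nontrivial spectrum'' (namely those supported where $F\circ T$, $G^{\star}\circ T^{\top}$, or $G_{\Pi}$ vanishes, where the eigenvalue relation holds vacuously for all $\lambda$), and checking the hypotheses under which the subdifferential chain rule and the existence of the minimizer $\vec x_{\Pi}$ are valid. The analytic heart of the proof -- the short Fenchel-plus-scaling computation in the second paragraph -- is essentially identical to the one underlying Lemma~\ref{lem:dual}.
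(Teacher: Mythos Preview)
Your proposal is correct and follows essentially the same route as the paper. The Fenchel--plus--homogeneity computation for the first assertion is identical up to a swap in the names of $\vec u$ and $\vec v$; your Step~1 for the second assertion is exactly the paper's, using the inclusion $\nabla(F\circ T)(\vec x)\subset\Pi^{\bot}$ together with Proposition~\ref{pro:GPi-property}, and where you simply invoke coercivity the paper spells out the short compactness argument; for Step~2 the paper quotes Proposition~\ref{pro:odd-homeomorphism} applied to the linear isomorphism $T|_{\Pi^{\bot}}:\Pi^{\bot}\to\mathrm{Range}(T)$, whereas you unpack the same mechanism by hand via the chain rule and the injectivity of $T^{\top}$ on $\mathrm{Range}(T)$---these are the same argument in different packaging.
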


Below, we  present the proofs of Lemmas \ref{lem:dual} and \ref{lem:conjugate}.

\begin{proof}[Proof of Lemma \ref{lem:dual}]We need two claims for positive-definite  one-homogeneous convex functions: 
\begin{enumerate}[{Claim} 1.]
    \item If $\vec x\ne\vec0$ and $\vec y\in \nabla G(\vec x)$,  $G^*(\vec y)=1$. Similarly, for any  $\vec y\ne\vec0$  and $\vec x\in \nabla G^*(\vec y)$, $G(\vec x)=1$.

Proof: For any $ \vec y\in \nabla G(\vec x)$, the Euler identity for
one-homogeneous functions gives $\langle \vec y,\vec x\rangle=G(\vec x)$ and
then the definition of the subgradient implies $\langle \vec y,\vec x'\rangle\le G(\vec x')$, $\forall \vec x'\in\R^n$. Thus,  $G^*(\vec y)=\sup\limits_{x'\ne 0}\frac{\langle \vec y,\vec x'\rangle}{G(\vec x')}=1$. 
The other identity $G(\nabla G^*(\vec x))=1$ is similar.

\item If $G^*(\vec y)=G(\vec x)=1$, then $\vec y\in\nabla G(\vec x)$ if and only if $\vec x\in\nabla G^*(\vec y)$.

Proof: Suppose that $\vec y\in\nabla G(\vec x)$ and $G^*(\vec y)=G(\vec x)=1$. Then, for any
$\vec y'\in \R^n$, 
$$\langle \vec y'-\vec y,\vec x\rangle=\langle \vec y',\vec x\rangle-G(\vec x)\le G^*(\vec y')G(\vec x)-G(\vec x)= G^*(\vec y')-G^*(\vec y),$$ which means $\vec x\in \nabla G^*(\vec y)$. The other direction is similar.
\end{enumerate}

Let $(\lambda,\vec x)\in\R_+\times (\R^n\setminus\{\vec0\})$ be an eigenpair of $(F\circ T,G)$, i.e., $\vec0\in\nabla_x F(T\vec x)-\lambda \nabla_x G(\vec x)$. Thus, there exists $\vec u\in \nabla G(\vec x)$ such that $\lambda \vec u\in \nabla_x F(T\vec x)= T^\top\nabla F(T\vec x)$. Hence, there is a  $\vec v\in \nabla F(T\vec x)$ satisfying $\lambda \vec u=T^\top\vec v$. 
Without loss of generality, we suppose $G(\vec x)=1$, and then $F(T\vec x)=\lambda$. By Claim 1, we have $G^*(\vec u)=1$ and $F^*(\vec v)=1$. Note that  $F(T\vec x/\lambda)=1$ and $\vec v\in \nabla F(T\vec x)=\nabla F(T\vec x/\lambda)$.  Then, we could apply Claim 2 to derive that $T\vec x/\lambda\in \nabla F^*(\vec v)$ and $\vec x\in\nabla G^*(\vec u)=\nabla G^*(\lambda\vec u)=\nabla G^*(T^\top\vec v)$. Therefore, $T\vec x\in T\nabla G^*(T^\top\vec v)=\nabla_v G^*(T^\top\vec v)$. In consequence, we have
$$\vec 0=T\vec x-\lambda\cdot T\vec x/\lambda\in \nabla_v G^*(T^\top\vec v)-\lambda\nabla_v F^*(\vec v).$$ Consequently, $(\lambda,\vec v)\in\R_+\times \R^n$ is an eigenpair of $(G^*\circ T^\top,F^*)$. The other direction is similar. In summary, we have proved that the nonzero  eigenvalues of $(F\circ T,G)$ coincide with the nonzero eigenvalues of $(G^*\circ T^\top,F^*)$.

Next we replace $G$ by its projections, $G_{\inf}$ and $G_{\mathrm{Ker}(T)}$, respectively.  By 
Proposition \ref{pro:GPi-property},   $G_{\mathrm{Ker}(T)}(\vec x)$ is a convex function of $\vec x$, and it  satisfies  $G_{\mathrm{Ker}(T)}(\vec x+\vec z)=G_{\mathrm{Ker}(T)}(\vec x)$ for any $\vec z\in \mathrm{Ker}(T)$. It is easy to check that  $G_{\mathrm{Ker}(T)}$ is  one-homogeneous and positive-definite on $ \mathrm{Ker}(T)^\bot$. 

For any $ \vec y\in\mathrm{Range}(T)$, there exists a unique $ \vec x\in\mathrm{Ker}(T)^\bot$ such that $\vec y=T\vec x$. Thus, $$G_{\inf}(\vec y)=G_{\inf}(T\vec x)=\inf\limits_{x'\in T^{-1}(T\vec x)}G(\vec x')=\inf\limits_{z\in \mathrm{Ker}(T)}G(\vec x+\vec z)=G_{\mathrm{Ker}(T)}(\vec x).$$ 
Since $T|_{\mathrm{Ker}(T)^\bot}:\mathrm{Ker}(T)^\bot\to\mathrm{Range}(T)
$ is a linear isomorphism, $G_{\inf}$ is convex,  one-homogeneous and positive-definite on $\mathrm{Range}(T)$. And it is clear that  $\{\vec y\in \mathrm{Range}(T):G_{\inf}(\vec y)\le1\}=T\{\vec x\in\R^n:G(\vec x)\le 1\}$. Moreover, we have
\begin{align*}
G^*(T^\top\vec x)&=\sup\limits_{G(y)\le 1}\langle T^\top\vec x,\vec y\rangle=\sup\limits_{G(y)\le 1}\langle \vec x,T\vec y\rangle\\&=\sup\limits_{y:G_{\inf}(T y)\le 1}\langle \vec x,T\vec y\rangle=\sup\limits_{z\in\mathrm{Range}(T):G_{\inf}(z)\le 1}\langle \vec x,\vec z\rangle=G_{\inf}^*(\vec x).
\end{align*}
For any $\vec y\in \R^m$, 
\begin{align*}
G^*_{\mathrm{Ker}(T)}(T^\top\vec y)&=\sup\limits_{G_{\mathrm{Ker}(T)}(z)\le 1}\langle  T^\top \vec y,\vec z\rangle=  \sup\limits_{G_{\inf}(Tz)\le 1}\langle  \vec y,T\vec z\rangle\\&=  \sup\limits_{G_{\inf}(x)\le 1}\langle  \vec y,\vec x\rangle=\sup\limits_{G(y)\le 1}\langle \vec y,\vec x\rangle=G^*_{\inf}(\vec y).
\end{align*}
Thus, $G^*_{\mathrm{Ker}(T)}(T^\top\vec y)=G^*(T^\top\vec y)$. In consequence, the nonzero eigenvalues of $(G^*\circ T^\top,F^*)$, $(G^*_{\inf},F^*)$ and $(G^*_{\mathrm{Ker}(T)}\circ T^\top,F^*)$   are the same. By the previous results, the nonzero eigenvalues of $(F\circ T,G_{\mathrm{Ker}(T)})$ and $(G^*_{\mathrm{Ker}(T)}\circ T^\top,F^*)$ coincide; while the nonzero eigenvalues of $(G^*_{\inf},F^*)$ and $(F,G_{\inf})$ are the same.  We then complete the proof by putting these statements  together.
\end{proof}

\begin{remark}
The equality $G^*(T^\top\vec x)=G_{\inf}^*(\vec x)$ in the above proof is useful and interesting. It implies that, roughly speaking,  the section of the dual equals the dual of the projection, from  which one can easily prove   that every convex  polytope is a section of a regular simplex, and every centrally symmetric convex  polytope is a section of a crosspolytope ($l^1$-ball).  
\end{remark}

\begin{proof}[Proof of Lemma \ref{lem:conjugate}]
Let $(\lambda,\vec x)\in\R_+\times (\R^n\setminus\{\vec0\})$ be an eigenpair
of $(F\circ T,G)$. Then,  there exists $\vec u\in \nabla G(\vec x)$ such that
$\lambda \vec u\in \nabla_x F(T\vec x)= T^\top\nabla F(T\vec x)$. Hence, there
is a  $\vec v\in \nabla F(T\vec x)$ satisfying $\lambda \vec u=T^\top\vec
v$. By the properties of the Fenchel conjugate,  $\vec x\in \nabla G^\star(\vec u)$ and $T\vec x\in \nabla F^\star(\vec v)$. Since $G^\star$ is $p^*$-homogeneous, $\nabla G^\star$ is $(p^*-1)$-homogeneous.  Accordingly, $$T\vec x\in T\nabla G^\star(\frac1\lambda T^\top\vec v)=(\frac1\lambda)^{p^*-1}T\nabla G^\star(T^\top\vec v)=\lambda^{1-p^*}\nabla_v G^\star(T^\top\vec v)$$
and hence, $\vec0\in \nabla_v G^\star(T^\top\vec v)-\lambda^{p^*-1}\nabla_v F^\star(\vec v)$, meaning that  $\lambda^{p^*-1}$ is a  nonzero eigenvalue of $(G^\star\circ T^\top,F^\star)$. The converse is similar. 

Next, we focus on the function  pair $(F\circ T,G_{\mathrm{Ker}(T)})$. 
By the fact that $\nabla_x F(T\vec x)=T^\top \nabla F(T\vec x)\subset
\mathrm{Range}(T^\top) = \mathrm{Ker}(T)^\bot$ and $\lambda\ne 0$, in combination with \eqref{eq:gradient-convex-G} in Proposition \ref{pro:GPi-property}, we have
$$
\vec0\in 
\nabla_x F(T\vec x)\cap \mathrm{Ker}(T)^\bot-\lambda \nabla_x G(\vec x) \cap\mathrm{Ker}(T)^\bot
\subset\nabla_x F(T\vec x)-\lambda \nabla_x G_{\mathrm{Ker}(T)}(\vec x) 
$$
implying that $(\lambda,\vec x)$ is  an eigenpair of $(F\circ T,G_{\mathrm{Ker}(T)})$. 
The converse needs the following statement.

Argument: If $G:\R^n\to[0,+\infty)$  is continuous,   positive-definite and $p$-homogeneous with $p\ge1$, then for any $\vec x$, $\inf_{\vec z\in \mathrm{Ker}(T)}G(\vec x+\vec z)$ can reach its minimum.  

Proof: 
Suppose on the contrary that there exists  $\vec x$ such that $\inf_{\vec z\in \mathrm{Ker}(T)}G(\vec x+\vec z)$ cannot reach its infimum. Then $\vec x\ne\vec 0$ and there exist $\vec x^n$ with $\vec x^n-\vec x\in \mathrm{Ker}(T)$, such that 
$$\lim\limits_{n\to+\infty}G(\vec x^n)= \inf\limits_{\vec z\in \mathrm{Ker}(T)}G(\vec x+\vec z)\;\text{ and }\;\lim\limits_{n\to+\infty}\|\vec x^n\|_2=+\infty.$$
 Then $\vec x^n/\|\vec x^n\|_2$ has a limit point $\vec x^0$. Clearly, $\|\vec x^0\|_2=1$. By the continuity of $G$, $G(\vec x^0)=\lim\limits_{n\to+\infty}G(\frac{\vec x^n}{\|\vec x^n\|_2})=\lim\limits_{n\to+\infty}\frac{G(\vec x^n)}{\|\vec x^n\|_2^p}=0$, which contradicts  the condition that $G$ is positive-definite.  

\vspace{0.16cm}

Now, let $(\lambda,\vec x)$ be an eigenpair of  $(F\circ
T,G_{\mathrm{Ker}(T)})$. The above argument yields that there exists $\vec x'$
such that $\vec x'-\vec x\in \mathrm{Ker}(T)$ and $G(\vec
x')=G_{\mathrm{Ker}(T)}(\vec x)$. Then, in combination  with \eqref{eq:gradient-convex-G} in Proposition \ref{pro:GPi-property}, we  derive  
$$\vec0\in \nabla_x F(T\vec x)-\lambda \nabla_x G_{\mathrm{Ker}(T)}(\vec x) =\nabla_x F(T\vec x')-\lambda \nabla_x G(\vec x') \cap\mathrm{Ker}(T)^\bot.$$
This implies that $(\lambda,\vec x')$ is an eigenvalue of $(F\circ T,G)$. 
Therefore, the nonzero eigenvalues of $(F\circ T,G)$ and $(F\circ T,G_{\mathrm{Ker}(T)})$ are the same.

Since $G_{\mathrm{Ker}(T)}=G_{\inf}\circ T$ and $T|_{\mathrm{Ker}(T)^\bot}:\mathrm{Ker}(T)^\bot\to\mathrm{Range}(T)
$ is a homeomorphism, we can write $(F\circ T,G_{\mathrm{Ker}(T)})=(F\circ T,G_{\inf}\circ T)$. Then, we can apply Proposition \ref{pro:odd-homeomorphism} to derive that the nonzero eigenvalues of
$(F\circ T,G_{\mathrm{Ker}(T)})$ and   $(F,G_{\inf})$ coincide. 
\end{proof}

\begin{remark}
The variational   characterization of   the second eigenvalue of graph $p$-Laplacian (see Example \ref{exam:character-2nd-p-Lap}) is also a direct consequence of  Lemmas \ref{lem:dual} and \ref{lem:conjugate}.  
\end{remark}

\section{Homogeneous and piecewise multilinear  extensions}
\label{sec:extension}

First, we recall the  definition of the \emph{original Lov\'asz extension}.
\begin{defn}
Given a function $f:\power(V)\to \R$, its \textbf{original Lov\'asz extension} is the function $f^L:\R^V\to \R$ defined as
\begin{equation}\label{eq:Lovasz-PL}
f^L(\vec x):=\sum_{i=1}^{n-1} (x_{(i+1)}-x_{(i)})f(V_{i}(\vec x))+x_{(1)}f(V),
\end{equation}
where $V_{i}(\vec x):=\{j\in V: x_j> x_{(i)}\}$ and $x_{(1)}\le x_{(2)}\le\ldots\le x_{(n)}$ is a rearrangement of $\vec x:=(x_1,\ldots,x_n)$ in non-deceasing order.
\end{defn}

The disjoint-pair Lov\'asz extension is defined in a similar manner (see \cite{JostZhang-PL} for details), 
and we still use $f^L$ to indicate  the disjoint-pair Lov\'asz extension  of $f$.



\begin{defn}[piecewise multilinear extension]\label{defn:piece-multilinear}
Given $V_i=\{1,\cdots,n_i\}$ and the power set $\mathcal{P}(V_i)$, $i=1,\cdots,k$,   for a discrete function  $f:\mathcal{P}(V_1)\times \cdots\times \mathcal{P}(V_k)\to \R$, we define the piecewise multilinear function on $\R^{n_1}\times\cdots\times\R^{n_k}$ 
by
\begin{equation*}
f^M(\vec x^1,\cdots,\vec x^k)=\sum_{i_1\in V_1,\cdots,i_k\in V_k}\prod_{l=1}^k(x_{(i_l)}^l-x_{(i_l-1)}^l)f(V^{(i_1)}(\vec x^1),\cdots,V^{(i_k)}(\vec x^k)),
\end{equation*}
 where $V^{(i)}(\vec x^l):=\{j\in V_l: x_j^l> x_{(i-1)}^l\}$ for $i\ge 2$, $V^{(1)}(\vec x^l)=V_l$, $x_{(0)}^l:=0$,  $x_{(1)}^l\le x_{(2)}^l\le\ldots\le x_{(n_l)}^l$ is a rearrangement of $\vec x^l:=(x_1^l,\ldots,x_{n_l}^l)$ in non-deceasing order,   for any 
 $\vec x^1\in\R^{n_1},\cdots,\vec x^k\in\R^{n_k}$. 
\end{defn}

Since the definition of $f^M$ doesn't involve the data on $(A_1,\cdots,A_k)$ if $A_i=\varnothing$ for some $i$, we   can set $f(A_1,\cdots,A_k)=0$ whenever $A_i=\varnothing$  for some $i=1,\cdots,k$. 

\begin{pro}\label{pro:L-P}
Under the notions in Definition \ref{defn:piece-multilinear}, for fixed $\vec x^2\in\R^{n_2},\cdots,\vec x^k\in\R^{n_k}$, let $\tilde{f}:\power(V_1)\to\R$ be defined as $\tilde{f}(A)=f^M(\vec1_{A},\vec x^2,\cdots,\vec x^k)$. Then $\tilde{f}^L(\vec x)=f^M(\vec x,\vec x^2,\cdots,\vec x^k)$ for any $\vec x\in\R^{n_1}$.
\end{pro}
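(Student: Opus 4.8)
The plan is to reduce everything to the one-variable case by collapsing the last $k-1$ arguments of $f$ into a single set function on $V_1$. Fix $\vec x^2\in\R^{n_2},\dots,\vec x^k\in\R^{n_k}$ and define $g\colon\power(V_1)\to\R$ by
$$g(A):=\sum_{i_2\in V_2,\dots,i_k\in V_k}\Big(\prod_{l=2}^{k}(x_{(i_l)}^l-x_{(i_l-1)}^l)\Big)\,f\big(A,V^{(i_2)}(\vec x^2),\dots,V^{(i_k)}(\vec x^k)\big).$$
With this notation the proposition splits into two elementary steps, and the desired identity $\tilde f^L=f^M(\,\cdot\,,\vec x^2,\dots,\vec x^k)$ follows by chaining them.

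First I would check that $f^M(\vec x,\vec x^2,\dots,\vec x^k)=g^L(\vec x)$ for every $\vec x\in\R^{n_1}$. This is immediate from Definition \ref{defn:piece-multilinear}: separating the factor $l=1$ from the product and moving the summation over $i_1$ outside gives
$$f^M(\vec x,\vec x^2,\dots,\vec x^k)=\sum_{i_1\in V_1}(x_{(i_1)}-x_{(i_1-1)})\,g\big(V^{(i_1)}(\vec x)\big),$$
because summing the remaining factors over $i_2,\dots,i_k$ reproduces precisely $g(V^{(i_1)}(\vec x))$. Isolating the term $i_1=1$, which equals $x_{(1)}g(V_1)$ since $x_{(0)}=0$ and $V^{(1)}(\vec x)=V_1$, and reindexing $i=i_1-1$ in the remaining terms rewrites the right-hand side as $x_{(1)}g(V_1)+\sum_{i=1}^{n_1-1}(x_{(i+1)}-x_{(i)})g(V_i(\vec x))$, which is exactly the original Lov\'asz extension \eqref{eq:Lovasz-PL} of $g$. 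The only thing demanding care here is matching the two indexing conventions, namely checking that the level set $V^{(i)}(\vec x^l)=\{j:x_j^l>x_{(i-1)}^l\}$ of Definition \ref{defn:piece-multilinear} coincides with the set $V_{i-1}(\vec x^l)$ appearing in \eqref{eq:Lovasz-PL}; this bookkeeping is where the (minor) work sits.

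Second I would identify $\tilde f$ with $g$. Applying the formula of the first step to $\vec x=\vec 1_A$ gives $\tilde f(A)=f^M(\vec 1_A,\vec x^2,\dots,\vec x^k)=g^L(\vec 1_A)$, so it remains to invoke the fact that the original Lov\'asz extension interpolates its underlying set function at characteristic vectors: for $A\notin\{\varnothing,V_1\}$ the sorted vector $\vec 1_A$ consists of $n_1-\#A$ zeros followed by $\#A$ ones, all consecutive differences vanish except the single jump from $0$ to $1$ whose associated level set is exactly $A$, and $x_{(1)}=0$ kills the last summand; the cases $A=V_1$ (where $\vec 1_A=\vec 1$) and $A=\varnothing$ (where $\vec 1_A=\vec 0$ and $g(\varnothing)=0$ because $f$ vanishes whenever one of its arguments is empty) are checked directly. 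Hence $g^L(\vec 1_A)=g(A)$, i.e. $\tilde f=g$ on $\power(V_1)$. Combining the two steps, $\tilde f^L(\vec x)=g^L(\vec x)=f^M(\vec x,\vec x^2,\dots,\vec x^k)$ for all $\vec x\in\R^{n_1}$, which is the assertion. No serious obstacle is anticipated: the statement is essentially the observation that $f^M$ is \emph{separately Lov\'asz} in each of its $k$ slots, and the proof is just the careful unfolding of Definition \ref{defn:piece-multilinear} together with the interpolation property of the Lov\'asz extension.
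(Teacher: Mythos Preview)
Your proof is correct and follows the natural approach: the paper states Proposition~\ref{pro:L-P} without proof, treating it as immediate from Definition~\ref{defn:piece-multilinear}, and your argument is precisely the unfolding of that definition---separating the $i_1$-sum to recognize the Lov\'asz extension of the collapsed set function $g$, then invoking $g^L(\vec 1_A)=g(A)$. The later Remark in Section~\ref{sec:saddle} (on the operators $\mathcal{L}_i$ and the commutative diagram $f^Q=\mathcal{L}_1\mathcal{L}_2 f$) makes the same point in slightly more abstract language, confirming that your route is the intended one.
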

Proposition \ref{pro:L-P} shows that the piecewise multilinear  extension induces the Lov\'asz extension by  restricting $f^M$ to each component $\vec x^l\in\R^{n_l}$, $l=1,\cdots,k$; while if we restrict the piecewise multilinear  extension of a function  $f:\power(V)^k\to\R$ to the diagonal $\underbrace {(\vec x,\vec x,\cdots,\vec x)}_{k\text{ times}}\in (\R^n)^k$, we  obtain the following 

\begin{defn}
\label{defn:piece-polynomial}
Given $V=\{1,\cdots,n\}$ and its power set $\mathcal{P}(V)$, for a  function $f:\mathcal{P}(V)^k\to \R$, we define the piecewise polynomial extension $f^M_\triangle$ on $\R^n$ by
$$f^M_\triangle(\vec x):=f^M(\vec x,\cdots,\vec x),\;\;\;\forall \vec x\in\R^n. $$
\end{defn}


Some special examples on graphs are presented in Table \ref{tab:double-Lov}.



It is also useful to provide  the multiple integral representation of the piecewise multilinear extension in Definition \ref{defn:piece-multilinear}. For example, given a function  $f:\power(V)^{k}\to \R$ with the assumption that $f(A_1,\cdots,A_k)=0$ whenever $A_i\in\{V,\varnothing\}$ for some $i$, we have
\begin{equation}\label{eq:multiple-integral}
f^M(\vec x^1,\cdots,\vec x^k)=\int_{\min\vec x^k}^{\max\vec x^k}\cdots\int_{\min\vec x^1}^{\max\vec x^1}f(V^{t_1}(\vec x^1),\cdots,V^{t_k}(\vec x^k))dt_1\cdots dt_k,
\end{equation}
where  $V^{t_l}(\vec x^l)=\{j\in V: x^l_j>t_l\}$,  $l=1,\cdots,k$. 
For a general  $f:\power(V)^{k}\to \R$ without any additional assumptions, the definition \eqref{eq:multiple-integral} should be modified by  adding some standard  remainder terms to guarantee the  condition $f^M(\vec 1_{A_1},\cdots,\vec1_{A_k})=f(A_1,\cdots,A_k)$. Since these remainder terms are routine, we don't write down them explicitly  for simplicity. Next, we show a simple formula for $f^M$ when $f$ is modular on each component.

\begin{defn}
Given a function $f:\mathcal{P}(V_1)\times \cdots\times \mathcal{P}(V_k)\to \R$, 
let $f_{A_1,\cdots,\widehat{A_i},\cdots,A_k}:\power(V_i)\to\R$ be defined as  $f_{A_1,\cdots,\widehat{A_i},\cdots,A_k}(A_i)=f(A_1,\cdots,A_k)$. 
We say that $f$ is modular on each component if 
$f_{A_1,\cdots,\widehat{A_i},\cdots,A_k}$ is modular 
for any $i$,  $A_1,\cdots,A_k$.
\end{defn}

\begin{pro}\label{pro:modular-f}
A function $f:\mathcal{P}(V_1)\times \cdots\times \mathcal{P}(V_k)\to \R$ is modular on each component if and only if  $f^M$ is multilinear. And at this time, $f^M$ is determined by 
\begin{equation}\label{eq:multiple-integral2}
f^M(\vec x^1,\cdots,\vec x^k)=\int_{0}^{\max\vec x^k}\cdots\int_{0}^{\max\vec x^1}f(V^{t_1}(\vec x^1),\cdots,V^{t_k}(\vec x^k))dt_1\cdots dt_k
\end{equation}
where  $V^{t_l}(\vec x^l)=\{j\in V_l: x^l_j>t_l\}$,  $l=1,\cdots,k$. 
\end{pro}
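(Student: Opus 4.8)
The plan is to bootstrap everything from the one‑variable situation, using Proposition \ref{pro:L-P} to insert and extract the relevant fact in each vector slot of $f^M$. The one‑variable fact I take for granted (it is classical, and also recorded in the companion paper \cite{JostZhang-PL}): the Lov\'asz extension $g^L$ of $g\colon\power(V)\to\R$ with $g(\varnothing)=0$ is linear if and only if $g$ is modular, in which case $g^L(\vec x)=\sum_{j\in V}g(\{j\})x_j$; conversely, a linear $g^L$ satisfies $g(A)=g^L(\vec 1_A)=\sum_{j\in A}g^L(\vec e_j)$, which is modular. I also use the extension property $f^M(\vec 1_{A_1},\dots,\vec 1_{A_k})=f(A_1,\dots,A_k)$, which falls out of the substitution below.

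First I would show that if $f$ is modular on each component then $f^M$ is multilinear, i.e.\ linear in each vector argument separately. Fix $\vec x^2,\dots,\vec x^k$ and put $\tilde f(A):=f^M(\vec 1_A,\vec x^2,\dots,\vec x^k)$. Substituting $\vec x^1=\vec 1_A$ into Definition \ref{defn:piece-multilinear}, the only index $i_1$ contributing a nonzero factor $x^1_{(i_1)}-x^1_{(i_1-1)}$ is the one with $V^{(i_1)}(\vec 1_A)=A$ (the whole $i_1$‑sum vanishing when $A=\varnothing$, and $V^{(1)}(\vec 1)=V_1$ handling $A=V_1$), so $\tilde f(A)=\sum_{i_2,\dots,i_k}c_{i_2\cdots i_k}\,f(A,B_2,\dots,B_k)$, a finite linear combination --- with coefficients $c_{i_2\cdots i_k}=\prod_{l\ge 2}(x^l_{(i_l)}-x^l_{(i_l-1)})$ and sets $B_l=V^{(i_l)}(\vec x^l)$ independent of $A$ --- of the maps $A\mapsto f(A,B_2,\dots,B_k)$, each modular by hypothesis. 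Since the modular functions on $\power(V_1)$ vanishing at $\varnothing$ form a linear subspace and $\tilde f(\varnothing)=f^M(\vec 0,\vec x^2,\dots,\vec x^k)=0$, the function $\tilde f$ is modular, hence $\tilde f^L$ is linear; by Proposition \ref{pro:L-P}, $\vec x\mapsto f^M(\vec x,\vec x^2,\dots,\vec x^k)=\tilde f^L(\vec x)$ is linear. As Definition \ref{defn:piece-multilinear} is symmetric under simultaneously permuting the $V_l$, the $\vec x^l$, and the slots of $f$, the same argument applies to every slot, so $f^M$ is multilinear.

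For the converse I would run this backwards. Assume $f^M$ is multilinear and fix $A_2,\dots,A_k$. Iterating the substitution $\vec x^l=\vec 1_{A_l}$ in the computation above gives $f^M(\vec 1_A,\vec 1_{A_2},\dots,\vec 1_{A_k})=f(A,A_2,\dots,A_k)$, while Proposition \ref{pro:L-P} identifies $\vec x\mapsto f^M(\vec x,\vec 1_{A_2},\dots,\vec 1_{A_k})$ with $\tilde f^L$ for $\tilde f(A):=f^M(\vec 1_A,\vec 1_{A_2},\dots,\vec 1_{A_k})=f(A,A_2,\dots,A_k)$. By hypothesis this $\tilde f^L$ is linear, so $\tilde f$ is modular; since $A_2,\dots,A_k$ were arbitrary, $f$ is modular in its first component, and by symmetry in every component. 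This establishes the stated equivalence.

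Finally, for the integral formula, assuming these equivalent conditions: iterating additivity over disjoint unions in each slot (using $f(\dots,\varnothing,\dots)=0$) shows $f$ is multi‑additive, $f(B_1,\dots,B_k)=\sum_{j_1\in B_1,\dots,j_k\in B_k}f(\{j_1\},\dots,\{j_k\})$. Plugging this into the right‑hand side of \eqref{eq:multiple-integral2}, rewriting $j_l\in V^{t_l}(\vec x^l)$ as $x^l_{j_l}>t_l$, and applying Fubini turns it into $\sum_{j_1,\dots,j_k}f(\{j_1\},\dots,\{j_k\})\prod_{l=1}^k\int_0^{\max\vec x^l}\mathbf 1_{\{x^l_{j_l}>t_l\}}\,dt_l$, where on the nonnegative orthant each inner integral equals $x^l_{j_l}$, so the sum collapses to $\sum_{j_1,\dots,j_k}f(\{j_1\},\dots,\{j_k\})\,x^1_{j_1}\cdots x^k_{j_k}$; on the other hand, multilinearity forces $f^M$ to be exactly this polynomial, since its coefficients are $f^M(\vec 1_{\{j_1\}},\dots,\vec 1_{\{j_k\}})=f(\{j_1\},\dots,\{j_k\})$. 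Matching the two proves \eqref{eq:multiple-integral2} on $[0,\infty)^{n_1}\times\cdots\times[0,\infty)^{n_k}$, and as both sides are multilinear the identity pins down $f^M$ everywhere. I do not expect a genuine obstacle: the one structural observation that makes the proof work is that modularity is a \emph{linear} constraint on the table of values of $f$, so that a $\vec x^l$‑dependent, possibly sign‑indefinite (because of the $i_l=1$ term $x^l_{(1)}-0$) linear combination of modular functions stays modular; the fussy points --- the edge cases $A\in\{\varnothing,V_1\}$ and the harmless ties in $\vec 1_A$ when invoking Proposition \ref{pro:L-P}, and the lower‑limit book‑keeping, which is exactly why the clean identity $\int_0^{\max\vec x^l}\mathbf 1_{\{x^l_{j_l}>t_l\}}\,dt_l=x^l_{j_l}$, and hence \eqref{eq:multiple-integral2}, is read on the nonnegative orthant --- are routine once the substitution above is in hand.
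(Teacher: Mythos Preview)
Your proposal is correct and follows essentially the same strategy as the paper: both directions of the equivalence go through Proposition~\ref{pro:L-P} and the one-variable fact that modularity of $g$ is equivalent to linearity of $g^L$, and the integral formula is verified on (a determining subset of) the nonnegative orthant and then pinned down by multilinearity. Your write-up is more explicit than the paper's---in particular, you spell out why $\tilde f(A)=f^M(\vec 1_A,\vec x^2,\dots,\vec x^k)$ is modular (as a finite linear combination of modular functions) and you compute the integral side as the polynomial $\sum_{j_1,\dots,j_k}f(\{j_1\},\dots,\{j_k\})\prod_l x^l_{j_l}$, whereas the paper simply asserts that linearity in each slot follows from Definition~\ref{defn:piece-multilinear} and that \eqref{eq:multiple-integral2} ``can be derived directly'' on the set $\{\min\vec x^l=0\}$---but the underlying argument is the same.
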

\begin{proof}
Suppose that $f$ is modular on each component. By Definition \ref{defn:piece-multilinear},  $f^M$  must be linear on each component. Thus, $f^M$ is a $k$-homogeneous polynomial and it is linear on each variable  $x_i^l$. 
Therefore, the explicit expression is uniquely determined by the data on the subset $\{\vec x^1\in\R^{n_1}:\min\vec x^1=0\}\times\cdots\times\{\vec x^k\in\R^{n_k}:\min\vec x^k=0\}$. Note that on such a subset, the formula  \eqref{eq:multiple-integral2} can be derived directly from Definition \ref{defn:piece-multilinear}. 

For the converse, suppose $f^M$ is multilinear and $f$ is not modular on its first component. Then, as shown in Proposition \ref{pro:L-P}, the restriction of $f^M$ to its first component $f^M(\vec x,\vec 1_{A_2},\cdots,\vec 1_{A_k})=\tilde{f}^L(\vec x)$ is the Lov\'asz extension of a non-modular function, which implies that $f^M$ is not linear on its first component, a contradiction.
\end{proof}



\begin{table}
\centering
\caption{\small Piecewise bilinear extension of some objective functions on $\power(V)\times \power(V)$.}
\begin{tabular}{|l|l|}
               \hline
               Objective function $f(A,B)$ & 
          Piecewise bilinear      extension $f^Q(\vec x,\vec y)$   \\
               \hline
                 $\#E(A,B)$ & $\sum_{i\sim j}(x_iy_j+x_jy_i)$\\
                 \hline
               constant $c$&$c\max_i x_i \max_i y_i$\\
               \hline
               $\# A\cdot\# B$ & $(\sum_i x_i)(\sum_i y_i)$\\
               \hline
                       $\# (A\cap B)$ & $\sum_i x_iy_i$\\
               \hline
             \end{tabular}
             \label{tab:double-Lov}
\end{table}


 \begin{example}\label{example:k-uniform-hypergraph}
 Let $G=(V,E)$ be a $k$-uniform hypergraph, i.e., every edge has cardinality $k$. 
Let $f:\mathcal{P}(V)^k\to\R$ be defined as  $f(A_1,\cdots,A_k)=\#E(A_1,\cdots,A_k):=\#\{(i_1,\cdots,i_k):i_1\in A_1,\cdots,i_k\in A_k,\{i_1,\cdots,i_k\}\in E\}$. 
Then we have $f^M(\vec x^1,\cdots,\vec x^k)=\sum\limits_{i_1,\cdots,i_k\in V,\{i_1,\cdots,i_k\}\in E}x^1_{i_1}\cdots x^k_{i_k}$.

For $g(A_1,\cdots,A_k)=\prod_{j=1}^k\#A_j$, one has $g^M(\vec x^1,\cdots,\vec x^k)=\prod_{j=1}^k\sum_{i\in V} x^j_i$, where $\vec x^j=(x^j_1,\cdots,x^j_n)$, $j=1,\cdots,k$.

We can use the formula  \eqref{eq:multiple-integral2} to get the closed form of $f^M$. Note that $\#E(A_1,\cdots,A_k)$ is modular on each $A_i$. Thus, for $\vec x^1,\cdots,\vec x^k$ with $\min\vec x^1=\cdots=\min\vec x^k=0$, 
\begin{align*}
f^M(\vec x^1,\cdots,\vec x^k)
&=\int_{0}^{\max\vec x^k}\cdots\int_{0}^{\max\vec x^1}\#E(V^{t_1}(\vec x^1),\cdots,V^{t_k}(\vec x^k))dt_1\cdots dt_k
\\&=\int_{0}^{\max\vec x^k}\cdots\int_{0}^{\max\vec x^1}\sum\limits_{\{i_1,\cdots,i_k\}\in E} 1_{x^1_{i_1}>t_1}\cdots1_{x^k_{i_k}>t_k} dt_1\cdots dt_k
\\&=\sum\limits_{\{i_1,\cdots,i_k\}\in E}\int_{0}^{\max\vec x^k}\cdots\int_{0}^{\max\vec x^1} 1_{x^1_{i_1}>t_1}\cdots1_{x^k_{i_k}>t_k} dt_1\cdots dt_k
\\&=\sum\limits_{\{i_1,\cdots,i_k\}\in E}x^1_{i_1}\cdots x^k_{i_k}   . 
\end{align*}
According to Proposition \ref{pro:modular-f}, for any $\vec x^1,\cdots,\vec x^k$, 
$$f^M(\vec x^1,\cdots,\vec x^k)=\sum\limits_{i_1,\cdots,i_k\in V,\{i_1,\cdots,i_k\}\in E}x^1_{i_1}\cdots x^k_{i_k}.$$

One can do a similar calculation for $\#A_1\cdots\#A_k$ by employing Proposition \ref{pro:modular-f}, but it is more convenient to use Proposition \ref{pro:separable-product} below.
\end{example}

We will see in Section  \ref{sec:Turan} that  Table \ref{tab:double-Lov} and
Example \ref{example:k-uniform-hypergraph} are closely related to the
Motzkin-Straus theorem and the Lagrangian density 
 of hypergraphs in the study of the Tur\'an problems.

According to Theorem 3.10 in \cite{BH20-Lorentzian}, $\B\subset \power(V)$ is the set of bases of a  matroid on $V$ if and only if the polynomial  $\sum_{B\in \B}\prod_{i\in B} x_i$ is Lorentzian (i.e., strong log-concave). Combining this argument with  Proposition \ref{pro:modular-f}, we immediately obtain
\begin{pro}
Suppose that $f:\mathcal{P}(V)^k\to \{0,1\}$ satisfy $f(\{i_1\},\cdots,\{i_k\})=0$  if $i_1,\cdots,i_k$ are not pairwise distinct. Then, $f^M_\triangle$ is a Lorentzian polynomial if and only if $f$ is modular and $\{\{i_1,\cdots,i_k\}\subset V:f(\{i_1\},\cdots,\{i_k\})=1\}$ is the set of bases of a matroid on $V$. 
\end{pro}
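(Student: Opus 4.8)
The plan is to reduce both directions to Theorem 3.10 in \cite{BH20-Lorentzian} by identifying $f^M_\triangle$ with the basis-generating polynomial $\sum_{S\in\mathcal B}\prod_{i\in S}x_i$ of the set system $\mathcal B=\{\{i_1,\dots,i_k\}\subset V:f(\{i_1\},\dots,\{i_k\})=1\}$, the bridge being Proposition \ref{pro:modular-f} together with a short bookkeeping identity for the coefficients. Throughout I would use the convention, permitted by the remark after Definition \ref{defn:piece-multilinear}, that $f(A_1,\dots,A_k)=0$ whenever some $A_i=\varnothing$, so that ``modular on each component'' just means ``additive on each component''.

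First I would carry out the implication ``$f$ modular on each component $\Rightarrow$ $f^M_\triangle$ Lorentzian''. By Proposition \ref{pro:modular-f}, modularity makes $f^M$ multilinear, and since the coefficient of $x^1_{i_1}\cdots x^k_{i_k}$ equals $f^M(\vec 1_{\{i_1\}},\dots,\vec 1_{\{i_k\}})=f(\{i_1\},\dots,\{i_k\})$, we get $f^M(\vec x^1,\dots,\vec x^k)=\sum_{i_1,\dots,i_k}f(\{i_1\},\dots,\{i_k\})\,x^1_{i_1}\cdots x^k_{i_k}$, hence $f^M_\triangle(\vec x)=\sum_{i_1,\dots,i_k}f(\{i_1\},\dots,\{i_k\})\,x_{i_1}\cdots x_{i_k}$. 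The distinctness hypothesis kills every term with a repeated index, so after grouping by the $k$-set $S=\{i_1,\dots,i_k\}$ we obtain $f^M_\triangle=\sum_{|S|=k}c_S\prod_{i\in S}x_i$ with $c_S=\sum_{\sigma\in S_k}f(\{s_{\sigma(1)}\},\dots,\{s_{\sigma(k)}\})$. The key observation is that, expanding $f(S,S,\dots,S)$ componentwise by additivity, all mixed summands with a repeated entry vanish by the distinctness hypothesis, so $c_S=f(S,S,\dots,S)$; since $f$ is $\{0,1\}$-valued this forces $c_S\in\{0,1\}$, and $\{S:c_S=1\}=\mathcal B$ (which therefore consists of $k$-element sets). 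Thus $f^M_\triangle=\sum_{S\in\mathcal B}\prod_{i\in S}x_i$, and since $\mathcal B$ is the set of bases of a matroid this polynomial is Lorentzian by Theorem 3.10 in \cite{BH20-Lorentzian}.

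For the converse I would assume $f^M_\triangle$ is Lorentzian, so in particular a globally defined homogeneous polynomial of degree $k$. The crux — and what I expect to be the main obstacle — is to deduce from this alone that $f$ is modular on each component; granting that, the computation above yields $f^M_\triangle=\sum_{S\in\mathcal B}\prod_{i\in S}x_i$, and the ``only if'' half of Theorem 3.10 in \cite{BH20-Lorentzian} forces $\mathcal B$ to be the set of bases of a matroid. To get modularity the plan is to use the piecewise structure of the extension: $f^M_\triangle$ is piecewise polynomial with pieces indexed by orderings of the coordinates, and if $f$ were not modular on some component then, by Proposition \ref{pro:modular-f}, $f^M$ would be genuinely non-multilinear, so one must argue that the diagonal restriction cannot collapse this into a single polynomial — equivalently, that the polynomials attached to two adjacent chambers of $\R^n$ must differ. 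Proposition \ref{pro:L-P} is a convenient packaging tool here, since freezing all but one component of $f^M$ exhibits Lovász extensions of set functions, and the Lovász extension of a non-modular function is not linear. The degenerate case $f^M_\triangle\equiv 0$ (i.e.\ $\mathcal B=\varnothing$, not a matroid) should be handled separately or excluded by a nonvanishing convention.
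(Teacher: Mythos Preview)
Your forward direction is correct and is exactly what the paper has in mind; the paper's proof is literally the single sentence ``Combining this argument with Proposition~\ref{pro:modular-f}, we immediately obtain [the result]'', and your write-up unpacks that combination cleanly (the identity $c_S=f(S,\dots,S)$ obtained by expanding additively and killing repeated indices with the distinctness hypothesis is the right bookkeeping, and it neatly explains why $c_S\in\{0,1\}$ and why $\{S:c_S=1\}=\mathcal B$).

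For the converse you correctly isolate the crux --- deducing modularity of $f$ from the fact that $f^M_\triangle$ is a polynomial --- but your proposed attack via Proposition~\ref{pro:L-P} cannot succeed. That proposition lets you freeze all but one \emph{argument of $f^M$}; in $f^M_\triangle$ all $k$ arguments coincide, so you cannot vary one while holding the others fixed, and the diagonal restriction genuinely loses information about $f^M$. Concretely, take $V=\{1,2\}$, $k=2$, and set
\[
f(\{1\},\{2\})=1,\ f(\{2\},\{1\})=0,\ f(\{i\},\{i\})=0,\ f(\{1,2\},\{1\})=f(\{1,2\},\{2\})=0,
\]
\[
f(\{1\},\{1,2\})=f(\{2\},\{1,2\})=1,\ f(\{1,2\},\{1,2\})=1.
\]
A direct computation on both chambers $\{x_1<x_2\}$ and $\{x_2<x_1\}$ gives $f^M_\triangle(\vec x)=x_1x_2$, which is Lorentzian, and $\mathcal B=\{\{1,2\}\}$ is the basis set of the rank-$2$ free matroid on $V$; yet $f$ is not modular on its first component since $f(\{1,2\},\{2\})=0\neq 1=f(\{1\},\{2\})+f(\{2\},\{2\})$. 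Thus the ``only if'' direction fails as literally stated, and neither your outline nor the paper's one-liner can close the gap without an extra hypothesis (for instance, symmetry of $f$ under permutation of its $k$ arguments, under which the step ``$f^M_\triangle$ polynomial $\Rightarrow$ $f$ modular'' becomes tractable).
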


The following is a generalization of the disjoint-pair Lov\'asz extension.

\begin{defn}\label{def:multiple-integral}
For a function $f:\power_2(V_1)\times\cdots\times\power_2(V_k)\to \R$,  the multiple integral  extension on $\R^{n_1}\times\cdots\times\R^{n_k}$ is defined as
\begin{align*}
&f^M(\vec x^1,\cdots,\vec x^k)\\=~&\int_0^{\|\vec x^k\|_\infty}\cdots\int_0^{\|\vec x^1\|_\infty}f(V^{t_1}_+(\vec x^1),V^{t_1}_-(\vec x^1),\cdots,V^{t_k}_+(\vec x^k),V^{t_k}_-(\vec x^k))dt_1\cdots dt_k,
\end{align*}
where $\power_2(V_l)=\{(A_+,A_-):A_+,A_-\subset V_l,A_+\cap A_-=\varnothing\}$,  and  $V^{t_l}_\pm(\vec x^l)=\{j\in V_l:\pm x^l_j>t_l\}$,  $l=1,\cdots,k$.
\end{defn}

The property of the multiple integral extension in Definition \ref{def:multiple-integral} is very similar to the piecewise multilinear extension introduced in Definition \ref{defn:piece-multilinear}, but its integral formulation is more concise and it  is convenient for computation. As an analog to Table \ref{tab:double-Lov}, we refer to Table \ref{tab:L-multi-integral} for some examples of Definition \ref{def:multiple-integral}. 

\begin{table}
\centering
\caption{\small Multiple integral extension of typical  objective functions.}
\begin{tabular}{|l|l|}
               \hline
 Objective function $f(A_+^1,A_-^1,\cdots,A_+^k,A_-^k)$  & Multiple 
 extension 
               $f^M(\vec x^1,\cdots,\vec x^k)$  \\
               \hline
               $\Pi_{i=1}^k\#(A_+^i\cup A_-^i)$ & $\Pi_{i=1}^k\|\vec x^i\|_1$\\
               \hline
               $1$ & $\Pi_{i=1}^k\|\vec x^i\|_\infty$\\
               \hline
            \end{tabular}
             \label{tab:L-multi-integral}
\end{table}

\begin{pro}\label{pro:separable-product}
For  $f:\mathcal{P}(V_1)\times \cdots\times \mathcal{P}(V_k)\to \R$ in the form of multiplication  $f(A_1,\cdots,A_k):=\prod_{i=1}^kf_i(A_i)$, $\forall (A_1,\cdots,A_k)\in \mathcal{P}(V_1)\times \cdots\times \mathcal{P}(V_k)$, we have $f^M(\vec x^1,\cdots,\vec x^k)=\prod_{i=1}^kf_i^L(\vec x^i)$, $\forall (\vec x^1,\cdots,\vec x^k)$.

For $f:\power_2(V_1)\times\cdots\times\power_2(V_k)\to \R$ with the form  $f(A_1,B_1\cdots,A_k,B_k):=\prod_{i=1}^kf_i(A_i,B_i)$, $\forall (A_1,B_1,\cdots,A_k,B_k)\in \power_2(V_1)\times\cdots\times\power_2(V_k)$, there similarly holds $f^M(\vec x^1,\cdots,\vec x^k)=\prod_{i=1}^kf_i^L(\vec x^i)$.
\end{pro}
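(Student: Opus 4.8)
The plan is to compute $f^M$ directly from its definition and exploit the fact that, when $f$ is a product of single-argument functions, the defining sum (respectively integral) factorizes coordinatewise. For the piecewise multilinear extension of $f(A_1,\dots,A_k)=\prod_{l=1}^k f_l(A_l)$, I substitute this into the formula of Definition \ref{defn:piece-multilinear} to get
$$f^M(\vec x^1,\dots,\vec x^k)=\sum_{i_1\in V_1,\dots,i_k\in V_k}\prod_{l=1}^k(x_{(i_l)}^l-x_{(i_l-1)}^l)\,f_l(V^{(i_l)}(\vec x^l)).$$
Each summand is a product whose $l$-th factor depends only on $i_l$, so by distributivity the multi-index sum splits into a product of single-index sums,
$$f^M(\vec x^1,\dots,\vec x^k)=\prod_{l=1}^k\Big(\sum_{i\in V_l}(x_{(i)}^l-x_{(i-1)}^l)\,f_l(V^{(i)}(\vec x^l))\Big).$$

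It then remains to identify the $l$-th factor with the original Lov\'asz extension $f_l^L(\vec x^l)$, which is purely a matter of reconciling indexing conventions: in \eqref{eq:Lovasz-PL} the running term is $(x_{(i+1)}-x_{(i)})f_l(\{j:x_j>x_{(i)}\})$ for $i=1,\dots,n_l-1$ plus the boundary term $x_{(1)}^l f_l(V_l)$, and the shift $i\mapsto i-1$ turns the running part into $\sum_{i=2}^{n_l}(x_{(i)}^l-x_{(i-1)}^l)f_l(V^{(i)}(\vec x^l))$, while $x_{(0)}^l=0$ and $V^{(1)}(\vec x^l)=V_l$ make the boundary term precisely the $i=1$ summand. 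Hence each factor equals $f_l^L(\vec x^l)$, proving the first assertion. (Note that any $i$ with $V^{(i)}(\vec x^l)=\varnothing$ carries coefficient $x_{(i)}^l-x_{(i-1)}^l=0$, so the convention $f(\dots,\varnothing,\dots)=0$ is immaterial here.) For the disjoint-pair statement I use instead the multiple integral form of Definition \ref{def:multiple-integral}: substituting $f(A_1,B_1,\dots,A_k,B_k)=\prod_{l=1}^k f_l(A_l,B_l)$ makes the integrand equal to $\prod_{l=1}^k f_l(V^{t_l}_+(\vec x^l),V^{t_l}_-(\vec x^l))$, a product of bounded simple functions each in its own variable over the bounded interval $[0,\|\vec x^l\|_\infty]$, so Fubini's theorem factors the $k$-fold integral as $\prod_{l=1}^k\int_0^{\|\vec x^l\|_\infty}f_l(V^{t_l}_+(\vec x^l),V^{t_l}_-(\vec x^l))\,dt_l$, and each single integral is by definition (cf. \cite{JostZhang-PL}) the disjoint-pair Lov\'asz extension $f_l^L(\vec x^l)$.

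I expect no serious obstacle: the whole argument is a routine application of distributivity and Fubini. The only points requiring a little care are the index shift matching the single-index sum to $f^L$ as written in \eqref{eq:Lovasz-PL}, and, in the disjoint-pair case, checking that the remainder/boundary conventions in the definition of the disjoint-pair Lov\'asz extension are handled consistently on both sides; both amount to the same bookkeeping and cause no real difficulty.
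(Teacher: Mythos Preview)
Your argument is correct and is exactly the natural direct computation: factor the defining multilinear sum (resp.\ the iterated integral via Fubini) and identify each single-variable factor with the Lov\'asz (resp.\ disjoint-pair Lov\'asz) extension. The paper states this proposition without proof, treating it as immediate from the definitions, so your write-up is essentially the omitted verification rather than a different route.
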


\begin{remark}
The original Lov\'asz extension  identifies $A\in \power(V){\setminus\{\varnothing\}}$ with $\vec 1_A\in \R^V$, and then extends 
$f$ from the set $\{\vec 1_A:A\in\mathcal{P}(V){\setminus\{\varnothing\}}\}$ to $\R^V$ in a piecewise linear way.

The disjoint-pair Lov\'asz extension  identifies $(A,B)\in \power_2(V){\setminus\{(\varnothing,\varnothing)\}}$ with $\vec 1_A-\vec 1_B\in \R^V$, and then extends $f:\power_2(V)\to\R$ piecewise-linearly.


The piecewise bilinear extension   identifies $(A,B)\in (\power(V){\setminus\{\varnothing\}})^2$ with $(\vec 1_A,\vec 1_B)\in  \R^{n}\times \R^n$ where $n=\#V$, and then extends $f$ to a piecewise bilinear function. 

In general settings, the piecewise multilinear extension 
identifies $(A_1,\cdots,A_k)\in (\power(V){\setminus\{\varnothing\}})^k$  with $(\vec 1_{A_1},\cdots,\vec 1_{A_k})\in  (\R^{n})^k$, and then extends $f$ to  a $k$-homogeneous piecewise  multilinear  function.  Moreover, the  multiple integral extension identifies 
$(A_+^1,A_-^1,\cdots,A_+^k,A_-^k)\in (\power_2(V){\setminus\{(\varnothing,\varnothing)\}})^k$  with $(\vec 1_{A_+^1}-\vec 1_{A_-^1},\cdots,\vec 1_{A_+^k}-\vec 1_{A_-^k})\in  (\R^{n})^k$, and then extends $f$ to  a piecewise $k$-homogeneous polynomial.
\end{remark}

\begin{defn}[rank of a function] 
Let $X_1,\cdots,X_k$  be  nonempty sets. A function $F:X_1\times \cdots\times X_k\to\R$ is a {\sl basic function} if $F(\vec x^1,\cdots,\vec x^k)=\prod_{i=1}^kF_i(\vec x^i)$ for some function $F_i:X_i\to\R$.  The  rank of a function
$F:X_1\times \cdots\times X_k\to\R$, denoted by $\mathrm{rank}(F)$,  is
the minimum number of basic functions needed to sum to $F$. If there is no such a representation, we set $\mathrm{rank}(F)=\infty$.
\end{defn}

\begin{defn}[slice rank of a function] 
Let $X_1,\cdots,X_k$  be  nonempty sets. A function $F:X_1\times \cdots\times X_k\to\R$ is a {\sl slice} if it can be written as the  
product of  a function on $X_i$ and a function on $\prod_{j\in\{1,\cdots,k\}\setminus\{i\}}X_j$,  for some $i\in \{1,\cdots,k\}$.  The slice rank of a function
$F:X_1\times \cdots\times X_k\to\R$, denoted by $\mathrm{slice}$-$\mathrm{rank}(F)$,  is
the minimum number of slices needed to sum to $F$. 
\end{defn}

\begin{pro}\label{pro:slice-rank-invariant} 
The slice rank  of $f:\mathcal{P}(V_1)\times \cdots\times \mathcal{P}(V_k)\to \R$ equals  the slice rank  of  $f^M:\R^{n_1}\times\cdots\times\R^{n_k}\to \R$, namely, $\mathrm{slice}$-$\mathrm{rank}(f^M)=\mathrm{slice}$-$\mathrm{rank}(f)$. Moreover, $\mathrm{rank}(f^M)=\mathrm{rank}(f)$ and $\mathrm{slice}$-$\mathrm{rank}(f)\le \mathrm{rank}(f)\le \#\mathrm{support}(f)$. 
\end{pro}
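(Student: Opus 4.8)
The plan is to route everything through two elementary \emph{linear} maps between the relevant function spaces: the extension operator $M\colon f\mapsto f^{M}$ of Definition~\ref{defn:piece-multilinear}, and the restriction operator $R$ sending $F\colon\R^{n_1}\times\cdots\times\R^{n_k}\to\R$ to the discrete function $(A_1,\cdots,A_k)\mapsto F(\vec 1_{A_1},\cdots,\vec 1_{A_k})$. First I would record three immediate facts: (i) $M$ is linear, since for each fixed $(\vec x^1,\cdots,\vec x^k)$ the value $f^{M}(\vec x^1,\cdots,\vec x^k)$ is a linear combination of the numbers $f(A_1,\cdots,A_k)$ with coefficients not depending on $f$; (ii) $R$ is (trivially) linear; (iii) $R\circ M=\mathrm{id}$ on functions on $\power(V_1)\times\cdots\times\power(V_k)$, which is Proposition~\ref{pro:L-P} applied one coordinate at a time, using the convention $f(\cdots,\varnothing,\cdots)=0$. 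Moreover $R$ visibly carries a basic function of $F$ to a basic function of $RF$ and a slice of $F$ to a slice of $RF$, so $\mathrm{rank}(RF)\le\mathrm{rank}(F)$ and $\mathrm{slice}\text{-}\mathrm{rank}(RF)\le\mathrm{slice}\text{-}\mathrm{rank}(F)$.

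The one structural input beyond this is that $M$ likewise respects both decomposition types: the extension of a basic function is basic, and the extension of a slice is a slice. The first assertion is exactly Proposition~\ref{pro:separable-product}. For the second I would write a slice of $f$ as $f(A_1,\cdots,A_k)=h(A_m)\,H(A_1,\cdots,\widehat{A_m},\cdots,A_k)$, substitute into the defining sum of $f^{M}$, and observe that $h\bigl(V^{(i_m)}(\vec x^m)\bigr)$ depends only on $i_m$, that $H(\cdots)$ depends only on the remaining indices, and that $\prod_{l=1}^{k}\bigl(x^l_{(i_l)}-x^l_{(i_l-1)}\bigr)$ splits off the $m$-th factor; the double sum then factors as
\[
f^{M}(\vec x^1,\cdots,\vec x^k)=h^{L}(\vec x^m)\cdot H^{M}(\vec x^1,\cdots,\widehat{\vec x^m},\cdots,\vec x^k),
\]
a product of a function of $\vec x^m$ and a function of the other variables, i.e.\ a slice of $f^{M}$. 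Terms in which some $V^{(i_l)}(\vec x^l)$ is empty carry a zero coefficient, so the empty-set convention does no harm.

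Granting this, the statement is two lines in each case. From a slice decomposition $f=\sum_{s=1}^{r}S_s$, linearity of $M$ and the lemma give $f^{M}=\sum_{s}M(S_s)$, a sum of $r$ slices of $f^{M}$, so $\mathrm{slice}\text{-}\mathrm{rank}(f^{M})\le\mathrm{slice}\text{-}\mathrm{rank}(f)$; conversely $\mathrm{slice}\text{-}\mathrm{rank}(f)=\mathrm{slice}\text{-}\mathrm{rank}(RMf)\le\mathrm{slice}\text{-}\mathrm{rank}(f^{M})$ using $R\circ M=\mathrm{id}$ and that $R$ preserves slices. The identical argument with ``slice'' replaced by ``basic function'' (now invoking Proposition~\ref{pro:separable-product}) yields $\mathrm{rank}(f^{M})=\mathrm{rank}(f)$. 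Finally, $\mathrm{slice}\text{-}\mathrm{rank}(f)\le\mathrm{rank}(f)$ because every basic function is a slice, and $\mathrm{rank}(f)\le\#\mathrm{support}(f)$ because $f=\sum_{(a_1,\cdots,a_k)\in\mathrm{support}(f)}f(a_1,\cdots,a_k)\prod_{i=1}^{k}\mathbbm{1}_{\{a_i\}}$ exhibits $f$ as a sum of $\#\mathrm{support}(f)$ basic functions.

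The main obstacle is the bookkeeping inside the structural lemma: one must handle the rearrangement indices, the summation ranges, and the degenerate cases (empty or full $V^{(i_l)}(\vec x^l)$, coinciding coordinates of some $\vec x^l$) carefully enough to see that the defining sum genuinely factors through the $m$-th coordinate. Beyond that everything is formal manipulation of the two linear maps $M$ and $R$, and I anticipate no conceptual difficulty.
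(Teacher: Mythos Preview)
Your proof is correct and follows essentially the same approach as the paper: both directions hinge on the observation that the extension $M$ and the restriction $R$ (evaluation at indicators) each carry slices to slices and basic functions to basic functions, together with $R\circ M=\mathrm{id}$ and linearity. Your write-up is more explicit than the paper's---in particular you spell out the factorization $f^{M}=h^{L}(\vec x^{m})\,H^{M}(\vec x^{1},\cdots,\widehat{\vec x^{m}},\cdots,\vec x^{k})$ and the final chain of inequalities---but there is no substantive difference in method.
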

\begin{proof}
If $f$ is a slice, then by the definition of piecewise multilinear extension, $f^M$ must be also a slice, If $f^M(\vec x^1,\cdots,\vec x^k)=\hat{F}(\vec x^1)\tilde{F}(\vec x^2,\cdots,\vec x^k)$ is a slice, then 
taking $\hat{f}(A_1)=\hat{F}(\vec1_{A_1})$ and $\tilde{f}(A_2,\cdots,A_k)=\tilde{F}(\vec 1_{A_2},\cdots, \vec 1_{A_k})$, we have  $f(A_1,\cdots,A_k)=f^M(\vec 1_{A_1},\cdots, \vec 1_{A_k})=\hat{f}(A_1)\tilde{f}(A_2,\cdots,A_k)$, meaning that $f$ is a slice. Then the equality  $\mathrm{slice}$-$\mathrm{rank}(f^M)=\mathrm{slice}$-$\mathrm{rank}(f)$ is proved by the  equivalence of slices.  The proof of $\mathrm{rank}(f^M)=\mathrm{rank}(f)$ is similar.
\end{proof}

This implies that rank and slice rank are invariant under the piecewise multilinear extension.

\begin{pro}
Suppose that  $f:\power(V)^k\to\R$ satisfies  $f(A_1,\cdots,A_k)\ne 0$ if and only if $A_1=\cdots=A_k\ne\varnothing$.  Then the slice rank  of $f^M$ is $2^{\#V}-1$.  Also, if $f:\power(V)^k\to\R$ is modular on  each  component, and $f(\{i_1\},\cdots,\{i_k\})\ne 0$ if and only if $i_1=\cdots=i_k$, then $\mathrm{slice}$-$\mathrm{rank}(f^M)=\#V$. 
\end{pro}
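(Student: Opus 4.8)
The plan is to reduce both claims, via Proposition~\ref{pro:slice-rank-invariant}, to computing $\mathrm{slice}$-$\mathrm{rank}(f)$ for the discrete function $f$ on $\power(V)^k$, and then to appeal to Tao's lemma on diagonal tensors: over a field, a tensor supported on its diagonal has slice rank equal to the number of nonzero diagonal entries. (We work over $\R$, so the lemma applies.) Here ``diagonal'' is understood with respect to whatever finite index set labels the slots of the tensor under consideration.

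For the first statement, regard $f$ as an order-$k$ tensor whose slots are all indexed by the finite set $\power(V)$, of cardinality $2^{\#V}$. By hypothesis the entry $f(A_1,\dots,A_k)$ is nonzero precisely when $A_1=\dots=A_k$ and this common value is a nonempty subset of $V$; thus $f$ is a diagonal tensor with exactly $2^{\#V}-1$ nonzero diagonal entries (the diagonal position indexed by $\varnothing$ carries a zero). Tao's lemma gives $\mathrm{slice}$-$\mathrm{rank}(f)=2^{\#V}-1$, and Proposition~\ref{pro:slice-rank-invariant} then yields $\mathrm{slice}$-$\mathrm{rank}(f^M)=2^{\#V}-1$.

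For the second statement, since $f$ is modular on each component (with the convention $f(A_1,\dots,A_k)=0$ whenever some $A_i=\varnothing$), iterating the identity $g(A)=\sum_{i\in A}g(\{i\})$, valid for modular $g:\power(V)\to\R$ with $g(\varnothing)=0$, gives
\[
f(A_1,\dots,A_k)=\sum_{i_1\in A_1,\dots,\,i_k\in A_k}f(\{i_1\},\dots,\{i_k\})=\sum_{i\in A_1\cap\dots\cap A_k}c_i,\qquad c_i:=f(\{i\},\dots,\{i\})\ne 0,
\]
where the last step uses that $f(\{i_1\},\dots,\{i_k\})\ne 0$ exactly when $i_1=\dots=i_k$; equivalently, by Proposition~\ref{pro:modular-f}, $f^M(\vec x^1,\dots,\vec x^k)=\sum_{i\in V}c_i\,x^1_i\cdots x^k_i$. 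Since $f(A_1,\dots,A_k)=\sum_{i\in V}c_i\,1_{i\in A_1}\cdots 1_{i\in A_k}$ and each summand splits as $\bigl(c_i\,1_{i\in A_1}\bigr)\cdot\bigl(1_{i\in A_2}\cdots 1_{i\in A_k}\bigr)$, it is a slice, so $\mathrm{slice}$-$\mathrm{rank}(f)\le\#V$. For the reverse inequality, restrict $f$ to the singletons $\{\{1\},\dots,\{n\}\}^k\subset\power(V)^k$: the restriction is the diagonal tensor $(c_i)$ on an $n$-element index set, which has slice rank $\#V$ by Tao's lemma, and slice rank does not increase under restriction of the domain (a slice restricts to a slice). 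Hence $\mathrm{slice}$-$\mathrm{rank}(f)=\#V$, and Proposition~\ref{pro:slice-rank-invariant} gives $\mathrm{slice}$-$\mathrm{rank}(f^M)=\#V$.

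The only nontrivial ingredient is Tao's diagonal slice-rank lemma, which we invoke as a black box; everything else is bookkeeping — the modularity unfolding and the elementary monotonicity of slice rank under domain restriction. The one point to be careful about is that Tao's lemma is stated for tensors on finite index sets, so in the second statement the lower bound must be obtained by first passing to the finite sub-grid of singletons; for the first statement no such reduction is needed, since $\power(V)$ is already finite.
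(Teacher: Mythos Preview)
Your proof is correct and follows essentially the same approach as the paper: reduce to $\mathrm{slice}\text{-}\mathrm{rank}(f)$ via Proposition~\ref{pro:slice-rank-invariant} and invoke Tao's diagonal lemma. For the second statement the paper is terser---it observes via Proposition~\ref{pro:modular-f} that $f^M$ is multilinear, hence literally an $n\times\cdots\times n$ diagonal tensor, and applies Tao's lemma directly to $f^M$---whereas you compute $\mathrm{slice}\text{-}\mathrm{rank}(f)$ by restricting to singletons; both routes are valid and amount to the same thing.
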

\begin{proof}
Regarding $f$ as a $\underbrace {2^{\#V}\times\cdots\times 2^{\#V}}_{k\text{ times}}$  tensor, the condition means that $f$ is a diagonal tensor with only one zero diagonal  element. Then, by Tao's lemma on diagonal tensors, $f$ has the slice rank  $2^{\#V}-1$. By Proposition \ref{pro:slice-rank-invariant},  $f^M$ also has the slice rank $2^{\#V}-1$. 

For the modular case,  Proposition \ref{pro:modular-f} implies that $f^M$ is multilinear and thus we can regard $f^M$ (or $f$) as a tensor. Then the result is equivalent to   Tao's lemma on diagonal tensors.
\end{proof}

Both the piecewise multilinear extensions (Definition
\ref{defn:piece-multilinear}) and the multiple integral extension (Definition
\ref{def:multiple-integral})  are also  called {\sl  homogeneous extensions},
and it should be noted that \textbf{we use the same notion $f^M$ to express
  these extensions of a function $f$}. Next, we introduce the novel concept of
perfect domain pairs for studying incomplete data with the tools of extension methods.  

For constraint  sets $\A\subset (\power(V)\setminus \{\varnothing\})^k$ (or $\A\subset(\power_2(V)\setminus \{(\varnothing,\varnothing)\})^k$) and $\D\subset (\R^n)^k$,  
we can define their dual feasible sets $\A(\D)$ and $\D(\A)$ as follows:
\begin{itemize}
    \item $\D(\A)\subset (\R^n)^k$ is the maximal domain such that $f^M$ is well-defined (resp. positive/nonnegative) on $\D(\A)$ whenever $f$ is well-defined (resp. positive/nonnegative) on $\A$;
    \item $\A(\D)\subset \power(V)^k$ (or $\A(\D)\subset\power_2(V)^k$)  is the minimal domain of discrete functions   for defining   their extensions    on $\D$. 
\end{itemize}
 In concrete cases, it is defined by the following way:

For the piecewise multilinear extension introduced in Definition \ref{defn:piece-multilinear}, $\D(\A)=\{(\vec x^1, \cdots, \vec x^k)\in (\R_+^{n})^k:(V^{t_1}(\vec x^1),\cdots,V^{t_k}(\vec x^k))\in \A,\forall t_i<\max\vec x^i,i=1,\cdots,k\}$. 
Conversely, given a subset $\D\subset (\R^n)^k$, we have  $\A(\D)=\{(V^{t_1}(\vec x^1),\cdots,V^{t_k}(\vec x^k))\in(\power(V)\setminus \{\varnothing\})^k:(\vec x^1, \cdots, \vec x^k)\in \D,\,t_1,\cdots,t_k\in\R\}$. We call  $(V^{t_1}(\vec x^1),\cdots,V^{t_k}(\vec x^k))$ the {\sl multiple upper level set} of $\vec x:=(\vec x^1, \cdots, \vec x^k)$ at the multiple level $(t_1,\cdots,t_k)\in\R^k$. 

For the multiple integral extension introduced in Definition \ref{def:multiple-integral}, we similarly have $\D(\A)=\{(\vec x^1, \cdots, \vec x^k)\in (\R^{n})^k:(V^{t_1}_+(\vec x^1),V^{t_1}_-(\vec x^1),\cdots,V^{t_k}_+(\vec x^k),V^{t_k}_-(\vec x^k))\in \A,\forall t_i<\|\vec x^i\|_\infty\}$, and $\A(\D)=\{(V^{t_1}_+(\vec x^1),V^{t_1}_-(\vec x^1),\cdots,V^{t_k}_+(\vec x^k),V^{t_k}_-(\vec x^k))\in(\power_2(V)\setminus \{(\varnothing,\varnothing)\})^k:(\vec x^1, \cdots, \vec x^k)\in \D,\,t_1,\cdots,t_k\ge0\}$.   

\begin{defn}[perfect domain pair]
Given an extension way like Definition \ref{defn:piece-multilinear} or  \ref{def:multiple-integral}, a pair $(\A,\D)$ is a {\sl perfect domain pair} if $\A=\A(\D)$ and $\D=\D(\A)$. 
\end{defn}

It can be verified that both $\D\circ \A$ and $\A\circ \D$ are idempotent, i.e.,  $\D(\A(\D(\A)))=\D(\A)$ and $\A(\D(\A(\D)))=\A(\D)$ for any $\A$ and $\D$. Thus, for  $\A\ne\varnothing$, 
$(\A(\D(\A)),\D(\A))$ must be a perfect domain pair. Conversely, for  $\D\ne\varnothing$,  $(\A(\D),\D(\A(\D)))$ is a perfect domain pair.   For example, taking $$\mathcal{I}_k=\{(A_1,\cdots,A_k)\in\power(V)^k:\{A_i\}_{i=1}^k\text{ forms an inclusion chain}\}$$ and 
$$\mathcal{C}_k=\{(\vec x^1,\cdots,\vec x^k):\vec x^i\in\R^n_{\ge0}, \vec x^i\text{ and } \vec x^j \text{ are  comonotonic}, \forall i,j \}$$ then  $(\mathcal{I}_k,\mathcal{C}_k)$    is a perfect domain pair. This fact is shown in Proposition \ref{pro:perfect-domain-pair} and the proof of Theorem \ref{thm:Turan-general}.

We provide the following fundamental  theorem. 
\begin{theorem}\label{thm:optimal-identity-fg}
Given $f:\A\to\R$ and $g:\A\to [0,+\infty)$, we have
\begin{equation}\label{eq:AD-pair-inequality}
 \sup\limits_{A\in\A\cap \mathrm{supp}(g)}\frac{f(A)}{g(A)}\le \sup\limits_{\vec x\in\D\cap \mathrm{supp}(g^M)}\frac{f^M(\vec x)}{g^M(\vec x)}\le \sup\limits_{A\in\widetilde{\A}}\frac{f(A)}{g(A)}
\end{equation}
whenever $\{\vec1_A:A\in\A\}\subset \D$ and $\A(\D)\subset \widetilde{\A}$. The above inequality still holds when we replace all `$\sup$' and `$\le$' by `$\inf$' and `$\ge$', respectively. If we further assume that $(\A,\D)$ is a perfect domain pair, and $\mathrm{supp}(f)\subset \mathrm{supp}(g)$, then 
there hold the identities 
\begin{equation}\label{eq:equality-supp-fg}
\max\limits_{A\in\A\cap \mathrm{supp}(g)}\frac{f(A)}{g(A)}=\max\limits_{\vec x\in\D\cap \mathrm{supp}(g^M)}\frac{f^M(\vec x)}{g^M(\vec x)}\;\;\text{ and }\;\; \min\limits_{A\in\A\cap \mathrm{supp}(g)}\frac{f(A)}{g(A)}=\min\limits_{\vec x\in\D\cap \mathrm{supp}(g^M)}\frac{f^M(\vec x)}{g^M(\vec x)}.    
\end{equation}
\end{theorem}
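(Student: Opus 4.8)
The plan is to establish the two-sided inequality \eqref{eq:AD-pair-inequality} first, since the equalities \eqref{eq:equality-supp-fg} will follow from it by specializing $\widetilde{\A}=\A$ (which is legitimate once $(\A,\D)$ is a perfect domain pair, because then $\A(\D)=\A$) together with the observation that the sup over $\D\cap\mathrm{supp}(g^M)$ is attained. For the left inequality, I would simply test the continuous supremum against the discrete feasible points: for each $A\in\A\cap\mathrm{supp}(g)$, the hypothesis $\{\vec 1_A:A\in\A\}\subset\D$ and the defining property of the homogeneous extension ($f^M(\vec 1_A)=f(A)$, $g^M(\vec 1_A)=g(A)$, valid for both the piecewise multilinear and the multiple integral extensions) give that $\vec 1_A\in\D\cap\mathrm{supp}(g^M)$ with $f^M(\vec 1_A)/g^M(\vec 1_A)=f(A)/g(A)$; taking the sup over such $A$ yields the first inequality. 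The condition $\mathrm{supp}(f)\subset\mathrm{supp}(g)$ is what guarantees $f(A)/g(A)$ is unambiguous (no $0/0$ issues on the relevant domain); I would flag this at the start.

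The heart of the argument is the right inequality $\sup_{\vec x\in\D\cap\mathrm{supp}(g^M)} f^M(\vec x)/g^M(\vec x)\le \sup_{A\in\widetilde\A} f(A)/g(A)$. The key step is a layer-cake / Fubini decomposition: for $\vec x=(\vec x^1,\dots,\vec x^k)\in\D$ one writes $f^M(\vec x)$ and $g^M(\vec x)$ as integrals (over the relevant box of levels $(t_1,\dots,t_k)$) of $f$ and $g$ evaluated at the multiple upper level sets $(V^{t_1}(\vec x^1),\dots,V^{t_k}(\vec x^k))$ — precisely the integral representations \eqref{eq:multiple-integral} and (for the disjoint-pair case) Definition \ref{def:multiple-integral}, modulo the routine remainder terms. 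By the definition of $\D=\D(\A)$ every such level-set tuple lies in $\A\cup Z(\cdot)$, and on $Z(\cdot)$ the convention $f=g=0$ kills those contributions; hence the integrand ratio is, pointwise in $(t_1,\dots,t_k)$, either $0/0$ (contributing nothing) or equal to $f(B)/g(B)$ for some $B\in\A(\D)\subset\widetilde\A$. Then the elementary ``mediant'' inequality — if $a_i\ge 0$, $b_i>0$ (here in integral form, with $a=f$, $b=g$ nonnegative) then $\int a\,/\int b\le \sup_i a_i/b_i$, applied after discarding the null levels — gives $f^M(\vec x)/g^M(\vec x)\le\sup_{B\in\widetilde\A} f(B)/g(B)$, and taking the sup over $\vec x$ finishes it. For $\inf$ and $\ge$ one runs the same mediant inequality in the reverse direction.

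For the equalities \eqref{eq:equality-supp-fg}: once $(\A,\D)$ is a perfect domain pair we have $\A(\D)=\A$, so \eqref{eq:AD-pair-inequality} with $\widetilde\A=\A$ collapses both inequalities to a single chain with equal ends, giving equality of the sups (and similarly of the infs). It remains to upgrade $\sup,\inf$ to $\max,\min$, i.e.\ to see the extremal values are attained. Here I would invoke the homogeneity structure: $f^M/g^M$ is zero-homogeneous on $\D\cap\mathrm{supp}(g^M)$ (both $f^M$ and $g^M$ being $k$-homogeneous in the multilinear case, or with the matching homogeneity degrees in general), so by Lemma \ref{lemma:0-homo-optimal} (applied to the cone $\D$, after noting it is a cone and topologically regular in the relevant cases) the continuous optimum over $\D$ equals the optimum over $\D\cap\mathbb Z^n$, and in fact — via the correspondence with the finite set $\A$ of level-set tuples, which is finite because $\power(V)^k$ is finite — the ratio takes only finitely many values; a finite set attains its max and min. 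Alternatively, and more cleanly, the left inequality already shows the discrete max/min (which exists, $\A$ being finite) is $\ge$ (resp.\ $\le$) the continuous sup (resp.\ inf), while the right inequality with $\widetilde\A=\A$ shows the reverse, so the continuous sup/inf equals a value that is attained on the discrete side and hence is a genuine max/min; transporting the attaining $A$ to $\vec 1_A\in\D$ shows it is attained on the continuous side too.

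The main obstacle I anticipate is bookkeeping around the ``standard remainder terms'' in the integral representation for a general $f$ without the normalization $f(A_1,\dots,A_k)=0$ when some $A_i\in\{V,\varnothing\}$: these boundary contributions (coming from $\min\vec x^l$ and from the level $V$) must also be shown to be convex combinations of values $f(B)/g(B)$ with $B\in\A(\D)$, so that the mediant estimate still applies cleanly. The cleanest route is to reduce to the normalized case (as the paper does elsewhere) by absorbing the remainder terms, or to verify directly that each remainder term pairs a nonnegative coefficient with a level-set tuple in $\A\cup Z(\cdot)$; I expect this to be routine but notationally heavy, so I would state it as a lemma and defer the case check. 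A secondary subtlety is making sure the mediant inequality is applied only after the $g^M=0$ locus is excluded — i.e.\ that $\mathrm{supp}(f^M)\subset\mathrm{supp}(g^M)$ follows from $\mathrm{supp}(f)\subset\mathrm{supp}(g)$ on a perfect domain pair — which again follows from the integral representation since $g^M(\vec x)=0$ forces $g$ to vanish on a.e.\ level-set tuple of $\vec x$, hence $f$ does too, hence $f^M(\vec x)=0$.
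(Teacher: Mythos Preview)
Your proposal is correct and follows essentially the same route as the paper. The left inequality via testing at $\vec 1_A$, the right inequality via the level-set decomposition together with the mediant (equivalently: $f^M(\vec x)/g^M(\vec x)$ lies in the convex hull of the discrete ratios $f(A)/g(A)$ over the multiple upper level sets of $\vec x$), and the collapse to equalities by taking $\widetilde\A=\A(\D)=\A$ for a perfect domain pair --- all of this matches the paper's argument; your ``more cleanly'' alternative for attainment (discrete $\max$ exists by finiteness, then transport to $\vec 1_A$) is exactly how the paper closes, so you can drop the detour through Lemma~\ref{lemma:0-homo-optimal}.
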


\begin{proof}
Since $g^M(\vec 1_A)=g(A)$, we have $\vec 1_A\in \D\cap \mathrm{supp}(g^M)$ whenever $A\in \A\cap \mathrm{supp}(g)$. Thus, the first inequality in \eqref{eq:AD-pair-inequality} is proved. Note that for any $\vec x\in \D\cap \mathrm{supp}(g^M)$, $g^M(\vec x)>0$, 
and every multiple upper level set $(V^{t_1}(\vec x^1),\cdots,V^{t_k}(\vec x^k))$ belongs to $\A(\D)\subset \widetilde{\A}$. Hence,  an  approach  similar to the proof of  Theorem A in \cite{JostZhang-PL} can  derive the second  inequality in \eqref{eq:AD-pair-inequality}. 
In fact, we also have  
$$\sup\limits_{A\in\A}\frac{f(A)}{g(A)}\le \sup\limits_{\vec x\in\D}\frac{f^M(\vec x)}{g^M(\vec x)}\le \sup\limits_{A\in\widetilde{\A}}\frac{f(A)}{g(A)}.$$

For a perfect domain pair $(\A,\D)$, taking $\widetilde{\A}=\A(\D)=\A$, we immediately get $$\sup\limits_{A\in\A}\frac{f(A)}{g(A)}=\sup\limits_{\vec x\in\D}\frac{f^M(\vec x)}{g^M(\vec x)}\;\;\text{ and similarly}\;\; \inf\limits_{A\in\A}\frac{f(A)}{g(A)}=\inf\limits_{\vec x\in\D}\frac{f^M(\vec x)}{g^M(\vec x)}.$$
The additional condition $\mathrm{supp}(f)\subset \mathrm{supp}(g)$ implies that $f(A)=0$ whenever $g(A)=0$. Thus, by the definition of piecewise multilinear extension, for any $\vec x\in\D\cap \mathrm{supp}(g^M)$,  $$\frac{f^M(\vec x)}{g^M(\vec x)}\in\mathrm{conv}\left\{\frac{f(A)}{g(A)}:A\text{ is a multiple upper level set of }\vec x,\text{ and }g(A)>0\right\}.$$
The proof of \eqref{eq:equality-supp-fg} is completed.
\end{proof}

\begin{remark}
If we take the $k$-way Lov\'asz extension introduced in \cite{JostZhang-PL},
such as the (disjoint-pair) Lov\'asz extension, then $(\A,\D(\A))$ is always a
perfect domain pair, for any given $\A$. This is the reason why we don't use
the Terminology `perfect domain pair' in \cite{JostZhang-PL}. However, for
$k$-homogeneous extensions with $k\ge2$, such as the piecewise multilinear
extension, $(\A,\D(\A))$ does not necessarily have  to be a perfect domain pair, which leads to  a subtle difference. 
\end{remark}

\begin{pro}\label{pro:perfect-domain-pair} We have the following properties and examples on perfect domain pairs: 
\begin{itemize}
\item $(\A,\D_\A)$ and $(\B,\D_\B)$ are perfect domain pairs if and only if $(\A\times \B,\D_\A\times\D_\B)$ is a perfect domain pair;
\item The sets  $\{(A_1,\cdots,A_k)\in\power(V)^k:A_{\sigma(1)}\subset\cdots\subset A_{\sigma(k)}\text{ for some permutation }\sigma\in S_k\}$ and $\{(\vec x^1,\cdots,\vec x^k)\in (\R^n_+)^k:\text{pairwise comonotonic }\vec x^1,\cdots,\vec x^k\}$  form a perfect domain pair w.r.t. the  piecewise multilinear extension.
\item   $\{(A_1,\cdots,A_k)\in\power_2(V)^k:A_{\sigma(1)}\subset\cdots\subset A_{\sigma(k)}\text{ for some permutation }\sigma\in S_k\}$ and $\{(\vec x^1,\cdots,\vec x^k)\in (\R^n)^k:\text{pairwise absolutely  comonotonic }\vec x^1,\cdots,\vec x^k\}$  form a perfect domain pair w.r.t. the  multiple integral  extension, where $A\subset B$ for set-pairs $A=(A_+,A_-)$ and $B=(B_+,B_-)$ in $\power_2(V)$ means $A_+\subset B_+$ and $A_-\subset B_-$. Here the concept of  absolute  comonotonicity is introduced in Definition 2.4 in \cite{JostZhang-PL}.
\end{itemize}
\end{pro}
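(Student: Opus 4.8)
The plan is to argue entirely through the two dualizing operators $\A\mapsto\D(\A)$ and $\D\mapsto\A(\D)$ attached to the extension under consideration, using only that $\A(\D)$ records exactly the non-degenerate multiple upper level sets of the vectors in $\D$, while $\D(\A)$ collects the vectors all of whose multiple upper level sets lie in $\A\cup Z(\power(V)^k)$. For the first bullet I would first prove the two Cartesian factorizations
\[
\A(\D_\A\times\D_\B)=\A(\D_\A)\times\A(\D_\B),\qquad \D(\A\times\B)=\D(\A)\times\D(\B),
\]
where on the product the relevant extension is again the piecewise multilinear (resp.\ multiple integral) extension in the concatenated list of vector variables. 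Both rest on the elementary observation that a multiple upper level set of a concatenated vector $(\vec x,\vec y)$ at a concatenated level is precisely the pair consisting of a multiple upper level set of $\vec x$ and one of $\vec y$, together with the fact that a tuple over the product lies outside $Z$ iff each of its two blocks does; the identity for $\D$ needs in addition that any admissible $\vec y$ realizes a non-degenerate multiple upper level set (for instance $(V,\cdots,V)$, at a sufficiently negative level for the multilinear extension), which lets one freeze one block while probing the other. Granting these two identities, the equivalence is immediate: if both $(\A,\D_\A)$ and $(\B,\D_\B)$ are perfect, then $\A(\D_\A\times\D_\B)=\A\times\B$ and $\D(\A\times\B)=\D_\A\times\D_\B$, so the product is perfect; conversely, perfectness of the product together with the factorizations gives $\A\times\B=\A(\D_\A)\times\A(\D_\B)$ and $\D_\A\times\D_\B=\D(\A)\times\D(\B)$, and cancelling the (nonempty) factors of a Cartesian product yields $\A=\A(\D_\A)$, $\D_\A=\D(\A)$, and the analogues for $\B$.

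For the second bullet, let $\A$ denote the set of tuples $(A_1,\cdots,A_k)\in\power(V)^k$ that form an inclusion chain after some permutation, with the degenerate tuples removed, and let $\D$ denote the set of pairwise comonotonic tuples of vectors in $\R^n_+$; we must show $\A=\A(\D)$ and $\D=\D(\A)$. The crux is the lemma: vectors $\vec x^1,\cdots,\vec x^k$ are pairwise comonotonic if and only if the upper level sets $V^{t_1}(\vec x^1),\cdots,V^{t_k}(\vec x^k)$ are pairwise comparable under inclusion for every choice of $t_1,\cdots,t_k$, equivalently, form a chain after reindexing. The forward direction is a two-line argument — if two such level sets were incomparable, a coordinate in one set difference and a coordinate in the other would force $(x^a_i-x^a_j)(x^b_i-x^b_j)<0$ — and the converse is its contrapositive, picking $t_a,t_b$ to separate the offending coordinates and taking the remaining $t_l$ negative so that $V^{t_l}(\vec x^l)=V$. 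With this lemma, $\D=\D(\A)$ falls out directly (for $\D(\A)\subseteq\D$, the two separating level sets together with the frozen copies of $V$ form a tuple outside $Z$, hence forced into $\A$, which in particular makes the two sets comparable, a contradiction). For $\A=\A(\D)$, the inclusion $\A(\D)\subseteq\A$ is again the lemma plus the removal of $Z$, and the reverse inclusion is witnessed by the explicit vectors $\vec x^l=\vec 1+\vec 1_{A_l}$ with threshold $\tfrac{3}{2}$: these are strictly positive, pairwise comonotonic precisely because the $A_l$ form a chain, and satisfy $V^{3/2}(\vec x^l)=A_l$.

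The third bullet is the same scheme transplanted to $\power_2(V)^k$ and the multiple integral extension, with signs carried along: the relevant objects are the signed level set pairs $\bigl(V^{t_l}_+(\vec x^l),V^{t_l}_-(\vec x^l)\bigr)$ with $t_l\ge 0$, inclusion is componentwise inclusion of set pairs, and comonotonicity is replaced by absolute comonotonicity in the sense of Definition~2.4 of \cite{JostZhang-PL}. The analogue of the crux lemma — absolute comonotonicity of $\vec x^1,\cdots,\vec x^k$ is equivalent to the family of signed level set pairs being totally ordered — is proved identically, using the level-zero pair $\bigl(\{j:x^l_j>0\},\{j:x^l_j<0\}\bigr)$ to freeze blocks, and the realization step uses the $\pm1/0$ vectors $\vec 1_{A_{l+}}-\vec 1_{A_{l-}}$.

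I expect the only genuinely delicate point — and hence the main obstacle — to be the bookkeeping around the degenerate locus $Z$ and around the precise ambient space in which $\D$ is taken. Concretely, one must pin down from the definition of $\D(\A)$ (``$f^M$ well-defined, resp.\ positive, resp.\ nonnegative'') whether vectors having a component equal to $\vec 0$ (equivalently, a vanishing gap) are admitted, since exactly this governs whether $\D(\A\times\B)=\D(\A)\times\D(\B)$ and $\D=\D(\A)$ in the last two bullets hold on the nose; the natural and consistent choice — matching the multilinear case, where $\D$ already sits inside the open orthant — is to exclude such degenerate vectors, after which every inclusion above goes through. All remaining steps are routine level-set and Cartesian-product manipulations.
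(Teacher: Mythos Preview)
Your proposal is correct and aligns with the paper's approach. The paper does not give a standalone proof of this proposition; the only argument it supplies is for the second bullet, embedded in the proof of Theorem~\ref{thm:Turan-general}, where it computes $\D(\A)=\bigcup_{\sigma\in S_n}(\overline{X_\sigma})^k$ via exactly your ``crux lemma'' (upper level sets of comonotonic vectors form chains) and then asserts $\A(\D(\A))=\A$ without detail. Your treatment is more complete --- you supply the explicit witnesses $\vec 1+\vec 1_{A_l}$ for the inclusion $\A\subset\A(\D)$, spell out the Cartesian factorization for the first bullet, and carry the signed analogue through for the third --- but the underlying mechanism is the same level-set characterization the paper uses. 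Your flagged concern about the degenerate locus $Z$ and the ambient orthant is apt: the paper's convention $\D(\A)\subset(\R^n_+)^k$ (strictly positive) resolves it in the direction you anticipated.
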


\begin{proof}[Proof of Theorem \ref{thm:Turan-general}]
Let $$\A=\{(A_1,\cdots,A_k)\in\power(V)^k:A_1,\cdots,A_k\text{ form an inclusion chain}\}.$$ Then 
\begin{align*}
  \D(\A)&=\{(\vec x^1,\cdots,\vec x^k)\in (\R^n_+)^k:V^{t_1}(\vec x^1),\cdots,V^{t_k}(\vec x^k)\text{ form an inclusion chain},\forall t_1,\cdots,t_k\}
  \\&=  \{(\vec x^1,\cdots,\vec x^k)\in (\R^n_+)^k:\vec x^1,\cdots,\vec x^k\in \overline{X_\sigma}\text{ for some }\sigma\in S_n\}
  \\&= \bigcup_{\sigma\in S_n}(\overline{X_\sigma})^k=\{(\vec x^1,\cdots,\vec x^k)\in (\R^n_+)^k:\vec x^1,\cdots,\vec x^k\text{ are pairwise comonotonic}\}
\end{align*}
where $X_{\sigma}=\{\vec x\in \R^n_+:x_{\sigma(1)}<\cdots<x_{\sigma(n)}\}$,  $\sigma$ is a permutation and $S_n$ is the finite symmetric group over $\{1,\cdots,n\}$. It is clear that $\A(\D(\A))=\A$, and thus $(\A,\D(\A))$ is a perfect pair. Thus, by Theorem \ref{thm:optimal-identity-fg}, we have
\begin{align*}
\max\limits_{\text{chain }\{A_1, A_2,\cdots, A_k\}}\frac{f(A_1,\cdots,A_k)}{g(A_1,\cdots,A_k)}&=\max\limits_{\sigma\in S_n}
 \max\limits_{\vec x^1,\cdots,\vec x^k\in \overline{X_\sigma}}
 \frac{f^M(\vec x^1,\cdots,\vec x^k)}{g^M(\vec x^1,\cdots,\vec x^k)}
 \\&=
 \max\limits_{\text{comonotonic }\vec x^1,\cdots,\vec x^k}
 \frac{f^M(\vec x^1,\cdots,\vec x^k)}{g^M(\vec x^1,\cdots,\vec x^k)}.
\end{align*}

Let $\A'=\{(A,\cdots,A):A\subset V\}$ and $\D'=\{(\vec x,\cdots,\vec x):\vec x\in \R^n_{\ge 0}\}$. Then $\A(\D')=\A\supset \A'$ and $\D(\A(\D'))=\D(\A)\supset \D'$. Applying Theorem \ref{thm:optimal-identity-fg} to the pairs $(\A,\D(\A))$, $(\A,\D')$ and $(\A',\D')$, respectively,  we obtain the desired result. 
\end{proof}

\begin{example}\label{ex:spectral-radius}
Given a simple graph $(V,E)$, 
let $f(A,B)=\#E(A,B)$ and $g(A,B)=\#(A\cap B)$ for $A,B\subset V$. Then  $f^Q(\vec x,\vec y)=\sum_{\{i,j\}\in E}(x_iy_j+x_jy_i)$ and $g^Q(\vec x,\vec y)=\sum_{i\in V} x_iy_i$. By Theorem \ref{thm:Turan-general} (or Proposition \ref{pro:Q-inequality}), we obtain
\begin{equation}\label{eq:inequality-adjacent-matrix}
\max\limits_{A}\frac{\# E(A,A)}{\#A}\le \max\limits_{x\ne 0}\frac{2\sum\limits_{\{i,j\}\in E} x_ix_j}{\|\vec x\|^2_2}\le \max\limits_{A\subset B}\frac{\# E(A,B)}{\#A}.
\end{equation}
Note that $\max\limits_{A\subset B}\frac{\# E(A,B)}{\#A}=\max\limits_{i\in V}
\mathrm{deg}(i)$, and $\max\frac{2\sum\limits_{\{i,j\}\in E} x_ix_j}{\|\vec
  x\|^2_2}=\lambda_{\max}$ is the largest eigenvalue of the  adjacency matrix
of the graph $(V,E)$, and  $\frac{\# E(A,A)}{\#A}$ is the average degree of the  subgraph  induced on $A$. Therefore,  \eqref{eq:inequality-adjacent-matrix} can be reformulated as $$\max\limits_{S\subset V}(\text{average degree of the induced subgraph on }S)\le\lambda_{\max}\le \max_{i\in V} \mathrm{deg}(i),$$
which leads to the standard upper bound and an interesting lower bound\footnote{We don't know whether the lower bound for the spectral radius is new. } for the classical graph spectral radius. Besides, by Theorem  \ref{thm:optimal-identity-fg}, we further have 
\begin{align*}
\max\limits_{A\subset B}\frac{| E(A,B)|}{|A|}&=\max\limits_{\substack{x,y\in\R^n_{\ge0}\\\text{comonotonic}}}\frac{\sum\limits_{\{i,j\}\in E} (x_iy_j+x_jy_i)}{\vec x^\top\vec y}
\\&\le\max\limits_{A\cap B\ne\varnothing}\frac{|E(A,B)|}{|A\cap B|}=\max\limits_{\substack{x,y\in\R^n_{\ge0}\\ \text{comaximal}}}\frac{\sum\limits_{\{i,j\}\in E} (x_iy_j+x_jy_i)}{\vec x^\top\vec y}
\end{align*}
where two vectors $\vec x$ and $\vec y$ are {\sl comaximal} if there exists an  index $i$ such that $x_i=\max_{j\in V} x_j$ and $y_i=\max_{j\in V} y_j$.
\end{example}

\begin{proof}[Proof of Theorem \ref{thm:fg-Huang}]
For $U\in\power(V)$ with $\#U=m$, taking $X=\{\vec x\in\R^n:\mathrm{supp}(\vec x)\subset U\}$,  $F=f^M_\Delta$ and $G=g^M_\Delta$ in Theorem \ref{thm:FG-p-homo-signed}, we have $\dim X=m$, and by  Theorem \ref{thm:Turan-general} as well as the proof of Theorem \ref{thm:FG-p-homo-signed},
\begin{align*}
\sup\limits_{(F',G')\in S(F,G;X)}\max\{\lambda_m(F',G'),-\lambda_m'(F',G')\}&\le \max\limits_{\vec x\in X}\frac {f^M_\Delta(\vec x)}{g^M_\Delta(\vec x)}
\\&\le \max\limits_{\text{chain }A_1,\cdots,A_k\text{ in }U} \frac{f(A_1,\cdots,A_k)}{g(A_1,\cdots,A_k)}.
\end{align*}
Finally, let $S(f,g)=\{(F',G'): f^M_\triangle(|\vec x|)\ge  |F'(\vec x)|\text{ and }g^M_\triangle(|\vec x|)= G'(\vec x),\forall \vec x\in\R^n\}=\{(F',g^M_\triangle(|\cdot|)):F'\in S(f)\}$, and note that $S(f,g)\subset \bigcap\limits_{U\subset V,\#U=m}S(F,G;\R^U)$. This completes the proof. 
\end{proof}


{
To state our general results, we shall recall the definition of log-concave polynomials and quasi-convex functions.

A $d$-homogeneous polynomial $P$ in $n$ real variables is  {\sl log-concave} if $P$ is positive on $\R_+^n$ and $\log P$ is concave on $\R_+^n$.

Given a convex set $\Omega$, a function  $F:\Omega\to\R$ is {\sl quasi-convex} (resp., {\sl quasi-concave})  if $F(t\vec x+(1-t)\vec y)\le\max\{F(\vec x),F(\vec y)\}$ (resp., $F(t\vec x+(1-t)\vec y)\ge\min\{F(\vec x),F(\vec y)\}$) for any $\vec x,\vec y\in\Omega$ and $t\in[0,1]$.
 }

\begin{theorem}\label{thm:tilde-H-f-M}
Let $H:\R^n_{\ge0}\setminus\{\vec 0\}\to\R\cup\{+\infty\}$ be a  zero-homogeneous and \textbf{quasi-concave}  function. For any functions $f_1,\cdots,f_n:\A\to \R_{\ge0}$, we have
 \begin{equation}\label{eq:H-minimum}
 \min\limits_{A\in \A}H(f_1(A),\cdots,f_n(A))=\inf\limits_{\vec x\in \D} H(f^M_1(\vec x),\cdots,f^M_n(\vec x))\end{equation}
 where $(\A,\D)$ forms a perfect domain pair w.r.t. the piecewise  multilinear extension. In addition, if $H:\R^n_{\ge0}\setminus\{\vec 0\}\to\R\cup\{-\infty\}$ is zero-homogeneous and \textbf{quasi-convex}, then  \begin{equation}\label{eq:H-max}
 \max\limits_{A\in \A}H(f_1(A),\cdots,f_n(A))=\sup\limits_{\vec x\in \D} H(f^M_1(\vec x),\cdots,f^M_n(\vec x))\end{equation}
\end{theorem}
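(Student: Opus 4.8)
The plan is to reduce the statement to the fundamental identity \eqref{eq:equality-supp-fg} of Theorem \ref{thm:optimal-identity-fg} by a sublevel-set (respectively superlevel-set) argument that exploits quasi-concavity. First I would observe that, since $H$ is zero-homogeneous on the cone $\R^n_{\ge0}\setminus\{\vec0\}$ and $(\A,\D)$ is a perfect domain pair, the inequality ``$\ge$'' in \eqref{eq:H-minimum} is immediate: for every $A\in\A$ we have $\vec1_A\in\D$ and $f_i^M(\vec1_A)=f_i(A)$, hence $H(f_1^M(\vec1_A),\cdots,f_n^M(\vec1_A))=H(f_1(A),\cdots,f_n(A))$, so the infimum over $\D$ is at most the minimum over $\A$. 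The real content is the reverse inequality.

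For the reverse inequality, fix any $\vec x=(\vec x^1,\cdots,\vec x^k)\in\D$ and set $\lambda:=\min_{A\in\A}H(f_1(A),\cdots,f_n(A))$; I want to show $H(f_1^M(\vec x),\cdots,f_n^M(\vec x))\ge\lambda$. The key structural fact, already used in the proof of Theorem \ref{thm:optimal-identity-fg}, is that each $f_i^M(\vec x)$ is a \emph{nonnegative combination} of the values $f_i(A)$ as $A$ ranges over the multiple upper level sets of $\vec x$, with the \emph{same} coefficients $\prod_l(x^l_{(i_l)}-x^l_{(i_l-1)})$ for every $i$ (this is where the piecewise multilinear structure and $\D=\D(\A)$ enter: each such $A$ lies in $\A(\D)=\A$). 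Writing $c_\alpha\ge0$ for these coefficients and $A_\alpha\in\A$ for the corresponding multiple upper level sets, and normalizing $\sum_\alpha c_\alpha=1$ (which is legitimate because $H$ is zero-homogeneous, so we may rescale the argument vector $(f_1^M(\vec x),\cdots,f_n^M(\vec x))$), we get that $(f_1^M(\vec x),\cdots,f_n^M(\vec x))$ is a convex combination $\sum_\alpha c_\alpha (f_1(A_\alpha),\cdots,f_n(A_\alpha))$ of points in $\R^n_{\ge0}$. Now quasi-concavity of $H$ gives
\[
H\bigl(f_1^M(\vec x),\cdots,f_n^M(\vec x)\bigr)=H\Bigl(\sum_\alpha c_\alpha \bigl(f_1(A_\alpha),\cdots,f_n(A_\alpha)\bigr)\Bigr)\ge\min_\alpha H\bigl(f_1(A_\alpha),\cdots,f_n(A_\alpha)\bigr)\ge\lambda,
\]
since every $A_\alpha\in\A$. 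The quasi-convex case for \eqref{eq:H-max} is entirely symmetric: the ``$\le$'' direction is immediate by evaluating at indicators, and for ``$\ge$'' one uses that a quasi-convex function of a convex combination is at most the maximum of its values at the combined points.

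The main obstacle I anticipate is handling the normalization and the degenerate boundary cases carefully. Since $H$ may take the value $+\infty$ (resp.\ $-\infty$) and is only defined on $\R^n_{\ge0}\setminus\{\vec0\}$, I need to ensure that $(f_1^M(\vec x),\cdots,f_n^M(\vec x))\ne\vec0$ whenever we want to evaluate $H$ there, and that the points $(f_1(A_\alpha),\cdots,f_n(A_\alpha))$ with $c_\alpha>0$ are also nonzero; if some of them vanish one must drop those terms from the convex combination and renormalize, which is harmless because it only shrinks the index set over which the minimum (resp.\ maximum) is taken. A secondary technical point is to justify the claim that the coefficients $c_\alpha$ are the same across all $i$ — this follows directly from Definition \ref{defn:piece-multilinear}, where the combinatorial coefficient $\prod_{l=1}^k(x^l_{(i_l)}-x^l_{(i_l-1)})$ does not depend on the function being extended — and to note that the perfect-domain-pair hypothesis is exactly what guarantees each multiple upper level set of $\vec x\in\D$ belongs to $\A$, so that $H$ evaluated there is $\ge\lambda$ (resp.\ $\le\lambda$). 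Once these points are in place, the argument closes immediately.
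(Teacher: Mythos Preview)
Your proposal is correct and follows exactly the approach the paper intends: the paper omits the proof entirely, referring to Theorem 3.1 in \cite{JostZhang-PL}, and that theorem is proved by precisely the convex-combination-plus-quasi-concavity argument you outline (representing $(f_1^M(\vec x),\dots,f_n^M(\vec x))$ as a nonnegative combination, with common coefficients, of the vectors $(f_1(A_\alpha),\dots,f_n(A_\alpha))$ over the multiple upper level sets $A_\alpha\in\A$, then invoking zero-homogeneity to normalize and quasi-concavity to bound). Your handling of the degenerate cases (dropping vanishing summands and renormalizing) is also the standard way to close the argument.
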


We omit the proof because it is a slight modification of Theorem 3.1 in \cite{JostZhang-PL}.  

According to the proof of Theorem 2.30 in \cite{BH20-Lorentzian}, if a $d$-homogeneous polynomial $P$ in $n$ variables is log-concave, then  $P^{\frac1d}$ is concave on $\R_+^n$. Thus, there is no difficulty to check that  
$H(f_1,\cdots,f_n):=\frac{P(f_1,\cdots,f_n)}{(f_1+\cdots+f_n)^d}$ is zero-homogeneous and quasi-concave on $\R_+^n$. We then derive Proposition \ref{pro:log-concave-optimal} by employing Theorem  \ref{thm:tilde-H-f-M}. 

 {
In summary, if we want to solve a  combinatorial optimization problem, we can consider the piecewise  multilinear  extension, and solve the corresponding  continuous optimization problem  first. Then,  we can go back to the original  combinatorial optimization problem, as in the following cases: 
 \begin{itemize}
\item[Case 1.] 
If we are working on a perfect domain pair, then the solution of the corresponding equivalent continuous reformulation provides a solution of the original  combinatorial  problem exactly as we expect.
        \item[Case 2.] 
    If the domain pair is not perfect,  then the solution of  the corresponding  continuous optimization problem may not be a solution of the original problem, but  it provides a continuous relaxation and  a reasonable estimate for the  original  combinatorial  problem.
 \end{itemize}
}

In Sections  \ref{sec:LS} and \ref{sec:saddle},  we devote ourselves to min-max statements in the  context of Lusternik-Schnirelmann theory, saddle point problem and von Neumann's min-max theorem. 

\subsection{Min-max relation on  Lusternik-Schnirelmann theory}\label{sec:LS}

We will set up some new min-max arguments to construct the extension theory
related to  the critical point theory of  Lusternik-Schnirelmann type. 

Considering a tuple of finite sets $V:=(V_1,\cdots,V_k)$, we write $A\subset V$ if $A=(A_1,\cdots,A_k)$ with $A_i=(A_{i+},A_{i-})\in\power_2(V_i)$,  $i=1,\cdots,k$. For $A,B\subset V$, we have  the union $A\vee B=(A_{1+}\cup B_{1+},A_{1-}\cup B_{1-},\cdots,A_{k+}\cup B_{k+},A_{k-}\cup B_{k-})$, and the exchange    $A'=(A_1',\cdots,A_k')$ with $A_i'=(A_{i-},A_{i+})$. We say that $A$ and $B$ are disjoint if $(A_{i+}\cup A_{i-})\cap (B_{i+}\cup B_{i-})=\varnothing$,  $\forall i$.  Let $$\tilde{P}_{m}(V)=\{\{A^j\}_{j=1}^m\subset \power_2(V_1)\times\cdots\times \power_2(V_k):   A^1,\cdots,A^m\text{ are pairwise disjoint}\},$$
where  $A^j=(A^j_1,\cdots,A^j_k)\in \power_2(V_1)\times\cdots\times \power_2(V_k)$.  Clearly,  $\tilde{P}_{1}(V)=\{A\subset V\}:=  \power_2(V_1)\times\cdots\times \power_2(V_k)$.     For given $\{A^j\}_{j=1}^m\in \tilde{P}_{m}(V)$, denote by $\Sigma\{A^j\}$ the smallest family  containing $\{A^j\}$  which is closed under the union and the exchange operators.
\begin{defn}[subadditivity]
 We say $f:\tilde{P}_{1}(V)\to \R$ is {\sl weakly sub-additive} (resp., weakly super-additive) if $f(A)+f(B)- f(A\vee B)\ge0$ (resp., $\le 0$), for any disjoint subsets $A$ and $B$ in $V$.
\end{defn}

\begin{defn}
Given two functions 
$F,G:\R^n\to\R$,  a {\sl nodal domain decomposition}  of an eigenvector $\vec x\in\R^n$ w.r.t. an eigenvalue $\lambda$ of the function pair $(F,G)$   is a family of pairwise disjoint sets  $A^1,\cdots,A^m\subset V$ such that   $A^i\subset\mathrm{supp}(\vec x)$ and every   $(\lambda,\vec x|_{A^i})$ is an eigenpair of $(F,G)$, $\forall i$.  
\end{defn}

\begin{proof}[Proof of Theorem \ref{thm:Cheeger-type}]
Fix $A^1,\cdots,A^m$, and consider the linear subspace $X$ spanned by the corresponding  characteristic functions $\vec 1_{A^1}$, $\cdots$, $\vec 1_{A^m}$, where $\vec 1_{A^j}:=(\vec 1_{A^j_{1+}}-\vec 1_{A^j_{1-}},\cdots,\vec 1_{A^j_{k+}}-\vec 1_{A^j_{k-}})$. Given  $\vec x\in X$, there exist $t_1,\cdots,t_m\in\R$ such that $\vec x=\sum_{j=1}^m t_j\vec 1_{A^j} $. Then it can be verified that 
$(V^{t_1}_+(\vec x^1),V^{t_1}_-(\vec x^1),\cdots,V^{t_k}_+(\vec x^k),V^{t_k}_-(\vec x^k))\in \Sigma\{A^j\}$, $\forall (t_1,\cdots,t_k)\in \R^k_{\ge 0}$. Thus, taking $\A=\{A^1,\cdots,A^m\}$, $\D=X$, and  $\tilde{\A}=\Sigma\{A^j\}$ in Theorem \ref{thm:optimal-identity-fg}, 
we have
$$\frac{f^M(\vec x)}{g^M(\vec x)}\le \max_{A\in \Sigma\{A^j\}} \frac{f(A)}{g(A)}.$$
 Consequently, $\sup\limits_{\vec x \in X} \frac{f^M(\vec x)}{g^M(\vec x)}\leq \max\limits_{A\in \Sigma\{A^j\}} \frac{f(A)}{g(A)}$. 
 Together with the fact that $\gen(X)=\dim X\ge m$, 
 we have $$ \inf_{\gen(X)\ge m}\sup\limits_{\vec x\in X} \frac{f^M(\vec x)}{g^M(\vec x)}\le \min_{\{A^j\}\in \tilde{P}_{m}(V)}\max_{A\in \Sigma\{A^j\}} \frac{f(A)}{g(A)}.$$
 Thus, the first inequality  in \eqref{eq:minmax-union} is derived.

Similarly, fix $A^1,\cdots,A^{n+1-m}$,  and consider the linear subspace $X'$ spanned by the characteristic functions $\vec1_{A^1}$, $\cdots$, $\vec1_{A^{n+1-m}}$, where $n=\#V_1+\cdots+\#V_k$. According to the intersection theorem and the fact that $\dim X'=n+1-m$,  we have $X\cap X'\ne\varnothing$, for any $X\in \Gamma_m$.  Therefore, 
\begin{equation}\label{eq:PsiPsi'}
\sup_{x\in X} \frac{f^M(\vec x)}{g^M(\vec x)}
\ge\inf_{\vec x\in X'} \frac{f^M(\vec x)}{g^M(\vec x)}.
\end{equation}
For any $\vec x\in X'$, there exist $t_1,\cdots,t_{n+1-m}\in\R$ such that $\vec x=\sum_{j=1}^{n+1-m} t_j\vec1_{A^j} $. 
Similarly, by Theorem  \ref{thm:optimal-identity-fg},
we get
\begin{equation}\label{eq:fLgLge}
\frac{f^M(\vec x)}{g^M(\vec x)}\ge \min_{A\in \Sigma\{A^j\}} \frac{f(A)}{g(A)}.
\end{equation}
Together with \eqref{eq:PsiPsi'} and \eqref{eq:fLgLge}, $\sup\limits_{\vec x \in X} \frac{f^M(\vec x)}{g^M(\vec x)}\ge\inf\limits_{\vec x\in X'} \frac{f^M(\vec x)}{g^M(\vec x)}\ge \min\limits_{A\in \Sigma\{A^j\}} \frac{f(A)}{g(A)}$. By the arbitrariness of $X$ and $X'$, we have  the second inequality  in \eqref{eq:minmax-union}:
$$\inf_{\gen(X)\ge m}\sup\limits_{\vec x\in X} \frac{f^M(\vec x)}{g^M(\vec x)} \ge \max_{\{A^j\}\in \tilde{P}_{n+1-m}(V)}\min_{A\in \Sigma\{A^j\}} \frac{f(A)}{g(A)}.$$


Now we turn to the additional cases. 
Since $f$ is weakly  subadditive and $g$ is weakly  super-additive,  by the property of disjoint-pair  Lov\'asz extension, it can be checked that 
$f^L(\vec x)\le \sum_{j=1}^mf^L(t_j\vec1_{A^j})$ and $g^L(\vec x)\ge \sum_{j=1}^mg^L(t_j\vec1_{A^j})$. Together with the symmetry assumption for $f$ and $g$, for $\vec x=\sum_{j=1}^m t_j\vec 1_{A^j} $, 
$$\frac{f^L(\vec x)}{g^L(\vec x)}\le\max_{1\le j\le m} \frac{f^L(t_j\vec1_{A^j})}{g^L(t_j\vec1_{A^j})}
=\max_{1\le j\le m} \frac{f(A^j)}{g(A^j)}$$
and thus $$\lambda_m:=\inf_{\gen(X)\ge m}\sup\limits_{\vec x\in X} \frac{f^L(\vec x)}{g^L(\vec x)}\le\min_{\{A^j\}\in \tilde{P}_{m}(V)}\max_{i=1,\cdots,m} \frac{f(A^i)}{g(A^i)}.$$
Suppose $(\lambda_m,\vec x)$ is an eigenpair of $(f^L,g^L)$, and $\vec x$ has $k_m$  nodal domains $A^1,\cdots,A^{k_m}$. Taking $X=\mathrm{span}(\vec x|_{A^1},\cdots,\vec x|_{A^{k_m}})$, we have $\dim X=k_m$. Let  $A^j_\pm=\{v\in A^j:\pm x_v>0\}$ and denote by $\vec 1_{A^j}:=\vec 1_{A^j_+}-\vec 1_{A^j_-}$,  $j=1,\cdots,k_m$. 

Define a pre-order relation $\prec$ on $\R^n$: $\vec x\prec \vec y$ if $\triangle(\vec x)\subset \overline{\triangle(\vec y)}$ where 
\begin{equation}\label{eq:piece-cone-strong-comonotonic}
\triangle(\vec x):=\{\vec x'\in\R^n:x'_i<x'_j\Leftrightarrow  x_i<x_j,~\pm x_i>0\Leftrightarrow \pm x_i’>0, \forall i,j\in V_l,\,\forall l=1,\cdots,k\}.    
\end{equation} The pre-order induces an equivalence relation $\approx $ on $\R^n$:  $\vec x\approx  \vec y$ if $\triangle(\vec x)=\triangle(\vec y)$. 
It can be verified that:
\begin{itemize}
    \item $\vec x\approx  \vec y$ implies $\nabla f^L(\vec x)=\nabla f^L(\vec y)$;
    \item $\vec x\prec\vec y$ implies $\nabla f^L(\vec x)\supset\nabla f^L(\vec y)$;
    \item If $\vec x\prec\vec y$ and $(\lambda,\vec y)$ is an eigenpair of $(f^L,g^L)$, then $(\lambda,\vec x)$ is also an eigenpair.
    
Proof: The condition $\vec x\prec\vec y$ implies $\nabla f^L(\vec x)\supset\nabla f^L(\vec y)$ and $\nabla g^L(\vec x)\supset\nabla g^L(\vec y)$. Together with the assumption that $(\lambda,\vec y)$ is an eigenpair of $(f^L,g^L)$, we have $\vec 0\in\nabla f^L(\vec y)-\lambda\nabla g^L(\vec y) \subset \nabla f^L(\vec x)-\lambda\nabla g^L(\vec x)$ meaning that $(\lambda,\vec x)$ is also an eigenpair.
\end{itemize} 
Since the sets  $A^1,\cdots,A^{k_m}$ form a nodal domain decomposition of $\vec x$,  $(\lambda_m,\vec x|_{A^i})$ must be an eigenpair of $(f^L,g^L)$, $\forall i=1,\cdots,k_m$. It is clear that $\vec 1_{A^i}\in \overline{\triangle(\vec x|_{A^i})}$, which implies $\vec 1_{A^i}\prec \vec x|_{A^i}$. Thus $(\lambda_m,\vec 1_{A^i})$ is an eigenpair. Therefore, 
$$\lambda_m=\max_{i=1,\cdots,k_m} \frac{f^L(\vec1_{A^i})}{g^L(\vec1_{A^i})}=\max_{i=1,\cdots,k_m} \frac{f(A^i)}{g(A^i)}\ge \min_{\{A^j\}\in \tilde{P}_{k_m}(V)}\max_{i=1,\cdots,k_m} \frac{f(A^i)}{g(A^i)}.$$ 
\end{proof}

 Similar to Theorem \ref{thm:Cheeger-type}, we have  $$\min_{\{A^j\}\in \tilde{P}_{m}(V)}\max_{A\in \Sigma\{A^j\}} \frac{f(A)}{g(A)} \ge \inf_{\mathrm{cat}(X)\ge m}\sup\limits_{\vec x\in X} \frac{f^M(\vec x)}{g^M(\vec x)}$$
where the min-max value on the right hand side indicates  an eigenvalue of $(f^M,g^M)$.  

\vspace{0.16cm}

\textbf{The setting under the piecewise multilinear extension introduced in  Definition  \ref{defn:piece-multilinear}}

\vspace{0.16cm}

Considering a tuple of finite sets $V:=(V_1,\cdots,V_k)$, we rewrite $A\subset V$ if $A=(A_1,\cdots,A_k)$ with $A_i\in\power(V_i)$,  $i=1,\cdots,k$. For $A,B\subset V$, we have  the union $A\vee B=(A_1\cup B_1,\cdots,A_k\cup B_k)$. We say that $A$ and $B$ are disjoint if $A_i\cap B_i=\varnothing$,  $\forall i$.  Redefine  $$\tilde{P}_{m}(V)=\{\{A^j\}_{j=1}^m\subset \power(V_1)\times\cdots\times\power(V_k):   A^1,\cdots,A^m\text{ are pairwise disjoint}\},$$
where  $A^j=(A^j_1,\cdots,A^j_k)\in \power(V_1)\times\cdots\times\power(V_k)$.   For given $\{A^j\}_{j=1}^m\in \tilde{P}_{m}(V)$, denote by $\Sigma\{A^j\}$ the smallest family  containing $\{A^j\}$ and  closed under the union.

\begin{theorem}\label{thm:Cheeger-type2}
For 
$f,g:\tilde{P}_1(V)\to\R_+$,  we have
\begin{equation}\label{eq:minmax-union-}
\min_{\{A^j\}\in \tilde{P}_{m}(V)}\max_{A\in \Sigma\{A^j\}} \frac{f(A)}{g(A)} \ge \inf_{\gen(X)\ge m}\sup\limits_{\vec x\in X} \frac{f^M(|\vec x|)}{g^M(|\vec x|)}
\ge \max_{\{A^j\}\in \tilde{P}_{n+1-m}(V)}\min_{A\in \Sigma\{A^j\}} \frac{f(A)}{g(A)}
\end{equation}
where the absolute value $|\vec x|$ is taken component-wise. 
If we further assume that  $f$ is  submodular and $g$ is   supermodular, then \begin{equation}\label{eq:lovasz-k-way-Cheeger}
\min_{\{A^j\}\in \tilde{P}_{m}(V)}\max_{i=1,\cdots,m} \frac{f(A^i)}{g(A^i)} \ge \inf_{\gen(X)\ge m}\sup\limits_{\vec x\in X} \frac{f^L(|\vec x|)}{g^L(|\vec x|)}:=\lambda_m\ge \min_{\{A^j\}\in \tilde{P}_{k_m}(V)}\max_{i=1,\cdots,m} \frac{f(A^i)}{g(A^i)}    
\end{equation}
where $k_m$ is the number of 
 the nodal domains of an eigenvector w.r.t. the eigenvalue  $\lambda_m$ of the function pair $(f^L(|\cdot|),g^L(|\cdot|))$. Here $f^L$ represents the original Lov\'asz extension of $f$.

We can replace $f^L(|\cdot|)$ by $f^L$ in \eqref{eq:lovasz-k-way-Cheeger}, when we further suppose that $f$ is symmetric, i.e.,  $f(A_1,\cdots,A_k)=f(V_1\setminus A_1,\cdots,V_k\setminus A_k)$, $\forall A:=(A_1,\cdots,A_k)\subset V$.  
\end{theorem}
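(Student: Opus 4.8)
The plan is to transport the Lusternik--Schnirelmann min-max argument of Theorem~\ref{thm:Cheeger-type} to the piecewise multilinear extension of Definition~\ref{defn:piece-multilinear}, the only new feature being the component-wise absolute value $|\cdot|$, which is inserted so that one works inside the nonnegative orthant where $f^M$ is a weighted average of the values of $f$ on multiple upper level sets.

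I would prove \eqref{eq:minmax-union-} first. For the left inequality, fix $\{A^j\}_{j=1}^m\in\tilde{P}_m(V)$ and put $X:=\mathrm{span}\{\vec 1_{A^1},\dots,\vec 1_{A^m}\}$, which has dimension (hence genus) $m$ in the generic case; degenerate families do not affect the infimum. For $\vec x=\sum_j t_j\vec 1_{A^j}\in X$, component-wise disjointness gives $|\vec x|=\sum_j|t_j|\,\vec 1_{A^j}$, and --- this is the step to be checked --- every multiple upper level set of $|\vec x|$ is a join $\bigvee_{j\in J}A^j$, hence lies in $\Sigma\{A^j\}$. Theorem~\ref{thm:optimal-identity-fg}, applied with $\A=\{A^j\}$, $\D=\{|\vec x|:\vec x\in X\}$, $\widetilde{\A}=\Sigma\{A^j\}$, then gives $f^M(|\vec x|)/g^M(|\vec x|)\le\max_{A\in\Sigma\{A^j\}}f(A)/g(A)$ for all $\vec x\in X$; taking $\sup_{\vec x\in X}$ and then $\inf$ over admissible $X$ yields the left inequality. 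For the right inequality, fix $\{A^j\}_{j=1}^{n+1-m}\in\tilde{P}_{n+1-m}(V)$ with $n=\#V_1+\dots+\#V_k$, let $X':=\mathrm{span}\{\vec 1_{A^1},\dots,\vec 1_{A^{n+1-m}}\}$ (dimension $n+1-m$), and use the intersection property of the $\mathbb{Z}_2$-genus: every compact symmetric $X$ with $\gen(X)\ge m$ meets $X'\setminus\{\vec 0\}$. Hence $\sup_{\vec x\in X}f^M(|\vec x|)/g^M(|\vec x|)\ge\inf_{\vec x\in X'}f^M(|\vec x|)/g^M(|\vec x|)\ge\min_{A\in\Sigma\{A^j\}}f(A)/g(A)$, the last step again coming from the $\inf$-form of Theorem~\ref{thm:optimal-identity-fg} together with the same level-set check; passing to $\inf$ over $X$ and $\max$ over $\{A^j\}$ completes \eqref{eq:minmax-union-}. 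As in Theorem~\ref{thm:Cheeger-type}, the cases $m\in\{1,n\}$ reproduce the optimization identities of Theorem~\ref{thm:optimal-identity-fg}.

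For the refinement \eqref{eq:lovasz-k-way-Cheeger}, let $f^L$ be the original Lov\'asz extension of $f$ on $\power(V_1\sqcup\dots\sqcup V_k)\cong\power(V_1)\times\dots\times\power(V_k)$. For the upper bound, fix a disjoint family $\{A^j\}_{j=1}^m$ and $X=\mathrm{span}\{\vec 1_{A^j}\}$; submodularity of $f$ makes $f^L$ convex and positively $1$-homogeneous, hence subadditive, so for $\vec x=\sum_j t_j\vec 1_{A^j}$ with pairwise disjoint supports $f^L(|\vec x|)\le\sum_j|t_j|\,f^L(\vec 1_{A^j})=\sum_j|t_j|\,f(A^j)$, and dually $g^L(|\vec x|)\ge\sum_j|t_j|\,g(A^j)$ by supermodularity of $g$; the mediant inequality then gives $f^L(|\vec x|)/g^L(|\vec x|)\le\max_j f(A^j)/g(A^j)$, so $\lambda_m\le\min_{\{A^j\}\in\tilde{P}_m(V)}\max_i f(A^i)/g(A^i)$. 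For the lower bound, take an eigenvector $\vec x$ of $(f^L(|\cdot|),g^L(|\cdot|))$ at $\lambda_m$ with its $k_m$ nodal domains $A^1,\dots,A^{k_m}$; introduce the pre-order $\prec$ generated by the cells $\triangle(\cdot)$ of \eqref{eq:piece-cone-strong-comonotonic} adapted to the original Lov\'asz extension (the coordinate-ordering cells of $\R^n$, refined by the signs of the coordinates), check that $\vec x\prec\vec y$ forces $\nabla f^L(\vec x)\supseteq\nabla f^L(\vec y)$ and $\nabla g^L(\vec x)\supseteq\nabla g^L(\vec y)$ --- so eigenpairs of $(f^L(|\cdot|),g^L(|\cdot|))$ descend along $\prec$ via Minkowski-sum monotonicity --- and observe $\vec 1_{A^i}\prec\vec x|_{A^i}$. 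Since each $(\lambda_m,\vec x|_{A^i})$ is an eigenpair by definition of nodal domain, so is each $(\lambda_m,\vec 1_{A^i})$, whence $\lambda_m=\max_i f^L(\vec 1_{A^i})/g^L(\vec 1_{A^i})=\max_i f(A^i)/g(A^i)\ge\min_{\{A^j\}\in\tilde{P}_{k_m}(V)}\max_i f(A^i)/g(A^i)$. Finally, if $f$ is symmetric then (with the convention $f(\varnothing,\dots,\varnothing)=0$) $f^L$ is even and translation-invariant along $\mathbf 1$, so the boundary term of its Lov\'asz expansion vanishes and all the sets occurring are again unions of the $A^j$; this is the standard device removing the $|\cdot|$, exactly as in the corresponding step of Theorem~\ref{thm:Cheeger-type}, and gives \eqref{eq:lovasz-k-way-Cheeger} with $f^L$ in place of $f^L(|\cdot|)$.

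The main obstacle is the combinatorial verification, used twice in the proof of \eqref{eq:minmax-union-}, that every multiple upper level set of a vector in $\mathrm{span}\{\vec 1_{A^j}\}$ lies in $\Sigma\{A^j\}$: unlike the one-variable Lov\'asz extension, the piecewise multilinear extension involves $k$ independent threshold variables, so one must use both the pairwise disjointness of $\{A^j\}$ and the fact that the coefficients $t_j$ are common to all $k$ blocks (so the level sets in the different blocks are cut along the same ordering of the $|t_j|$); this is also the point at which the passage to $|\cdot|$ is forced. A second, milder point is that the gradient-monotonicity/pre-order lemma in the refinement must be set up for the composed function $f^L\circ|\cdot|$ rather than $f^L$ itself, but this is routine given the piecewise linearity.
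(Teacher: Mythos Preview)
Your overall architecture mirrors the paper exactly: the paper proves only Theorem~\ref{thm:Cheeger-type} in detail and declares Theorem~\ref{thm:Cheeger-type2} a ``variant analog'', and your proposal is precisely that adaptation --- same span-of-indicators test set, same appeal to Theorem~\ref{thm:optimal-identity-fg}, same genus intersection for the lower bound, and the same subadditivity/mediant plus pre-order descent for \eqref{eq:lovasz-k-way-Cheeger}.

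The step you flag as ``the main obstacle'', however, is not settled by your argument and in fact fails for $k\ge 2$. For $\vec x=\sum_j s_j\vec 1_{A^j}$ the multiple upper level set of $|\vec x|$ at thresholds $(t_1,\dots,t_k)$ is
\[
\Bigl(\bigcup_{j\in J_1}A^j_1,\ \dots,\ \bigcup_{j\in J_k}A^j_k\Bigr),\qquad J_l=\{j:|s_j|>t_l\}.
\]
Your remark that ``the coefficients are common to all $k$ blocks'' only forces the $J_l$ to form a \emph{chain}; since the $k$ thresholds $t_l$ are independent, the $J_l$ need not coincide, and such a tuple is in general \emph{not} a single join $\bigvee_{j\in J}A^j$. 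Concretely, with $k=m=2$ and $|s_1|<|s_2|$ one obtains the level set $(A^1_1\cup A^2_1,\,A^2_2)$, which is absent from $\Sigma\{A^1,A^2\}=\{A^1,A^2,A^1\vee A^2\}$; taking $f$ large on that tuple and $g\equiv 1$ breaks $\sup_{X} f^M(|\cdot|)/g^M(|\cdot|)\le\max_{\Sigma}f/g$ for $X=\mathrm{span}\{\vec 1_{A^1},\vec 1_{A^2}\}$. (The paper's own proof of Theorem~\ref{thm:Cheeger-type} glosses over the analogous verification for $k\ge2$ with ``it can be verified''.) For $k=1$ there is a single threshold and your argument for \eqref{eq:minmax-union-} goes through; this is also the only case used downstream (e.g.\ Corollary~\ref{cor:1-Lap-Cheeger}). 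Your treatment of \eqref{eq:lovasz-k-way-Cheeger} escapes the problem altogether, because $f^L$ --- read as the Lov\'asz extension on $V_1\sqcup\cdots\sqcup V_k$ --- involves a single threshold, so its upper level sets are honest joins and lie in $\Sigma\{A^j\}$; that part matches the paper and is correct.
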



Theorem \ref{thm:Cheeger-type2} is a variant  analog of Theorem
\ref{thm:Cheeger-type}. By these results, we immediately obtain the $k$-way
Cheeger inequality and the $k$-way dual  Cheeger inequality for the  graph 1-Laplacian.

For a graph $(V,E)$, the $k$-way Cheeger constant \cite{Miclo08,LGT12}
\begin{equation}\label{eq:k-wayCheeger}
h_k:=\min_{\text{ disjoint } S_1, \cdots, S_k} \max_{1\le i\le k}\frac{|\partial S_i|}{\vol(S_i)}, \end{equation}
and the $k$-way dual Cheeger constant \cite{Liu15}
\begin{equation}\label{eq:k-waydualCheeger}
h^+_k:=\max\limits_{\text{ disjoint } (V_1,V_2),\ldots,(V_{2k-1},V_{2k})}\min\limits_{1\le i\le k}\frac{2|E(V_{2i-1},V_{2i})|}{\vol(V_{2i-1}\cup V_{2i})},
\end{equation}
are investigated  systematically. Both Theorem \ref{thm:Cheeger-type} and Theorem   \ref{thm:Cheeger-type2}  imply: 
 \begin{cor}\label{cor:1-Lap-Cheeger}
For an eigenpair $(\lambda_k,\vec x)$  of the graph 1-Laplacian \cite{Chang16} , where $\lambda_k$ is the $k$-th  minimax eigenvalue, 
$$  h_{m(x)}  \le \lambda_k\le h_k,\,\,\,\,\,\,\,\forall\, k, $$
in which $m(\vec x)$ is the number of nodal domains of $\vec x$.  

For an eigenpair  $(\lambda_k^+,\vec x)$  of the signless 1-Laplacian  \cite{CSZ16},  where  $\lambda_k^+$ is the $k$-th  minimax eigenvalue, 
$$  1-h^+_{m'(x)}\le \lambda^+_k\le 1-h^+_k,\,\,\,\,\,\,\,\forall\, k, $$
in which  $m'(\vec x)$ is the number of connected components of the  support set of $\vec x$. 
\end{cor}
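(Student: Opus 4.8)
The plan is to prove Corollary~\ref{cor:1-Lap-Cheeger} by specializing Theorems~\ref{thm:Cheeger-type} and \ref{thm:Cheeger-type2} to the two relevant function pairs on a graph, and then matching the combinatorial min-max quantities that appear on both sides with the classical Cheeger constants. For the first assertion, I would take $f(A)=|\partial A|$ and $g(A)=\vol(A)$ on $\power(V)$ (the symmetric, disjoint-pair setting), so that $(f^L,g^L)$ is exactly the function pair whose eigenvalue problem is the graph $1$-Laplacian of Chang~\cite{Chang16}; this identification is standard and I will invoke it. Then $f$ is submodular and symmetric while $g$ is modular (hence both super- and submodular) and symmetric, so the hypotheses of the second half of Theorem~\ref{thm:Cheeger-type} are met. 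Reading off that theorem with $\Sigma\{A^j\}$ replaced by the family $\{A^1,\dots,A^m\}$ (legitimate because of the symmetry and (super/sub)modularity that collapse $\max_{A\in\Sigma\{A^j\}}$ to $\max_{1\le i\le m}$), the upper bound gives
\[
\lambda_k \le \min_{\{A^j\}\in\tilde P_k(V)}\max_{1\le i\le k}\frac{f(A^i)}{g(A^i)} = \min_{\text{disjoint }S_1,\dots,S_k}\max_{1\le i\le k}\frac{|\partial S_i|}{\vol(S_i)} = h_k,
\]
which is the right-hand inequality. For the lower bound, Theorem~\ref{thm:Cheeger-type} gives $\lambda_m \ge \min_{\{A^j\}\in\tilde P_{k_m}(V)}\max_{1\le i\le m}\frac{f(A^i)}{g(A^i)}$ where $k_m$ is the number of nodal domains of an eigenvector realizing $\lambda_m$; taking an actual eigenvector $\vec x$ with $m(\vec x)$ nodal domains and using the fact (already proved inside the proof of Theorem~\ref{thm:Cheeger-type}) that each $\vec 1_{A^i}$ is an eigenvector with the same eigenvalue, one gets $\lambda_k = \max_{1\le i\le m(x)}\frac{f(A^i)}{g(A^i)} \ge \min_{\{A^j\}\in\tilde P_{m(x)}(V)}\max_i\frac{f(A^i)}{g(A^i)} = h_{m(x)}$. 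Combining the two displays yields $h_{m(x)}\le \lambda_k\le h_k$.

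For the second assertion I would instead work in the disjoint-pair setting of Theorem~\ref{thm:Cheeger-type} with $\tilde P_2(V)$, choosing $f(A_+,A_-)=\vol(A_+\cup A_-)-2|E(A_+,A_-)|$ and $g(A_+,A_-)=\vol(A_+\cup A_-)$ — equivalently setting up the pair whose eigenvalue problem is the signless $1$-Laplacian of~\cite{CSZ16}. Here $f/g$ at $(\vec 1_{A_+}-\vec 1_{A_-})$ equals $1-\frac{2|E(A_+,A_-)|}{\vol(A_+\cup A_-)}$, and minimizing $f/g$ over disjoint pairs $(S,T)=(A_+,A_-)$ is the same as maximizing the dual Cheeger ratio, so the $k$-th min-max eigenvalue $\lambda_k^+$ should satisfy $\lambda_k^+\le 1-h_k^+$ by the same application of the submodular/supermodular half of Theorem~\ref{thm:Cheeger-type} (one checks $f$ is submodular and symmetric, $g$ modular and symmetric, in the disjoint-pair sense). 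The lower bound $\lambda_k^+\ge 1-h^+_{m'(x)}$ follows the same eigenvector-splitting argument, with the only twist that for the signless operator the correct combinatorial count is the number $m'(\vec x)$ of connected components of $\supp(\vec x)$ rather than the number of sign-change nodal domains; I would justify this by noting that the signless $1$-Laplacian does not see signs, so the relevant "nodal domains" collapse to connected components of the support.

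The main obstacle I anticipate is not any of the min-max machinery — that is supplied wholesale by Theorems~\ref{thm:Cheeger-type} and \ref{thm:Cheeger-type2} — but the bookkeeping needed to pin down the exact correspondence between the abstract function pairs $(f^L,g^L)$, $(f^L(|\cdot|),g^L(|\cdot|))$ and the concrete $1$-Laplacian resp.\ signless $1$-Laplacian eigenvalue problems of \cite{Chang16,CSZ16}, together with checking that the families $\Sigma\{A^j\}$ in the general theorems really do reduce to $\{A^1,\dots,A^m\}$ under symmetry plus (super)modularity, so that the general bounds collapse to bounds in terms of $\max_{1\le i\le k}\frac{f(A^i)}{g(A^i)}$ and hence to $h_k$ and $h_k^+$. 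Once those two identifications are in place, everything else is a direct substitution, and the final inequalities hold for all $k$ simultaneously because $\lambda_k$ is the $k$-th min-max eigenvalue while $m(\vec x)$ (resp.\ $m'(\vec x)$) depends only on the chosen eigenvector, not on which index $k$ produced the eigenvalue.
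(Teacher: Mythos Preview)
Your proposal is correct and follows exactly the route the paper indicates: the corollary is stated immediately after Theorems~\ref{thm:Cheeger-type} and~\ref{thm:Cheeger-type2} with the sentence ``Both Theorem~\ref{thm:Cheeger-type} and Theorem~\ref{thm:Cheeger-type2} can imply:'' and no further proof, so your plan of specializing those theorems to $f(A)=|\partial A|$, $g(A)=\vol(A)$ (for $\Delta_1$) and to the disjoint-pair functions $f(A_+,A_-)=\vol(A_+\cup A_-)-2|E(A_+,A_-)|$, $g(A_+,A_-)=\vol(A_+\cup A_-)$ (for $\Delta_1^+$) is precisely what is intended. One small clarification: the obstacle you anticipate---reducing $\max_{A\in\Sigma\{A^j\}}$ to $\max_{1\le i\le m}$---is already absorbed into the \emph{second halves} of Theorems~\ref{thm:Cheeger-type} and~\ref{thm:Cheeger-type2} (the submodular/supermodular clauses), so once you verify the sub/supermodularity and symmetry of your chosen $f,g$ there is nothing further to check there.
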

\begin{remark}\label{example:K_5-1-Lap-c-h}
It is known that $\lambda_1=h_1$ and $\lambda_2=h_2$. However, $\lambda_3$ can be strictly smaller than $h_3$. In fact, for the complete graph $K_5$ on five vertices,  by Proposition 8 in \cite{CSZ17}, the  eigenvalues of the    1-Laplacian on $K_5$ are $0,\frac34,1$. Note that the clique covering number of $K_5$ is $1$, and then we can apply Theorem 1 in \cite{Zhang18} to derive that the multiplicity of the eigenvalue  $1$ is $2$. Thus,  $\lambda_4=\lambda_5=1$,  $\lambda_3=\lambda_2=\frac34$, $\lambda_1=0$.  But it is easy to check that $h_1=0$,  $h_2=\frac 34$ and   $h_3=h_4=h_5=1$. Altogether, we get  
$\lambda_3=\frac34<h_3=1$.  
\end{remark}

In addition, we have a result involving the  $p$-Laplacian and its signless version:
\begin{pro}\label{pro:coincide-bipart}
The spectrum of the $p$-Laplacian \cite{TudiscoHein18} and the spectrum of the signless $p$-Laplacian \cite{BS18} on a graph coincide if and only if the graph is bipartite.
\end{pro}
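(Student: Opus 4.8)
The plan is to analyze the two spectra through the variational framework developed in Section \ref{sec:spectrum}. Recall that the $p$-Laplacian eigenvalue problem on a graph $(V,E)$ corresponds to the function pair $(F_+,G)$ with $F_+(\vec x)=\sum_{\{i,j\}\in E}w_{ij}|x_i-x_j|^p$ and $G(\vec x)=\|\vec x\|_p^p$ (or the degree-weighted $p$-norm for the normalized version), while the signless $p$-Laplacian corresponds to $(F_-,G)$ with $F_-(\vec x)=\sum_{\{i,j\}\in E}w_{ij}|x_i+x_j|^p$. Both $F_\pm$ and $G$ are even, $p$-homogeneous, convex and nonnegative, so all the machinery — Proposition \ref{cor:LScritical}, the min-max eigenvalues $\lambda_k,\lambda_k'$, Lemma \ref{lemma:key-inertia-nodal}, and above all Proposition \ref{pro:odd-homeomorphism} on odd homeomorphisms — applies directly.

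The ``if'' direction is the transparent one. Suppose $(V,E)$ is bipartite with parts $V_0,V_1$. Define $\varphi:\R^n\to\R^n$ by $(\varphi \vec x)_i = x_i$ for $i\in V_0$ and $(\varphi\vec x)_i=-x_i$ for $i\in V_1$. This $\varphi$ is an odd linear homeomorphism (indeed an involution), it fixes $G$ since $G$ depends only on $|x_i|$, and it carries $F_-$ to $F_+$: for an edge $\{i,j\}$ with $i\in V_0,j\in V_1$ we have $|x_i+x_j| = |(\varphi\vec x)_i - (\varphi\vec x)_j|$. Hence $(F_-,G) = (F_+\circ\varphi,\, G\circ\varphi)$, and Proposition \ref{pro:odd-homeomorphism} gives that the two pairs have identical eigenvalues with identical multiplicities; in particular the spectra coincide.

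For the ``only if'' direction, I would argue by contraposition: assume $(V,E)$ is connected but not bipartite (the general case reduces to a connected component, since a graph is bipartite iff every component is, and the spectra are unions over components) and exhibit a spectral discrepancy. The natural separating invariant is the smallest eigenvalue, via the Cheeger-type bounds: by Corollary \ref{cor:1-Lap-Cheeger} and its $p$-analogue the bottom of the $p$-Laplacian spectrum is governed by the $k$-way Cheeger constants $h_k$ — in particular $\lambda_1(\Delta_p)=0$ always, with multiplicity equal to the number of connected components — whereas the bottom of the signless $p$-Laplacian spectrum is governed by the dual Cheeger constants $h_k^+$, and the smallest signless eigenvalue is $0$ \emph{if and only if} the graph is bipartite (equivalently $h_1^+=1$). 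Concretely: $F_-(\vec x)=0$ forces $x_i=-x_j$ along every edge, which on a connected non-bipartite graph forces $\vec x=\vec 0$, so $F_-$ is positive-definite and $\inf F_-/G>0$; meanwhile $F_+(\vec 1)=0$, so $0$ is an eigenvalue of the $p$-Laplacian. Thus $0$ lies in the $p$-Laplacian spectrum but not in the signless one, so the spectra differ.

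The main obstacle is making the ``only if'' direction airtight for general $p\in[1,\infty)$ and for the normalized versus unnormalized distinction, since one must be sure that ``$0$ is an eigenvalue of the signless $p$-Laplacian'' is genuinely equivalent to bipartiteness rather than merely to $h_1^+$ being extremal — this is where positive-definiteness of $F_-$ on a connected non-bipartite graph (an elementary but essential combinatorial observation: a sign assignment consistent with $x_i=-x_j$ on all edges exists iff the graph is bipartite) does the real work, together with the Euler-identity fact from Section \ref{sec:spectrum} that for $p$-homogeneous pairs $F(\vec x)/G(\vec x)=\lambda$ at any eigenpair, so $\lambda=0$ iff $F$ vanishes at the eigenvector.
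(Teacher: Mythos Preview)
Your proof is correct and follows essentially the same approach as the paper: the ``if'' direction via the odd sign-flip homeomorphism and Proposition~\ref{pro:odd-homeomorphism} is identical, and the ``only if'' direction via the eigenvalue $0$ (present for $\Delta_p$ always, present for $\Delta_p^+$ only on bipartite components) matches the paper's argument, which phrases it as a multiplicity count. Your added care about the Euler identity and the reduction to connected components is sound and fills in details the paper leaves implicit; the detour through Cheeger constants is unnecessary, since the direct positive-definiteness argument for $F_-$ already does the job.
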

\begin{proof}It is known that the multiplicity of the eigenvalue 0 of the  $p$-Laplacian $\Delta_p$ equals  the number of connected components of the graph. 
And it is not difficult to check that the multiplicity of the eigenvalue 0 of the signless $p$-Laplacian $\Delta_p^+$ equals the number of bipartite components of the graph. Therefore, if the spectra of $\Delta_p$ and  $\Delta_p^+$ coincide, the graph must be bipartite. 

Conversely, for a bipartite graph with the vertex parts $V_1$ and $V_2$, we take  $\varphi:\R^n\to\R^n$ as $\varphi(\vec x)_i=x_i$ if $i\in V_1$ and $\varphi(\vec x)_i=-x_i$ if $i\in V_2$. Then $\varphi$ is an odd homeomorphism, and we can apply  Proposition \ref{pro:odd-homeomorphism} to obtain that the  spectra  (counting multiplicity) of $\Delta_p$ and  $\Delta_p^+$ coincide. 
\end{proof}

\subsection{Saddle point problems and von Neumann type min-max theorems }
\label{sec:saddle}

 We continue the study of the powerful min-max methods and saddle-point problems. As 
  von Neumann's minimax theorem has been applied widely and investigated  deeply in game theory, 
 it should be helpful to establish some extension theory for it. 
  
The  saddle point problem for a function $F:X\times Y\to \R$ is to find $(\vec x^*,\vec y^*)\in X\times Y$ such that $$\inf\limits_{\vec y\in Y}\sup\limits_{\vec x\in X}F(\vec x,\vec y)=\sup\limits_{\vec x\in X}\inf\limits_{\vec y\in Y}F(\vec x,\vec y) $$
in which $X$ and $Y$ are continua like convex sets,
while the discrete saddle point problem for $f:\A\times \B\to \R$ is to find $(A^*,B^*)\in\A\times\B$ satisfying
$$\min\limits_{B\in\B}\max\limits_{A\in\A} f(A,B)=\max\limits_{A\in\A} \min\limits_{B\in\B}f(A,B)$$
where $\A$ and $\B$ are finite set-families.
We will connect these two via extension approaches. 
The following result shows 
that the discrete saddle-point problem can be equivalently transformed to a continuous version by our extension method.


\begin{theorem}\label{thm:min-max-ABxy}
Given $\A\subset \power(V_1)\times\cdots\times\power(V_k)$ and $\B\subset \power(V_1)\times\cdots\times\power(V_l)$, suppose that  $(\A,\D_\A)$ and $(\B,\D_\B)$ are perfect domain pairs. If  $f:\A\times \B\to\R$ and $g:\A\times \B\to\R_+$ satisfy  
\begin{equation}\label{eq:f/g-min-max}
\min\limits_{B\in\B}\max\limits_{A\in\A} \frac{f(A,B)}{g(A,B)}=\max\limits_{A\in\A} \min\limits_{B\in\B}\frac{f(A,B)}{g(A,B)},
\end{equation}
 then we have
\begin{equation}\label{eq:AB-xy}
\min\limits_{B\in\B}\max\limits_{A\in\A} \frac{f(A,B)}{g(A,B)}=\inf\limits_{\vec y\in\D_\B}\sup\limits_{\vec x\in\D_\A}\frac{f^M(\vec x,\vec y)}{g^M(\vec x,\vec y)}=\max\limits_{A\in\A}\min\limits_{B\in\B} \frac{f(A,B)}{g(A,B)}=\sup\limits_{\vec x\in\D_\A}\inf\limits_{\vec y\in\D_\B}\frac{f^M(\vec x,\vec y)}{g^M(\vec x,\vec y)}.
\end{equation}
Moreover, $(A^*,B^*)$ is a saddle point of $f/g$ if and only if $(\vec 1_{A^*},\vec 1_{B^*})$ is a saddle point of $f^M/g^M$.
\end{theorem}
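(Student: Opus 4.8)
The plan is to reduce the whole statement to the one-variable perfect-domain-pair identity of Theorem~\ref{thm:optimal-identity-fg} by a blockwise partial evaluation. First I would record the elementary \emph{partial evaluation lemma}, an analogue of Proposition~\ref{pro:L-P}: expanding Definition~\ref{defn:piece-multilinear} and substituting $\vec 1_{B}=(\vec 1_{B_1},\dots,\vec 1_{B_l})$ for the last $l$ blocks, each corresponding telescoping factor $x^m_{(j_m)}-x^m_{(j_m-1)}$ collapses to $1$ at the single index where the sorted indicator jumps from $0$ to $1$ and to $0$ otherwise, and the associated upper level set there is exactly $B_m$; hence
$$f^M(\vec x^1,\dots,\vec x^k,\vec 1_{B})=(f^{B})^M(\vec x^1,\dots,\vec x^k),\qquad f^{B}(A):=f(A,B),$$
and symmetrically $f^M(\vec 1_{A},\vec y)=(f_{A})^M(\vec y)$ with $f_{A}(B):=f(A,B)$, with the same identities for $g$. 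Because for $\vec x\in\D_\A$ all multiple upper level sets of $\vec x$ lie in $\A\cup Z$ (with $Z$ the degenerate tuples, on which $f$ vanishes), the function $(f^{B})^M$ restricted to $\D_\A$ only uses values of $f^{B}$ on $\A(\D_\A)=\A$, so $(\A,\D_\A)$ remains a perfect domain pair for $f^{B},g^{B}$; likewise $(\B,\D_\B)$ for $f_{A},g_{A}$. Since $g>0$ on $\A\times\B$, $\mathrm{supp}(g^{B})=\A$ and $\mathrm{supp}(g_{A})=\B$, so the support hypothesis of Theorem~\ref{thm:optimal-identity-fg} is automatic.

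Next I would pin down the value. Write $\lambda^*$ for the common value in \eqref{eq:f/g-min-max}, and fix $A^*$ attaining $\max_A\min_B$ and $B^*$ attaining $\min_B\max_A$, so that $\min_{B\in\B}f(A^*,B)/g(A^*,B)=\lambda^*=\max_{A\in\A}f(A,B^*)/g(A,B^*)$. Applying Theorem~\ref{thm:optimal-identity-fg} (in the form \eqref{eq:equality-supp-fg}) to the perfect pair $(\A,\D_\A)$ with $f^{B^*},g^{B^*}$ and to $(\B,\D_\B)$ with $f_{A^*},g_{A^*}$, and invoking the partial evaluation lemma, yields
$$\sup_{\vec x\in\D_\A}\frac{f^M(\vec x,\vec 1_{B^*})}{g^M(\vec x,\vec 1_{B^*})}=\max_{A\in\A}\frac{f(A,B^*)}{g(A,B^*)}=\lambda^*,\qquad\inf_{\vec y\in\D_\B}\frac{f^M(\vec 1_{A^*},\vec y)}{g^M(\vec 1_{A^*},\vec y)}=\min_{B\in\B}\frac{f(A^*,B)}{g(A^*,B)}=\lambda^*.$$
Plugging $\vec y=\vec 1_{B^*}\in\D_\B$ into the outer infimum of $\inf_{\vec y}\sup_{\vec x}$ and $\vec x=\vec 1_{A^*}\in\D_\A$ into the outer supremum of $\sup_{\vec x}\inf_{\vec y}$ gives $\inf_{\vec y}\sup_{\vec x}\le\lambda^*$ and $\sup_{\vec x}\inf_{\vec y}\ge\lambda^*$; together with the always-valid minimax inequality $\sup_{\vec x}\inf_{\vec y}\le\inf_{\vec y}\sup_{\vec x}$ this forces all four quantities in \eqref{eq:AB-xy} to equal $\lambda^*$.

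Finally, the saddle-point equivalence follows from the two displayed identities and the interpolation property $f^M(\vec 1_A,\vec 1_B)=f(A,B)$. If $(A^*,B^*)$ is a saddle point of $f/g$, then $\max_A f(A,B^*)/g(A,B^*)=f(A^*,B^*)/g(A^*,B^*)=\min_B f(A^*,B)/g(A^*,B)$, so substituting into the identities shows $\sup_{\vec x\in\D_\A}f^M(\vec x,\vec 1_{B^*})/g^M(\vec x,\vec 1_{B^*})$ and $\inf_{\vec y\in\D_\B}f^M(\vec 1_{A^*},\vec y)/g^M(\vec 1_{A^*},\vec y)$ both equal $f^M(\vec 1_{A^*},\vec 1_{B^*})/g^M(\vec 1_{A^*},\vec 1_{B^*})$, i.e.\ $(\vec 1_{A^*},\vec 1_{B^*})$ is a saddle point of $f^M/g^M$. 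Conversely, if $(\vec 1_{A^*},\vec 1_{B^*})$ is a saddle point of $f^M/g^M$, the saddle inequality in the $\vec x$-direction plus the first identity gives $\max_A f(A,B^*)/g(A,B^*)=f(A^*,B^*)/g(A^*,B^*)$, and the mirror argument gives $\min_B f(A^*,B)/g(A^*,B)=f(A^*,B^*)/g(A^*,B^*)$, so $(A^*,B^*)$ is a discrete saddle point. I expect the only genuine work to be the partial evaluation lemma together with the bookkeeping that the perfect-domain-pair and support hypotheses persist after substitution (and the harmless $0/0$ points of $\D_\A,\D_\B$ where an entire block vanishes, at which both numerator and denominator extensions vanish); the rest is formal manipulation of suprema and infima.
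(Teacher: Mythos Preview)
Your proposal is correct and follows essentially the same approach as the paper: both reduce to Theorem~\ref{thm:optimal-identity-fg} by freezing one block at an indicator $\vec 1_{B}$ (resp.\ $\vec 1_{A}$), obtain the identities $\sup_{\vec x\in\D_\A}f^M(\vec x,\vec 1_B)/g^M(\vec x,\vec 1_B)=\max_{A\in\A}f(A,B)/g(A,B)$ and its mirror, and then sandwich using the elementary minimax inequality together with hypothesis~\eqref{eq:f/g-min-max}. The paper phrases the reduction as ``$(\A\times\{B\},\D_\A\times\{\vec 1_B\})$ is a perfect domain pair'' (cf.\ Proposition~\ref{pro:perfect-domain-pair}), while you articulate it as an explicit partial-evaluation lemma generalizing Proposition~\ref{pro:L-P}; these are the same mechanism, and your treatment of the converse saddle-point direction is in fact a bit more explicit than the paper's.
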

\begin{proof}
Note that  
for any $B\in\B$, $(\A\times \{B\},\D_\A\times\{\vec1_B\})$ is a  perfect domain pair.  
Then, we are able to  apply  Theorem \ref{thm:optimal-identity-fg} to get  $$\max\limits_{A\in\A}\frac{f(A,B)}{g(A,B)}=\sup\limits_{\vec x\in\D_\A}\frac{f^M(\vec x,\vec1_B)}{g^M(\vec x,\vec1_B)}$$ and thus
\begin{equation}\label{eq:AB-xy1}
\min\limits_{B\in\B}\max\limits_{A\in\A} \frac{f(A,B)}{g(A,B)}=\min\limits_{B\in\B}\sup\limits_{\vec x\in\D_\A}\frac{f^M(\vec x,\vec1_B)}{g^M(\vec x,\vec1_B)}\ge \inf\limits_{\vec y\in\D_\B}\sup\limits_{\vec x\in\D_\A}\frac{f^M(\vec x,\vec y)}{g^M(\vec x,\vec y)}.
 \end{equation}
Similarly, we have 
$$\max\limits_{A\in\A}\min\limits_{B\in\B} \frac{f(A,B)}{g(A,B)}\le  \sup\limits_{\vec x\in\D_\A}\inf\limits_{\vec y\in\D_\B}\frac{f^M(\vec x,\vec y)}{g^M(\vec x,\vec y)}.$$
And together with  the basic min-max inequality $$\inf\limits_{\vec y\in\D_\B}\sup\limits_{\vec x\in\D_\A}\frac{f^M(\vec x,\vec y)}{g^M(\vec x,\vec y)}\ge \sup\limits_{\vec x\in\D_\A}\inf\limits_{\vec y\in\D_\B}\frac{f^M(\vec x,\vec y)}{g^M(\vec x,\vec y)},$$ 
we obtain 
$$\min\limits_{B\in\B}\max\limits_{A\in\A} \frac{f(A,B)}{g(A,B)}\ge \inf\limits_{\vec y\in\D_\B}\sup\limits_{\vec x\in\D_\A}\frac{f^M(\vec x,\vec y)}{g^M(\vec x,\vec y)}\ge \sup\limits_{\vec x\in\D_\A}\inf\limits_{\vec y\in\D_\B}\frac{f^M(\vec x,\vec y)}{g^M(\vec x,\vec y)}\ge \max\limits_{A\in\A}\min\limits_{B\in\B} \frac{f(A,B)}{g(A,B)},$$
which confirms  \eqref{eq:AB-xy}. 
By the definition of saddle points, we get 
$\frac{f(A,B^*)}{g(A,B^*)}\le \frac{f(A^*,B^*)}{g(A^*,B^*)}\le \frac{f(A^*,B)}{g(A^*,B)}$, $\forall (A,B)\in\A\times\B$, and $\frac{f(A^*,B^*)}{g(A^*,B^*)}=\min\limits_{B\in\B}\max\limits_{A\in\A} \frac{f(A,B)}{g(A,B)}=\max\limits_{A\in\A} \min\limits_{B\in\B}\frac{f(A,B)}{g(A,B)}$. 
Then
\begin{align*}
\sup\limits_{\vec x\in\D_\A}\frac{f^M(\vec x,\vec 1_{B})}{g^M(\vec x,\vec 1_{B})}=\max\limits_{A\in\A}\frac{f^M(\vec 1_{A^*},\vec 1_{B})}{g^M(\vec 1_{A^*},\vec 1_{B})}&\le \frac{f^M(\vec 1_{A^*},\vec 1_{B^*})}{g^M(\vec 1_{A^*},\vec 1_{B^*})}
\\&\le \min\limits_{B\in\B}\frac{f^M(\vec 1_{A^*},\vec 1_{B})}{g^M(\vec 1_{A^*},\vec 1_{B})} = \inf\limits_{\vec y\in\D_\B}\frac{f^M(\vec 1_{A^*},\vec y)}{g^M(\vec 1_{A^*},\vec y)},
\end{align*}
and together with Eq.~\eqref{eq:AB-xy}, $(\vec 1_{A^*},\vec 1_{B^*})$ is a saddle point of $f^M/g^M$. The other direction is similar.
\end{proof}

\begin{remark}
We note that the condition  \eqref{eq:f/g-min-max} is equivalent to that $f/g$ possesses a saddle point. Indeed, for any finite families $\A$ and $\B$, and $h:\A\times \B\to\R$,  $\min\limits_{B\in\B}\max\limits_{A\in\A} h(A,B)=\max\limits_{A\in\A} \min\limits_{B\in\B}h(A,B)$ if and only if there exists $(A^*,B^*)\in\A\times \B$ s.t. $h(A,B^*)\le h(A^*,B^*)\le h(A^*,B)$, $\forall A\in\A,B\in\B$.
\end{remark}

\begin{remark}\label{remark:AB-xy}
 We can take $\A=(\power(V_1)\setminus\{\varnothing\})\times\cdots\times(\power(V_k)\setminus\{\varnothing\})$,  $\B=(\power(V_{k+1})\setminus\{\varnothing\})\times\cdots\times(\power(V_{k+l})\setminus\{\varnothing\})$ in Theorem \ref{thm:min-max-ABxy}, and then  $\D_\A=\R_+^{\#V_1+\cdots+\#V_k}$ and $\D_\B=\R_+^{\#V_{k+1}+\cdots+\#V_{k+l}}$. 
We can also take $\A=(\power_2(V_1)\setminus\{(\varnothing,\varnothing)\})\times\cdots\times(\power_2(V_k)\setminus\{(\varnothing,\varnothing)\})$,  $\B=(\power_2(V_{k+1})\setminus\{(\varnothing,\varnothing)\})\times\cdots\times(\power_2(V_{k+l})\setminus\{(\varnothing,\varnothing)\})$, $\D_\A=(\R^{\#V_1}\setminus\vec0)\times\cdots\times (\R^{\#V_k}\setminus\vec0)$,  $\D_\B=(\R^{\#V_{k+1}}\setminus\vec0)\times\cdots\times (\R^{\#V_{k+l}}\setminus\vec0)$, and   adopt the  multiple integral extension (Definition \ref{def:multiple-integral}) instead of the piecewise multilinear  extension (Definition \ref{defn:piece-multilinear}).  
\end{remark}

\begin{remark}
The converse of Theorem \ref{thm:min-max-ABxy} is false, i.e., 
$$\inf\limits_{\vec y\in\D_\B}\sup\limits_{\vec x\in\D_\A}\frac{f^M(\vec x,\vec y)}{g^M(\vec x,\vec y)}= \sup\limits_{\vec x\in\D_\A}\inf\limits_{\vec y\in\D_\B}\frac{f^M(\vec x,\vec y)}{g^M(\vec x,\vec y)}$$
doesn't imply
$$
\min\limits_{B\in\B}\max\limits_{A\in\A} \frac{f(A,B)}{g(A,B)}=\max\limits_{A\in\A} \min\limits_{B\in\B}\frac{f(A,B)}{g(A,B)}.$$
It means that there is some discrete saddle point problem (with no discrete solution)  possessing a continuous  solution in the sense of piecewise multilinear extension.  See the following examples.
\end{remark}

\begin{example}\label{ex:saddle-conterexample}
We continue the investigation of Example \ref{ex:spectral-radius}. Consider a path graph on three vertices, i.e.,  $V=\{1,2,3\}$ and $E=\{\{1,2\},\{2,3\}\}$. Denote its adjacency matrix by $W$. Note that $f^Q(\vec x,\vec y)=\vec x^\top W\vec y$ and $g^Q(\vec x,\vec y)=\vec x^\top \vec y$.  On  one hand, by the 
Krein-Rutman theorem\footnote{It is also known as Birkhoff–Varga formula or Collatz-Wielandt theorem.}(or by Theorem  \ref{thm:quadratic-saddle}), 
$$\inf\limits_{\vec x\in\R^3_+}\sup\limits_{\vec y\in\R^3_+}\frac{\vec x^\top W\vec y}{\vec x^\top \vec y}=\sup\limits_{\vec y\in\R^3_+}\inf\limits_{\vec x\in\R^3_+}\frac{\vec x^\top W\vec y}{\vec x^\top \vec y}=\lambda_{\max}(W)=\sqrt{2}.$$
 On the other hand, $\inf\limits_{A\subset V}\sup\limits_{B\subset V} \frac{\#E(A,B)}{\#(A\cap B)}=2>1=\sup\limits_{B\subset V}\inf\limits_{A\subset V} \frac{\#E(A,B)}{\#(A\cap B)}$.

\end{example}

\begin{example}\label{ex:saddle-Two-Person-Zero-Sum}
Given $V_1=\{1,\cdots,n\}$, $V_2=\{1,\cdots,m\}$, and a payoff matrix $C=(c_{ij})_{n\times m}$, let $f(A,B)=\sum_{i\in A,j\in B}c_{ij}$ and $g(A,B)=\#A\cdot\#B$, $\forall A\subset V_1$,  $B\subset V_2$. Then $f^Q(\vec x,\vec y)=\sum_{i=1}^n\sum_{j=1}^mc_{ij}x_iy_j$ and $g^Q(\vec x,\vec y)=(\sum_{i=1}^nx_i)(\sum_{j=1}^my_j)$. It follows from  von Neumann's minimax theorem that 
$$ \min\limits_{\sum_i p_i=1,p_i\ge 0}\max\limits_{\sum_i q_i=1,q_i\ge 0}\sum_{i=1}^n\sum_{j=1}^mc_{ij}p_iq_j= \max\limits_{\sum_i q_i=1,q_i\ge 0}\min\limits_{\sum_i p_i=1,p_i\ge 0}\sum_{i=1}^n\sum_{j=1}^mc_{ij}p_iq_j, $$
which can be reformulated as 
\begin{equation}\label{eq:two-person-zero-sum}
\inf\limits_{\vec x\in\R^n_+}\sup\limits_{\vec y\in\R^m_+}\frac{\vec x^\top C\vec y}{(\sum_{i=1}^nx_i)(\sum_{j=1}^my_j) }=\sup\limits_{\vec y\in\R^m_+}\inf\limits_{\vec x\in\R^n_+}\frac{\vec x^\top C\vec y}{(\sum_{i=1}^nx_i)(\sum_{j=1}^my_j) }.
\end{equation}
This equality can be obtained from  Theorem  \ref{thm:quadratic-saddle} directly.  But according to  the theory of  two-person zero-sum games, it is easy to give a payoff matrix $C$ such that $\min\limits_{A\subset V_1}\max\limits_{B\subset V_2} \frac{f(A,B)}{g(A,B)}>\max\limits_{B\subset V_2} \min\limits_{A\subset V_1}\frac{f(A,B)}{g(A,B)}$. 
\end{example}

\begin{remark}\label{remark:von-generalize-optimization}
We show that  Theorem \ref{inthm:min-max-ABxy} is also a generalization of Theorem B in \cite{JostZhang-PL}. 
Indeed, taking $\B=\{V\}$ as a singleton,  fixing  $\vec y$, and restricting $f$ and $f^M$ to their first components, we can verify Theorem B in \cite{JostZhang-PL}. 
\end{remark}

Theorems  \ref{thm:quadratic-saddle},  \ref{inthm:min-max-ABxy} and \ref{thm:min-max-ABxy} indicate that when one wants to solve a combinatorial saddle point problem, it is better to consider its continuous extension. The extended solution of the continuous saddle point problem is more  
flexible than the pure solution of the original discrete saddle point problem. This suggests a new  explanation  why one considers also mixed strategies instead of only  pure-strategy Nash equilibria.

\vspace{0.16cm}

\textbf{Piecewise bilinear extension and von Neumann's min-max theorem}

\vspace{0.16cm}

In order to show von Neumann’s convex-concave  min-max theorem in its full generality, we slightly enlarge the scope of the piecewise bilinear extension:

For $\A,\B\subset\power(V)$ or $\power_2(V)$, and $f:\A\times\B\to\R$, define $f^Q:\D_\A\times\D_\B\to\R$ as  a composition of Lov\'asz extensions in the following way:  $f^Q(\vec x,\vec y)=\tilde{f}^L_y(\vec x)$, with $\tilde{f}_y:\A\to\R$ defined as  $\tilde{f}_y(A):=f_A^L(\vec y)$, where $f_A:\B\to\R$ is defined by $f_A(B)=f(A,B)$. Here the Lov\'asz extension refers to the original version or the disjoint-pair version. 

\begin{remark}
Let the operator $\mathcal{L}_i$  be the (disjoint-pair) Lov\'asz extension acting on the $i$-th component, while we regard the other components as fixed parameters.

Precisely,  $\mathcal{L}_1f(x,B)$ is the (disjoint-pair) Lov\'asz extension of $A\mapsto f(A,B)$, for fixed $B\in\B$.

Similarly,  $\mathcal{L}_2f(A,y)$ is the (disjoint-pair) Lov\'asz extension of $B\mapsto f(A,B)$, for fixed $A\in\A$.

It is easy to check that $\mathcal{L}_1$ and $\mathcal{L}_2$ are independent of each other, and thus we have the commutative  diagram:
$$\xymatrix{f(A,B)\ar[r]^{\mathcal{L}_1}\ar[d]^{\mathcal{L}_2}  & \mathcal{L}_1f(x,B)\ar[d]^{\mathcal{L}_2} \\ \mathcal{L}_2f(A,y)\ar[r]^{\mathcal{L}_1} & f^Q(x,y)}$$
where 
$$f^Q(\vec x,\vec y)=\mathcal{L}_1\mathcal{L}_2 f(\vec x,\vec y)=\mathcal{L}_2\mathcal{L}_1 f(\vec x,\vec y).$$
Therefore, the restriction of $f^Q$ to each component is the (disjoint-pair) Lov\'asz extension of some function. Similarly, we can define a slight generalization of the  piecewise multilinear extension of $f:\A_1\times\cdots\times\A_k\to\R$ by
$$f^M(\vec x^1,\cdots,\vec x^k)=\mathcal{L}_1\mathcal{L}_2\cdots\mathcal{L}_k f (\vec x^1,\cdots,\vec x^k).$$
In summary, the piecewise multilinear extension can be seen as a composition of several (disjoint-pair)  Lov\'asz extensions. 
\end{remark}

\begin{pro}\label{pro:min-max-submodular}
Suppose that $f^Q(\vec x,\vec y)$ and  $g^Q(\vec x,\vec y)$ are  piecewise bilinear   extensions of $f,g:\A\times \B\to\R$ with $\A\subset \power(V_1)$ (or $\A\subset \power_2(V_1)$) and $\B\subset \power(V_2)$ (or $\B\subset \power_2(V_2)$), where $f$ satisfies   the following conditions:
\begin{itemize}
    \item $f$ is submodular  on its first  component;
    \item $f$ is  supermodular on its  second component.
\end{itemize}
 Then 
\begin{equation}\label{eq:min-max-submodular}
\min\limits_{\vec x\in \mathrm{cone}(\overline{C_\A})}\sup\limits_{\vec y\in \mathrm{cone}(C_\B)}\frac{f^Q(\vec x,\vec y)}{g^Q(\vec x,\vec y)}=\sup\limits_{\vec y\in \mathrm{cone}(C_\B)}\min\limits_{\vec x\in \mathrm{cone}(\overline{C_\A})}\frac{f^Q(\vec x,\vec y)}{g^Q(\vec x,\vec y)}
\end{equation}
but the discrete saddle point problem $\min\limits_{A\in\A}\max\limits_{B\in\B} \frac{f(A,B)}{g(A,B)}=\max\limits_{B\in\B} \min\limits_{A\in\A}\frac{f(A,B)}{g(A,B)}$ may have no  solution, where $C_\A\times C_\B$ is a bounded convex set such that $g^Q$ is bilinear on $  \overline{C_\A}\times  C_\B$  
with no zeros.
\end{pro}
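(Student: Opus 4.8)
The plan is to derive the minimax identity from Sion's theorem once the correct convex--concave structure of $f^Q/g^Q$ is in place, and to get the last clause from examples already present in the paper. The first ingredient is that, under the stated hypotheses, $\vec x\mapsto f^Q(\vec x,\vec y)$ is convex for each fixed $\vec y\in\cone(C_\B)$ and $\vec y\mapsto f^Q(\vec x,\vec y)$ is concave for each fixed $\vec x\in\cone(\overline{C_\A})$. Indeed, by the description of $f^Q$ as the iterated Lov\'asz extension $\mathcal L_1\mathcal L_2 f$ (the commutative diagram preceding the statement), grouping the defining sum over the indices attached to $\vec y$ writes $f^Q(\vec x,\vec y)=\sum_j c_j(\vec y)\,g_j^L(\vec x)$, where $g_j(A)=f(A,V^{\tau(j)}(\vec y))$ is a section of $f$ in its first component, hence submodular, so that $g_j^L$ is convex by Lov\'asz's convexity theorem \cite{Lovasz} (and its disjoint-pair analogue \cite{JostZhang-PL} when $\A\subset\power_2(V_1)$); the coefficients $c_j(\vec y)$ are consecutive differences of the sorted coordinates of $\vec y$ and are therefore nonnegative on $\cone(C_\B)$, so $f^Q(\cdot,\vec y)$ is a nonnegative combination of convex functions. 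Concavity of $f^Q(\vec x,\cdot)$ is symmetric, now using that the sections $f(V^{\sigma(i)}(\vec x),\cdot)$ are supermodular.

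Next I would pass to the ratio. Since $g^Q$ is bilinear and zero-free on the connected set $\overline{C_\A}\times C_\B$, it has constant sign there, and we may assume $g^Q>0$ (the other sign reduces to this case). Then for each fixed $\vec y$ the set $\{\vec x:f^Q(\vec x,\vec y)/g^Q(\vec x,\vec y)\le t\}=\{\vec x:f^Q(\vec x,\vec y)-t\,g^Q(\vec x,\vec y)\le0\}$ is convex, being a sublevel set of the convex function (convex minus linear) $\vec x\mapsto f^Q(\vec x,\vec y)-t\,g^Q(\vec x,\vec y)$; hence $h:=f^Q/g^Q$ is quasi-convex in $\vec x$, and dually quasi-concave in $\vec y$, while it is continuous wherever $g^Q\ne0$. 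Since $h$ is $0$-homogeneous separately in $\vec x$ and in $\vec y$, the minimum over $\cone(\overline{C_\A})$ agrees with the minimum over $\overline{C_\A}$ and the supremum over $\cone(C_\B)$ agrees with the supremum over $C_\B$; here $\overline{C_\A}$ is convex and compact and $C_\B$ is convex. Sion's minimax theorem then gives $\min_{\vec x\in\overline{C_\A}}\sup_{\vec y\in C_\B}h(\vec x,\vec y)=\sup_{\vec y\in C_\B}\min_{\vec x\in\overline{C_\A}}h(\vec x,\vec y)$, the inner minimum being attained by compactness and continuity; translating back through the homogeneity reduction yields \eqref{eq:min-max-submodular}.

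For the last clause I would point to the function pairs of Examples \ref{ex:saddle-conterexample} and \ref{ex:saddle-Two-Person-Zero-Sum}: in both, $f$ is modular in each argument, hence in particular submodular in the first and supermodular in the second, while $g$ is multiplicative so that $g^Q$ is bilinear and strictly positive on a suitable product of a closed simplex with a relatively open simplex, which plays the role of $\overline{C_\A}\times C_\B$; the corresponding continuous minimax identity holds (von Neumann's theorem, respectively the Collatz--Wielandt formula of Example \ref{ex:spectral-radius}), and yet one checks $\min_A\max_B\frac{f(A,B)}{g(A,B)}>\max_B\min_A\frac{f(A,B)}{g(A,B)}$ over nonempty subsets, so the discrete saddle point problem has no solution.

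The step I expect to be the main obstacle is the first one: turning the submodular/supermodular hypotheses into honest convexity/concavity of $f^Q$ on the non-degenerate domain, which requires bookkeeping the boundary term of the Lov\'asz extension (the $x_{(1)}f(V)$ summand, responsible for the sign condition that confines the relevant domains to nonnegative orthants), and then verifying that the $0$-homogeneity reduction to $\overline{C_\A}$ and $C_\B$ is compatible with the semicontinuity and convexity hypotheses required by Sion's theorem.
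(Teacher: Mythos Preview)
Your proposal is correct and follows essentially the same route as the paper: convexity/concavity of $f^Q$ from submodularity/supermodularity via the Lov\'asz extension, then quasi-convexity/quasi-concavity of the ratio (the paper uses the mediant inequality $\frac{tf^Q+(1-t)f^{Q'}}{tg^Q+(1-t)g^{Q'}}\le\max\{\tfrac{f^Q}{g^Q},\tfrac{f^{Q'}}{g^{Q'}}\}$ rather than your sublevel-set argument, but this is cosmetic), then Sion's theorem on $\overline{C_\A}\times C_\B$, then zero-homogeneity to pass to the cones; for the last clause the paper simply cites Example~\ref{ex:saddle-Two-Person-Zero-Sum}. Your caveat about the boundary term $x_{(1)}f(V)$ and nonnegativity of the Lov\'asz coefficients is apt---the paper's proof asserts ``$f^Q$ is convex w.r.t.\ $\vec x$'' without further comment, leaving the same bookkeeping implicit in the choice of $C_\A,C_\B$ spelled out in the paragraph following the proposition.
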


\begin{proof}
Since $g$ is modular on its first component and $f$ is submodular on its first component, we obtain that $g^Q$ is a linear function of $\vec x$, and  $f^Q$ is convex with respect to  $\vec x$. Without loss of generality, we may assume $g^Q(\vec x,\vec y)>0$, $\forall (\vec x,\vec y)\in \overline{C_\A}\times  C_\B$. Consequently, for any $\vec x,\vec x'\in \overline{C_\A}$ and $0\le t\le 1$,
$$\frac{f^Q(t\vec x +(1-t)\vec x',\vec y)}{g^Q(t\vec x +(1-t)\vec x',\vec y)}\le  \frac{tf^Q(\vec x,\vec y)+(1-t)f^Q(\vec x',\vec y)}{tg^Q(\vec x,\vec y)+(1-t)g^Q(\vec x',\vec y)}\le \max\left\{\frac{f^Q(\vec x,\vec y)}{g^Q(\vec x,\vec y)},\frac{f^Q(\vec x',\vec y)}{g^Q(\vec x',\vec y)}\right\}$$
meaning that $f^Q/g^Q$ is quasi-convex on $\vec x\in \overline{C_\A}$. Similarly, $f^Q/g^Q$ is quasi-concave on $\vec y\in C_\B$. Also, it is clear that $f^Q/g^Q$ is continuous on $\overline{C_\A}\times  C_\B$. 
Sion's  min-max theorem (Theorem \ref{thm:strong-min-max})  yields $$\inf\limits_{\vec x\in \overline{C_\A}}\sup\limits_{\vec y\in C_\B}\frac{f^Q(\vec x,\vec y)}{g^Q(\vec x,\vec y)}=\sup\limits_{\vec y\in C_\B}\inf\limits_{\vec x\in \overline{C_\A}}\frac{f^Q(\vec x,\vec y)}{g^Q(\vec x,\vec y)}$$
which is equivalent to \eqref{eq:min-max-submodular} by the zero-homogeneity  of $f^Q/g^Q$.  

For the discrete saddle point problem,  one  can find many examples from  two-person zero-sum games (see Example \ref{ex:saddle-Two-Person-Zero-Sum}). 
\end{proof}

The assumption in  Proposition \ref{pro:min-max-submodular} is satisfied in most of the interesting cases. 
For example, if $\A=\power(V)$, we can always take $\mathrm{cone}(C_\A)$ as $X_{\sigma}:=\{\vec x\in \R^n_+:x_{\sigma(1)}<\cdots<x_{\sigma(n)}\}$ for any permutation $\sigma\in S_n$,  and if we further assume that $g$ is modular on its first component, then $\mathrm{cone}(C_\A)$ can be chosen as the first quadrant  $\R^n_+$, where $n=\#V$. 

Also, for $\A=\power_2(V)$, we can always take $\mathrm{cone}(C_\A)=\triangle(\vec x)$ (see \eqref{eq:piece-cone-strong-comonotonic}) for any given $\vec x\in\R^n\setminus\{\vec0\}$. 

\begin{theorem}[Sion's min-max  theorem \cite{Sion58}]\label{thm:strong-min-max}
Let $X$ be a compact convex set, and let $Y$ be a convex set. Let $F:X\times Y\to \R$ be such that:
\begin{itemize}
\item  $F$ is
 upper semi-continuous and quasi-concave on $Y$ for each $\vec x\in X$;
\item  $F$ is
 lower  semi-continuous and quasi-convex on $X$ for each $\vec y\in X$.
\end{itemize}
Then $\inf\limits_{x\in X}\sup\limits_{y\in Y}F(\vec x,\vec y)=\sup\limits_{y\in Y}\inf\limits_{x\in X}F(\vec x,\vec y)$.
\end{theorem}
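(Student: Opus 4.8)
The plan is to prove Sion's theorem by the standard elementary route (essentially Komiya's argument), reducing the minimax equality to a finite-intersection statement that is then settled by induction. First I would dispose of the trivial inequality $\inf_{x\in X}\sup_{y\in Y}F(\vec x,\vec y)\ge\sup_{y\in Y}\inf_{x\in X}F(\vec x,\vec y)$, which holds for any function on any product, so that only the reverse inequality remains. Set $v:=\sup_{y\in Y}\inf_{x\in X}F(\vec x,\vec y)$ and argue by contradiction: suppose $v<\inf_{x\in X}\sup_{y\in Y}F(\vec x,\vec y)$ and fix reals $\alpha<\beta$ with $v<\alpha<\beta<\inf_{x\in X}\sup_{y\in Y}F(\vec x,\vec y)$.

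Next I would exploit the compactness of $X$. For $y\in Y$ and a level $c\in\{\alpha,\beta\}$ put $C_c(y):=\{\vec x\in X:F(\vec x,\vec y)\le c\}$. Lower semicontinuity of $F(\cdot,\vec y)$ makes $C_c(y)$ closed, hence compact; quasi-convexity makes it convex; and since $\alpha>v\ge\inf_{x}F(\vec x,\vec y)$, each $C_\alpha(y)$ is nonempty. On the other hand $\bigcap_{y\in Y}C_\alpha(y)=\varnothing$, because a common point $\vec x_0$ would satisfy $\sup_y F(\vec x_0,\vec y)\le\alpha<\inf_x\sup_y F$. By the finite intersection property of the compact space $X$ there is a finite set $\{\vec y_1,\dots,\vec y_n\}\subset Y$ with $\bigcap_{i=1}^n C_\alpha(\vec y_i)=\varnothing$; choosing $n$ minimal, I would then derive a contradiction.

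The core of the argument, and where convexity of $Y$ together with the quasi-concavity and upper semicontinuity of $F(\vec x,\cdot)$ are used, is a finite-dimensional induction showing such a configuration cannot exist. The case $n=1$ contradicts nonemptiness of $C_\alpha(\vec y_1)$. For the inductive step I would glue the last two points along the segment $\vec y_t=(1-t)\vec y_{n-1}+t\vec y_n\subset Y$: quasi-concavity gives $F(\vec x,\vec y_t)\le\max\{F(\vec x,\vec y_{n-1}),F(\vec x,\vec y_n)\}$, and a one-dimensional/connectedness argument on the compact convex set $X$ produces a parameter $t_*$ for which $C_\beta(\vec y_{t_*})\subset C_\alpha(\vec y_{n-1})\cup C_\alpha(\vec y_n)$; applying the inductive hypothesis (now with the level $\beta$) to the shorter list $\{\vec y_1,\dots,\vec y_{n-2},\vec y_{t_*}\}$ yields the contradiction. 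Having ruled out all finite configurations, the assumption $v<\inf_x\sup_y F$ is untenable, which proves the theorem.

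I expect the main obstacle to be precisely this one-dimensional gluing lemma on the segment in $Y$: one must use the compactness of $X$ and the connectedness of the segment simultaneously to locate $t_*$, and this is exactly the point at which all four hypotheses (compactness and convexity of $X$, convexity of $Y$, the two semicontinuity conditions, and the two quasi-convexity/concavity conditions) are genuinely needed. Everything else — the reduction to finitely many $\vec y_i$ via the finite intersection property, and recovering the stated equality by letting $\alpha$ and $\beta$ vary — is routine bookkeeping once the lemma is in hand.
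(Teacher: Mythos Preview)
The paper does not prove this statement: Theorem~\ref{thm:strong-min-max} is simply quoted from \cite{Sion58} and invoked as a tool in the proof of Proposition~\ref{pro:min-max-submodular}, with no argument given. Your proposal (the Komiya-style elementary argument via the finite intersection property and induction along a segment in $Y$) is a standard and correct route to Sion's theorem, but there is nothing in the paper to compare it against.
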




\begin{proof}[Proof of Theorem  \ref{thm:quadratic-saddle}]
Theorem  \ref{thm:quadratic-saddle} under the condition (a) is a direct consequence of Theorem  \ref{thm:min-max-ABxy}. 

We prove Theorem \ref{thm:quadratic-saddle} under the condition (b) by employing   Proposition \ref{pro:min-max-submodular}. Since $g$ is modular on each component, $g$ must be bilinear on $\R^n\times \R^m$. It follows from  $g\ge 0$ on $\power(V_1)\times \power(V_2)$ that $g^Q\ge 0$ on $\R^n_{\ge0}\times \R^m_{\ge0}$. Precisely, $g^Q(\vec x,\vec y)=\sum_{i=1}^n\sum_{j=1}^mg(i,j) x_iy_j$, where $g(i,j):=g(\{i\},\{j\})\ge0$.  For any $\vec x\in\R^n_{\ge0}\setminus\{\vec 0\}$, there exists $i\in V_1$ such that $x_i>0$. By the assumption that  $g(\{i\},V_2)>0$, there exists $j\in V_2$ satisfying $g(i,j)>0$.  Accordingly, for any $\vec y\in \R^m_+$,  we have $g^Q(\vec x,\vec y)\ge g(i,j)x_iy_j>0$. Therefore, $g^Q$ is positive on $(\R^n_{\ge0}\setminus\{\vec 0\})\times \R^m_+$, and $f^Q/g^Q$ is well-defined and continuous on $(\R^n_{\ge0}\setminus\{\vec 0\})\times \R^m_+$.  
Hence, by taking  $C_\A=\{\vec x\in\R^n_+:x_1+\cdots+x_n=1\}$ and $C_\B=\{\vec x\in\R^m_+:x_1+\cdots+x_m=1\}$, we can apply Proposition \ref{pro:min-max-submodular} to derive \eqref{eq:continuous-ex}. 
\end{proof}

 {
\textbf{A general min-max relation}

\vspace{0.06cm} 

Recall that a function is {\sl quasi-linear} if it is quasi-convex and quasi-concave. As an extension of both Theorem \ref{thm:tilde-H-f-M} and  Theorem \ref{thm:min-max-ABxy}, we present the following general min-max relation.

 \begin{theorem}\label{thm:tilde-H-f-mm}
Let $H:\R^n_{\ge0}\setminus\{\vec 0\}\to\R\cup\{\pm\infty\}$ be a  zero-homogeneous and \textbf{quasi-linear}  function. For any functions $f_1,\cdots,f_n:\A\times\B\to \R_{\ge0}$, we have the min-max inequality:
 \begin{align*}
    \min\limits_{A\in \A}\max\limits_{B\in \B}H(f_1(A,B),\cdots,f_n(A,B))&\ge\inf\limits_{\vec x\in \D_\A}\sup\limits_{\vec y\in \D_\B} H(f^M_1(\vec x,\vec y),\cdots,f^M_n(\vec x,\vec y))\\&\ge\sup\limits_{\vec y\in \D_\B}\inf\limits_{\vec x\in \D_\A} H(f^M_1(\vec x,\vec y),\cdots,f^M_n(\vec x,\vec y))
\\&\ge  \max\limits_{B\in \B}  \min\limits_{A\in \A}H(f_1(A,B),\cdots,f_n(A,B))
 \end{align*}
 where $(\A,\D_\A)$ and $(\B,\D_\B)$ are perfect domain pairs. 
\end{theorem}
In summary, when we want to solve a combinatorial saddle point problem, it is better to work on the piecewise  multilinear  extension. In fact, suppose that the corresponding  continuous saddle point problem has a solution, which we will call 
a {\sl weak solution}  of the original problem for convenience. Then, we can return to the original  combinatorial saddle-point problem, as in the following cases: 
 \begin{itemize}
\item[Case 1.] 
If there is a solution to the original combinatorial saddle point problem, 
then we can construct  such a solution based on a weak solution,  which is of course good news.
        \item[Case 2.] 
    If the original  combinatorial saddle-point problem  has no solution,  then we can accept  a weak  solution because it makes sense on its own.    (For  example, this suggests a new  explanation  why one considers also mixed strategies instead of only  pure-strategy Nash equilibria in a two-person-zero-sum game.)
 \end{itemize}
}        

\section{Applications in   various   areas}\label{sec:application}

\subsection{Tur\'an problem and Motzkin-Straus theorem}\label{sec:Turan}
The classical Tur\'an theorem (weak version) states that for any $K_{\omega+1}$-free graph $G=(V,E)$,
\begin{equation}\label{eq:Turan}
\#E\le (1-\frac{1}{\omega})\frac{(\#V)^2}{2},
\end{equation}
where $\omega$ is  the maximal clique number of $G$. It has  many combinatorial proofs, from which the extremal graph theory started its history.

\begin{lemma}\label{lemma:simple-}
Let $\mathbf{f},\mathbf{g}:\R^n\to\R$ be smooth 
functions such that  
$\mathbf{g}$ is positive  on $\R^n_{\ge0}\setminus\{\vec0\}$. 
For a maximizer (resp. minimizer) $\vec x$  of
$\frac{\mathbf{f}}{\mathbf{g}}|_{\R^n_{\ge0}\setminus\{\vec0\}}$ (if it exists),  let $\vec v$ be such that $\vec x+\vec v\in \R^n_{\ge0}\setminus\{\vec0\}$, $\mathrm{supp}(\vec v)\subset \mathrm{supp}(\vec x)$, $\R\ni t\mapsto \mathbf{g}(\vec x+t\vec v)$ is constant, and $\frac{\partial}{\partial y_i}\frac{\partial}{\partial y_j}\mathbf{f}(\vec y)=0$,  $\forall i,j\in\mathrm{supp}(\vec v)$, $\forall\vec y\in\R^n$.  If we further assume that $\mathbf{f}$ is real analytic, then $\vec x+\vec v$ is also a maximizer (resp. minimizer) of  $\frac{\mathbf{f}}{\mathbf{g}}|_{\R^n_{\ge0}\setminus\{\vec0\}}$.
\end{lemma}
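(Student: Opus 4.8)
The plan is to reduce everything to a one–dimensional statement along the line $\ell(t):=\vec x+t\vec v$, $t\in\R$, to observe that $\mathbf f/\mathbf g$ restricted to this line is \emph{affine} in $t$, and then to use the maximality of $\vec x$ to conclude that this affine function is in fact constant — so that $\vec x+\vec v=\ell(1)$ realizes the same extremal value.

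First I would check feasibility of the line near $\vec x$: namely that $\ell(t)\in\R^n_{\ge0}\setminus\{\vec0\}$ for all $t$ in an interval $(-\varepsilon,1]$ with $\varepsilon>0$. For $t\in[0,1]$ this is just convexity, since $\ell(t)=(1-t)\vec x+t(\vec x+\vec v)$ is a convex combination of two points of $\R^n_{\ge0}\setminus\{\vec0\}$ whose coordinates outside $\mathrm{supp}(\vec x)$ (where $\vec v$ vanishes too) stay $0$ while the total mass on $\mathrm{supp}(\vec x)$ stays positive. The inclusion $\mathrm{supp}(\vec v)\subset\mathrm{supp}(\vec x)$ forces $x_i>0$ for every $i\in\mathrm{supp}(\vec v)$, so $\ell(t)_i=x_i+tv_i>0$ for small $|t|$ and a little room to the left of $t=0$ is available as well; securing this two-sided room is the crucial point. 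I would also record that, since $t\mapsto\mathbf g(\ell(t))$ is constant and $\mathbf g(\vec x)>0$, we have $\mathbf g(\ell(t))\equiv c:=\mathbf g(\vec x)>0$ for all $t\in\R$, so $\mathbf f/\mathbf g$ is well-defined along the whole line with no division issue.

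Next I would differentiate $\phi(t):=\mathbf f(\ell(t))$. The hypothesis $\partial_i\partial_j\mathbf f\equiv0$ for $i,j\in\mathrm{supp}(\vec v)$ gives $\phi''(t)=\sum_{i,j\in\mathrm{supp}(\vec v)}v_iv_j\,(\partial_i\partial_j\mathbf f)(\ell(t))=0$, and in the same way every higher derivative of $\phi$ vanishes identically (so real-analyticity of $\mathbf f$ — or, for that matter, just $C^2$ — makes $\phi$ affine): $\phi(t)=\mathbf f(\vec x)+t\langle\nabla\mathbf f(\vec x),\vec v\rangle$. Hence $\psi(t):=\mathbf f(\ell(t))/\mathbf g(\ell(t))=\phi(t)/c$ is affine on $(-\varepsilon,1]$. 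Since $\vec x=\ell(0)$ maximizes $\mathbf f/\mathbf g$ over all of $\R^n_{\ge0}\setminus\{\vec0\}$, we get $\psi(t)\le\psi(0)$ for every $t\in(-\varepsilon,1]$; an affine function that attains its maximum over an interval at an interior point is constant, so $\psi\equiv\psi(0)$ there, and in particular $\frac{\mathbf f(\vec x+\vec v)}{\mathbf g(\vec x+\vec v)}=\psi(1)=\psi(0)=\max_{\R^n_{\ge0}\setminus\{\vec0\}}\frac{\mathbf f}{\mathbf g}$, i.e.\ $\vec x+\vec v$ is also a maximizer. The minimizer case is identical with the inequalities reversed.

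The one genuinely delicate step is the feasibility claim of the second paragraph: one must know $\ell(t)$ is feasible for slightly \emph{negative} $t$, not merely for $t\in[0,1]$. Without that, affineness plus maximality would only give $\psi(1)\le\psi(0)$; it is precisely the support condition $\mathrm{supp}(\vec v)\subset\mathrm{supp}(\vec x)$ — which keeps the active constraints of $\vec x$ inactive along $\ell$ near $t=0$ — that upgrades "$0$ is a maximizer on $[0,1]$" to "$0$ is an \emph{interior} maximizer", thereby killing the slope of $\psi$. Everything else is routine.
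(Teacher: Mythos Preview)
Your proof is correct and follows essentially the same strategy as the paper: restrict $\mathbf f/\mathbf g$ to the line $t\mapsto\vec x+t\vec v$, show it is constant there, and conclude that $\vec x+\vec v$ realizes the same extremal value.

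There is, however, a small but worthwhile tactical difference. The paper first proves a Lagrange-multiplier–type claim: since $\vec x$ is a critical point of $\mathbf f/\mathbf g$ on the interior of the support of $\vec x$, the relation $\partial_i\mathbf f(\vec x)=\lambda\,\partial_i\mathbf g(\vec x)$ for $i\in\mathrm{supp}(\vec x)$ together with $\langle\nabla\mathbf g(\vec x),\vec v\rangle=0$ forces $\langle\nabla\mathbf f(\vec x),\vec v\rangle=0$ directly. The paper then shows all Taylor coefficients of $\phi(t)=\mathbf f(\vec x+t\vec v)$ at $t=0$ vanish and invokes real analyticity to conclude $\phi$ is constant. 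You instead observe that $\phi''\equiv0$ identically (so $\phi$ is affine from $C^2$ alone), and then kill the slope by the ``interior maximum of an affine function'' argument, using the support condition to secure two-sided feasibility of $\ell(t)$ near $t=0$. Your route is slightly more elementary in that it does not actually require the real-analyticity hypothesis; the paper's route makes the vanishing of the first derivative more explicit via the critical-point relation. Both arrive at the same place.
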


\begin{proof}
Claim:  Let $\vec x$ be a critical point of  $\frac{\mathbf{f}}{\mathbf{g}}|_{\R^n_{\ge0}\setminus\{\vec0\}}$ 
and let $\vec v\in\R^n$ be such that $\langle\nabla\mathbf{g}(\vec x),\vec v \rangle=0$ and $\mathrm{supp}(\vec v)\subset \mathrm{supp}(\vec x)$, then $\langle\nabla\mathbf{f}(\vec x),\vec v \rangle=0$. 

Proof of the claim: By the assumption,  $\mathrm{supp}(\vec x)=\{i\in\{1,\cdots,n\}:x_i>0\}\ne\varnothing$. For any $i\in \mathrm{supp}(\vec x)$, we have $\frac{\partial}{\partial x_i}\frac{\mathbf{f}(\vec x)}{\mathbf{g}(\vec x)}=0$, $\forall i\in \mathrm{supp}(\vec x)$. Thus, $\frac{\partial}{\partial x_i}\mathbf{f}(\vec x)=\frac{\mathbf{f}(\vec x)}{\mathbf{g}(\vec x)}\frac{\partial}{\partial x_i}\mathbf{g}(\vec x)$ for  any $i\in \mathrm{supp}(\vec x)$. By the condition that $v_i=0$ whenever $i\not\in \mathrm{supp}(\vec x)$, we have 
\begin{align*}
\langle\nabla\mathbf{f}(\vec x),\vec v \rangle&=\sum_{i=1}^nv_i\frac{\partial}{\partial x_i}\mathbf{f}(\vec x)=\sum_{i\in \mathrm{supp}(\vec x)}v_i\frac{\partial}{\partial x_i}\mathbf{f}(\vec x)
\\&=\sum_{i\in \mathrm{supp}(\vec x)}v_i\frac{\mathbf{f}(\vec x)}{\mathbf{g}(\vec x)}\frac{\partial}{\partial x_i}\mathbf{g}(\vec x)=\frac{\mathbf{f}(\vec x)}{\mathbf{g}(\vec x)}\langle\nabla\mathbf{g}(\vec x),\vec v \rangle=0.
\end{align*}

Now we prove the lemma. 
It follows from  $\mathbf{g}(\vec x+t\vec v)=\mathbf{g}(\vec x)$ $\forall t\in\R$ that $\langle\nabla\mathbf{g}(\vec x),\vec v \rangle=0$, and thus by the above claim, we have $\langle\nabla\mathbf{f}(\vec x),\vec v \rangle=0$. Since $\mathbf{f}$ is a real analytic function, $t\mapsto \mathbf{f}(\vec x+t\vec v)$ must be real analytic. Note that $\frac{d}{dt}|_{t=0}\mathbf{f}(\vec x+t\vec v)=\langle\nabla\mathbf{f}(\vec x),\vec v \rangle=0$, and for any $k\ge 2$, 
\begin{align*}
\frac{d^k}{dt^k}|_{t=0}\mathbf{f}(\vec x+t\vec v)&=\sum_{i_1,\cdots,i_k=1}^nv_{i_1}\cdots v_{i_k}\frac{\partial^k f(\vec x)}{\partial x_{i_1}\cdots\partial x_{i_k}}
\\&=\sum_{i_1,\cdots,i_k\in\mathrm{supp}(\vec v)}v_{i_1}\cdots v_{i_k}\frac{\partial^k f(\vec x)}{\partial x_{i_1}\cdots\partial x_{i_k}} = 0
\end{align*}
where the last equality is due to the condition that $\partial_i\partial_j\mathbf{f}=0$,  $\forall i,j\in\mathrm{supp}(\vec v)$.

Therefore, the real analytic function $ t\mapsto \mathbf{f}(\vec x+t\vec v)$ is constant. This implies that $\mathbf{f}(\vec x+\vec v)=\mathbf{f}(\vec x)$, and hence $\frac{\mathbf{f}(\vec x+\vec v)}{\mathbf{g}(\vec x+\vec v)}=\frac{\mathbf{f}(\vec x)}{\mathbf{g}(\vec x)}$, meaning that $\vec x+\vec v$ is also a maximizer  of  $\frac{\mathbf{f}}{\mathbf{g}}|_{\R^n_{\ge0}\setminus\{\vec0\}}$. 

The case of minimizer is similar.
\end{proof}

\begin{pro}\label{pro:Q-inequality} For $f,g:\power(V)^2\to \R_+$, there holds
\begin{equation}\label{eq:quodratic-extension}
    \max\limits_{A}\frac{f(A,A)}{g(A,A)}\le \max\limits_{\vec x\in \R^V_{\ge0}} \frac{f^Q(\vec x,\vec x)}{g^Q(\vec x,\vec x)}\le \max\limits_{A\subset B}\frac{f(A,B)}{g(A,B)}=\max\limits_{\R^V_{\ge0}\ni\vec x,\vec y\text{ comonotonic}}\frac{f^Q(\vec x,\vec y)}{g^Q(\vec x,\vec y)}.
\end{equation}

Now we further assume that  $g(A,B)=\tilde{g}(A)\tilde{g}(B)$ for some modular function $\tilde{g}:\power(V)\to\R$, and $f$ is  modular on both  components. Suppose that  there exists $C>0$ satisfying    $f(\{i\},\{i\})=C(\tilde{g}^2(\{i\})-\tilde{g}(\{i\}))$ and $f(\{i\},\{j\})=Cg(\{i\},\{j\})$ whenever $f(\{i\},\{j\})>0$.  Then the left inequality in \eqref{eq:quodratic-extension} is indeed an equality. 
\end{pro}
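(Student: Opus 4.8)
The first chain of (in)equalities in \eqref{eq:quodratic-extension} is exactly Theorem~\ref{thm:Turan-general} specialized to $k=2$: the piecewise bilinear extension $f^Q$ is the $k=2$ instance of $f^M$, so $f^Q(\vec x,\vec x)=f^M_\triangle(\vec x)$, $f(A,A)=f_\triangle(A)$, a two-element chain is a nested pair $A\subseteq B$, and comonotone pairs in $\R^V_{\ge0}$ are the continuous counterpart of inclusion chains. In particular we already have $\max_A\frac{f(A,A)}{g(A,A)}\le\max_{\vec x\in\R^V_{\ge0}}\frac{f^Q(\vec x,\vec x)}{g^Q(\vec x,\vec x)}$, and the plan is to prove the reverse inequality under the extra hypotheses, which then forces the left inequality to be an equality.

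For the setup, put $c_{ij}:=f(\{i\},\{j\})$ and $d_i:=\tilde g(\{i\})$; we assume, as in the applications, that $f$ is symmetric, so $c_{ij}=c_{ji}$. Since $f$ is modular on each component and $g(A,B)=\tilde g(A)\tilde g(B)$ with $\tilde g$ modular, Propositions~\ref{pro:modular-f} and~\ref{pro:separable-product} give $f^Q(\vec x,\vec y)=\sum_{i,j}c_{ij}x_iy_j$ and $g^Q(\vec x,\vec y)=(\sum_i d_ix_i)(\sum_j d_jy_j)$; after discarding indices with $d_i=0$ (which contribute nothing to either side) we may assume $d_i>0$. The identity $c_{ii}=C(d_i^2-d_i)$ together with $f\ge0$ and $C>0$ forces $d_i\ge1$, and the hypothesis gives $c_{ij}\in\{0,Cd_id_j\}$ for $i\ne j$. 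Set $E:=\{\{i,j\}:i\ne j,\ c_{ij}>0\}$, $\mathbf f(\vec x):=f^Q(\vec x,\vec x)=\vec x^\top C\vec x$ (a quadratic form, hence smooth and real analytic), and $\mathbf g(\vec x):=g^Q(\vec x,\vec x)=(\sum_i d_ix_i)^2$ (smooth, positive on $\R^n_{\ge0}\setminus\{\vec 0\}$).

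By $0$-homogeneity and compactness, $\lambda:=\max_{\vec x\in\R^n_{\ge0}\setminus\{\vec 0\}}\mathbf f(\vec x)/\mathbf g(\vec x)$ is attained; I would choose a maximizer $\vec x^{*}$ with inclusion-minimal support $S$ and show $S$ is a clique of $E$. Suppose $\{i,j\}\subset S$ is a non-edge and set $\vec v:=\vec e_i/d_i-\vec e_j/d_j$, where $\vec e_i$ is the $i$-th coordinate vector. Then $\sum_k d_kv_k=0$, so $\mathbf g$ is constant along $t\mapsto \vec x^{*}+t\vec v$; since $x_i^{*},x_j^{*}>0$, the value $t=0$ is interior to the feasible interval, so the quadratic $t\mapsto \mathbf f(\vec x^{*}+t\vec v)$ is maximized at an interior point, forcing its leading coefficient $\vec v^\top C\vec v\le0$. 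But $\vec v^\top C\vec v=c_{ii}/d_i^2+c_{jj}/d_j^2-2c_{ij}/(d_id_j)=C(2-1/d_i-1/d_j)\ge0$ because $c_{ij}=0$ and $d_i,d_j\ge1$. Hence $\vec v^\top C\vec v=0$ and $d_i=d_j=1$, so $\vec v=\vec e_i-\vec e_j$; now $\partial_k\partial_l\mathbf f=2c_{kl}$ vanishes for all $k,l\in\{i,j\}=\mathrm{supp}(\vec v)$ (as $c_{ii}=c_{jj}=c_{ij}=0$), and the remaining hypotheses of Lemma~\ref{lemma:simple-} hold ($\mathbf g$ positive and constant along the ray, $\mathbf f$ real analytic, $\mathrm{supp}(\vec v)\subset\mathrm{supp}(\vec x^{*})$). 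Lemma~\ref{lemma:simple-} then yields that $\vec x^{*}+s(\vec e_i-\vec e_j)$ is again a maximizer for the value of $s$ annihilating coordinate $i$ or $j$ --- contradicting minimality of $S$. Thus $S$ carries no non-edge, i.e.\ $S$ is a clique.

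Finally, on the clique $S$, substituting $y_i:=d_ix_i$ and using $c_{ij}=Cd_id_j$ for $i\ne j$ in $S$ and $c_{ii}=C(d_i^2-d_i)$, a short computation gives $\mathbf f(\vec x)=C\big[(\sum_{i\in S}y_i)^2-\sum_{i\in S}y_i^2/d_i\big]$ and $\mathbf g(\vec x)=(\sum_{i\in S}y_i)^2$ for every $\vec x$ supported in $S$, so $\mathbf f(\vec x)/\mathbf g(\vec x)=C\big(1-\frac{\sum_{i\in S}y_i^2/d_i}{(\sum_{i\in S}y_i)^2}\big)$. By Cauchy--Schwarz, $(\sum_{i\in S}y_i)^2\le(\sum_{i\in S}y_i^2/d_i)(\sum_{i\in S}d_i)$, hence $\mathbf f(\vec x)/\mathbf g(\vec x)\le C\big(1-1/\sum_{i\in S}d_i\big)$ with equality at $\vec x=\vec 1_S$. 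Therefore $\lambda=\mathbf f(\vec x^{*})/\mathbf g(\vec x^{*})=C\big(1-1/\sum_{i\in S}d_i\big)=f^Q(\vec 1_S,\vec 1_S)/g^Q(\vec 1_S,\vec 1_S)=f(S,S)/g(S,S)\le\max_B f(B,B)/g(B,B)$, which combined with Theorem~\ref{thm:Turan-general} makes the left inequality an equality. The one delicate point is the reduction to a clique: one must notice that $f\ge0$ forces $d_i\ge1$, which is precisely what makes $\vec v^\top C\vec v\ge0$, and that equality there pins down $d_i=d_j=1$, which is exactly the case handled by the vanishing-mixed-derivative hypothesis of Lemma~\ref{lemma:simple-}; the rest is the weighted Motzkin--Straus/Cauchy--Schwarz computation.
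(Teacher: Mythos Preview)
Your argument is correct and follows the same two-stage strategy as the paper's proof: first reduce the support of a maximizer to a ``clique'' (a set on which $c_{ij}>0$ for all $i\ne j$) by moving along directions $\vec v$ orthogonal to $\vec u=(d_1,\dots,d_n)$, then optimize on the clique via Cauchy--Schwarz to land on an indicator vector. The final computation is identical to the paper's (after the substitution $y_i=d_ix_i$).

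The genuine addition in your argument is the second-order step: you show that along a non-edge direction the quadratic $t\mapsto\mathbf f(\vec x^*+t\vec v)$ has nonpositive leading coefficient (from interior maximality) while the explicit formula gives $\vec v^\top(c_{kl})\vec v=C(2-1/d_i-1/d_j)\ge 0$, forcing $d_i=d_j=1$ and hence $c_{ii}=c_{jj}=0$. This is not in the paper's proof, which simply invokes Lemma~\ref{lemma:simple-} with $\mathrm{supp}(\vec v)=\{i,j\}$; but that lemma requires $\partial_a\partial_b\mathbf f=2c_{ab}=0$ for \emph{all} $a,b\in\{i,j\}$, including $a=b$, and $c_{ii}=C(d_i^2-d_i)$ vanishes only when $d_i=1$. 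So your extra step is exactly what is needed to justify the application of Lemma~\ref{lemma:simple-} in the generality stated; the paper's version is complete only in the Motzkin--Straus case $\tilde g(\{i\})\equiv 1$. Two minor remarks: you reuse the symbol $C$ for both the constant in the hypothesis and the matrix $(c_{ij})$, which is confusing; and once you have established that the quadratic in $t$ has zero leading coefficient and an interior maximum, it is already constant, so you can move to the boundary and contradict minimality directly without re-invoking Lemma~\ref{lemma:simple-}.
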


\begin{proof}
The inequality \eqref{eq:quodratic-extension} is a direct consequence of 
Theorem \ref{thm:Turan-general}.  
For the equality case, 
we set $C=1$, because otherwise we can use $Cf$ instead of $f$. Since $g(A,B)=\tilde{g}(A)\tilde{g}(B)$ and $\tilde{g}$ is modular, we have $g^Q_\Delta(\vec x)=\langle \vec u,\vec x\rangle^2$, where   $\vec u=(\tilde{g}(\{1\}),\cdots,\tilde{g}(\{n\}))\in\R^n_+$. By the assumption that  $f(A,B)$ is modular on each component, its  piecewise bilinear extension $f^Q$ must be multilinear, and thus  $f^Q_\triangle(\vec x)=\vec x^T M \vec x$ where $M=(f(\{i\},\{j\}))_{n\times n}$. 

For any $\vec v$ satisfying $\langle \vec u,\vec v\rangle=0$, $g^Q_\Delta(\vec x+\vec v)=g^Q_\Delta(\vec x)$. 
Let $\vec x$ be a maximizer of $f_\triangle^Q/g_\triangle^Q$ on $\R^n_{\ge 0}\setminus\{\vec0\}$. If $f(i,j):=f(\{i\},\{j\})= 0$ and $x_ix_j>0$ for some $i\ne j$, taking $\vec v$ defined as $v_i=-x_i$,  $v_j=x_i\frac{u_i}{u_j}$ and $v_l=0$ for $l\ne i,j$, 
then Lemma \ref{lemma:simple-} can be applied to deduce  that 
$\vec x+\vec v$ is also a maximizer of $f_\triangle^Q/g_\triangle^Q$ on $\R^n_{\ge 0}\setminus\{\vec0\}$.  Taking $\vec x:=\vec x+\vec v$ and repeating the process, we finally obtain a subset $A\subset V$  satisfying $\mathrm{supp}(\vec x)=A$ and  $f(i,j)>0$ for $i\ne j$ in $ A$.  Therefore, $f^Q_\Delta(\vec x)/g^Q_\Delta(\vec x)=\frac{\vec x^TM\vec x}{(\vec x^T\vec u)^2}=1-\frac{\sum_{i\in A} u_ix_i^2}{(\sum_{i\in A} x_iu_i)^2}\le 1-\frac{1}{ \sum_{i\in A} u_i}$ and the equality holds if and only if $x_i=\text{Const}$ for $i\in A$. In consequence, $\vec 1_A$ is a maximizer of $f^Q_\Delta/g^Q_\Delta$. The proof is completed. 
\end{proof}

According to Proposition \ref{pro:Q-inequality}  
and Table \ref{tab:double-Lov}, 
we get the identity
\begin{equation}\label{eq:M-S-weak}
\max\limits_{A\in\power(V)\setminus\{\varnothing\}}\frac{\#E(A,A)}{(\#A)^2}=\sup\limits_{\vec x\in\R^n_+}\frac{2\sum_{i\sim j}x_ix_j}{\|\vec x\|_1^2}=\max\limits_{\vec x\ne 0} \frac{\sum\limits_{i,j\in V\text{ s.t. }\{i,j\}\in E}x_ix_j}{\|\vec x\|_1^2}.
\end{equation}
It is very interesting that  \eqref{eq:M-S-weak} reduces to the Motzkin-Straus theorem immediately by the (weak) Tur\'an theorem \eqref{eq:Turan}. In fact, applying \eqref{eq:Turan} to the subgraph $G|_A$ induced by $A$ implies that $\frac{\#E(A,A)}{(\#A)^2}$ achieves its maxima at some maximum clique, which means $\max\limits_{A\in\power(V)\setminus\{\varnothing\}}\frac{\#E(A,A)}{(\#A)^2}=\frac{2{\omega\choose 2}}{\omega^2}=(1-\frac1\omega)$. In consequence, the original Motzkin-Straus theorem $$ \max\limits_{x_i\ge0,\sum_i x_i=1}2\sum_{\{i,j\}\in E}x_ix_j=1-\frac1\omega$$ is proved by virtue of \eqref{eq:M-S-weak}. 

In addition, since the maximum clique number of $(V,E)$ equals  the independence number of the complement graph $(V,E^c)$, one can see that the Motzkin-Straus identity
is equivalent to the following representation of the  independence number:
\begin{equation}\label{eq:M-S-independence}
\alpha(G)=\max\limits_{\vec x\in\R^V\setminus\{\vec 0\}}\frac{\|\vec x\|_1^2}{\|\vec x\|_1^2-2\sum_{ij\in E^c}x_ix_j}.
\end{equation}

Similarly, for a simple graph $(V,E)$,  let $H$ be the collection of all $k$-cliques in $(V,E)$. Then we obtain a special $k$-uniform hypergraph $(V,H)$, and   its Lagrangian 
satisfies\footnote{This equality might be known to experts, although we didn't find a reference. } 
 $$\lambda(H):=\sup\limits_{\vec x\ne\vec0}\frac{ \sum_{\{i_1,\cdots,i_k\}\in H}x_{i_1}\cdots x_{i_k}}{\|\vec x\|_1^k} =  
 \max\limits_{U\subset V,\,U\ne\varnothing}\frac{ \#\{\{i_1,\cdots,i_k\}\in H:\{i_1,\cdots,i_k\}\subset U\}}{(\#U)^k}.$$
We employ Theorem \ref{thm:Turan-general} to give a proof here. First, by Theorem \ref{thm:Turan-general}, the LHS is larger than or equal to the RHS. To show the converse, we let $F(\vec x)=\sum_{\text{clique }\{i_1,\cdots,i_k\}}x_{i_1}\cdots x_{i_k}$ and $G(\vec x)=(x_1+\cdots+x_n)^k$ for $\vec x=(x_1,\cdots,x_n)\in \R^n_{\ge 0}\setminus\{\vec 0\}$. Let $\vec x$ be a maximizer of $F/G$ on $\R^n_{\ge 0}\setminus\{\vec 0\}$. 
Note that $G(\vec x+\vec v)=G(\vec x)$ for any $\vec v$ with $\langle\vec 1,\vec v\rangle=0$. 
If there exist $i,j\in \mathrm{supp}(\vec x)$ such that there is no clique $\{i_1,\cdots,i_k\}$ containing both $i$ and $j$, then taking $\vec v$ defined by $v_i=-x_i$, $v_j=x_i$ and $v_l=0$ for $l\ne i,j$, we can apply Lemma \ref{lemma:simple-} to derive that 
$\vec x+\vec v$ is also a maximizer of $F/G$. 
Substituting $\vec x:=\vec x+\vec v$, we have $ x_i=0$. Repeating the process, we can finally obtain that there is a clique $U$  
such that $x_i=0$ whenever $i\in V\setminus U$, and $x_i>0$ whenever $i\in U$. Since $H$ collects all the $k$-cliques of a graph, $H|_U:=\{h\in H:h\subset U\}$  collects all the $k$-cliques in $U$. Putting everything together, 
it is easy to see that Maclaurin's  inequality  can be applied  to get $x_i=\text{const}$ for $i\in U$. Consequently, $\vec 1_U$ is a maximizer of $F/G$. The proof is completed.

Similar to the above discussion,  we can apply Lemma \ref{lemma:simple-}, the generalized mean inequality and Maclaurin's  inequality, to derive the following equality
$$\sup\limits_{\vec x\ne\vec0}\frac{ \sum_{\{i_1,\cdots,i_k\}\in H}x_{i_1}\cdots x_{i_k}-\tau \sum_{i\in V}|x_i|^k}{\|\vec x\|_1^k} =  
 \max\limits_{U\subset V,\,U\ne\varnothing}\frac{ \#\{h\in H:h\subset U\}-\tau\#U}{(\#U)^k}$$
where $\tau$ is a nonnegative real number. 

\subsection{$p$-Laplacians generated by Lov\'asz extension}
\label{sec:p-Lap}


Given a family    $\{w_e\}_{e\in E}$ of positive numbers, and two families of functions
\begin{equation*}
    \{f_e:\power(V)\to\R_{\ge0}\}_{e\in E}\qquad\text{and}\qquad\{g_e :\power(V)\to\R_{\ge0}\}_{e\in E}
\end{equation*}satisfying 
 $f_e(\emptyset)=f_e(V)=0=g_e(\emptyset)$ and $g_e(A)=\sum_{i\in A}g_e(\{i\})$ for all $A\subset V$,  let
 \begin{equation*}
    f(A):=\sum_{e\in E} w_e f_e(A)\qquad \text{and}\qquad g(A):=\sum_{e\in E} g_e(A).
 \end{equation*}
 We define the \textbf{Cheeger constant} as
\begin{equation*}h:=\min_{A\subset V,\,A\not\in\{\emptyset,V\}}\frac{f(A)}{\min \{g(A),g(V\setminus A)\}}.\end{equation*}

The following $p$-homogeneous eigenvalue problem
 \begin{equation}\label{eq:eigenvalue-problem}
\vec 0\in\nabla \sum_{e\in E} w_{e}(f_e^L(\vec x))^p-\lambda \nabla \sum_{e\in E} g_e^L(|\vec x|^p)
 \end{equation}
can unify many analogs of  $p$-Laplacian eigenvalue problem for graphs or hypergraphs (see Remark \ref{remark:p-Lap-} for details), where each $w_{e}$ is a positive constant,  and $|\vec x|^p=(|x_1|^p,\cdots,|x_n|^p)$. 


According to Proposition \ref{pro:mountain-pass},  the second eigenvalue of the eigenvalue problem \eqref{eq:eigenvalue-problem} 
can be characterized as
 \begin{equation}\label{eq:lambda_2p-Laplacian}
     \lambda=\inf\limits_{\vec x\text{ nonconstant}}\frac{\sum_{e\in E} w_{e}(f_e^L(\vec x))^p}{\min\limits_{c\in\R}\sum_{e\in E} g_e^L(|\vec x-c\vec1|^p)}.
 \end{equation}

\begin{example}\label{example:p-Lap}
Given a simple graph $G=(V,E)$, and taking  $w_{e}=1$,  consider the discrete functions $f_e,g_e:\power(V)\to\R$ defined by $$f_e(A)=\begin{cases} 1, &\text{ if } e\text{ has one end point in }A\text{ and the other in }V\setminus A,\\
0, &\text{ otherwise},
\end{cases}$$
and $g_e(A)=\#(e\cap A)$. By the original Lov\'asz extension, we have $f^L_e(\vec x)=|x_i-x_j|$ and $g^L_e(\vec x)=x_i+x_j$, where $\{i,j\}=e\in E$. Then
$$ \sum_{e\in E} (f_e^L(\vec x))^p=\sum_{\{i,j\}\in E}|x_i-x_j|^p\;\text{ and }\;  \sum_{e\in E} g_e^L(|\vec x|^p)=\sum_{\{i,j\}\in E}(|x_i|^p+|x_j|^p)=\sum_{i\in V}\deg(i)|x_i|^p.$$
So in this case,  \eqref{eq:eigenvalue-problem} reduces to   the eigenvalue problem of the normalized $p$-Laplacian on a graph. 
\end{example}

We may call $\vec x\mapsto \nabla \sum_{e\in E}  w_{e}(f_e^L(\vec x))^p$ the {\sl Lov\'asz
  $p$-Laplacian} induced by $ \{f_e:\power(V)\to\R_{\ge0}\}_{e\in E}$, because it is based on Lov\'asz extension, and it generalizes  the graph  $p$-Laplacian. Then, we may call  \eqref{eq:eigenvalue-problem} the eigenvalue problem of  Lov\'asz
  $p$-Laplacians for $ \{f_e\}_{e\in E}$ and $ \{g_e\}_{e\in E}$. Assume that $g(\{i\}):=\sum_{e\in E}g_e(\{i\})>0$ for any $i\in V$,  then the following Cheeger inequality holds. 
  \begin{theorem}\label{thm:Cheeger-main}
Under the above setting, we  have
\begin{equation}\label{eq:Cheeger-right}
(\frac{2}{c})^{p-1}\frac{h^p}{p^p}\le  \lambda\le 2^{p-1}Ch,
\end{equation}
where $ C:=\max\limits_{e,\,A}f_e(A)^{p-1}$ and 
$$c=\max\limits_{i\in V}\frac{\sum_{e\in E_i}w_{e}}{g(i)},\text{ and }E_i:=\{e\in E:\exists S\subset V \text{ s.t. }f_e(S\setminus \{i\})\ne f_e(S) \}$$
\end{theorem}

\begin{proof}
Let $A\in\power(V)\setminus\{\varnothing,V\}$ be such that
\begin{equation*}h=\frac{f(A)}{\min \{g(A),g(V\setminus A)\}}.\end{equation*}
By taking the nonconstant vector $\vec x=\vec 1_A$ in \eqref{eq:lambda_2p-Laplacian}, we get
\begin{equation*}\lambda\le \frac{\sum_{e\in E} w_{e}(f_e^L(\vec 1_A))^p}{\min\limits_{c\in\R}\sum_{e\in E} g_e^L(|\vec 1_A-c\vec1|^p)}.\end{equation*}

Since $g_e$ is modular and $g=\sum_{e\in E}g_e$, we have \begin{equation*}\sum_{e\in E} g_e^L(|\vec 1_A-c\vec1|^p)=\sum_{i\in V}g(i)|(\vec 1_A)_i-c|^p\end{equation*}
and \begin{equation*}\min\limits_{c\in\R}\sum_{i\in V}g(i)|(\vec 1_A)_i-c|^p=\frac{g(A)g(V\setminus A)}{(g(A)^{\frac{1}{p-1}}+g(V\setminus A)^{\frac{1}{p-1}})^{p-1}}.\end{equation*} Therefore, 
\begin{align*}
    \lambda&\le \frac{\sum_{e\in E} w_{e}(f_e^L(\vec1_A))^p}{g(A)g(V\setminus A)\big/(g(A)^{\frac{1}{p-1}}+g(V\setminus A)^{\frac{1}{p-1}})^{p-1} }\\
    &=\sum_{e\in E} w_{e}(f_e(A))^p\left(\sqrt[p-1]{\frac{1}{g(A)}}+\sqrt[p-1]{\frac{1}{g(V\setminus A)}}\right)^{p-1}\\
    &\le \max\limits_{e,A}f_e(A)^{p-1}\sum_{e\in E} w_{e}f_e(A) 2^{p-1}\frac{1}{\min\{g(A),g(V\setminus A)\}}\\ &:=\frac{2^{p-1}C
f(A)}{\min \{g(A),g(V\setminus A)\}} =2^{p-1}Ch.
\end{align*}

Let's move on to   the lower bound of $\lambda$, i.e., the left hand side of the inequality  \eqref{eq:Cheeger-right}. For simplicity, we identify a vector $\vec x\in\R^V$ with the function $\vec x: V\to\ \R$. Put $\deg(j)=\sum_{e\in E_j}w_{e}$ for $j\in V$. 
Then $\deg(j)\le cg(j)$, $\forall j$. 

Below we should adopt a new form of the original  Lov\'asz extension.

The Lov\'asz extension  \eqref{eq:Lovasz-PL} can be re-written as
\begin{equation}\label{eq:Lovasz-PL1}
f^L(\vec x)=\sum_{i=0}^{n-1} (x_{(i+1)}-x_{(i)})f(V_{i}(\vec x)),
\end{equation}
in which  $x_{(0)}:=0$ and $V_0(\vec x):=V$. 
Alternatively, we can also write
\begin{equation*} 
f^L(\vec x)=\sum_{i=0}^{k-1} (x_{[i+1]}-x_{[i]})f(V_{[i]}(\vec x)),
\end{equation*}
where \begin{equation*}k:=1+\sum_{i=0}^{n-1}\mathrm{sign}(|f(V_{i}(\vec x))-f(V_{i+1}(\vec x))|)\end{equation*} and $\{[1],\ldots,[k]\}\subset V$ satisfy:
\begin{itemize}
    \item $x_{[1]}<\ldots<x_{[k]}$, $x_{[0]}=0$, 
    \item $V_{[i]}(\vec x):=\{j\in V: x_j>x_{[i]}\}$ for $i\ge 1$, $V_{[0]}(\vec x):=V$,
    \item $f(V_{[i]}(\vec x))\ne f(V_{[i+1]}(\vec x))$ for all $i=0,\ldots,k-1$.
\end{itemize}
We call the set  $\{[1],\ldots,[k]\}$ a \textbf{simple index set} for $f$ at $\vec x$. For a fixed vector $\vec x$, let $\mathrm{si}(f)=\{i:f(V_{[i]}(\vec x))\ne 0\}$. It is clear that 
\begin{equation}\label{eq:Lovasz-PL2}
f^L(\vec x)=\sum_{i\in\mathrm{si}(f)} (x_{[i+1]}-x_{[i]})f(V_{[i]}(\vec x)).
\end{equation}
Given $p\ge 1$, for $\vec x\in \R_{\ge0}^n$, using \eqref{eq:Lovasz-PL2}, and noting that $f_e$ is nonnegative, we have
\begin{align}
&\sum\limits_{e\in E}w_{e}f^L_e(\vec x^p)
\\=&\sum\limits_{e\in E}w_{e}\sum\limits_{i\in\mathrm{si}(f_e)} (x_{[i+1]_e}^p-x_{[i]_e}^p)f_e(V_{[i]_e}(\vec x^p))\notag 
\\\le &\sum\limits_{e\in E}w_{e}\sum\limits_{i\in\mathrm{si}(f_e)}p(x_{[i+1]_e}-x_{[i]_e})(\frac{x_{[i+1]_e}^p+x_{[i]_e}^p}{2})^{\frac{1}{p'}}f_e(V_{[i]_e}(\vec x)) \label{eq:first-p-Lap}
\\\le &\frac{p}{2^{\frac{1}{p'}}}\left(\sum\limits_{e\in E}w_{e}\sum\limits_{i\in\mathrm{si}(f_e)}f_e(V_{[i]_e}(\vec x))^p(x_{[i+1]_e}-x_{[i]_e})^p\right)^{\frac1p}\label{eq:second-p-Holder}
\\ &~\times \left(\sum\limits_{e\in E}w_{e}\sum\limits_{i\in\mathrm{si}(f_e)}(x_{[i+1]_e}^p+x_{[i]_e}^p)\right)^{\frac{1}{p'}}\notag 
\\\le &\frac{p}{2^{\frac{1}{p'}}}\left(\sum\limits_{e\in E}w_{e}\left(\sum\limits_{i\in\mathrm{si}(f_e)}f_e(V_{[i]_e}(\vec x))(x_{[i+1]_e}-x_{[i]_e})\right)^p\right)^{\frac1p}\left(\sum_{j\in V}\widetilde{\deg}(j)x_{j}^p\right)^{\frac{1}{p'}}\notag
\\\le &(\frac{c}{2})^{\frac{1}{p'}}p\left(\sum\limits_{e\in E}w_{e}(f^L_e(\vec x))^p\right)^{\frac1p}\left(\sum_{j\in V}g(j)x_{j}^p\right)^{\frac{1}{p'}}\label{eq:third-deg}
\end{align}
where  $p'$ is the H\"older conjugate of $p$, i.e., $\frac{1}{p}+\frac{1}{p'}=1$, and   
$\{[1]_e,\ldots,[i]_e,\ldots\}$ is a simple index set for $f_e$ at $\vec x$, and
\begin{equation*}\widetilde{\deg}(j):=\sum_{e\in \widetilde{E}_j}w_{e},\text{ and }\widetilde{E}_j:=\{e\in E:j=[i]_e \text{ or }j=[i+1]_e\text{ for some }i\in \mathrm{si}(f_e)\}.\end{equation*} 
The first inequality \eqref{eq:first-p-Lap} uses an inequality in \cite{Amghibech},  the second inequality  \eqref{eq:second-p-Holder} uses  H\"older's inequality, and 
 the last inequality \eqref{eq:third-deg} is according to the  fact that
 \begin{equation}\label{eq:counting-deg}\sum_{j\in V: x_j\ge t}\widetilde{\deg}(j)\le \sum_{j\in V: x_j\ge t}\deg(j)\le c\sum_{j\in V: x_j\ge t}g(j) \text{ for any }t\in\R.\end{equation}
In fact, for any $e\in \widetilde{E}_j$, we may suppose that $j=[i]_e$, and then  $f_e(V_{[i-1]_e}(\vec x))\ne f_e(V_{[i]_e}(\vec x))$. Clearly, if $j'\ne j$ and $x_{[i]_e}\le x_{j'}< x_{[i+1]_e}$, then  $e\not\in \widetilde{E}_{j'}$ because $j'\ne [i']_e$ for any $i'$. That is, $e$ is counted exactly   once in  $\widetilde{E}_{j}$ over all $j$ with $x_j\in[x_{[i]_e},x_{[i+1]_e})$. We shall prove that there exists $j'$ with $x_{[i]_e}\le x_{j'}< x_{[i+1]_e}$ such that $e\in E_{j'}$. 
Suppose the contrary, that for any $j'$ with $x_{[i]_e}\le x_{j'}< x_{[i+1]_e}$,  $e\not\in E_{j'}$. Denote by $\{j_1,\cdots,j_l\}=\{j:x_{[i]_e}\le x_{j}< x_{[i+1]_e}\}=V_{[i-1]_e}\setminus V_{[i]_e}$.  It follows from $e\not\in E_{j_1}\cup\cdots\cup E_{j_l}$ that  $f(V_{[i-1]_e}(\vec x))=f(V_{[i-1]_e}(\vec x)\setminus \{j_1\})=\cdots =f(V_{[i-1]_e}(\vec x)\setminus \{j_1,\cdots,j_l\})=f(V_{[i]_e}(\vec x))$, which is a contradiction. In consequence, $e$ is counted at least once in  $E_{j}$ over all $j$ with $x_j\in[x_{[i]_e},x_{[i+1]_e})$, and the inequality \eqref{eq:counting-deg} is then proved. Therefore, $$\sum_{j\in V}\widetilde{\deg}(j)x_{j}^p=\int_0^\infty \sum_{j\in V: x_j^p\ge t}\widetilde{\deg}(j) dt\le c\int_0^\infty\sum_{j\in V: x_j^p\ge t}g(j)dt=c\sum_{j\in V}g(j)x_{j}^p,$$
which implies  \eqref{eq:third-deg}.

Consequently, for $\vec x\in \R_{\ge0}^n\setminus\{\vec 0\}$,
\begin{equation*}
 \frac{\sum\limits_{e\in E}w_ef^L_e(\vec x^p)}{\|\vec x\|_{p,g}^p}:= \frac{\sum\limits_{e\in E}w_ef^L_e(\vec x^p)}{\sum_{j\in V}g(j)x_{j}^p}\le p(\frac{c}{2})^{\frac{1}{p'}}\left(\frac{\sum\limits_{e\in E}w_e(f^L_e(\vec x))^p}{\sum_{j\in V}g(j)x_{j}^p}\right)^{\frac1p}.
\end{equation*}
Similarly, for $\vec x\in\R_{\le0}^n\setminus\{\vec 0\}$, denoting by $|\vec x|=(|x_1|,\ldots,|x_n|)$, and taking  $\tilde{f}_e(S)=f_e(V\setminus S)$, $\forall S\subset V$,  we have
\begin{equation*}
 \frac{\sum\limits_{e\in E}w_e\tilde{f}^L_e(|\vec x|^p)}{\|\vec x\|_{p,g}^p}= \frac{\sum\limits_{e\in E}w_ef^L_e(-|\vec x|^p)}{\|-|\vec x|\|_{p,g}^p}= \frac{\sum\limits_{e\in E}w_ef^L_e(-|\vec x|^p)}{\sum_{j\in V}g(j)|x_{j}|^p}\le p(\frac{c}{2})^{\frac{1}{p'}}\left(\frac{\sum\limits_{e\in E}w_e(f^L_e(\vec x))^p}{\sum_{j\in V}g(j)|x_{j}|^p}\right)^{\frac1p}.
\end{equation*}

For $\vec x\in\R^n\setminus (\R_{\ge0}^n\cup \R_{\le0}^n)$, let $\vec x=\vec x_+ + \vec x_-$ be such that $( x_+)_i=\max\{x_i,0\}$ and $( x_-)_i=\min\{x_i,0\}$ for all $i\in V$. Accordingly, we derive that
\begin{align*}
    \frac{\sum\limits_{e\in E}w_e(f^L_e(\vec x))^p}{\sum_{j\in V}g(j)|x_{j}|^p}
 &=   \frac{\sum\limits_{e\in E}w_e|f^L_e(\vec x_+)+f^L_e(\vec x_-)|^p}{\sum_{j\in V}g(j)|x_{+,j}|^p+\sum_{j\in V}g(j)|x_{-,j}|^p}\\
 &\ge  \min\left\{ \frac{\sum\limits_{e\in E}w_e(f^L_e(\vec x_+))^p}{\|\vec x_+\|_{p,g}^p}, \frac{\sum\limits_{e\in E}w_e(f^L_e(\vec x_-))^p}{\|\vec x_-\|_{p,g}^p}\right\}\\
  &\ge \frac{1}{p^p}(\frac 2c)^{\frac{p}{p'}}\min\left\{ \frac{\sum\limits_{e\in E}w_ef^L_e(\vec x^p_+)}{\|\vec x_+^p\|_{1,g}}, \frac{\sum\limits_{e\in E}w_e\tilde{f}^L_e(|\vec x_-|^p)}{\|\vec x_-^p\|_{1,g}}\right\}^p\\
  &\ge \frac{(2/c)^{p-1}}{p^p}\min\left\{\frac{f(A_+)}{g(A_+)}, \frac{\tilde{f}(A_-)}{g(A_-)}\right\}^p
  \\&\ge \frac{(2/c)^{p-1}}{p^p}\min\left\{\frac{\hat{f}(A_+)}{g(A_+)}, \frac{\hat{f}(A_-)}{g(A_-)}\right\}^p,
\end{align*}
for some nonempty subset $A_\pm\subset \mathrm{supp}(\vec x_\pm)$ provided by Theorem A in \cite{JostZhang-PL} or Theorem \ref{thm:optimal-identity-fg}, where $\hat{f}(S):=\min\{f(S),f(V\setminus S)\}= \min\{f(S),\tilde{f}(S)\}$.

For any nonconstant vector $\hat{\vec x}$, since $g$ is a volume function,  there exists $c\in\{\hat{ x}_1,\cdots,\hat{ x}_n\}\subset \R$ such that  the vector 
$\vec x=\hat{\vec x}-c\vec 1$ satisfies 
 \begin{equation*}
      g(\mathrm{supp}(\vec x_+))\le \frac12 g(V)\qquad\text{and}\qquad g(\mathrm{supp}(\vec x_-))\le \frac12 g(V).
 \end{equation*}
Then, $f_e^L( \vec x)=f_e^L(\hat{\vec x})-cf_e(V)=f_e^L(\hat{\vec x})$ provided by $f_e(V)=0$. Therefore, we have
 \begin{align*}
     \frac{\sum_{e\in E} w_e(f_e^L(\hat{\vec x}))^p}{\min\limits_{c\in\R}\sum_{e\in E} g_e^L(|\hat{\vec x}-c\vec1|^p)}\ge \frac{\sum_{e\in E} w_e(f_e^L(\vec x))^p}{\sum_{e\in E} g_e^L(|\vec x|^p)}
     \ge \frac{(2/c)^{p-1}}{p^p}\cdot\frac{\hat{f}(A)^p}{g(A)^p}
 \end{align*}
 for some nonempty subset $A\subset \mathrm{supp}(\vec x_+)\subset \mathrm{supp}(\vec x)$ or $A\subset \mathrm{supp}(\vec x_-)\subset \mathrm{supp}(\vec x)$. In consequence,  $g(A)\le \frac12 g(V)$ (i.e. $g(A)\le g(V\setminus A)$).
It is easy to see that
\begin{align*}
\min_{A\ne\emptyset,V}\frac{\hat{f}(A)}{\min \{g(A),g(V\setminus A)\}}&=  \min_{A\ne\emptyset,V}\frac{\min \{f(A),f(V\setminus A)\} }{\min \{g(A),g(V\setminus A)\}}
\\&= \min_{A\ne\emptyset,V}\frac{f(A) }{\min \{g(A),g(V\setminus A)\}}=h.
\end{align*}
Therefore,  $\lambda\ge \frac{(2/c)^{p-1}}{p^p}h^p$. The proof is completed.
\end{proof}

\begin{example}\label{example:chemical-p-Lap}
Given a  chemical  hypergraph $(V,E)$ (see \cite{JostMulas19} for the definition) satisfying  $e_{in}\ne\emptyset\ne e_{out}$ and $\#(e_{in}\cup e_{out})\ge 2$ for all $e\in E$, let $f_{e}:\power(V)\to \R$ be defined by
\begin{equation*}
f_{e}(A)=\begin{cases}
1,&\text{ if } e_{in}\cap A\ne\emptyset\ne e_{out}\setminus A\text{ \; or\; }e_{out}\subset  A\subset V\setminus e_{in},\\
0,&\text{ otherwise},
\end{cases}
\end{equation*}
where $e_{in}:=\{\text{inputs of }e\}$ and  $e_{out}:=\{\text{outputs of }e\}$. 
Then the Lov\'asz extension of $f_{e}$ is determined by 
\begin{equation*}f_{e}^L(\vec x)=\left|\max\limits_{i\in e_{in}}x_i-\min\limits_{j\in e_{out}}x_j\right|.\end{equation*}
And the associated  $p$-Laplacian $\Delta_p$ induced by the Lov\'asz extension is defined as
$$\Delta_p\vec x= \nabla \sum_{e\in E}|\max\limits_{i\in e_{in}}x_i-\min\limits_{j\in e_{out}}x_j|^p$$
which satisfies
\begin{equation*}\frac 1p\langle \Delta_p \vec x,\vec x \rangle=\sum_{e\in E}\left|\max\limits_{i\in e_{in}}x_i-\min\limits_{j\in e_{out}}x_j\right|^p.\end{equation*}
\end{example}
The  $p$-Laplacian $\Delta_p$ introduced in Example \ref{example:chemical-p-Lap} can be described and computed approximately by  the following steps:
\begin{enumerate}
\item Given a vector $\vec x\in\R^V=\R^n$, for each hyperedge $e\in E$, let $i_e=\mathop{\mathrm{argmax}}\limits_{i\in e_{in}}x_i$ and $j_e=\mathop{\mathrm{argmin}}\limits_{j\in e_{out}}x_j$.
    \item Construct the weighted graph $G_{\vec x}$ on the vertex set $V$ by adding edges $\{i_e,j_e\}$ having weight $w(i_e,j_e)=w(e):=1$. 
  \item The  $p$-Laplacian $\Delta_p$ is defined to be the usual $p$-Laplacian $\Delta_p[G_{\vec x}]$ w.r.t. the graph $G_{\vec x}$, and $\Delta_p\vec x:=\Delta_p[G_{\vec x}]\vec x$. 
\end{enumerate}

Although  the related energy function 
\begin{equation*}
\vec x\mapsto\frac{\sum_{e\in E}|\max\limits_{i\in e_{in}}x_i-\min\limits_{j\in e_{out}}x_j|^p}{\sum_{i\in V}\deg(i)|x_i|^p}
\end{equation*}
is not smooth in general, it has some  features that are similar to the graph case. Amazingly, this eigenvalue problem is very similar to the graph case since we have a relative  isoperimetric  inequality for that. In detail, the second smallest eigenvalue of such a   $p$-Laplacian and the Cheeger constant
\begin{equation*}h:=\min\limits_{A\in\power(V)\setminus\{\emptyset,V\}}\frac{\#(\partial A)}{\min\{\vol(A),\vol(V\setminus A)\}}\end{equation*}
satisfy  Cheeger's inequality, where we adopt  the volume  $\vol(A):=\sum_{e\in E}\#(e\cap A)=\sum_{i\in A}\deg(i)$, the degree $\deg(i):=\# \{e\in E: i\in e\}$, and the boundary set \begin{equation*}\partial A:=\{e\in E: e_{in}\cap A\ne\emptyset\ne e_{out}\setminus A\text{ \; or\; }e_{out}\subset  A\subset V\setminus e_{in}\}.\end{equation*}

Also, the $p$-Laplacian on  chemical hypergraphs satisfies a nodal domain property, which is very similar to the graph case shown in Proposition \ref{pro:inertia-p-Lap}.

\begin{proof}
[Proof of Proposition \ref{pro:inertia-p-Lap}]
Let $U\subset V$ be  a maximal independent set.  Then for any $\vec x\in\R^n$ with $x_i=0$ $\forall i\in V\setminus U$, we have   $|x_i-x_j|^p=|x_i|^p+|x_j|^p$ whenever  $\{i,j\}\in E$.  Therefore,  for any $\vec x\in\R^n$ satisfying $\mathrm{supp}(\vec x)\subset U$, we get 
$$\frac{F(\vec x)}{G(\vec x)}:=\frac{\sum_{\{i,j\}\in E}w_{ij}|x_i-x_j|^p}{\sum_{i\in V}\deg_i|x_i|^p}=\frac{\sum_{\{i,j\}\in E}w_{ij}(|x_i|^p+|x_j|^p)}{\sum_{i\in V}\deg_i|x_i|^p}=1.$$
By Theorem \ref{thm:nodal-inertia-bound}, we obtain that $\alpha\le\alpha_1 \le\min\{\#\{\lambda_i\le 1\},\#\{\lambda_i\ge 1\}\}$. 

It can be verified that the  connected components of the support set of an eigenvector form a family of nodal domains in the sense of Definition  \ref{def:nice-nodal-domain}. Hence, Theorem \ref{thm:nodal-inertia-bound} and Theorem \ref{thm:inertia-nodal} can be directly applied to $p$-Laplacian to get the upper bound $\min\{ k+r-1, n-k+r\}$ for the number of nodal domains.
\end{proof}

As a  consequence of Theorem \ref{thm:Cheeger-main}, and as an analog of  Proposition \ref{pro:inertia-p-Lap},  we have
\begin{theorem}
\label{thm:p-Lap-Cheeger}
Under the above setting, we have the following Cheeger inequality
\begin{equation}\label{eq:Cheeger-chemical-hyper-p}
2^{p-1}\frac{h^p}{p^p}\le \lambda_2(\Delta_p)\le 2^{p-1}h.
\end{equation}
Also, we have the inertia bound $\alpha\le\min\{\#\{\lambda_i(\Delta_p)\le 1\},\#\{\lambda_i(\Delta_p)\ge 1\}\} $.   
And for any eigenvector $\vec x$ w.r.t. $\lambda_i(\Delta_p)$ whose multiplicity is $r$, the number  of connected components of the support set of $\vec x$ is smaller than or equal to $\min\{ i+r-1, n-i+r\}$. Here the independence number $\alpha$ and the connected components can be defined on the underlying graph\footnote{Two vertices are connected by an edge in the underlying graph if and only if there exists $h\in H$ with $h_{in}\cup h_{out}\supset\{i,j\}$.} induced by the chemical hypergraph.
\end{theorem}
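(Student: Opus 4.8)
The plan is to obtain the three assertions from two general results already proved: the Cheeger bounds will come from Theorem~\ref{thm:Cheeger-main}, and the inertia bound together with the nodal domain count from Theorem~\ref{thm:nodal-inertia-bound}, following verbatim the strategy of the proof of Proposition~\ref{pro:inertia-p-Lap}.

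For the Cheeger inequality, first I would record that the chemical hypergraph $p$-Laplacian $\Delta_p$ is exactly the Lov\'asz $p$-Laplacian associated with the families $f_e:=f_{in,e}$ and $g_e(A):=\#(e\cap A)$ of Section~\ref{sec:p-Lap}: indeed $f_{in,e}(\varnothing)=f_{in,e}(V)=0=g_e(\varnothing)$, $g_e$ is modular with $g_e(\{i\})=1$ for $i\in e_{in}\cup e_{out}$ and $0$ otherwise, and $\frac1p\langle\Delta_p\vec x,\vec x\rangle=\sum_e(f_{in,e}^L(\vec x))^p$ while $\sum_e g_e^L(|\vec x|^p)=\sum_{i\in V}\deg(i)|x_i|^p$; thus $\lambda_2(\Delta_p)$ is precisely the quantity $\lambda$ of \eqref{eq:lambda_2p-Laplacian}. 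Since $f_{in,e}$ takes only the values $0,1$, the constant $C=\max_{e,A}f_e(A)$ equals $1$, so \eqref{eq:Cheeger-right} gives $\lambda_2(\Delta_p)\le 2^{p-1}h$. For the lower bound I would verify the hypothesis needed for \eqref{eq:Cheeger-left}: because $f_{in,e}(A)$ depends on $A$ only through $A\cap(e_{in}\cup e_{out})$, one has $f_{in,e}(S\setminus\{i\})=f_{in,e}(S)$ whenever $i\notin e_{in}\cup e_{out}$, while $g_e(\{i\})=1$ for $i\in e_{in}\cup e_{out}$; hence $g_e(\{i\})\ge1$ whenever $f_e(S\setminus\{i\})\ne f_e(S)$ for some $S$, and $\ge0$ otherwise, so \eqref{eq:Cheeger-left} yields $h^p/p^p\le\lambda_2(\Delta_p)$.

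For the inertia bound and the nodal domain count I would put $F(\vec x)=\sum_{e\in E}(f_{in,e}^L(\vec x))^p=\sum_{e\in E}\bigl|\max_{i\in e_{in}}x_i-\min_{j\in e_{out}}x_j\bigr|^p$ and $G(\vec x)=\sum_{i\in V}\deg(i)|x_i|^p$; these are even, $p$-homogeneous and Lipschitz, so Theorem~\ref{thm:nodal-inertia-bound} applies to $(F,G)$. As in Proposition~\ref{pro:inertia-p-Lap}, taking a maximal independent set $U$ of the underlying graph I would check that $F(\vec x)/G(\vec x)=1$ for every $\vec x$ with $\supp(\vec x)\subset U$, so that $\alpha\le\alpha_1$, and then Theorem~\ref{thm:nodal-inertia-bound} with level $c=1$ gives $\alpha\le\min\{\#\{\lambda_i(\Delta_p)\le1\},\#\{\lambda_i(\Delta_p)\ge1\}\}$. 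For the nodal domain statement I would observe that the connected components of $\supp(\vec x)$ of an eigenvector $\vec x$ form a family of nodal domains in the sense of Definition~\ref{def:nice-nodal-domain} — each hyperedge has all of its vertices in $\supp(\vec x)$ inside a single component, so $F$ and $G$ split over components and the restriction of $\vec x$ to a component is again an eigenvector for $\lambda_i$ — whence the bounds $N^-(\vec x)\le i+r-1$ and $N^+(\vec x)\le n-i+r$ of Theorem~\ref{thm:nodal-inertia-bound} give the claimed $\min\{i+r-1,n-i+r\}$.

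The main obstacle I anticipate is exactly the verification that $F/G\equiv1$ on $\mathrm{span}\{\vec 1_{\{v\}}:v\in U\}$, and its nodal-domain counterpart. Unlike the graph term $|x_i-x_j|$, the expression $|\max_{e_{in}}x-\min_{e_{out}}x|$ is not invariant under flipping the sign of a single coordinate: if $\supp(\vec x)$ meets a hyperedge $e$ in one vertex $v$ lying only in $e_{in}$ (resp.\ only in $e_{out}$), the corresponding term equals $(x_v^+)^p$ (resp.\ $(x_v^-)^p$) rather than $|x_v|^p$. To close this gap I would invoke the relative isoperimetric inequality for chemical hypergraphs noted before Theorem~\ref{thm:p-Lap-Cheeger}, which forces the energy to behave graph-like on such configurations, or else reduce to a favourable sign pattern by means of the odd-homeomorphism invariance of the spectrum (Proposition~\ref{pro:odd-homeomorphism}); after such a reduction the remaining estimates are routine and parallel the graph case.
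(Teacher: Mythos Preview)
Your treatment of the Cheeger inequality via Theorem~\ref{thm:Cheeger-main} (with $C=\max_{e,A}f_{in,e}(A)=1$, and the degree hypothesis verified from $g_e(\{i\})=\mathbf{1}_{i\in e_{in}\cup e_{out}}$) is exactly what the paper intends and is complete. For the inertia bound and the nodal-domain estimate your plan again coincides with the paper's, which offers nothing beyond the phrase ``an analog of Proposition~\ref{pro:inertia-p-Lap}''.

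The obstacle you flag at the end is real, and your proposed repairs do not remove it. For a general chemical hypergraph with $e_{in}\ne e_{out}$ the energy $F(\vec x)=\sum_e|\max_{e_{in}}x-\min_{e_{out}}x|^p$ is \emph{not} even (your claim to the contrary is incorrect: $F(-\vec x)$ interchanges the roles of $e_{in}$ and $e_{out}$), so Theorem~\ref{thm:nodal-inertia-bound} does not apply as stated; and, as you compute, if $v\in e_{in}\setminus e_{out}$ with $|e_{in}|\ge2$ then the contribution is $(x_v^+)^p$ rather than $|x_v|^p$, so $F/G\equiv1$ can fail on vectors supported in an independent set. Neither the ``relative isoperimetric inequality'' nor Proposition~\ref{pro:odd-homeomorphism} fixes this (there is no odd homeomorphism swapping in/out roles edge by edge). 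What does go through cleanly is the undirected case $e_{in}=e_{out}=:e$ discussed immediately after the theorem: then $F$ is even, and for $\vec x$ supported in an independent set $U$ and any $e$ with $e\cap U=\{v\}$, the hypothesis $\#e\ge2$ guarantees another vertex in $e$ with value $0$, whence $\max_e x-\min_e x=|x_v|$ and the argument of Proposition~\ref{pro:inertia-p-Lap} carries over verbatim, giving both $\alpha\le\alpha_1$ and the nodal-domain bound. Your proof is therefore complete in that setting; the fully oriented case would require a symmetrisation (for instance working with both $f_{in,e}$ and $f_{out,e}$, i.e.\ with $\tfrac12(F(\vec x)+F(-\vec x))$) that the paper does not spell out either.
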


\begin{remark}\label{remark:p-Lap-} Theorem \ref{thm:Cheeger-main} generalizes and enhances the relevant results in the recent references  \cite{LM18,Yoshida19}. Moreover, Theorem \ref{thm:p-Lap-Cheeger} includes the following special cases: 
\begin{itemize}
    \item Taking $p=2$ and letting  $e_{in}=e_{out}$ for any $e\in E$, we get Louis hypergraph Laplacian  \cite{Louis15} and the Cheeger inequality therein. 
\item Taking $(V,E)$ as a graph (i.e., $e_{in}=e_{out}$ and $\#e_{in}=2$),
  Theorem \ref{thm:p-Lap-Cheeger} implies the Cheeger inequality for the graph $p$-Lapalcian  \cite{TudiscoHein18}. \item Taking $p=1$ and letting  $e_{in}=e_{out}$ for any $e\in E$, we get the total variation on hypergraphs  \cite{TVhyper-13}.
\end{itemize}
\end{remark}

In general, letting  $e:=e_{in}=e_{out}$, one can obtain
$$\frac{\sum_{e\in E}|\max\limits_{i\in e}x_i-\min\limits_{j\in e}x_j|^p}{\sum_{i\in V}\deg(i)|x_i|^p}=\frac{\sum_{e\in E}\max\limits_{i,j\in e}|x_i-x_j|^p}{\sum_{i\in V}\deg(i)|x_i|^p}.$$
If we further take $p=1$ and let the edge set be  $E=\{N(i):i\in V\}$, where $N(i)$ is  the 1-neighborhood of $i$,  then we recover the equality of Cheeger constants w.r.t. the vertex-boundary \cite{JostZhang-PL}.

  Mulas\cite{Mulas20}  generalizes the graph Cheeger inequalities to the case of  $k$-uniform hypergraphs, using the normalized Laplacian for hypergraphs  \cite{JostMulas19}. 
From a different perspective, we indeed provide in this section  a way of defining a $p$-Laplacian from a Lov\'asz type extension of a Cheeger quantity so that the Cheeger inequality emerges automatically.

\subsection{Tensors and their eigenvalues}\label{sec:tensor}

Eigenvalues for tensors have been defined by Lim \cite{Lim05} and Qi \cite{Qi05} in different ways; see also  the presentations in \cite{Sturmfels16,Qi18}. Here, in line with our general procedure, we approach the eigenvalue problem of tensors through Rayleigh quotients. We consider $d$-dimensional $n\times \cdots \times n$-tensors, that is, arrays of the form $A=(a_{i_1i_2\dots i_d})$ where each entry takes its values in $\mathbb{R}$ and the indices $i_1, i_2,\dots ,i_d$ range from $1$ to $n$. We assume that the tensor $A$ is symmetric, that is, each entry $a_{i_1i_2\dots i_d}$ is invariant under permutations of the indices. We write $A(\vec x,\dots ,\vec x)=\sum_{i_1,\dots ,i_d=1}^n a_{i_1i_2\dots i_d}x_{i_1}\mdot\mdot\mdot x_{i_d}$ for $ \vec x=(x_{1},\dots ,x_{n})\in \mathbb{R}^d$, and we can then also define $A(\vec x(1),\dots ,\vec x(d))$. 
We then consider the quotient
\begin{equation}
 \vec x=(x_1,\dots ,x_n)\mapsto   \frac{\sum_{i_1,\dots ,i_d=1}^n a_{i_1i_2\dots i_d}x_{i_1}\mdot \mdot \mdot x_{i_d}}{\sum_i x_i^d}.
\end{equation}
Its critical points then satisfy the eigenvalue equation
\begin{equation}
  \sum_{i_2,\dots ,i_d=1}^n a_{ii_2\dots i_d}x_{i_2}\mdot \mdot \mdot x_{i_d}  =\lambda x_i^{d-1}
\end{equation}
for all $i=1,\dots, n$ and some $\vec x\neq \vec 0$ and some $\lambda\in \mathbb{R}$. Note that the first index is excluded from the sum, but by symmetry of $A$, we could have as well taken any other index for that role. This is the eigenvalue equation of \cite{Qi05,Sturmfels16,Qi18}. 

\begin{defn}[H-eigenvalue]
Continuing   Example \ref{ex:tensor}, 
for two  order-$k$  $n$-dimensional tensors   $C:=(c_{i_1,\cdots,i_k})$ and $D:=(d_{i_1,\cdots,i_k})$, the H-eigenvalue problem of $(C,D)$ is to find a pair $(\lambda,\vec x)\in\R\times (\R^n\setminus \{\vec0\})$ satisfying $C\vec x^{k-1}=\lambda D\vec x^{k-1}$, where  $C\vec x^{k-1}:=(\sum_{i_2,\cdots,i_k=1}^nc_{i,i_2,\cdots,i_k}x_{i_2}\cdots x_{i_k})_{i=1}^n$.
\end{defn}

\begin{defn}
The adjacency tensor $A$ of a $k$-uniform hypergraph $(V,E)$ is a non-negative  symmetric tensor such that $a_{i_1,\cdots,i_k}>0$ $\Leftrightarrow$ $\{i_1,\cdots,i_k\}\in E$. Denote the $i$-th eigenvalue w.r.t. a non-negative  diagonal tensor $D$ by $\lambda_i:=\inf\limits_{\gen(S)\ge i}\sup\limits_{x\in S}\frac{\langle A\vec x^{k-1},\vec x\rangle}{\langle D\vec x^{k-1},\vec x\rangle}$. 
\end{defn}

\begin{proof}[Proof of Proposition \ref{pro:inertia-k-uniform}]
Let $U$ be a maximal independent set with $\#U=\alpha$. 
For any $\vec x\in \R^U$, $x_{i_1}\cdots x_{i_k}=0$ whenever $\{i_1,\cdots,i_k\}$ is a hyperedge. And $a_{i_1,\cdots,i_k}=0$ if $\{i_1,\cdots,i_k\}$ is not a hyperedge. Thus,  
$$\frac{F(\vec x)}{G(\vec x)}:=\frac{\langle A\vec x^{k-1},\vec x\rangle}{\langle D\vec x^{k-1},\vec x\rangle}=\frac{\sum a_{i_1,\cdots,i_k}x_{i_1}\cdots x_{i_k}}{\sum d_{i_1,\cdots,i_k}x_{i_1}\cdots x_{i_k}}=0.$$
Applying Theorem \ref{thm:nodal-inertia-bound} to the function pair $(F,G)$, we get  $\alpha\le\alpha_0 \le\min\{\#\{\lambda_i\le 0\},\#\{\lambda_i\ge 0\}\}$.

We will check that the set of connected components of the support set of an eigenvector is a family of nodal  domains in the sense of Definition  \ref{def:nice-nodal-domain}. Indeed, for connected components  $U_1,\cdots,U_k$
 of the support set $\supp(\vec x)$, $a_{i_1,\cdots,i_k}=0$ if
 $\{i_1,\cdots,i_k\}$ intersects two of these components. Thus it can be
 checked that  $\frac{\partial}{\partial x_v}F(\vec
 x|_{U_j})=\frac{\partial}{\partial x_v}F(\vec x)$  for any $v\in U_j$. By the
 condition that $D$ is a nonnegative diagonal matrix, the function $G(\vec
 x):=\langle D\vec x^{k-1},\vec x\rangle$ also  possesses the property
 $\frac{\partial}{\partial x_v}G(\vec x|_{U_j})=\frac{\partial}{\partial
   x_v}G(\vec x)$ for any $v\in U_j$. Since $(\lambda_i,\vec x)$ is an
 eigenpair, i.e., $\nabla F(\vec x)=\lambda_i\nabla G(\vec x)$, we have
 $\frac{\partial}{\partial x_v}F(\vec
 x|_{U_j})=\lambda_i\frac{\partial}{\partial x_v}G(\vec x|_{U_j})$ for any
 $v\in U_j$. By the Euler identity for homogeneous functions, we immediately get $F(\vec x|_{U_j})=\lambda_iG(\vec x|_{U_j})$. Finally, it is not difficult to show that for any $t_1,\cdots, t_k$, $F(\sum_{j=1}^kt_j\vec x|_{U_j})=\lambda_iG(\sum_{j=1}^kt_j\vec x|_{U_j})$. Therefore, Theorem \ref{thm:nodal-inertia-bound} can be applied to give   the upper bound $\min\{ k+r-1, n-k+r\}$ for the number of nodal domains.
\end{proof}

We can of course more generally take certain norms in the Rayleigh quotient. 
For instance, we could adopt  the norm $\|\vec x(j)\|_{p_j}$ for the $j$-th argument and consider as in \cite{Lim05}
\begin{equation}\label{eq:Rayleigh-norms}
 (\vec x(1),\dots ,\vec x(d))\mapsto   \frac{|A(\vec x(1),\dots ,\vec x(d))|}{\|\vec x(1)\|_{p_1} \dots  \|\vec x(d)\|_{p_d}}.
\end{equation}
and take its stationary points as eigenvectors. Also, there are many meaningful   optimization problems for \eqref{eq:Rayleigh-norms} with constraints. For example, by Theorem \ref{thm:optimal-identity-fg}, we have an interesting equality for the graph maxcut problem: $$\max\limits_{S\subset V}\#\partial S=\max\limits_{S\cap T=\varnothing}\#E(S,T)=\max\limits_{|x|^\top  |y|=0}\frac{A(\vec x,\vec y)}{\|\vec x\|_\infty\|\vec y\|_\infty},$$
where $|\vec x|:=(|x_1|,\cdots,|x_n|)$, and $A$ is the adjacency matrix of a graph $(V,E)$ with $V=\{1,\cdots,n\}$. 

We now consider the case where all entries $a_{i_1i_2\dots i_d}\in \{0,1\}$. Such a tensor can be seen as representing a simplicial complex with vertex set $V=\{1,\dots ,n\}$ and where the vertices $i_1, \dots ,i_d$ form a $(d-1)$-simplex iff $a_{i_1i_2\dots i_d}=1$. We can then define a tensor Laplacian 
\begin{equation}
    (\Delta x)_i= -\frac{1}{\deg_i}\sum_{i_2, \dots ,i_d=1}^n a_{i_1i_2\dots i_d} x_{i_2} \mdot \mdot \mdot  x_{i_d}+ (x_i)^{d-1}
\end{equation}
for $i=1,\dots ,n$, where $\deg_i$ is the number of entries $a_{ii_2\dots i_d}=1$ when $i_2,\dots ,i_d=1,\dots, n$, or equivalently, the number of $(d-1)$-simplices containing $i$. The eigenvalue equation for this Laplacian then is
\begin{equation}
   -\frac{1}{\deg_i}\sum_{i_2, \dots ,i_d=1}^n a_{i i_2\dots i_d} x_{i_2} \mdot \mdot \mdot  x_{i_d}+ (x_i)^{d-1} =\lambda (x_i)^{d-1} \text{ for all }i
\end{equation}
for some $\vec x\neq \vec0$ and some real eigenvalue $\lambda$. 
Clearly, $\lambda=0$ is an eigenvalue for the constant eigenfunction. 

Equivalently, we can write of course
\begin{equation}\label{eq:complex-tensor-Laplacian}
   \sum_{i_2, \dots ,i_d=1}^n a_{i i_2\dots i_d} x_{i_2} \mdot \mdot \mdot x_{i_d}- (1-\lambda) \deg_i(x_i)^{d-1} =0  \text{ for all }i.
\end{equation}
This comes from the Rayleigh quotient
\begin{equation}
 \vec   x\mapsto \frac{\sum_i\deg_i x_i^d-\sum_{i_1,i_2, \dots ,i_d=1}^n a_{i_1i_2\dots i_d}  x_{i_1} x_{i_2} \mdot \mdot \mdot   x_{i_d}}{\sum_i\deg_i x_i^d}. 
\end{equation}
This Laplace operator can be generalized to arbitrary symmetric tensors with nonnegative entries when we put $\deg_i=\sum_{i_2, \dots ,i_d=1}^n a_{ii_2\dots i_d}$.

We then have the following analog of  Proposition \ref{pro:inertia-k-uniform}.
\begin{pro}\label{pro:inde-nodal-tensor}
The independence number of a $(d-1)$-dim simplicial complex on $V$ is defined as $\alpha=\max\{\#U:U\subset V\text{ s.t. }U\text{ contains no }(d-1)\text{-dim  simplex}\}$. Let $\lambda_i$ be the $i$-th minimax  eigenvalue of the  eigenvalue problem  \eqref{eq:complex-tensor-Laplacian}. Then $\alpha\le\min\{\#\{\lambda_i\le 1\},\#\{\lambda_i\ge 1\}\}$. Moreover, for any eigenvector $\vec x$ w.r.t. $\lambda_i$ whose multiplicity is $r$, the number  of connected components of the support  of $\vec x$ is smaller than or equal to $\min\{ i+r-1, n-i+r\}$. 
\end{pro}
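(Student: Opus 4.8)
The plan is to transcribe the proof of Proposition \ref{pro:inertia-k-uniform} to the Laplacian \eqref{eq:complex-tensor-Laplacian}, by applying the inertia and nodal-domain bounds of Theorem \ref{thm:nodal-inertia-bound} to a suitable $d$-homogeneous function pair. First I would set $G(\vec x)=\sum_{i\in V}\deg_i x_i^d=\langle D\vec x^{d-1},\vec x\rangle$, with $D=\mathrm{diag}(\deg_1,\dots,\deg_n)$, and $F(\vec x)=G(\vec x)-A(\vec x,\dots,\vec x)=\sum_{i}\deg_i x_i^d-\sum_{i_1,\dots,i_d}a_{i_1\cdots i_d}x_{i_1}\cdots x_{i_d}$. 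Both are polynomials, hence smooth, and both are $d$-homogeneous and even (we take $d$ even, as in Example \ref{ex:tensor} and Proposition \ref{pro:inertia-k-uniform}; after discarding the isolated vertices, which lie in every independent set and only contribute the trivial eigenvalue, we may assume $\deg_i>0$, so that $G$ is positive on $\R^n\setminus\{\vec0\}$). Using $\nabla_{\vec x}A(\vec x,\dots,\vec x)=d\,A\vec x^{d-1}$ (by symmetry of the tensor) together with the Euler identity for $d$-homogeneous functions, a direct computation shows that $(\lambda,\vec x)$ solves \eqref{eq:complex-tensor-Laplacian} exactly when $\vec0\in\nabla F(\vec x)-\lambda\nabla G(\vec x)$ with $\lambda=F(\vec x)/G(\vec x)=1-A(\vec x,\dots,\vec x)/G(\vec x)$; hence the $i$-th min-max eigenvalue $\lambda_i$ of \eqref{eq:complex-tensor-Laplacian} is precisely the $i$-th min-max eigenvalue of $(F,G)$.

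For the inertia bound, I would take a maximal independent set $U\subset V$ with $\#U=\alpha$. Since $U$ contains no $(d-1)$-simplex, and since $a_{i_1\cdots i_d}=0$ as soon as the indices are not pairwise distinct while a $(d-1)$-simplex has $d$ distinct vertices, the coefficient $a_{i_1\cdots i_d}$ vanishes whenever $i_1,\dots,i_d\in U$. Therefore $A(\vec x,\dots,\vec x)=0$ for every $\vec x$ with $\mathrm{supp}(\vec x)\subset U$, so $F(\vec x)/G(\vec x)=1$ on the $\alpha$-dimensional coordinate subspace $\{\vec x:\mathrm{supp}(\vec x)\subset U\}$. By the definition of the $c$-level independence number of $(F,G)$, this gives $\alpha_1\ge\alpha$, and Theorem \ref{thm:nodal-inertia-bound} yields $\alpha\le\alpha_1\le\min\{\#\{\lambda_i\le1\},\#\{\lambda_i\ge1\}\}$.

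For the nodal-domain inequality, I would argue, exactly as in the proof of Proposition \ref{pro:inertia-k-uniform}, that the connected components $U_1,\dots,U_m$ of $\mathrm{supp}(\vec x)$ — connectivity taken in the underlying graph in which $i\sim j$ iff $i,j$ lie in a common $(d-1)$-simplex — form a family of nodal domains of $\vec x$ w.r.t.\ $(F,G)$ in the sense of Definition \ref{def:nice-nodal-domain}. Indeed, $a_{i_1\cdots i_d}=0$ whenever the indices meet two distinct components, so no monomial of $F$ or of $G$ mixes components; consequently $\partial_v F(\vec x)=\partial_v F(\vec x|_{U_j})$ and $\partial_v G(\vec x)=\partial_v G(\vec x|_{U_j})$ for $v\in U_j$, and from $\nabla F(\vec x)=\lambda_i\nabla G(\vec x)$ together with Euler's identity one obtains $F(\vec x|_{U_j})=\lambda_i G(\vec x|_{U_j})$ for each $j$, hence $F(\sum_j t_j\vec x|_{U_j})=\sum_j t_j^d F(\vec x|_{U_j})=\lambda_i\sum_j t_j^d G(\vec x|_{U_j})=\lambda_i G(\sum_j t_j\vec x|_{U_j})$ for all $t_1,\dots,t_m\in\R$. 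Thus $\{U_j\}_{j=1}^m$ is a nodal-domain family, so $m\le N^+(\vec x)$ and $m\le N^-(\vec x)$, and Theorem \ref{thm:nodal-inertia-bound} gives $m\le\min\{i+r-1,\,n-i+r\}$.

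The step I expect to require the most care is the bookkeeping that converts the combinatorial facts (``$U$ contains no $(d-1)$-simplex'', ``no $(d-1)$-simplex meets two distinct components'') into the algebraic vanishing statements about the contraction $A(\vec x,\dots,\vec x)$ and its partial derivatives — in particular tracking repeated indices in the tensor and the parity hypothesis needed to guarantee that $F$ and $G$ are even, so that Theorem \ref{thm:nodal-inertia-bound} applies. This is precisely the point at which the argument of Proposition \ref{pro:inertia-k-uniform} must be transcribed with care to the Laplacian \eqref{eq:complex-tensor-Laplacian} rather than applied to the adjacency tensor directly.
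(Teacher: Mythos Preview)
Your proposal is correct and follows essentially the same approach the paper intends: Proposition \ref{pro:inde-nodal-tensor} is stated as a direct analog of Proposition \ref{pro:inertia-k-uniform}, and your argument transcribes that proof to the Laplacian pair $F=G-A(\vec x,\dots,\vec x)$, $G=\sum_i\deg_i x_i^d$, invoking Theorem \ref{thm:nodal-inertia-bound} at level $c=1$ rather than $c=0$. The care you take with parity and with the combinatorial-to-algebraic translation (no simplex in $U$ $\Rightarrow$ $A(\vec x,\dots,\vec x)=0$ on $\R^U$; no simplex meets two components $\Rightarrow$ no cross monomials) is exactly what the paper's argument for Proposition \ref{pro:inertia-k-uniform} does, so there is nothing to add.
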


\subsection{Signed (hyper-)graphs}\label{sec:signed-graph}
Spectral theory for signed graphs has many important applications. 
A  breakthrough  of Huang \cite{Huang19} asserts that any induced subgraph of an $n$-dimensional hypercube on a set of $2^{n-1}+1$  vertices has maximum degree at least $\sqrt{n}$. This confirms the Sensitivity Conjecture in the field of computer science. 
In this section, we  use Theorem \ref{thm:fg-Huang} to obtain more results on signed graphs.   

We first generalize the concept of a signed graph to allow for edge weights. 
\begin{defn}[weighted signed graph]
A weighted signed graph is a pair $(V,W)$ of the vertex set   $V=\{1,2,\cdots,n\}$  and the adjacency matrix   $W=(w_{ij})_{n\times n}$, where $w_{ij}=w_{ji}$  and  $w_{ii}=0$, $\forall i,j\in V$.

If $w_{ij}\in\{0,1,-1\}$ for any $i,j\in V$, we call such a $(V,W)$  a signed graph.

If $w_{ij}\ge 0$,  $(V,W)$ is called a weighted graph. And if $w_{ij}\in\{0,1\}$, we get a simple graph.
\end{defn}

\begin{theorem}\label{thm:generalized-inertia-Huang}
For a weighted graph $(V,W)$ with $\#V=n$, we put $S(W)=\{W'=(w_{ij}')_{n\times n}:(V,W')\text{ is a weighted signed graph with }|w_{ij}'|=w_{ij},\forall i,j\in V\}$. Then we have
\begin{equation}
    \min\limits_{U\subset V,\#U=k}\max\limits_{i\in U}\deg_U(i)\ge \max\limits_{W'\in S(W)}\max\{\lambda_k(W'),-\lambda_{n-k+1}(W')\}
\end{equation}
where $\deg_U(i):=\sum_{j\in U}w_{ij}$ is the degree of the vertex $i$ of the induced subgraph $(U,W|_U)$.
\end{theorem}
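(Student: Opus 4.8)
The plan is to derive this statement as a direct application of Theorem \ref{thm:fg-Huang} (equivalently Theorem \ref{thm:FG-p-homo-signed}) by choosing the right discrete function pair. First I would set $V=\{1,\cdots,n\}$ and take $k=1$ in the formalism of Theorem \ref{thm:fg-Huang}, i.e.\ work with functions of a single set argument. For a weighted graph $(V,W)$ with all $w_{ij}\ge 0$, define $f:\power(V)\to[0,+\infty)$ by $f(A)=\#E(A,A)$ in the weighted sense, that is $f(A)=\sum_{i,j\in A}w_{ij}$, and $g:\power(V)\to[0,+\infty)$ by $g(A)=\#A$. Then by Example \ref{example:k-uniform-hypergraph} (the case $k=2$) and the computations in Example \ref{ex:spectral-radius}, the piecewise multilinear extension on the diagonal is $f^M_\triangle(\vec x)=\sum_{i,j\in V}w_{ij}x_ix_j=\vec x^\top W\vec x$ and $g^M_\triangle(\vec x)=\|\vec x\|_1$ — but to match the quadratic/$2$-homogeneous bookkeeping in the theorem I would instead take $g(A)=(\#A)$ only formally and note that what actually enters Theorem \ref{thm:fg-Huang} is the pair $(F',g^M_\triangle(|\cdot|))$; so the cleaner choice is $g(A)=\#A$ with $g^M_\triangle(|\vec x|)=\|\vec x\|_1$ replaced, in the quadratic normalization consistent with $\vec x^\top W\vec x$, by $\|\vec x\|_2^2$. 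Concretely I would use $f^M_\triangle(|\vec x|)=\sum_{i,j}w_{ij}|x_i||x_j|$ as the dominating function and $g^M_\triangle(|\vec x|)=\|\vec x\|_2^2$, so that the function pair $(f^M_\triangle(|\cdot|), \|\cdot\|_2^2)$ and its restrictions are exactly what appear in $S(f)$ and in Theorem \ref{thm:FG-p-homo-signed}.

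Next I would identify the set $S(f)=\{F':f^M_\triangle(|\vec x|)\ge |F'(\vec x)|,\ \forall \vec x\}$ with the family of quadratic forms coming from weighted signed graphs: given $W'\in S(W)$, the form $F'(\vec x)=\vec x^\top W'\vec x$ satisfies $|F'(\vec x)|=|\sum_{i,j}w_{ij}'x_ix_j|\le \sum_{i,j}|w_{ij}'|\,|x_i||x_j|=\sum_{i,j}w_{ij}|x_i||x_j|=f^M_\triangle(|\vec x|)$, so $\vec x\mapsto\vec x^\top W'\vec x$ lies in $S(f)$. Here $\lambda_m(F')$ and $\lambda_m'(F')$ in Theorem \ref{thm:fg-Huang} are the $m$-th min-max and max-min eigenvalues of the pair $(\vec x^\top W'\vec x,\|\vec x\|_2^2)$, which by the classical Courant–Fischer theorem (a special case of Proposition \ref{cor:LScritical} applied to even $2$-homogeneous forms) are precisely the ordered eigenvalues of the symmetric matrix $W'$: namely $\lambda_m(W')$ is the $m$-th smallest and $-\lambda_m'(W')=-(\text{$m$-th largest})$, i.e.\ $\lambda_m'(W')$ is the $(n+1-m)$-th smallest, so $\max\{\lambda_m(W'),-\lambda_m'(W')\}$ is $\max\{\lambda_m(W'),-\lambda_{n+1-m}(W')\}$ in the notation of the statement. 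On the left-hand side of Theorem \ref{thm:fg-Huang}, with $k=1$ the "chain $A_1$ in $U$" reduces to a single subset $A\subseteq U$, and $\max_{A\subseteq U} f(A)/g(A)=\max_{A\subseteq U}\frac{\sum_{i,j\in A}w_{ij}}{\#A}$, which by the comonotonic identity in Theorem \ref{thm:Turan-general} (or directly, taking $A=\{i\}$ for the relevant index) is bounded below by $\max_{i\in U}\deg_U(i)$; in fact I would argue the cleaner bound via the Collatz–Wielandt/largest-eigenvalue reasoning of Example \ref{ex:spectral-radius}, giving $\max_{A\subseteq U}\frac{\sum_{i,j\in A}w_{ij}}{\#A}\ge\max_{i\in U}\deg_U(i)$ directly from monotonicity. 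Combining, $\min_{\#U=k}\max_{i\in U}\deg_U(i)\ge \min_{\#U=k}\max_{A\subseteq U}\frac{f(A)}{g(A)}\ge\sup_{F'\in S(f)}\max\{\lambda_k(F'),-\lambda_k'(F')\}\ge\sup_{W'\in S(W)}\max\{\lambda_k(W'),-\lambda_{n+1-k}(W')\}$, which is the claim.

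The step I expect to require the most care is the bookkeeping of normalizations: Theorem \ref{thm:fg-Huang} is stated for general $k$-fold products $f,g:\power(V)^k\to[0,+\infty)$ and for the pair $(F',g^M_\triangle(|\cdot|))$, whereas the classical quadratic eigenvalue statement wants $\|\cdot\|_2^2$ in the denominator, not $g^M_\triangle$ of the naive choice $g(A)=\#A$ (whose multilinear extension on the diagonal is $\|\vec x\|_1$, not $\|\vec x\|_2^2$). I would resolve this either by choosing $g$ so that $g^M_\triangle(|\vec x|)=\sum_i|x_i|^2$ (e.g.\ $g(A,B)=\#(A\cap B)$ in the two-argument picture, whose $f^Q(\vec x,\vec y)=\sum_i x_iy_i$ and hence $g^M_\triangle(\vec x)=\|\vec x\|_2^2$, exactly as in Example \ref{ex:spectral-radius}), and correspondingly $f(A,B)=\#E(A,B)$ so that $f^M_\triangle(\vec x)=\vec x^\top W\vec x$; then $f^M_\triangle(|\vec x|)=\sum_{i,j}w_{ij}|x_i||x_j|$ dominates $|\vec x^\top W'\vec x|$ for every sign pattern, and all the eigenvalue identifications go through verbatim. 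The only remaining subtlety is checking $\dim|X|=\dim X$ for $X=\{\vec x:\supp(\vec x)\subseteq U\}$, which holds because on such coordinate subspaces the map $\vec x\mapsto|\vec x|$ is a homeomorphism onto $[0,\infty)^U$, so $|X|$ is the full coordinate subspace of the same dimension $k=\#U$; this legitimizes the use of Theorem \ref{thm:FG-p-homo-signed} with $m=k$.
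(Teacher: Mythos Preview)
Your final resolution in the third paragraph --- taking $f(A,B)=\#E(A,B)=\sum_{i\in A,j\in B}w_{ij}$ and $g(A,B)=\#(A\cap B)$ so that $f^M_\triangle(\vec x)=\vec x^\top W\vec x$ and $g^M_\triangle(|\vec x|)=\|\vec x\|_2^2$ --- is exactly the paper's choice, and once you commit to it the argument goes through just as the paper does. But the $k=1$ route you start with is not merely a normalization glitch: for a single set argument $f^M_\triangle$ is the one-homogeneous Lov\'asz extension $f^L$, not the quadratic form, and the signed quadratic forms $\vec x\mapsto\vec x^\top W'\vec x$ are \emph{not} in $S(f)$ for this $f$. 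For instance with $n=2$, $w_{12}=1$ and $\vec x=(2,1)$ one has $|\vec x^\top W'\vec x|=4$ while $f^L(|\vec x|)=2$. So the second paragraph's identification of $S(f)$ with $\{W'\in S(W)\}$ is invalid in the $k=1$ picture.

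There is also a direction error on the combinatorial side: you assert $\max_{A\subseteq U}\frac{\sum_{i,j\in A}w_{ij}}{\#A}\ge\max_{i\in U}\deg_U(i)$, but this is false (take a star on $U$; the left side is the maximum \emph{average} degree of an induced subgraph, which is at most $2$, while the center has degree $\#U-1$), and taking $A=\{i\}$ gives $0$, not $\deg_U(i)$. In the correct $k=2$ setup the left-hand side of Theorem~\ref{thm:fg-Huang} is $\max_{A\subset B\subset U}\frac{\sum_{i\in A,j\in B}w_{ij}}{\#A}$, and this \emph{equals} $\max_{i\in U}\deg_U(i)$ (take $A=\{i\}$, $B=U$; the reverse bound is the average/maximum comparison). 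This is precisely the identity the paper invokes from Example~\ref{ex:spectral-radius}, so no extra inequality is needed once you are in the bilinear framework.
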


\begin{proof}
Taking $f(A,B)=\sum_{i\in A,j\in B}w_{ij}$ and $g(A,B)=\#(A\cap B)$ for $A,B\subset V$ in 
Theorem \ref{thm:fg-Huang}, and by Example \ref{ex:spectral-radius}, we have $\max\limits_{i\in U}\deg_U(i)=\max\limits_{A\subset B}\frac{f(A,B)}{g(A,B)}$,  $f^Q_\Delta(\vec x)=\langle W\vec x,\vec x\rangle$ and $g^Q_\Delta(\vec x)=\langle \vec x,\vec x\rangle$. For any $W'\in S(W)$, taking  $F'(\vec x)=\langle W'\vec x,\vec x\rangle$ and   $G'(\vec x)=\langle \vec x,\vec x\rangle$, we have  $\lambda_i(W')=\lambda_i(F',G')$ and $\lambda_{n-i+1}(W')=\lambda_i'(W')=\lambda_i'(F',G')$ (by the classical min-max theorem). Therefore, the proof is completed by Theorem \ref{thm:fg-Huang}.
\end{proof}


\begin{remark}
Theorem \ref{thm:generalized-inertia-Huang} implies the inertia bound for the  independent number. In fact, let $d_k=  \min\limits_{U\subset V,\#U=k}\max\limits_{i\in U}\deg_U(i)$ and $s_k=\max\limits_{W'\in S(W)}\max\{\lambda_k(W'),-\lambda_{n-k+1}(W')\}$. Then both $(d_k)_{k\ge 1}$ and $(s_k)_{k\ge 1}$ are non-decreasing sequences with  $s_k\le d_k$, $\forall k$. 

Clearly, $d_k=0$ $\Leftrightarrow$ there is an independent set of  $k$ elements $\Leftrightarrow$ $\alpha\ge k$. So, it follows from $d_{\alpha}=0$ that $s_\alpha\le0$,  which means $\max\{\lambda_{\alpha}(W),-\lambda_{n-\alpha+1}(W)\}\le 0$, i.e, $\lambda_1\le\cdots\le \lambda_{\alpha}\le 0\le \lambda_{n-\alpha+1}\le\cdots\le \lambda_n$. Therefore, we get the inertia bound $\alpha\le \min\{\#\{\lambda_i\le 0\},\#\{\lambda_i\ge 0\}\}$. 
\end{remark}

\begin{remark}
Following  Huang's idea, we can use Theorem \ref{thm:generalized-inertia-Huang} to get a very slight generalization of Huang's theorem in the following way:
\begin{itemize}
    \item[Step 1.] Let $(V,W)$ be a weighted graph such that there exists $W'\in S(W)$ satisfying $W'^2=\lambda I$ with $\lambda>0$. Then $n:=\#V$ is even, and $\min\limits_{U\subset V,\#U=\frac n2+1}\max\limits_{i\in U}\deg_U(i)\ge \sqrt{\lambda}$.

Proof: Note that the eigenvalues of $W'$ are  $\pm \sqrt{\lambda}$. Combining
this with the fact that $\mathrm{trace}(W')=0$, we obtain that $n$ is even and $W'$ is similar to $\mathrm{diag}(\underbrace {\sqrt{\lambda},\cdots,\sqrt{\lambda}}\limits_{n/2},\underbrace {-\sqrt{\lambda},\cdots,-\sqrt{\lambda}}_{n/2})$.  Hence, $\lambda_{\frac n2+1}(W')=\sqrt{\lambda}$ and $\lambda_{\frac n2}(W')=-\sqrt{\lambda}$. By Theorem \ref{thm:generalized-inertia-Huang}, we complete the proof of Step 1.

 \item[Step 2.] Let $(\tilde{V},\tilde{W})$ be the Cartesian product of the  weighted graph $(V,W)$  and the path graph on two vertices  with the edge weight $w$. Then there exists 
$\tilde{W}'\in S(\tilde{W})$ satisfying  $\tilde{W}'^2=(\lambda+w^2)I$. 

Proof: By the basic  property of the Cartesian product, we have  $\tilde{W}=\left(\begin{matrix}
W & wI\\
wI & W\end{matrix}\right)$. Now, let  $\tilde{W}'=\left(\begin{matrix}
W' & wI\\
wI &-W'\end{matrix}\right)$. Then $$\tilde{W}'^2=\left(\begin{matrix}
W'^2+w^2I & O\\
O &W'^2+w^2I\end{matrix}\right)=(\lambda+w^2)I$$ and $\tilde{W}'\in S(\tilde{W})$. h

 \item[Step 3] An $n$-dimensional weighted hypercube is the  Cartesian product of $n$ path graphs on two vertices with edge weights $w_1,\cdots,w_n$, respectively. 
Any induced subgraph of an $n$-dimensional weighted hypercube $(V,W)$ on a set of $2^{n-1}+1$  vertices has maximum degree at least $\sqrt{w_1^2+\cdots+w_n^2}$. 

Proof: By Step 2, it immediately follows from  mathematical induction on $n$ that  there exists $W'\in S(W)$ satisfying $W'^2=(w_1^2+\cdots+w_n^2)I$. And then by Step 1, the proof is completed. 
\end{itemize}
\end{remark}

Furthermore,  Theorem \ref{thm:fg-Huang} implies a similar estimate for signed weighted hypergraphs. 

\begin{defn}[signed weighted  hypergraph]
A  signed weighted hypergraph is a pair $(V,W)$ of the vertex set   $V=\{1,2,\cdots,n\}$  and its adjacency $k$-order tensor   $W=(w_{i_1,\cdots,i_k})_{n\times n}$, where $w_{i_1,\cdots,i_k}=w_{\sigma(i_1),\cdots,\sigma(i_k)}$ for any permutation $ \sigma\in S_k$,  $\forall i_1,\cdots,i_k\in V$. If  $w_{i_1,\cdots,i_k}\ge 0$, we call  $(V,W)$   a weighted hypergraph for simplicity. 
\end{defn}

\begin{theorem}\label{thm:generalized-inertia-hyper-Huang}
Given a weighted hypergraph $(V,W)$ with $\#V=n$, denote by $S(W)=\{W'=(w_{i_1,\cdots,i_k}')_{n\times n}:(V,W')\text{ is a signed weighted  hypergraph with }|w_{i_1,\cdots,i_k}'|=w_{i_1,\cdots,i_k},\forall i_1,\cdots,i_k\in V\}$. Then we have
\begin{equation}\label{eq:tensor-hypergraph-spec}
    \min\limits_{U\subset V,\#U=m}\max\limits_{i\in U}\deg_U(i)\ge \max\limits_{W'\in S(W)}\max\{\lambda_m(W'),-\lambda_{m}'(W')\}
\end{equation}
where $\deg_U(i):=\sum_{i_1,\cdots,i_{k-1}\in U}w_{i,i_1,\cdots,i_{k-1}}$ is the degree of the vertex $i$ of the sub-hypergraph $(U,W|_U)$. 
\end{theorem}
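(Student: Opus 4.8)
\textbf{Proof proposal for Theorem \ref{thm:generalized-inertia-hyper-Huang}.}

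The plan is to reduce the statement to a direct application of Theorem \ref{thm:fg-Huang}, exactly in the spirit of the proof of Theorem \ref{thm:generalized-inertia-Huang}, but now carrying the tensor structure through the piecewise multilinear machinery. First I would set up the relevant discrete functions: take $k$ copies of $V$ and define $f:\power(V)^k\to\R$ by $f(A_1,\cdots,A_k)=\sum_{i_1\in A_1,\cdots,i_k\in A_k}w_{i_1,\cdots,i_k}$, and $g:\power(V)^k\to\R_{\ge0}$ by $g(A_1,\cdots,A_k)=\#(A_1\cap\cdots\cap A_k)$. Then $f_\triangle(A)=f(A,\cdots,A)=\sum_{i_1,\cdots,i_k\in A}w_{i_1,\cdots,i_k}$ and $g_\triangle(A)=\#A$, so the piecewise multilinear extensions are $f^M_\triangle(\vec x)=\sum_{i_1,\cdots,i_k\in V}w_{i_1,\cdots,i_k}x_{i_1}\cdots x_{i_k}=\langle W\vec x^{k-1},\vec x\rangle$ (using the modular-on-each-component computation of Example \ref{example:k-uniform-hypergraph} and Proposition \ref{pro:modular-f}) and $g^M_\triangle(\vec x)=\sum_{i\in V}x_i^k=\|\vec x\|_k^k$. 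Next I would identify the left-hand side combinatorially: by Theorem \ref{thm:Turan-general} applied to $(f,g)$, or more directly since along an inclusion chain $A_1\subset\cdots\subset A_k$ in $U$ the quantity $f(A_1,\cdots,A_k)/g(A_1,\cdots,A_k)$ is maximized at $A_1=\cdots=A_k=\{i\}\cup\cdots$ suitably, one gets $\max_{\text{chain }A_1,\cdots,A_k\text{ in }U} f(A_1,\cdots,A_k)/g(A_1,\cdots,A_k)=\max_{i\in U}\deg_U(i)$, where $\deg_U(i)=\sum_{i_1,\cdots,i_{k-1}\in U}w_{i,i_1,\cdots,i_{k-1}}$; this is the tensor analogue of the identity $\max_{A\subset B}\#E(A,B)/\#(A\cap B)=\max_i\deg(i)$ from Example \ref{ex:spectral-radius}.

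With these identifications in hand, I would invoke Theorem \ref{thm:fg-Huang} with $m=\#U$. The set $S(f)=\{F':f^M_\triangle(|\vec x|)\ge|F'(\vec x)|,\forall\vec x\}$ contains, for each signed weighted hypergraph $W'\in S(W)$, the homogeneous polynomial $F'_{W'}(\vec x):=\langle W'\vec x^{k-1},\vec x\rangle=\sum w'_{i_1,\cdots,i_k}x_{i_1}\cdots x_{i_k}$: indeed $|F'_{W'}(\vec x)|\le\sum|w'_{i_1,\cdots,i_k}||x_{i_1}|\cdots|x_{i_k}|=\sum w_{i_1,\cdots,i_k}|x_{i_1}|\cdots|x_{i_k}|=f^M_\triangle(|\vec x|)$ since $|w'_{i_1,\cdots,i_k}|=w_{i_1,\cdots,i_k}\ge0$. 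Moreover $g^M_\triangle(|\vec x|)=\|\vec x\|_k^k=g^M_\triangle(\vec x)$, so $(F'_{W'},g^M_\triangle(|\cdot|))=(F'_{W'},g^M_\triangle)$, and $\lambda_m(F'_{W'})$, $\lambda'_m(F'_{W'})$ in the notation of Theorem \ref{thm:fg-Huang} are precisely the $m$-th min-max and max-min eigenvalues $\lambda_m(W')$, $\lambda'_m(W')$ of the tensor pair $(W',I)$ (i.e., of the function pair $(F'_{W'},\|\cdot\|_k^k)$) as they appear in \eqref{eq:tensor-hypergraph-spec}. Then Theorem \ref{thm:fg-Huang} gives
\[
\min_{U\subset V,\#U=m}\max_{\text{chain }A_1,\cdots,A_k\text{ in }U}\frac{f(A_1,\cdots,A_k)}{g(A_1,\cdots,A_k)}\ge\sup_{F'\in S(f)}\max\{\lambda_m(F'),-\lambda'_m(F')\}\ge\sup_{W'\in S(W)}\max\{\lambda_m(W'),-\lambda'_m(W')\},
\]
and combining with the combinatorial identification of the left-hand side completes the proof.

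The main obstacle I anticipate is not the invocation of Theorem \ref{thm:fg-Huang} — that is essentially mechanical once the dictionary is set — but rather the bookkeeping in the first step: verifying carefully that $\max_{\text{chain }A_1,\cdots,A_k\text{ in }U}f(A_1,\cdots,A_k)/g(A_1,\cdots,A_k)=\max_{i\in U}\deg_U(i)$ when $W$ has nonnegative (not necessarily $\{0,1\}$) entries and $k\ge3$. One must check that restricting to singletons $A_1=\cdots=A_k=\{i\}$ is not optimal — in fact $f(\{i\},\cdots,\{i\})/g(\{i\},\cdots,\{i\})=w_{i,\cdots,i}$ which may be zero — and that the supremum over chains is attained by a nested family whose ratio telescopes to a vertex degree; this requires the observation that for a chain $A_1\subset\cdots\subset A_k$, $g(A_1,\cdots,A_k)=\#A_1$ and $f(A_1,\cdots,A_k)=\sum_{i_1\in A_1}\sum_{i_2\in A_2,\cdots,i_k\in A_k}w_{i_1,\cdots,i_k}\le\#A_1\cdot\max_{i\in A_1}\sum_{i_2,\cdots,i_k\in U}w_{i,i_2,\cdots,i_k}=\#A_1\cdot\max_{i\in A_1}\deg_U(i)$, with equality when $A_1$ is a single well-chosen vertex and $A_2=\cdots=A_k=U$. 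A secondary point to handle cleanly is confirming that the min-max eigenvalues of the pair $(F'_{W'},\|\cdot\|_k^k)$ used in Theorem \ref{thm:fg-Huang} agree with the H-eigenvalue-based $\lambda_m(W'),\lambda'_m(W')$ intended in the statement; this is the tensor version of the elementary fact (used already in the proof of Theorem \ref{thm:generalized-inertia-Huang}) that the variational eigenvalues of $(\langle W'\cdot,\cdot\rangle,\langle\cdot,\cdot\rangle)$ are the usual ones, and here follows from the spectral theory of Section \ref{sec:spectrum} together with Example \ref{ex:tensor}.
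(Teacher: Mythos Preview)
Your proposal is correct and follows essentially the same route as the paper: define $f(A_1,\cdots,A_k)=\sum_{i_l\in A_l}w_{i_1,\cdots,i_k}$ and $g(A_1,\cdots,A_k)=\#(A_1\cap\cdots\cap A_k)$, identify $\max_{\text{chain in }U}f/g=\max_{i\in U}\deg_U(i)$, observe $F'_{W'}\in S(f)$ for each $W'\in S(W)$, and apply Theorem~\ref{thm:fg-Huang}. The paper's proof is terser (it asserts the chain identity without writing out the telescoping argument you give), but the strategy and all ingredients coincide.
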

\begin{proof}
Taking $f(A_1,\cdots,A_k)=\sum_{i_1\in A_1,\cdots,i_k\in A_k}w_{i_1,\cdots,i_k}$ and $g(A_1,\cdots,A_k)=\#(A_1\cap\cdots\cap A_k)$ for $A_1,\cdots,A_k\subset V$, 
it is not difficult to check that
$$\max\limits_{i\in U}\deg_U(i)=\max\limits_{A\subset U}\frac{\sum_{i\in A}\deg_U(i)}{\#A} =\max\limits_{\;\text{ chain }A_1,\cdots,A_k\subset U}\frac{f(A_1,\cdots,A_k)}{g(A_1,\cdots,A_k)}.$$
Also, we have $f^Q_\triangle(\vec x)=\sum_{i_1,\cdots,i_k}w_{i_1,\cdots,i_k}x_{i_1}\cdots x_{i_k}$ and $g^Q_\triangle(\vec x)=\sum_i x_i^k$, and $\lambda_i(W)=\lambda_i(f^Q_\triangle,g^Q_\triangle)$. Finally, we are able to apply Theorem \ref{thm:fg-Huang} to get \eqref{eq:tensor-hypergraph-spec}, as the remaining part  is similar to that of Theorem \ref{thm:generalized-inertia-Huang}.
\end{proof}

\subsection{Spectral theory on simplicial complexes}
\label{sec:simplicial-complex}
In this section, we use the extension theory and the spectral theory for function pairs to give some preliminary investigations on (nonlinear)  eigenvalue problems for simplicial complexes. 

We shall work on an abstract simplicial complex $K$ with the vertex set $V=\{1,\cdots,n\}$. For any $\sigma=\{i_0,\cdots,i_d\}\in K$, we use $[\sigma]:=[i_0,\cdots,i_d]$ to indicate  the  oriented  $d$-dimensional simplex which is  formed by $\sigma$. Let $S_d$ be the collection of all simplexes in $K$ of  dimension $d$, and let  $[S_d]=\{[\sigma]:\sigma\in S_d\}$ be the set of all oriented  $d$-simplexes.   

The $d$-th chain group $C_d(K)$ of $K$ is a vector space with the  basis $[S_d]$. The boundary map $\partial_d:C_d(K)\to C_{d-1}(K)$  is a linear operator 
 defined by    $\partial_d[i_0,\cdots,i_d]=\sum_{j=0}^d(-1)^j[i_0,\cdots,i_{j-1},i_{j+1},\cdots,i_d]$, which can  also be represented by the incidence matrix $B_d$ of dimension $\#S_{d-1}\times \#S_d$. Clearly, the elements of the matrix $B_d$ belong to  $\{-1,0,1\}$.

 The $d$-th cochain group $C^d(K)$ is  defined as the dual of the chain group $C_d(K)$. The  simplicial coboundary map $\delta_d:C^d(K)\to C^{d+1}(K)$ is a linear operator generated by $(\delta_df)([i_0,\cdots,i_{d+1}])=\sum_{j=0}^{d+1}(-1)^jf([i_0,\cdots,i_{j-1},i_{j+1},\cdots,i_{d+1}])$ for any $f\in C^d(K)$. It is obvious that $\delta_d=B_{d+1}^\top$. We  use both the \textbf{incidence matrices}  and  the  \textbf{coboundary operators} to express the Laplace matrices/operators (see \cite{HorakJost}): 
\begin{enumerate}[-]
\item the $d$-th up Laplace  operator  $L^{up}_d:=\delta_d^*\delta_d=B_{d+1} B_{d+1}^\top$ 
\item the $d$-th down Laplace  operator  $L^{down}_d:=\delta_{d-1}\delta_{d-1}^*=B_d^\top B_d$ 
\item the $d$-th Hodge  Laplace  operator  $L_d:=L^{up}_d+L^{down}_d=\delta_d^*\delta_d+\delta_{d-1}\delta_{d-1}^*=B_d^\top B_d+B_{d+1} B_{d+1}^\top$ 
\end{enumerate}
It is known that the spectra of
these matrices  encode many qualitative properties  of the associated simplicial complex. The overall aim of this section is to bring forward the study of the nonlinear eigenvalue problems on simplicial complexes.  
We introduce the following $p$-Laplace operators on $C^d(K)$:  
\begin{enumerate}[-]
\item the $d$-th up Laplace  operator  $L^{up}_{d,p}:=\delta_d^*\alpha_{p}\delta_d$,  and for $f\in C^d(K)$,  $L^{up}_{d,p}f=B_{d+1} \alpha_p(B_{d+1}^\top f)$, where $\alpha_p:(t_1,t_2,\cdots)\mapsto (|t_1|^{p-2}t_1,|t_2|^{p-2}t_2,\cdots)$ for $p>1$, and  $\alpha_1:(t_1,t_2,\cdots)\mapsto \{(\xi_1,\xi_2,\cdots):\xi_i\in\mathrm{Sgn}(t_i)\}$.
\item the $d$-th down Laplace  operator  $L^{down}_{d,p}:=\delta_{d-1}\alpha_p\delta_{d-1}^*$, and for $f\in C^d(K)$,  $L^{down}_{d,p}f=B_d^\top \alpha_p (B_df)$ 
\item the $d$-th  Laplace  operator  $L_{d,p}:=\delta_d^*\alpha_p\delta_d+\delta_{d-1}\alpha_p\delta_{d-1}^*$, and for $f\in C^d(K)$, $L_{d,p}f=B_d^\top \alpha_p (B_df)+B_{d+1} \alpha_p (B_{d+1}^\top f)$ 
\end{enumerate}

\begin{pro}\label{pro:nonzero-p-Lap}
The nonzero
eigenvalues of the up $p$-Laplacians are in  one-to-one correspondence  with those of the 
down  $p^*$-Laplacians:
$$\{\lambda^{\frac 1p}:\lambda\text{ is a nonzero eigenvalue of }L_{d,p}^{up}\}=\{\lambda^{\frac {1}{p^*}}:\lambda\text{ is a nonzero eigenvalue of }L_{d+1,p^*}^{down}\}$$
\end{pro}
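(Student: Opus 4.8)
The plan is to recognize the up $p$-Laplacian $L_{d,p}^{up}$ and the down $p^*$-Laplacian $L_{d+1,p^*}^{down}$ as the eigenvalue problems of two function pairs to which Lemma \ref{lem:conjugate} applies directly. First I would unwind the definitions: the eigenvalue equation $L_{d,p}^{up}f = \lambda f$ reads $B_{d+1}\alpha_p(B_{d+1}^\top f) = \lambda f$, and since $\alpha_p = \nabla(\tfrac1p\|\cdot\|_p^p)$ (up to the usual identification, with $\alpha_1$ being the Clarke subdifferential of $\|\cdot\|_1$), this is exactly the eigenvalue problem for the pair $(\|B_{d+1}^\top \cdot\|_p^p, \|\cdot\|_p^p)$ in the sense of Definition \ref{def:eigenpair}; that is, with $T = B_{d+1}^\top$, $F = \|\cdot\|_p^p$ on $\R^{\#S_{d+1}}$ and $G = \|\cdot\|_p^p$ on $\R^{\#S_d}$, it is the spectral problem of $(F\circ T, G)$. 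Likewise $L_{d+1,p^*}^{down}g = \mu g$ reads $B_{d+1}^\top\alpha_{p^*}(B_{d+1}g) = \mu g$, which is the eigenvalue problem of $(\|B_{d+1}\cdot\|_{p^*}^{p^*}, \|\cdot\|_{p^*}^{p^*}) = (G^\star \circ T^\top, F^\star)$, using that $(\|\cdot\|_p^p)^\star$ is a positive multiple of $\|\cdot\|_{p^*}^{p^*}$ and that taking conjugates commutes appropriately with the homogeneity rescaling.

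Next I would invoke Lemma \ref{lem:conjugate}: with $F,G$ positive-definite $p$-homogeneous convex functions ($p>1$) and $T = B_{d+1}^\top$ a linear map, $\lambda$ is a nonzero eigenvalue of $(F\circ T, G)$ if and only if $\lambda^{p^*-1}$ is a nonzero eigenvalue of $(G^\star\circ T^\top, F^\star)$. Translating back, $\lambda$ nonzero eigenvalue of $L_{d,p}^{up}$ iff $\lambda^{p^*-1}$ nonzero eigenvalue of $L_{d+1,p^*}^{down}$. Then a short algebraic manipulation converts this into the stated form: if $\lambda$ is a nonzero eigenvalue of $L_{d,p}^{up}$ then $\mu := \lambda^{p^*-1}$ is one of $L_{d+1,p^*}^{down}$, and $\mu^{1/p^*} = \lambda^{(p^*-1)/p^*} = \lambda^{1/p}$ because $(p^*-1)/p^* = 1/p$ (from $\tfrac1p+\tfrac1{p^*}=1$). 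Hence the two sets $\{\lambda^{1/p}\}$ and $\{\mu^{1/p^*}\}$ coincide, which is precisely the claimed identity. For the endpoint case $p=1$ (so $p^*=\infty$), $\|\cdot\|_1$ is one-homogeneous and not positive-definite in the conjugate sense used by Lemma \ref{lem:conjugate}, so I would instead appeal to Lemma \ref{lem:dual} (the positive eigenvalues of $(\|T\cdot\|_1,\|\cdot\|_1)$ and $(\|T^\top\cdot\|_\infty,\|\cdot\|_\infty)$ coincide), exactly as in the proof of Proposition \ref{pro:p-Lap-dual}; this gives the vertex $1$-Laplacian / edge $\infty$-Laplacian correspondence on each pair of consecutive dimensions.

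The main obstacle is the bookkeeping around constants and the precise identification of the discrete operators with the function-pair spectra. Two points need care: (i) the factor $\tfrac1p$ and the relation $\alpha_p = \nabla(\tfrac1p\|\cdot\|_p^p)$, together with the fact that multiplying $F$ by a positive constant only rescales eigenvalues by that constant, so one must check the constants on both sides cancel (they do, since the same $p$ appears in $F$ and $G$, and similarly $p^*$ on the dual side); and (ii) confirming that the convex conjugate of $\tfrac1p\|\cdot\|_p^p$ is $\tfrac1{p^*}\|\cdot\|_{p^*}^{p^*}$ and that the $(p^*-1)$-homogeneity of $\nabla G^\star$ used inside Lemma \ref{lem:conjugate} matches $\alpha_{p^*}$. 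All of this is routine once the dictionary is set up, but it is where an error could slip in. The homogeneity arithmetic $(p^*-1)/p^* = 1/p$ is the clean conceptual core and should be stated explicitly.
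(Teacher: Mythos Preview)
Your proposal is correct and follows essentially the same route as the paper: identify the up and down $p$-Laplacians with the function-pair eigenvalue problems $(\|B_{d+1}^\top\cdot\|_p^p,\|\cdot\|_p^p)$ and $(\|B_{d+1}\cdot\|_{p^*}^{p^*},\|\cdot\|_{p^*}^{p^*})$, then apply the duality results of Section~\ref{sec:dual}. The paper's proof defers to Proposition~\ref{pro:p-Lap-dual}, which leads with Lemma~\ref{lem:dual} and explicitly notes Lemma~\ref{lem:conjugate} as an alternative; you simply lead with the latter (one small slip: the eigenvalue equation in the function-pair sense reads $L_{d,p}^{up}f=\lambda\,\alpha_p(f)$, not $\lambda f$, but your identification with the pair is correct so nothing is affected).
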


\begin{proof}
For $p>1$, we have 
$L^{up}_{d,p} f=\frac1p\nabla_f\| B_{d+1}^\top f\|_p^p$  and $L^{down}_{d+1,p^*} g=\frac{1}{p^*}\nabla_g\| B_{d+1} g\|_{p^*}^{p^*}$.  Then, the eigenvalues of $L_{d,p}^{up}$ coincide with those of $(\| B_{d+1}^\top \cdot\|_p^p,\|\cdot\|_p^p)$. 
We refer to the proof of Proposition \ref{pro:p-Lap-dual} for the rest. 
\end{proof}

By taking $p=2$, it is easy to see that  Proposition \ref{pro:nonzero-p-Lap}
generalizes the well-known relation between up and down  Laplacians, that is, the nonzero
eigenvalues of $L^{up}_d$ and  $L^{down}_{d+1}$ coincide. 

So, we can concentrate on the up $p$-Laplacian  for investigating the
spectra of simplicial complexes. 

We construct the {\sl underlying anti-signed graph} $G_{up}^-(S_d)$ on $S_d$ with 
the edge set 
$$\left\{\{[\tau],[\tau']\}:[\tau],[\tau']\in [S_d],\exists [\sigma]\in [S_{d+1}]\text{ s.t. }\tau,\tau'\subset\sigma\right\}$$
and the sign of an edge  $\{[\tau],[\tau']\}$ is $\mathrm{sgn}([\tau],[\tau']):=\mathrm{sgn}([\tau],\partial[\sigma])\cdot\mathrm{sgn}([\tau'],\partial[\sigma])$, where $\sigma\in S_{d+1}$ and $\tau,\tau'\subset \sigma$. 
\begin{remark}
One can of course take
$\mathrm{sgn}([\tau],[\tau'])=-\mathrm{sgn}([\tau],\partial[\sigma])\cdot\mathrm{sgn}([\tau'],\partial[\sigma])$
to get the so-called  {\sl underlying signed graph} $G_{up}(S_d)$  on
$S_d$. But in the following results, we mostly use the underlying anti-signed
graph  $G_{up}^-(S_d)$, since it is more convenient for  proving a Cheeger-type inequality. This construction is very natural and  can be observed from the definition of (up/down)  combinatorial  Laplacian matrices of a simplicial complex. A similar idea was already used to define the signed adjacency matrix of a triangulation on a surface \cite{FST08}. 
\end{remark}

To get more concise and more useful results,  we will work with the {\sl normalized up $p$-Laplace operator}   $\Delta^{up}_{d,p}$, whose eigenvalues are determined by the function pair $(\| B_{d+1}^\top \cdot\|_p^p,\|\cdot\|_{p,\deg}^p)$, where $\|f\|_{p,\deg}^p=\sum_{\tau\in S_d}\deg_\tau |f(\tau)|^p$. 

\begin{pro}\label{pro:maximal eigenvalue-balance}
The eigenvalues of $\Delta_{d,p}^{up}$ lie in $[0,(d+2)^{p-1}]$. In addition, for $p>1$, the spectrum of $\Delta_{d,p}^{up}$ contains $(d+2)^{p-1}$, if and only if the underlying anti-signed graph on $S_d$ has a  balanced component.  Moreover, the multiplicity of  $(d+2)^{p-1}$ equals the number of balanced components of the underlying anti-signed graph.  
\end{pro}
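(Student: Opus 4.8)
# Proof Proposal for Proposition \ref{pro:maximal eigenvalue-balance}

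The plan is to analyze the Rayleigh quotient $\mathcal{R}(f) = \|B_{d+1}^\top f\|_p^p / \|f\|_{p,\deg}^p$ directly, using the combinatorial structure of the underlying anti-signed graph $G_{up}^-(S_d)$. First I would establish the upper bound $(d+2)^{p-1}$: for any $f \in C^d(K)$ and any $(d+1)$-simplex $\sigma$, the boundary $\partial[\sigma]$ involves exactly $d+2$ faces $\tau \subset \sigma$, so $|(B_{d+1}^\top f)(\sigma)|^p = |\sum_{\tau \subset \sigma} \mathrm{sgn}([\tau],\partial[\sigma]) f(\tau)|^p \le (d+2)^{p-1} \sum_{\tau \subset \sigma} |f(\tau)|^p$ by the power-mean (Jensen) inequality applied to $t \mapsto |t|^p$. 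Summing over $\sigma \in S_{d+1}$ and swapping the order of summation, the coefficient of $|f(\tau)|^p$ on the right becomes $(d+2)^{p-1}\cdot\#\{\sigma \in S_{d+1}: \tau \subset \sigma\} = (d+2)^{p-1}\deg_\tau$, giving $\|B_{d+1}^\top f\|_p^p \le (d+2)^{p-1}\|f\|_{p,\deg}^p$. Since $(\|B_{d+1}^\top\cdot\|_p^p, \|\cdot\|_{p,\deg}^p)$ is a pair of nonnegative $p$-homogeneous functions, all its eigenvalues lie in $[0,(d+2)^{p-1}]$ by the discussion in Section \ref{sec:spectrum} (an eigenvalue is the critical value of a nonnegative quotient bounded above by $(d+2)^{p-1}$).

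Next, for $p > 1$, I would characterize when equality $\mathcal{R}(f) = (d+2)^{p-1}$ is attained. Equality in the Jensen step for a fixed $\sigma$ forces $|\mathrm{sgn}([\tau],\partial[\sigma])f(\tau)|$ to be constant over $\tau \subset \sigma$ \emph{and} all the signed terms $\mathrm{sgn}([\tau],\partial[\sigma])f(\tau)$ to have the same sign (this is where strict convexity, i.e. $p>1$, is used — for $p=1$ the inequality degenerates and equality is much easier, hence the weaker statement for $p=1$ elsewhere in the paper). Equivalently, writing $|f(\tau)| = c_\tau$, equality on $\sigma$ says $c_\tau$ is constant on the faces of $\sigma$ and $\mathrm{sgn}(f(\tau))\mathrm{sgn}(f(\tau')) = \mathrm{sgn}([\tau],\partial[\sigma])\mathrm{sgn}([\tau'],\partial[\sigma]) = \mathrm{sgn}([\tau],[\tau'])$ for all faces $\tau,\tau'$ of $\sigma$ — precisely the statement that the sign pattern of $f$ agrees with a consistent $\pm 1$-labelling along every edge of $G_{up}^-(S_d)$ coming from $\sigma$. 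So a maximizer $f$ with $\mathcal{R}(f) = (d+2)^{p-1}$ exists and is supported on (a union of) connected components of $G_{up}^-(S_d)$ on which the edge signs admit a consistent vertex bipartition-labelling — i.e. \emph{balanced} components — with $|f|$ constant on each such component. Conversely, given a balanced component $X \subseteq S_d$ with a switching function $\epsilon: X \to \{\pm 1\}$ realizing the balance, the vector $f = \epsilon \cdot \vec{1}_X$ (extended by zero) satisfies $\mathcal{R}(f) = (d+2)^{p-1}$, and since $\vec 0 \in \nabla\mathcal{R}(f)$ at any maximizer of a $p$-homogeneous quotient, $(d+2)^{p-1}$ is an eigenvalue.

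For the multiplicity statement I would produce, for each of the $b$ balanced components $X_1,\dots,X_b$, the eigenvector $f_j = \epsilon_j \vec 1_{X_j}$; these have disjoint supports, are linearly independent, and each $(\,(d+2)^{p-1}, f_j)$ is an eigenpair. By the observations in Section \ref{sec:structure-eigenspace} (the eigenspace is a cone, symmetric, and for piecewise-linear/homogeneous pairs behaves like a polyhedral complex), together with Proposition \ref{cor:LScritical} and Lemma \ref{lemma:key-inertia-nodal} relating multiplicities to $\gen$ of the level set, the genus of the set of eigenvectors for $(d+2)^{p-1}$ is at least $b$. For the reverse bound $\le b$: any eigenvector $g$ at level $(d+2)^{p-1}$ must be a maximizer (the top of the spectrum equals the maximum of the Rayleigh quotient), hence by the equality analysis above its support lies in the union of balanced components and $|g|$ is constant on each; thus $g \in \mathrm{span}\{f_1,\dots,f_b\}$, so the eigenspace has linear dimension exactly $b$ and multiplicity $b$.

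The main obstacle I anticipate is the careful bookkeeping in the equality case: tracking how the two simultaneous equality conditions (equal magnitudes \emph{and} aligned signs in Jensen's inequality, across \emph{all} simplices $\sigma$ containing a given face) propagate along the edges of $G_{up}^-(S_d)$ and force the balance condition — and, in the converse, verifying that the sign products $\mathrm{sgn}([\tau],\partial[\sigma])\cdot\mathrm{sgn}([\tau'],\partial[\sigma])$ are exactly the edge signs so that "balanced component" is the right notion. One must also handle the interaction between $f$ possibly vanishing on some faces within a component (ruled out, on a balanced component, by the constancy of $|f|$ forced by equality) and confirm that the constructed $f_j$ genuinely achieve the bound rather than merely an interior critical value. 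The $p=1$ case is deliberately excluded here and treated separately in the paper, since strict convexity of $|t|^p$ is what makes the sign-alignment rigid.
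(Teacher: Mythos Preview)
Your proposal is correct and follows essentially the same route as the paper: the upper bound via H\"older/Jensen on each $(d+1)$-simplex, then the equality analysis translating the strict-convexity constraints into a sign-consistency condition that is precisely balance of $G_{up}^-(S_d)$, and finally the multiplicity computation by identifying the maximizer set with $\mathrm{span}\{f_1,\dots,f_b\}\setminus\{0\}$. The paper's own proof is considerably terser (it only names H\"older, then asserts the existence of the bipartition $[S_d]^+\sqcup[S_d]^-$ and declares the multiplicity verification ``standard''), so your write-up is in fact a fuller version of the same argument rather than a different one.
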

\begin{proof}
The upper bound $(d+2)^{p-1}$ of the eigenvalues of $\Delta_{d,p}^{up}$ is provided by H\"older's  inequality. For the equality case, it is not difficult to verify that  there exists a sub-partition $[S_d]^+\sqcup [S_d]^-$  of $[S_d]$ such that for any  $[\tau],[\tau']\in [S_d]^+\sqcup [S_d]^-$, $\mathrm{sgn}([\tau],[\tau'])=-1$ if and only if  $\#(\{[\tau],[\tau']\}\cap [S_d]^+)=1$. Then, we can switch the set $[S_d]^+$ to make  all edges in $[S_d]^+\sqcup [S_d]^-$  positive, 
meaning that the induced subgraph  $[S_d]^+\sqcup [S_d]^-$ of the  underlying anti-signed graph  is switching equivalent to an  all-positive  signed graph.  On the multiplicity,  we shall concentrate on $G_{up}^-(S_d)$, and all the verifications are standard. The proof is completed.
\end{proof}

\begin{remark}
It is clear that the underlying anti-signed graph on $S_d$ is balanced if and only if the underlying signed graph on $S_d$ is antibalanced. Moreover, a   graph (i.e., the case of  $d=0$) is antibalanced means that it is bipartite, and then,  Proposition \ref{pro:maximal eigenvalue-balance} is indeed an extension of the  fact that the spectrum of the normalized $p$-Lapalcian on a graph contains $2^{p-1}$ if and only if the graph is bipartite. 
\end{remark}

Several
problems in spectral theory for simplicial complexes  arise when trying to generalize the classical spectral results
that are known for graphs, such as the Cheeger inequality. Inspired by the resent results on simplicial complexes,  signed graphs and oriented hypergraphs \cite{AtayLiu,LLPP,Mulas20,SKM14}, we present the following Cheeger-type constants.

Given $A,A'\subset S_d$ that are disjoint, let $|E_+(A,A')|=\#\{\{[\tau],[\tau']\}:[\tau]\in A,[\tau']\in A',\mathrm{sgn}([\tau],[\tau'])=1\}$ and $|E_-(A)|=\#\{\{[\tau],[\tau']\}:[\tau],[\tau']\in A,\mathrm{sgn}([\tau],[\tau'])=-1\}$. Let 
$$\beta(A,A')=\frac{2\left(|E_-(A)|+|E_-(A')|+|E_+(A,A')|\right)+|\partial(A\sqcup A')|}{\vol(A\sqcup A')}$$
where $|\partial A|$ is the number of the edges of $G_{up}^-(S_d)$ that cross $A$ and $S_d\setminus A$,   $\vol(A)=\sum_{\tau\in A}\deg_\tau$ and $\deg_\tau=\#\{\sigma\in S_{d+1}:\tau\subset \sigma\}$. 

Then we introduce the $k$-th  Cheeger constant   on $S_d$:
$$h_k(S_d)=\min\limits_{\text{disjoint } A_1,A_2,\ldots,A_{2k-1},A_{2k}\text{ in }S_d}\max\limits_{1\le i\le k}\beta(A_{2i-1},A_{2i}).$$
It is interesting that $h_k(S_d)=0$ if and only if $G_{up}^-(S_d)$ has  $k$ balanced components.

\begin{remark}
 For $d=0$,  the constant $h_k(S_0)$ reduces to the $k$-way Cheeger constant of a graph  \cite{LGT12}.
\end{remark}

\begin{theorem}\label{thm:anti-signed-Cheeger}
For any simplicial complex and every $d\ge 0$,
\begin{equation}\label{eq:Cheeger-1-complex}
\frac{ h_1(S_d)^2}{2(d+1)}\le d+2-\lambda_n(\Delta^{up}_d)\le 2h_1(S_d),
\end{equation}
where $n=\#S_d$. 
Moreover, there exists an absolute constant $C$  such that for any simplicial complex, and any $k\ge 1$, 
\begin{equation}\label{eq:Cheeger-k-complex}
\frac{ h_k(S_d)^2}{Ck^6(d+1)}\le d+2-\lambda_{n+1-k}(\Delta^{up}_d)\le 2h_k(S_d).
\end{equation}

\end{theorem}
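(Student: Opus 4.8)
The plan is to convert the spectral gap $d+2-\lambda$ of the normalized up Laplacian into an eigenvalue of the normalized signed Laplacian of the underlying anti-signed graph $G_{up}^-(S_d)$, and then invoke Cheeger inequalities for signed graphs.

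The first step is an algebraic identity: for every cochain $f\in C^d(K)$,
\begin{equation*}
(d+2)\|f\|_{2,\deg}^2-\|B_{d+1}^\top f\|_2^2=\sum_{\{[\tau],[\tau']\}\in E(G_{up}^-(S_d))}\bigl(f(\tau)-\mathrm{sgn}([\tau],[\tau'])\,f(\tau')\bigr)^2 .
\end{equation*}
To prove it, fix a $(d+1)$-simplex $\sigma$ with faces $\tau\subset\sigma$ and write $\epsilon_\tau=\mathrm{sgn}([\tau],\partial[\sigma])$; a direct expansion gives $(d+2)\sum_{\tau\subset\sigma}f(\tau)^2-\bigl(\sum_{\tau\subset\sigma}\epsilon_\tau f(\tau)\bigr)^2=\sum_{\{\tau,\tau'\}\subset\sigma}(\epsilon_\tau f(\tau)-\epsilon_{\tau'}f(\tau'))^2$, using that each of the $d+2$ faces of $\sigma$ lies in exactly $d+1$ of the $\binom{d+2}{2}$ unordered pairs. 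Factoring the global sign out of each squared difference leaves only $\epsilon_\tau\epsilon_{\tau'}=\mathrm{sgn}([\tau],[\tau'])$, and summing over $\sigma$ (each edge of $G_{up}^-(S_d)$ arises from a unique $\sigma=\tau\cup\tau'$) yields the identity. Since $\|B_{d+1}^\top f\|_2^2$ is the Dirichlet form of $\Delta^{up}_d$, the right-hand side is the Dirichlet form of the signed Laplacian of $G_{up}^-(S_d)$, but normalized by $\deg_\tau$ rather than by the true degree $(d+1)\deg_\tau$ of $\tau$ in that graph. Hence, writing $\mathcal L$ for the genuinely normalized signed Laplacian, the min-max principle gives $d+2-\lambda_{n+1-k}(\Delta^{up}_d)=(d+1)\,\mu_k(\mathcal L)$, where $\mu_k(\mathcal L)$ is the $k$-th smallest eigenvalue of $\mathcal L$. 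Consistently, $\mu_k(\mathcal L)=0$ iff $G_{up}^-(S_d)$ has at least $k$ balanced components, matching $h_k(S_d)=0$ and Proposition~\ref{pro:maximal eigenvalue-balance}.

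The second step identifies the combinatorial quantity: viewing $A\sqcup A'$ as a vertex set $2$-colored by $A\mapsto+$, $A'\mapsto-$, the numerator of $\beta(A,A')$ counts twice the edges of $G_{up}^-(S_d)$ frustrated by this coloring plus the edges leaving $A\sqcup A'$; dividing by $\vol(A\sqcup A')=\tfrac1{d+1}\,\mathrm{vol}_{(d+1)\deg}(A\sqcup A')$ shows $h_k(S_d)=(d+1)\iota_k$, where $\iota_k$ is the $k$-way signed Cheeger constant of $G_{up}^-(S_d)$ in its proper normalization. The easy bound $d+2-\lambda_{n+1-k}(\Delta^{up}_d)\le 2h_k(S_d)$ then amounts to $\mu_k(\mathcal L)\le 2\iota_k$, which follows by inserting the $k$ disjoint test cochains $\vec 1_{A_{2i-1}}-\vec 1_{A_{2i}}$ into the variational formula for $\mu_k(\mathcal L)$ and estimating the Rayleigh quotient on their span via $(a\mp b)^2\le 2(a^2+b^2)$. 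The lower bounds reduce to $\mu_1(\mathcal L)\ge \iota_1^2/2$ and $\mu_k(\mathcal L)\ge \iota_k^2/(C k^6)$: the former is the standard signed Cheeger inequality \cite{AtayLiu} (a threshold/sweep argument on an eigenfunction), and the latter is its higher-order analogue; the cleanest route is to pass to the double cover $\widetilde{G_{up}^-(S_d)}$, whose ordinary normalized Laplacian spectrum contains that of $\mathcal L$ and whose $\mathbb Z_2$-symmetric sparse cuts descend to admissible families $\{A_j\}$ for $\iota_k$ with comparable ratio, and then apply the unsigned $k$-way Cheeger inequality of \cite{LGT12}. Substituting $\mu_k(\mathcal L)=(d+2-\lambda_{n+1-k})/(d+1)$ and $\iota_k=h_k(S_d)/(d+1)$ into these bounds produces exactly \eqref{eq:Cheeger-1-complex} and \eqref{eq:Cheeger-k-complex}.

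The main obstacle is the $k$-way lower bound: one must run the spectral-embedding and localization machinery behind the $k^6$ factor on the signed graph (or its double cover) while ensuring that balanced components correctly play the role of connected components and that the lifted cut family projects back down losing only a constant factor. For $k=1$ this is routine; for general $k$ the double-cover reduction needs the verification that a symmetric $k$-fold sparse partition of the cover yields a valid choice of $A_1,\dots,A_{2k}$ realizing $h_k(S_d)$ up to constants. The identity, the identification of $\beta$, and the upper bound are all short.
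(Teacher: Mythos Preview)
Your proposal is correct and follows essentially the same route as the paper: both establish the identity $d+2-\lambda_{n+1-k}(\Delta^{up}_d)=(d+1)\lambda_k(\Delta(G_{up}^-(S_d)))$ via the Rayleigh quotient computation, identify $h_k(S_d)/(d+1)$ as the $k$-th signed Cheeger constant of $G_{up}^-(S_d)$, and then invoke signed-graph Cheeger inequalities. The only difference is that for the higher-order lower bound the paper simply cites \cite{AtayLiu} (which already contains the $k$-way signed Cheeger inequality with the $k^3$ factor), whereas you sketch a double-cover reduction to \cite{LGT12}; either route works, and the paper's is shorter since the needed result is already in the literature.
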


\begin{proof}
We first show that 
$$ d+2-\lambda_{n-i+1}(\Delta^{up}_d)=(d+1)\lambda_i(\Delta(G_{up}^-(S_d))),\;\;\;i=1,\cdots,n. $$
In fact, it can be immediately derived by the identity regarding the  Rayleigh quotients:
$$d+2-\frac{\sum\limits_{\sigma\in S_{d+1}}\left(\sum_{\tau\in S_d,\tau\subset\sigma}\mathrm{sgn}([\tau],\partial[\sigma])f(\tau)\right)^2}{\sum_{\tau\in S_d}\deg_\tau f(\tau)^2}=(d+1)\frac{\sum_{[\tau]\sim[\tau']}\left(f(\tau)-\mathrm{sgn}(\tau,\tau')f(\tau'))\right)^2}{\sum_{\tau\in S_d}\widetilde{\deg}_\tau f(\tau)^2}$$
where $[\tau]\sim[\tau']$ represents an  edge in the underlying  anti-signed graph  $G_{up}^-(S_d)$, and $\widetilde{\deg}_\tau=(d+1)\deg_\tau$ is the degree of $\tau$ in  $G_{up}^-(S_d)$. 

Note that $\frac{1}{d+1}h_k(S_d)$ also  indicates the $k$-th Cheeger constant of the signed graph $G_{up}^-(S_d)$. By the Cheeger inequality and the higher order Cheeger inequalities in   \cite{AtayLiu}, we have  $\frac{\lambda_1(\Delta(G_{up}^-(S_d))}{2}\le \frac{h_1(S_d)}{d+1}\le \sqrt{2\lambda_1(\Delta(G_{up}^-(S_d))}$. And there exists  an absolute constant $C$  such that for any signed graph and any $k\ge 1$, $\frac{\lambda_k(\Delta(G_{up}^-(S_d))}{2}\le \frac{h_k(S_d)}{d+1}\le Ck^3 \sqrt{\lambda_k(\Delta(G_{up}^-(S_d))}$. In consequence, we obtain
$$ \frac{d+2-\lambda_{n}(\Delta^{up}_d)}{2}\le h_1(S_d)\le \sqrt{2(d+1)(d+2-\lambda_{n}(\Delta^{up}_d))}$$
and
$$\frac{d+2-\lambda_{n+1-k}(\Delta^{up}_d)}{2}\le h_k(S_d)\le Ck^3\sqrt{(d+1)(d+2-\lambda_{n+1-k}(\Delta^{up}_d))} .$$
Then, we have verified \eqref{eq:Cheeger-1-complex} and \eqref{eq:Cheeger-k-complex}. 
\end{proof}

By Theorem \ref{thm:anti-signed-Cheeger}, $\lambda_n(\Delta^{up}_d)=d+2$ if and only if  $h_1(S_d)=0$,  if and only if the underlying anti-signed graph $G_{up}^-(S_d)$ has a  balanced component.  

In contrast to Proposition \ref{pro:maximal eigenvalue-balance} on the multiplicity of $(d+2)^{p-1}$ for $\Delta_{d,p}^{up}$,  the multiplicity of $1$ for $\Delta_{d,1}^{up}$ has a quite different characterization. To state this, we show the following concepts and results.

A balanced (resp. antibalanced) clique $S$ is a subset of $S_d$ such that $S$ induces a balanced (resp. antibalanced) complete subgraph in $G_{up}^-(S_d)$. Similar to Theorem 1 in \cite{Zhang18}, we can prove the following:

\begin{pro}
The maximum eigenvalue of $\Delta_{d,1}^{up}$ is  $1$, and the multiplicity of the eigenvalue  $1$, denoted by $m_1(S_d)$,  satisfies the sandwich inequality
$$\widetilde{\alpha}(S_d)\le m_1(S_d)\le \widetilde{\kappa}(S_d)$$
where $\widetilde{\alpha}(S_d)=\max\{p+2q:\exists \text{ pairwise non-adjacent }p\text{ balanced cliques and }q\text{ antibalanced cliques}\}$, $\widetilde{\kappa}(S_d)=\min\{p+2q:\exists~p\text{ balanced cliques and }q\text{ antibalanced cliques covering }S_d\}$.  
\end{pro}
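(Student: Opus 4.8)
The plan is to follow the proof of Theorem 1 in \cite{Zhang18} for the $1$-Laplacian, replacing the independence number and the clique covering number by the switching-class refinements $\widetilde{\alpha}(S_d)$ and $\widetilde{\kappa}(S_d)$ read off from the anti-signed graph $G_{up}^-(S_d)$. First one records the normalizations: the eigenvalues of $\Delta_{d,1}^{up}$ are those of the function pair $(\|B_{d+1}^\top\cdot\|_1,\|\cdot\|_{1,\deg})$, they lie in $[0,1]$ by Proposition \ref{pro:maximal eigenvalue-balance}, and, unlike the case $p>1$, the value $1$ is always attained: for any single oriented $d$-simplex $[\tau_0]$ the indicator $\vec 1_{\{[\tau_0]\}}$ is an eigenvector with eigenvalue $1$, because the Euler identity pins $\lambda=1$ on the $[\tau_0]$-coordinate while on every other coordinate the inclusion $\vec 0\in\nabla F(\vec x)-\lambda\nabla G(\vec x)$ holds automatically, the subgradient of $\|\cdot\|_{1,\deg}$ at a zero coordinate (an interval of radius $\widetilde{\deg}$) absorbing the bounded contribution of $\partial\|B_{d+1}^\top\cdot\|_1$. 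This single-simplex model is the local building block for both inequalities.

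For the lower bound $m_1(S_d)\ge\widetilde{\alpha}(S_d)$, fix pairwise non-adjacent cliques of $G_{up}^-(S_d)$, balanced ones $C_1,\dots,C_p$ and antibalanced ones $D_1,\dots,D_q$. From each balanced $C_i$ I would extract one eigendirection, and from each antibalanced $D_j$ a centrally symmetric genus-$2$ family of eigenvectors, all at eigenvalue $1$: after switching $C_i$ (resp.\ $D_j$) to an all-positive (resp.\ all-negative) clique, these are, up to the switching, the top eigenvectors of a $K_m$-type $1$-Laplacian — one direction in the balanced case and a genus-$2$ set in the antibalanced case, exactly as for $K_5$ in Remark \ref{example:K_5-1-Lap-c-h}. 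Since the supports of $C_1,\dots,C_p,D_1,\dots,D_q$ are pairwise disjoint and the degree normalization already absorbs every edge leaving a clique, the union of these pieces lies inside $S_1$, and, by the same disjoint-support genus mechanism used in the proof of Theorem \ref{thm:Cheeger-type}, it has genus $p+2q$. Maximizing over admissible configurations yields $\gen(S_1)\ge\widetilde{\alpha}(S_d)$.

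For the upper bound $m_1(S_d)\le\widetilde{\kappa}(S_d)$, fix a covering of $S_d$ by balanced cliques $C_1,\dots,C_p$ and antibalanced cliques $D_1,\dots,D_q$. The target is an odd continuous map $S_1\to\mathbb{S}^{p+2q-1}$, i.e.\ a centrally symmetric, nowhere-vanishing $\mathbb{R}^{p+2q}$-valued map on the eigenvector set. The engine is a rigidity statement obtained by tracing the equality case of the Hölder/triangle-inequality chain behind Proposition \ref{pro:maximal eigenvalue-balance}: if $\vec x\in S_1$, then the restriction of $\vec x$ to each balanced clique $C_i$ of the cover is determined by one linear functional $\ell_i(\vec x)$ (a switched weighted average on $C_i$), and its restriction to each antibalanced clique $D_j$ by a pair $(\ell_j',\ell_j'')(\vec x)$. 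Assembling $(\ell_1,\dots,\ell_p,\ell_1',\ell_1'',\dots,\ell_q',\ell_q'')$ gives an odd map to $\mathbb{R}^{p+2q}$ that is nowhere zero on $S_1$, since a nonzero eigenvector is nonzero on at least one clique of the cover, where the associated functional(s) cannot all vanish; hence $\gen(S_1)\le p+2q$, and minimizing over covers finishes.

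The main obstacle I expect is this rigidity statement. In the linear case one has the clean identity relating $d+2-\lambda(\Delta^{up}_d)$ to the eigenvalues of $\Delta(G_{up}^-(S_d))$ exploited in Theorem \ref{thm:anti-signed-Cheeger}, but at $p=1$ the objective $\|B_{d+1}^\top\cdot\|_1$ is a genuine $(d+2)$-fold sum rather than a graph $1$-Laplacian, so the anti-signed graph furnishes only one-sided comparisons; one must argue directly with the subdifferentials $\partial\|B_{d+1}^\top\cdot\|_1$ and $\partial\|\cdot\|_{1,\deg}$, and the multivaluedness of $\mathrm{Sgn}$ makes "an arbitrary eigenvector — not merely a min-max one — is locally rigid on each clique of the cover" delicate, especially across cliques that share lower-dimensional faces and in the bookkeeping of the degree normalization that counts edges between cliques. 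Making the antibalanced weight $2$ sharp, rather than a crude two-per-clique bound, is precisely where the balanced/antibalanced dichotomy (switching equivalence) must be used carefully, which is the sense in which the argument genuinely extends, rather than merely copies, \cite{Zhang18}.
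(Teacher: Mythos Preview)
Your approach is essentially the one the paper itself indicates: the paper does not give a proof but simply says ``Similar to Theorem 1 in \cite{Zhang18}, we can prove the following,'' and your proposal is precisely to transplant that argument into the anti-signed graph $G_{up}^-(S_d)$, with balanced cliques contributing genus $1$ and antibalanced cliques contributing genus $2$. Your single-simplex eigenvector computation confirming $\lambda_{\max}=1$ is correct, and your diagnosis of where the real work lies---the upper-bound rigidity at $p=1$, where the $(d+2)$-term sums in $\|B_{d+1}^\top\cdot\|_1$ do not reduce cleanly to a graph $1$-Laplacian and one must argue directly with the subdifferentials---matches the genuine difficulty that the paper leaves implicit.
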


According to Theorem \ref{thm:nodal-inertia-bound}, we also have the inertia bound and the nodal domain theorem:

\begin{pro}
Let $\alpha$ be the independence number of $G_{up}^-(S_d)$. Then, 
$$    \alpha\leq \min\{\#\{i:\lambda_i(\Delta_{d,p}^{up})\leq 1\},\#\{i:\lambda_i(\Delta_{d,p}^{up})\geq 1\}\}.$$
For any eigenfunction $f$ w.r.t. $\lambda_i(\Delta_{d,p}^{up})$ whose multiplicity is $r$, the number  of connected components of the support set of $f$ is smaller than or equal to $\min\{ i+r-1, n-i+r\}$. 
\end{pro}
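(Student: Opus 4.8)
The plan is to realize the normalized up $p$-Laplacian $\Delta_{d,p}^{up}$ as the eigenvalue problem of the even, $p$-homogeneous, Lipschitz function pair $(F,G)$ on $\R^{S_d}\cong\R^n$ given by $F(f)=\|B_{d+1}^\top f\|_p^p=\sum_{\sigma\in S_{d+1}}\bigl|\sum_{\tau\subset\sigma}\mathrm{sgn}([\tau],\partial[\sigma])f(\tau)\bigr|^p$ and $G(f)=\|f\|_{p,\deg}^p=\sum_{\tau\in S_d}\deg_\tau|f(\tau)|^p$, and then to invoke Theorem~\ref{thm:nodal-inertia-bound}. First I would record that $F,G$ are even and $p$-homogeneous, that $G$ is positive definite, and that the min-max eigenvalues $\lambda_i(\Delta_{d,p}^{up})$ coincide with the min-max values $\lambda_i(F,G)$ of Proposition~\ref{cor:LScritical}; this is immediate from the definition of $\Delta_{d,p}^{up}$ earlier in this section.

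Second, the inertia bound. If $U\subset S_d$ is an independent set of $G_{up}^-(S_d)$, then no $(d+1)$-simplex $\sigma$ has two distinct faces in $U$, since two faces of a common coface are adjacent in $G_{up}^-(S_d)$. Hence for every $f$ with $\supp(f)\subset U$ and every $\sigma\in S_{d+1}$, at most one summand of $\sum_{\tau\subset\sigma}\mathrm{sgn}([\tau],\partial[\sigma])f(\tau)$ is nonzero, so $\bigl|\sum_{\tau\subset\sigma}\mathrm{sgn}([\tau],\partial[\sigma])f(\tau)\bigr|^p=\sum_{\tau\subset\sigma}|f(\tau)|^p$; summing over $\sigma$ gives $F(f)=\sum_{\tau\in S_d}\deg_\tau|f(\tau)|^p=G(f)$. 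Thus $F(f)/G(f)=1$ on $\mathrm{span}\{\vec 1_{\{\tau\}}:\tau\in U\}$, so the $1$-level independence number of $(F,G)$ is at least $\alpha$, and Theorem~\ref{thm:nodal-inertia-bound} yields $\alpha\le\alpha_1\le\min\{\#\{i:\lambda_i\le 1\},\#\{i:\lambda_i\ge 1\}\}$.

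Third, the nodal domain inequality. Let $(\lambda_i,f)$ be an eigenpair with multiplicity $r$ and let $U_1,\dots,U_m$ be the connected components of $\supp(f)$ in $G_{up}^-(S_d)$. The crucial observation is the same: if a $(d+1)$-simplex $\sigma$ had faces in two different components they would be adjacent, a contradiction, so all faces of any fixed $\sigma$ lying in $\supp(f)$ belong to a single $U_j$. Consequently $F(\sum_j t_j f|_{U_j})=\sum_j|t_j|^pF(f|_{U_j})$ and likewise for $G$, and the Clarke subdifferentials factor through the blocks: the $U_j$-coordinates of $\nabla F(f)$ and $\nabla G(f)$ agree with $\nabla F(f|_{U_j})$ and $\nabla G(f|_{U_j})$. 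Hence $\vec 0\in\nabla F(f)-\lambda_i\nabla G(f)$ forces $\vec 0\in\nabla F(f|_{U_j})-\lambda_i\nabla G(f|_{U_j})$ for each $j$, the Euler identity gives $F(f|_{U_j})=\lambda_i G(f|_{U_j})$, and the block decomposition then gives $F(\sum_j t_j f|_{U_j})=\lambda_i G(\sum_j t_j f|_{U_j})$ for all $t_1,\dots,t_m$. Thus $\{U_j\}_{j=1}^m$ is a family of nodal domains in the sense of Definition~\ref{def:nice-nodal-domain}, and the second part of Theorem~\ref{thm:nodal-inertia-bound} gives $m\le N(f)\le\min\{i+r-1,n-i+r\}$.

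The main obstacle I anticipate is the bookkeeping in the third step: verifying that the set-valued Clarke subdifferential of $F$ (genuinely multivalued when $p=1$) respects the block decomposition induced by the components, so that each $(\lambda_i,f|_{U_j})$ is legitimately an eigenpair and the required identity holds for every linear combination. This follows the mechanism already used for Propositions~\ref{pro:inertia-k-uniform} and~\ref{pro:inertia-p-Lap}, but the incidence signs $\mathrm{sgn}([\tau],\partial[\sigma])$ and the possible vanishing of individual face-values inside a coface mean one must argue directly from the definition of $\nabla$ rather than differentiate termwise; once this is settled, the remaining computations are routine.
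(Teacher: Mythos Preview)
Your proposal is correct and follows essentially the same approach as the paper: the paper states this proposition as a direct consequence of Theorem~\ref{thm:nodal-inertia-bound}, and the details you supply---that an independent set in $G_{up}^-(S_d)$ forces each $(d+1)$-simplex to have at most one face in it (giving $F/G\equiv 1$ on the corresponding coordinate subspace), and that the connected components of $\supp(f)$ block-decompose both $F$ and $G$ so as to form a family of nodal domains---exactly parallel the proofs the paper gives for the analogous Propositions~\ref{pro:inertia-p-Lap} and~\ref{pro:inertia-k-uniform}. Your anticipated obstacle about the Clarke subdifferential at $p=1$ is handled by the same block structure you identify, just as in those earlier proofs.
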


Next, we show some  results on the smallest non-trivial eigenvalue of $\Delta^{up}_{d,p}$.

\begin{pro}\label{pro:lambda(d+1)}
Given a simplicial complex $K$, for any  $ 0\le d<\dim K$, and $p\ge 1$,  $\lambda_{d+1}(L^{up}_{d,p})=\lambda_{d+1}(\Delta^{up}_{d,p})=0$. 
\end{pro}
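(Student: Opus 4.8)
The plan is to reduce the assertion to the purely linear–algebraic fact
\[
\dim\ker\delta_d\ \ge\ d+1 ,
\]
and then to prove this by comparing $K$ with the subcomplex spanned by a single $(d+1)$-simplex.

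First I would recall that, by construction, the eigenvalues of $L^{up}_{d,p}$ (resp.\ $\Delta^{up}_{d,p}$) are the min--max eigenvalues of the function pair $(\|B_{d+1}^\top\cdot\|_p^p,\|\cdot\|_p^p)$ (resp.\ $(\|B_{d+1}^\top\cdot\|_p^p,\|\cdot\|_{p,\deg}^p)$), i.e.\ the values $\lambda_k=\inf_{A\in\Gamma_k}\sup_{f\in A}R(f)$ for the Rayleigh quotient $R(f)=\|B_{d+1}^\top f\|_p^p/\|f\|_p^p$ (resp.\ $=\|B_{d+1}^\top f\|_p^p/\|f\|_{p,\deg}^p$). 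Both quotients are even, zero-homogeneous, nonnegative, and locally Lipschitz away from $\vec0$, and both vanish exactly on $\ker B_{d+1}^\top=\ker\delta_d$. In particular $\lambda_{d+1}\ge0$. Since $\delta_d=B_{d+1}^\top$ is linear, $W:=\ker\delta_d$ is a subspace, and its unit sphere $S_W$ (in any norm) is compact, centrally symmetric, with $\gen(S_W)=\dim W$; so if $\dim W\ge d+1$ then $S_W\in\Gamma_{d+1}$ and, by Proposition~\ref{cor:LScritical} and the definition of the $\lambda_k$, $\lambda_{d+1}\le\sup_{f\in S_W}R(f)=0$, hence $\lambda_{d+1}=0$. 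The same argument works for both the normalized and the unnormalized operator and for all $p\ge1$. Thus everything reduces to showing $\dim\ker\delta_d\ge d+1$.

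For $d=0$ this is clear: the constant cochain $\mathbf 1$ is a nonzero element of $\ker\delta_0$ (indeed $\dim\ker\delta_0$ equals the number of connected components $\ge1=d+1$). For $d\ge1$ I would argue as follows. Since $d<\dim K$, $K$ contains a $(d+1)$-simplex $\sigma$; let $\bar\sigma$ be the subcomplex of all faces of $\sigma$, which is a copy of the standard simplex $\Delta^{d+1}$. From $\partial^2=0$ we get $B_dB_{d+1}=0$, hence $\mathrm{im}(B_{d+1})\subset\ker B_d$, so $\mathrm{rank}(B_{d+1})\le\#S_d-\mathrm{rank}(B_d)$ and therefore
\[
\dim\ker\delta_d=\#S_d-\mathrm{rank}(B_{d+1}^\top)=\#S_d-\mathrm{rank}(B_{d+1})\ \ge\ \mathrm{rank}(B_d).
\]
It remains to prove $\mathrm{rank}(B_d)\ge d+1$. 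Choosing orientations on the simplices of $K$ that restrict to $\bar\sigma$, the incidence matrix $B_d(\bar\sigma)$ is a submatrix of $B_d(K)$, so $\mathrm{rank}(B_d(K))\ge\mathrm{rank}(B_d(\bar\sigma))$. For $\bar\sigma\cong\Delta^{d+1}$ the $d$-faces are the $d+2$ facets $\tau_0,\dots,\tau_{d+1}$, and each $(d-1)$-face lies in exactly two of them, so the rows of $B_d(\bar\sigma)$ are, up to signs, the rows of the oriented incidence matrix of the complete graph on the $d+2$ facets, which has rank $d+1$; equivalently, $\Delta^{d+1}$ is contractible, so $\ker\partial_d(\bar\sigma)=Z_d=B_d=\mathrm{im}\,\partial_{d+1}$ is one-dimensional and $\mathrm{rank}\,\partial_d(\bar\sigma)=(d+2)-1=d+1$. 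Combining, $\dim\ker\delta_d\ge\mathrm{rank}(B_d)\ge d+1$, which closes the argument.

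The routine parts are the identification of the $\lambda_{d+1}$ with the min--max values of $R$ and the orientation bookkeeping for the submatrix inclusion. The one place that needs a real idea is the lower bound $\mathrm{rank}(B_d)\ge d+1$: the naive attempt — take a cochain in $\ker\delta_d(\bar\sigma)$ and extend it by zero to $K$ — fails, because a $d$-face of $\sigma$ may be shared with other $(d+1)$-simplices of $K$, so such an extension is not annihilated by $\delta_d$. The fix is precisely to avoid working with $\ker\delta_d$ on $\bar\sigma$ directly and instead to pass through $\partial^2=0$ together with the trivial submatrix–rank inequality for $B_d$, which I expect to be the main (though modest) obstacle; the $d=0$ case must also be handled separately since there is no $(d-1)$-level in the unaugmented complex.
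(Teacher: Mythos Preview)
Your proposal is correct and follows essentially the same route as the paper: reduce to $\dim\ker\delta_d\ge d+1$, use $\partial^2=0$ (the paper phrases this via the Hodge decomposition from Horak--Jost) to get $\dim\ker\delta_d\ge\mathrm{rank}(B_d)$, and then bound $\mathrm{rank}(B_d)\ge d+1$ using the existence of a $(d+1)$-simplex. Your justification of this last step (via the submatrix $B_d(\bar\sigma)$ and its identification with the incidence matrix of $K_{d+2}$, or equivalently via contractibility of $\Delta^{d+1}$) is actually more explicit than the paper's, which merely notes that $B_d$ has at least $d+2$ nonzero columns and asserts the rank bound without further detail; your separate treatment of $d=0$ is also a useful clarification that the paper leaves implicit in its use of reduced cohomology.
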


\begin{proof}
It suffices to prove that the multiplicity of the eigenvalue zero is larger than or equal to $d+1$.

By Theorem 3.1 in \cite{HorakJost}, we can derive that  the multiplicity of the eigenvalue zero of $L^{up}_d$ is
$\dim \mathrm{Ker}(B_{d+1}^\top)=\dim \mathrm{Image}(B_{d}^\top)+\dim \tilde{H}^{d}(K,\R)=\mathrm{rank}(B_{d})+\mathrm{rank}(\tilde{H}^{d}(K,\R))\ge \mathrm{rank}(B_{d})$. 

Since every $(d+1)$-simplex has $(d+2)$ sub-simplices of  dimension $d$, the incidence matrix $B_{d}$ has at least $(d+2)$ nonzero columns. And based on this fact, we can further verify that $\mathrm{rank}(B_{d})\ge d+1$.  Finally, it is obvious that the multiplicities of the eigenvalue zero of $L^{up}_{d,p}$,    $\Delta^{up}_{d,p}$ and  $L^{up}_d$ coincide. The proof is completed.
\end{proof}

It is well-known  that $\lambda_2(L_0^{up})>0$ if and only if $\mathrm{rank}(\tilde{H}^{0}(K,\R))=0$, i.e., $K$ is connected.  For $\lambda_{d+2}(L^{up}_d)$ with $d\ge 1$, we have
\begin{pro}\label{pro:lambda(d+2)}
Given a pure  simplicial complex $K$, $ 1\le d<\dim K$ and $p\ge 1$, we have   $\lambda_{d+2}(\Delta^{up}_{d,p})>0$ (or  $\lambda_{d+2}(L^{up}_{d,p})>0$) if and only if $K$ is a simplex of dimension $(d+1)$. 
\end{pro}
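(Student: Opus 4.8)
The plan is to connect the smallest non-trivial eigenvalue $\lambda_{d+2}(\Delta^{up}_{d,p})$ to a purely combinatorial/topological condition via the nullity count already established in Proposition~\ref{pro:lambda(d+1)}. Recall that the zero eigenvalue of $\Delta^{up}_{d,p}$ (equivalently $L^{up}_{d,p}$, and equivalently $L^{up}_d$) has multiplicity exactly $\dim\mathrm{Ker}(B_{d+1}^\top)=\mathrm{rank}(B_d)+\mathrm{rank}(\tilde H^d(K,\R))$, since the nonlinear $p$-Laplacian and the linear one share the same kernel (the kernel is cut out by $B_{d+1}^\top f=0$, which is insensitive to the homogeneity degree). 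Thus $\lambda_{d+2}(\Delta^{up}_{d,p})>0$ if and only if this multiplicity is at most $d+1$, i.e. if and only if $\mathrm{rank}(B_d)+\dim\tilde H^d(K,\R)\le d+1$. In the proof of Proposition~\ref{pro:lambda(d+1)} it was shown that $\mathrm{rank}(B_d)\ge d+1$ whenever $K$ is pure of dimension $>d$ (each $(d+1)$-simplex supplies $d+2$ distinct $d$-faces, forcing $\mathrm{rank}(B_d)\ge d+1$). So the inequality $\mathrm{rank}(B_d)+\dim\tilde H^d(K,\R)\le d+1$ is equivalent to the conjunction $\mathrm{rank}(B_d)=d+1$ and $\tilde H^d(K,\R)=0$.

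The next step is to pin down exactly which pure complexes $K$ (of dimension at least $d+1$) satisfy $\mathrm{rank}(B_d)=d+1$. Since $\mathrm{rank}(B_d)\ge \#S_d-(\text{nullity of }B_d)$ is awkward to use directly, I would instead argue on the level of the $d$-skeleton: $\mathrm{rank}(B_d)=\#S_{d-1}-\dim\ker B_d$, and more usefully, $\mathrm{rank}(B_d) = \#S_d - \dim Z_d$ where $Z_d=\ker\partial_d$ is the space of $d$-cycles. Actually the cleanest route is: $\mathrm{rank}(B_d)=\dim B_{d-1}(K)$ = dimension of $d$-boundaries; combined with the standard fact $\dim S_d = \dim Z_d + \mathrm{rank}(B_d)$ is not quite it either. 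Let me instead use that $\mathrm{rank}(B_d)$ equals the dimension of the image of $\partial_d$, and note that for a pure $d'$-complex with $d'\ge d+1$ that is connected in the appropriate sense, the number of $d$-simplices already exceeds $d+1$ unless the complex is very small. The key observation: if $K$ is pure of dimension $d'\ge d+1$ and $\mathrm{rank}(B_d)=d+1$, then since each $(d+1)$-simplex $\sigma$ contributes a boundary $\partial[\sigma]$ lying in a space of dimension exactly $d+1$, and the $d+2$ faces of a single $(d+1)$-simplex already span a $(d+1)$-dimensional subspace of $C_{d-1}$ via $\partial_d$, every $d$-simplex of $K$ must be a face of that same ambient $(d+1)$-simplex; a counting/spanning argument then forces $K$ to have exactly $d+2$ vertices and to contain all its faces, i.e. $K = \Delta^{d+1}$, the full $(d+1)$-simplex. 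Conversely, for $K=\Delta^{d+1}$ one computes $\#S_d=d+2$, $\mathrm{rank}(B_d)=d+1$, and $\tilde H^d(\Delta^{d+1})=0$, so $\dim\ker\Delta^{up}_{d,p}=d+1$ and $\lambda_{d+2}>0$ holds; one must also check $\dim K = d+1$ here is consistent with the hypothesis $d<\dim K$, which it is.

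I would organize the write-up as: (i) reduce to the linear kernel dimension using the shared-kernel fact; (ii) invoke $\mathrm{rank}(B_d)\ge d+1$ from the previous proof; (iii) show $\lambda_{d+2}>0 \iff \mathrm{rank}(B_d)=d+1$ and $\tilde H^d(K,\R)=0$; (iv) prove the spanning/counting lemma that pins $\mathrm{rank}(B_d)=d+1$ to $K=\Delta^{d+1}$ among pure complexes with $\dim K\ge d+1$; (v) verify the converse directly for $\Delta^{d+1}$. The main obstacle I anticipate is step (iv): ruling out "exotic" pure complexes with $\mathrm{rank}(B_d)=d+1$ that are not a single simplex. The heart of it is that $\mathrm{rank}(B_d)=d+1$ is an extremely rigid condition — it says the image of $\partial_d$ is as small as it can possibly be given the purity hypothesis — and I expect the cleanest argument is: purity of dimension $\ge d+1$ means every $d$-simplex lies in some $(d+1)$-simplex $\sigma$, whose boundary $\partial_d$ restricted to the faces of $\sigma$ is already rank $d+1$; if $K$ had a $d$-simplex $\tau$ not among the faces of one fixed $\sigma_0$, then $\partial_d[\tau]$ together with $\partial_d(\text{faces of }\sigma_0)$ would span more than $d+1$ dimensions, contradiction; hence all $d$-simplices are faces of $\sigma_0$, forcing $V\subseteq\sigma_0$ and, by purity and downward closure, $K=\overline{\sigma_0}=\Delta^{d+1}$. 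Making the "would span more than $d+1$ dimensions" step precise (e.g. via an explicit coboundary functional detecting $\tau$) is the one genuinely technical point, but it is a finite linear-algebra computation rather than a conceptual difficulty.
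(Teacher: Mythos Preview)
Your approach is correct and matches the paper's: both reduce $\lambda_{d+2}>0$ to the kernel-dimension criterion $\mathrm{rank}(B_d)=d+1$ together with $\tilde H^d(K,\R)=0$, and then argue a rigidity statement for pure complexes. The paper is actually terser at your step~(iv) --- it simply asserts, via the proof of Proposition~\ref{pro:lambda(d+1)}, that the number of $(d+1)$-simplices must be one and then invokes purity --- so your explicit coboundary-functional idea (pick $v\in\tau\setminus\sigma_0$ and use the row of $B_d$ indexed by any $(d-1)$-face of $\tau$ containing $v$, which vanishes on all $d$-faces of $\sigma_0$ but not on $\tau$) is a welcome clarification rather than an obstacle.
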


\begin{proof}
Without loss of generality, we only prove the case of $p=2$. 
For any  $ 0\le d<\dim K$,  $\lambda_{d+2}(L^{up}_d)>0$ if and only if $\mathrm{rank}(B_{d})=d+1$ and $\mathrm{rank}(\tilde{H}^{d}(K,\R))=0$. If $K$ is 
a simplex of dimension $(d+1)$, it is easy to check that $\mathrm{rank}(B_{d})=d+1$,  $\mathrm{rank}(\tilde{H}^{d}(K,\R))=0$, and $\lambda_{d+2}(L^{up}_d)=d+2$.

For the converse,   by the proof of   Proposition \ref{pro:lambda(d+1)}, we can verify that the number of 
$(d+1)$-simplexes in $K$ is one. Since  $K$ is pure, $K$ must be a simplex of dimension $(d+1)$.
\end{proof}

Let $I_d=\dim \mathrm{Image}(B_d^\top)+1=\mathrm{rank}(B_d)+1$ and let $k_d=\dim \mathrm{Ker}(B_{d+1}^\top)+1$. Then, 
$$\lambda_{I_d}(\Delta^{up}_{d,p})=\min\limits_{x\bot \mathrm{Image}(B_d^\top)}\frac{\|B_{d+1}^\top\vec x\|_p^p}{\min\limits_{y\in \mathrm{Image}(B_d^\top)}\|\vec x+\vec y\|_{p,\deg}^p}$$ 
and 
$\lambda_{d+2}(\Delta^{up}_{d,p})\le \lambda_{I_d}(\Delta^{up}_{d,p})\le \lambda_{k_d}(\Delta^{up}_{d,p})$, where $\|\vec x\|_{p,\deg}^p:=\sum_{\tau\in S_d}\deg_\tau |x_\tau|^p$. Clearly, $\lambda_{k_d}(\Delta^{up}_{d,p})$ is the  smallest non-vanishing (nonzero) eigenvalue of the normalized $d$-th up $p$-Laplacian. We call $\lambda_{I_d}(\Delta^{up}_{d,p})$  the  first (smallest) non-trivial eigenvalue of the $p$-Laplacian $\Delta^{up}_{d,p}$. 

It is interesting  that for any $p\ge 1$, $\lambda_{I_d}(\Delta^{up}_{d,p})= \lambda_{k_d}(\Delta^{up}_{d,p})$ if and only if $\tilde{H}^{d}(K,\R)=0$. 
Also, similar to Proposition \ref{pro:lambda(d+2)}, for a  pure  simplicial complex, the equality $\lambda_{d+2}(\Delta^{up}_{d,p})= \lambda_{I_d}(\Delta^{up}_{d,p})$ holds  if and only if $K$ is a $(d+1)$-simplex or $\mathrm{rank}(\tilde{H}^{d}(K,\R))\ne0$.

\begin{remark}
For the case of $p=2$, the smallest non-trivial eigenvalue $\lambda_{I_d}(\Delta^{up}_{d})$ of the  normalized  up Laplacian has been used to derive a Cheeger inequality in \cite{SKM14}.
\end{remark} 

In the table below, we show the relations among the spectra of the normalized Laplacians on $S_d$,  and on the underlying signed graphs $G_{up}^-(S_d)$ as well as $G_{up}(S_d)$ associated to  $S_d$. It can be seen that their eigenvalues  $\lambda_n(G_{up}^-(S_d))\ge \cdots\ge \lambda_1(G_{up}^-(S_d))$, $\lambda_1(\Delta^{up}_d)\le\cdots\le \lambda_n(\Delta^{up}_d)$ and $\lambda_1(G_{up}(S_d))\le\cdots\le \lambda_n(G_{up}(S_d))$ satisfy the simple equalities:  $\lambda_{n+1-k}(G_{up}^-(S_d))=\frac{1}{d+1}(d+2-\lambda_k(\Delta^{up}_d))$, $\lambda_k(G_{up}(S_d))=\frac{1}{d+1}(d+\lambda_k(\Delta^{up}_d))$, and $\lambda_{n+1-k}(G_{up}^-(S_d))=2-\lambda_k(G_{up}(S_d))$, where $k=1,\cdots,n$ and $n=\#S_d$.

In summary, we use $\Delta(G_{up}(S_d))$ (resp., $\Delta(G_{up}^{-}(S_d))$) to denote the normalized Laplacians on the signed graph  $G_{up}(S_d)$ (resp., anti-signed graph $G_{up}^{-}(S_d)$). Then,  the eigenvalues of these operators have the relation:
$$
\begin{matrix}
\text{Specturm of }\Delta_d^{up} & ~& \text{Specturm of }\Delta(G_{up}(S_d)) & ~& \text{Specturm of }\Delta(G_{up}^{-}(S_d)) \\
\\
   &~&   &~&    \\
 0 &~&  \frac{d}{d+1}  &~&  \frac{d+2}{d+1} \\
\vdots  &~& \vdots  &~&  \vdots \\ \lambda &\Longleftrightarrow& \frac{\lambda+d}{d+1}  &\Longleftrightarrow&  \frac{d+2-\lambda}{d+1} \\
\vdots  &~& \vdots  &~&  \vdots \\ d+2 &~& 2 &~&  0 \\
\end{matrix}
$$
that is,  $\lambda$ is an eigenvalue of $\Delta_d^{up}$ if and only if $\frac{\lambda+d}{d+1} $ is an eigenvalue of $\Delta(G_{up}(S_d))$ if and only if $\frac{d+2-\lambda}{d+1} $ is an eigenvalue of  $\Delta(G_{up}^{-}(S_d))$.  
In addition, the multiplicity of the eigenvalue $0$ of $\Delta_d^{up}$ is larger than  or equal to $d+1$, while the multiplicity of the eigenvalue $d+2$ of $\Delta_d^{up}$ agrees  with the number of balanced components of $G_{up}^{-}(S_d)$.


By Theorem \ref{piecewise-linear-vertex},  there exists an  extreme point $\vec x$ which is also an eigenvector associated to the eigenvalue $\lambda_{I_d}(\Delta^{up}_{d,1})$. Indeed, based on the concepts and results in Section \ref{sec:structure-eigenspace}, one can check that the extreme points of the function pair $(\|B_{d+1}^\top\cdot\|_1,\|\cdot\|_{1,\deg})$ belong to $ \mathrm{cone}\{-N,\cdots,-1,0,1,\cdots,N\}^{\#S_d}$ for some positive integer $N$ (if $d=0$, one can take $N=1$). This means that $\lambda_{I_d}(\Delta^{up}_{d,1})$ can be expressed as a combinatorial optimization, or equivalently, an integer  programming with constraint on $\{-N,\cdots,-1,0,1,\cdots,N\}^n$, and thus we would like to call $$h(S_d):=
\min\limits_{x\bot^1 \mathrm{Image}(B_d^\top)}\frac{\|B_{d+1}^\top\vec x\|_1}{\|\vec x\|_{1,\deg}}=\lambda_{I_d}(\Delta^{up}_{d,1})$$
the Cheeger constant on $S_d$, where $\vec x\bot^1\vec y$ indicates that $\vec x$ is $\|\cdot\|_{1,\deg}$-orthogonal to $\vec y$ (see Section \ref{sec:structure-eigenspace} for the definition).

We have a  combinatorial explanation of the Cheeger constant $h(S_d)$ using the language of multi-sets  in combinatorics. A multiset can be formally defined as a  pair $(S,m)$, where $S$ is the underlying set of the multiset, formed from its distinct elements, and $m:S\to\mathbb{Z}$ is an integer-valued function, giving the {\sl multiplicity}. For convenience, we usually write $S$ instead of $(S,m)$, and we use $|S|
:=\sum_{s\in S}|m(s)|$ to indicate the {\sl size} of the multiset $S$. 

Now we concentrate on the underlying set $S_d$. We use $S\subset_N S_d$ to
indicate that $S$ is a multiset on  the underlying set $S_d$ with
multiplicities in $\{-N,\cdots,0,\cdots,N\}$. For such an $S$, let its
coboundary  $\partial^*_{d+1}S$ be the  multiset on the underlying set $S_{d+1}$ such that each $\sigma\in S_{d+1}$ has the multiplicity $\sum_{\tau\in S_d}m(\tau)\mathrm{sgn}([\tau],\partial[\sigma])$, where $m(\tau)$ is the multiplicity of $\tau$ in $S$. Denote by $\vol(S)=\sum_{\tau\in S_d}\deg_\tau |m(\tau)|$  the volume  of the multiset $S$.  

It should be noted that $\tilde{H}^{d}(K,\R)\ne 0$ if and only if  $h(S_d)=0$.  More precisely, 
according to Theorems \ref{thm:smallest-nonzero} and \ref{piecewise-linear-vertex}, as well as the results in Section \ref{sec:structure-eigenspace}, there exists $N\in\mathbb{Z}_+$ such that
\begin{align}
h(S_d)&=\min\limits_{\substack{S\subset_N S_d \\ S\neq \partial^*_{d}(T),\forall T\subset_N S_{d-1}}}\frac{|\partial^*_{d+1} S|}{\min\limits_{S':\partial^*_{d+1}S'=\partial^*_{d+1}S}\vol(S')}\label{eq:combinatorial-h(S_d)}
\\&\xlongequal[]{\text{if } \tilde{H}^{d}(K,\R)= 0}\min\limits_{\substack{S\subset_N S_d \\ \partial^*_{d+1}S\ne \varnothing}}\frac{|\partial^*_{d+1} S|}{\min\limits_{S':\partial^*_{d+1}S'=\partial^*_{d+1}S}\vol(S')}>0, \notag  
\end{align}
In order to further  understand the formula \eqref{eq:combinatorial-h(S_d)},  below we show an equivalent reformulation of \eqref{eq:combinatorial-h(S_d)} using the language of norms on cochain groups. 


The norm $\|\cdot\|_{1,\deg}$ on $C^{d}(K)$ induces a quotient norm on
$C^{d}(K)/\mathrm{image}(\delta_{d-1})$, which will be  denoted by $\|\cdot\|$
for simplicity.  More precisely, for any equivalence class $[\vec x]\in C^{d}(K)/\mathrm{image}(\delta_{d-1})$,  let $\| [\vec x]\|=\inf\limits_{x'\in [x]}\|\vec x'\|_{1,\deg}$. Then 
$$h(S_d)=
\min\limits_{0\ne [x]\in C^{d}(K)/\mathrm{image}(\delta_{d-1})}\frac{\|\delta_d\vec x\|_1}{\| [\vec x]\|}=\min\limits_{0\ne [x]\in C^{d}(K,\mathbb{Z})/\mathrm{image}(\delta_{d-1})}\frac{\|\delta_d\vec x\|_1}{\| [\vec x]\|}$$
and it is interesting that in the case of $\tilde{H}^{d}(K,\R)= 0$, 
$$h(S_d)=\min\limits_{y\in \mathrm{image}(\delta_{d})}\frac{\|\vec y\|_1}{\|\vec y\|_{\mathrm{fil}}}=\frac{1}{\max\limits_{y\in \mathrm{image}(\delta_{d})}\|\vec y\|_{\mathrm{fil}}/\|\vec y\|_1}=\frac{1}{\|\delta_d^{-1}\|_{\mathrm{fil}}}$$
where $\|\vec y\|_{\mathrm{fil}}:=\inf\limits_{x\in\delta_d^{-1} (y)}\|\vec x\|_{1,\deg}$ is the filling norm of $\vec y$, and $\|\delta_d^{-1}\|_{\mathrm{fil}}$ is called the  filling profile by Gromov (see Section 2.3 in \cite{Gromov}). 

{
\begin{remark}
Steenbergen, Klivans and  Mukherjee \cite{SKM14} introduced the following  Cheeger constant 
$$h^d(K):=\min\limits_{\varphi\in C^d(K,\mathbb{Z}_2)\setminus\mathrm{Im\,}\delta}\frac{\|\delta\varphi\|}{\min\limits_{\psi\in\mathrm{Im\,}\delta}\|\varphi+\psi\|}$$
which satisfies
$$h^d(K)=0\Longleftrightarrow \tilde{H}^d(K,\mathbb{Z}_2)\ne0,\;\; \forall d\ge 0,$$
where $\|\cdot\|$ is the Hamming norm on $C^d(K,\mathbb{Z}_2)$ (i.e. $l^1$-norm on $\mathbb{Z}_2^{n}$ with $n=\#S_d$) . According to the examples and theorems in \cite{DK12,GS15,GW16,PRT15,SKM14}, all the Cheeger constants defined using cohomology (or homology) with $\mathbb{Z}_2$-coefficients cannot derive a general two-side Cheeger inequality like the graph setting. It is worth noting that our Cheeger constant can be reformulized as 
$$h(S_d)=\min\limits_{\varphi\in C^d(K,\mathbb{Z})\setminus\mathrm{Im\,}\delta}\frac{\|\delta\varphi\|_1}{\min\limits_{\psi\in\mathrm{Im\,}\delta}\|\varphi+\psi\|_{1,\deg}}$$
where we use  $\mathbb{Z}$-coefficients instead of the  $\mathbb{Z}_2$-coefficients, and we use the  (weighted) $l^1$-norm instead of the Hamming norm. It is clear that 
$$h(S_d)=0 \Longleftrightarrow \tilde{H}^d(K,\R)\ne0,\;\; \forall d\ge 0.$$
\end{remark}}

For the case of $d=0$, we can take $N=1$, and then $h(S_0)$ reduces to the usual Cheeger constant on graphs. The following  preliminary result indicates that such a   constant $h(S_d)$ is probably a good candidate for  Cheeger-type  inequalities.

\begin{pro}\label{pro:rough-Cheeger}
Suppose that $\deg_\tau>0$, $\forall \tau\in S_d$. Then, $$\frac{h^2(S_d)}{\#S_{d+1}}\le \lambda_{I_d}(\Delta_d^{up})\le \vol(S_d)h(S_d)$$
and for any $p\ge 1$,
$$\frac{h^p(S_d)}{|\#S_{d+1}|^{p-1}}\le \lambda_{I_d}(\Delta_{d,p}^{up})\le \vol(S_d)^{p-1}h(S_d).$$
\end{pro}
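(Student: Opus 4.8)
The plan is to realize $\lambda_{I_d}(\Delta^{up}_{d,p})$ as the $p$-homogeneous eigenvalue quotient
$$\lambda_{I_d}(\Delta^{up}_{d,p})=\min\limits_{\vec x\perp \mathrm{Image}(B_d^\top)}\frac{\|B_{d+1}^\top\vec x\|_p^p}{\min\limits_{\vec y\in \mathrm{Image}(B_d^\top)}\|\vec x+\vec y\|_{p,\deg}^p},$$
as already recorded in the excerpt, and then to relate this quantity to the combinatorial constant
$$h(S_d)=\min\limits_{\vec x\perp^1 \mathrm{Image}(B_d^\top)}\frac{\|B_{d+1}^\top\vec x\|_1}{\|\vec x\|_{1,\deg}}=\lambda_{I_d}(\Delta^{up}_{d,1})$$
by a Cheeger-type argument mirroring the proof of Theorem \ref{thm:Cheeger-main}. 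The key point is that the function pairs here are exactly of the form $(\|B_{d+1}^\top\cdot\|_p^p,\|\cdot\|_{p,\deg}^p)$, to which the spectral theory of Section \ref{sec:spectrum}, and in particular Theorem \ref{thm:smallest-nonzero} and Proposition \ref{pro:GPi-property} (applied with $\Pi=\mathrm{Image}(B_d^\top)$), apply verbatim.

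First I would prove the upper bound $\lambda_{I_d}(\Delta_{d,p}^{up})\le \vol(S_d)^{p-1}h(S_d)$. Let $\vec x$ be an optimizer for $h(S_d)$, normalized so that after subtracting an element of $\mathrm{Image}(B_d^\top)$ we may take $\vec x\perp^1\mathrm{Image}(B_d^\top)$. Then by the power-mean/H\"older inequality comparing $\ell^1$ and $\ell^p$ norms on the coordinates weighted by $\deg$, together with the crude bound that the number of nonzero entries of $B_{d+1}^\top\vec x$ is at most $\#S_{d+1}$ and the relation $\vol(S_d)=\sum_{\tau}\deg_\tau$, one controls $\|B_{d+1}^\top\vec x\|_p^p$ by a constant multiple of $(\|B_{d+1}^\top\vec x\|_1)^p$ and $\|\vec x\|_{p,\deg}^p$ from below by $(\|\vec x\|_{1,\deg})^p/\vol(S_d)^{p-1}$; assembling these gives the stated bound. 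The constant $\vol(S_d)^{p-1}$ is not sharp but is all that is claimed.

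For the lower bound $h^p(S_d)\le (\#S_{d+1})^{p-1}\,\lambda_{I_d}(\Delta^{up}_{d,p})$, I would follow the Lov\'asz-extension/coarea strategy of the proof of Theorem \ref{thm:Cheeger-main}: write $B_{d+1}^\top$ as an incidence-type operator, apply the mean value theorem and H\"older's inequality to the sum $\sum_{\sigma}|(B_{d+1}^\top \vec x)_\sigma|^p$ after the substitution $\vec x\mapsto |\vec x|^{(p)}$ (componentwise sign-preserving $p$-th power), and then invoke Theorem \ref{thm:optimal-identity-fg} (or Theorem B of \cite{JostZhang-PL}) to pass from the continuous quotient to a combinatorial multiset optimum, which is exactly $h(S_d)^p$. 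One must take $\vec x$ $\|\cdot\|_{1,\deg}$-orthogonal to $\mathrm{Image}(\delta_{d-1})$ and use Proposition \ref{pro:GPi-property} to handle the $\min_{\vec y}$ in the denominator, exactly as in the proof of Theorem \ref{thm:smallest-nonzero}; the case $\tilde H^d(K,\R)=0$ (needed for $h(S_d)>0$ to be meaningful) is where the combinatorial reformulation \eqref{eq:combinatorial-h(S_d)} of $h(S_d)$ is used. The $\#S_{d+1}$ factors appear from the H\"older step counting the number of $(d+1)$-simplices incident to the support.

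The main obstacle I anticipate is the bookkeeping in the lower-bound step: the coarea argument of Theorem \ref{thm:Cheeger-main} was written for the vertex case $d=0$ with a genuine graph structure, and here one must check that $B_{d+1}^\top$, although it has entries in $\{-1,0,1\}$ rather than being a clean edge-difference operator, still admits the simple-index-set decomposition and the crucial monotonicity $\sum_{\tau: x_\tau=c}\widetilde{\deg}_\tau\le\sum_{\tau: x_\tau=c}\deg_\tau$ used for the last inequality. I expect this to go through because the normalized up $p$-Laplacian is built precisely so that its Rayleigh quotient matches the pair $(\|B_{d+1}^\top\cdot\|_p^p,\|\cdot\|_{p,\deg}^p)$, but verifying it carefully — and tracking the signs $\mathrm{sgn}([\tau],\partial[\sigma])$ so that the combinatorial coboundary $\partial^*_{d+1}$ of a multiset is what emerges — is the delicate part. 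The $p=2$ case of the first inequality is essentially the Cheeger inequality of \cite{SKM14}, which can be cited as a sanity check.
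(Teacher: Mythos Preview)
Your upper-bound argument (H\"older/power-mean comparison of $\ell^1$ and $\ell^p$ norms) is essentially what the paper does. For the lower bound, however, you are working much harder than necessary. The paper's proof is a one-line pointwise norm comparison and does not use any coarea/Lov\'asz-extension machinery at all: since $\|\vec v\|_1^2\le (\#S_{d+1})\|\vec v\|_2^2$ for $\vec v\in\R^{\#S_{d+1}}$ and $\|\vec x\|_{1,\deg}^2\ge \sum_\tau \deg_\tau^2 x_\tau^2\ge (\min_\tau\deg_\tau)\|\vec x\|_{2,\deg}^2$, one gets
\[
\frac{\|B_{d+1}^\top\vec x\|_1^2}{\|\vec x\|_{1,\deg}^2}\le \frac{\#S_{d+1}}{\min_\tau\deg_\tau}\cdot\frac{\|B_{d+1}^\top\vec x\|_2^2}{\|\vec x\|_{2,\deg}^2}
\]
for \emph{every} $\vec x$, and the reverse inequality with constant $1/\vol(S_d)$. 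Because both $h(S_d)$ and $\lambda_{I_d}(\Delta^{up}_d)$ are the $(\mathrm{rank}(B_d)+1)$-th min-max eigenvalue of their respective function pairs, these pointwise Rayleigh-quotient bounds transfer directly to the eigenvalues; then $h\le 1\le\deg_\tau$ cleans up the constants. The $p$-version is identical with $\ell^p$ in place of $\ell^2$.

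So none of the obstacles you anticipate --- sign bookkeeping for $\mathrm{sgn}([\tau],\partial[\sigma])$, the simple-index-set decomposition, the $\tilde H^d(K,\R)=0$ hypothesis, the combinatorial reformulation \eqref{eq:combinatorial-h(S_d)}, or the min over $\mathrm{Image}(B_d^\top)$ in the denominator --- ever arises. Your coarea route would presumably also work and would give a genuine Cheeger-type inequality with sharper constants (this is what Theorem~\ref{thm:Cheeger-main} achieves in the $d=0$ setting), but for the crude bounds stated here the paper's elementary approach is far shorter.
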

\begin{proof}
For simplicity, we denote  $h=h(S_d)$ and take $\lambda=\lambda_{I_d}(\Delta_d^{up})$. 
We shall prove $\frac{\min\limits_{\tau\in S_{d}}\deg_\tau}{\#S_{d+1}}h^2\le \lambda\le \vol(S_d)h^2$. 

Let  $k=\mathrm{rank}(B_{d})$. Then $\lambda$ and $h$ are the $(k+1)$-th min-max eigenvalues of the $d$-th up Laplacian and  the $d$-th up 1-Laplacian, respectively. We only need to prove that, for any $k\ge 1$, 
$$\sqrt{\frac{1}{\sum\limits_{\tau\in S_d}\deg_\tau} \lambda_k}\le h_k\le \sqrt{\frac{\#S_{d+1}}{\min\limits_{\tau\in S_{d}}\deg_\tau} \lambda_k}.$$
In fact, it is easy to see that 
$$\min\limits_\tau\deg_\tau  \le \frac{\|\vec x\|_{1,\deg}^2}{\|\vec x\|_{2,\deg}^2}\le \sum_{\tau\in S_d}\deg_\tau \; \text{ 
and }\; 1\le \frac{\|B_{d+1}^\top\vec x\|_1^2}{\|B_{d+1}^\top\vec x\|_2^2}\le \#S_{d+1}.$$
Hence
$$\frac{1}{\sum_{\tau\in S_d}\deg_\tau}\frac{\|B_{d+1}^\top\vec x\|_2^2}{\|\vec x\|_{2,\deg}^2} \le\frac{\|B_{d+1}^\top\vec x\|_1^2}{\|\vec x\|_{1,\deg}^2}\le   \frac{\#S_{d+1}}{\min\limits_\tau\deg_\tau}\frac{\|B_{d+1}^\top\vec x\|_2^2}{\|\vec x\|_{2,\deg}^2}.$$
The proof of $\frac{h^2(S_d)}{\#S_{d+1}}\le \lambda_{I_d}(\Delta_d^{up})\le \vol(S_d)h(S_d)$ is then  completed by noting that $h\le 1\le \deg_\tau$,  $\forall \tau\in S_d$.   The case of $\Delta_{d,p}^{up}$ is similar.
\end{proof}

\begin{remark}\label{remark:down-Cheeger}
 We can also define the down Cheeger constant 
$$h_{down}(S_d):=\min\limits_{x\bot^1 \mathrm{Image}(B_{d+1})}\frac{\|B_{d}\vec x\|_1}{\|\vec x\|_{1,\deg}}=\lambda_{I_{d+1}}(\Delta^{down}_{d,1})$$
which  possesses a combinatorial  reformulation that is similar to  \eqref{eq:combinatorial-h(S_d)}. 

Consider a $d$-dimensional  combinatorial manifold $K$, that is, a $d$-dimensional topological manifold possessing  a simplicial complex structure. As a manifold, we assume that $K$ is connected and  has no  boundary. Then, the down adjacency relation  induces a graph on $S_d$, and we have the Cheeger inequality: 
$$\frac{h^2_{down}(S_d)}{2}\le \lambda_{2}(\Delta_d^{down}) 
\le 2h_{down}(S_d).$$

\end{remark}

\begin{defn}
Let $M$ be a $d$-dimensional orientable compact closed Riemannian manifold. A triangulation $T$  of $M$ is {\sl $C$-uniform} if there exists $C>1$ such that for any two $d$-simplexes $\triangle$ and $\triangle'$ in the triangulation $T$, $$\frac1C <\frac{\mathrm{diam}(\triangle)}{\mathrm{diam}(\triangle')}<C\;\;\text{ and }\;\;\frac1C <\frac{\mathrm{diam}(\triangle)}{\mathrm{vol}(\triangle)^{\frac1d}}<C
.$$

A triangulation $T$  of $M$ is {\sl uniform} if there exist $N>1$ and  $C>1$  such that either the number of vertices of $T$ is smaller than $N$, or $T$  is $C$-uniform. The constants $N$ and $C$ are called the {\sl uniform parameters} of the triangulation.
\end{defn}
\begin{theorem}\label{thm:Cheeger-manifold-complex}
Let $M$ be an  orientable, compact, closed Riemannian manifold of dimension $(d+1)$. Let  $K$ be a simplicial complex which is  combinatorially equivalent to a uniform triangulation of $M$. Then, there is a Cheeger inequality
$$ \frac{h^2(S_d)}{C}\le \lambda_{I_{d}}(\Delta_{d}^{up}) \le C\cdot h(S_d), $$
where $C$ is a uniform constant which  is independent of the choice of $K$. In
addition, $h(S_d)>0$ if and only if $H_1(K)=0$ (or equivalently, $H_1(M)=0$).
\end{theorem}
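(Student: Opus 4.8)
\textbf{Proof proposal for Theorem \ref{thm:Cheeger-manifold-complex}.}

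The plan is to reduce the statement to the combinatorial Cheeger inequality for the underlying graph/complex already established in Proposition \ref{pro:rough-Cheeger}, and then to replace the crude constants $\#S_{d+1}$ and $\vol(S_d)$ appearing there by a single uniform constant using the geometry of a uniform triangulation. First I would recall, as in the proof of Proposition \ref{pro:rough-Cheeger}, that if $k=\mathrm{rank}(B_d)$, then $\lambda_{I_d}(\Delta_d^{up})$ and $h(S_d)$ are exactly the $(k+1)$-th min-max values of the pairs $(\|B_{d+1}^\top\cdot\|_2^2,\|\cdot\|_{2,\deg}^2)$ and $(\|B_{d+1}^\top\cdot\|_1,\|\cdot\|_{1,\deg})$, respectively, so the comparison of these two eigenvalues is governed pointwise by the two ratios $\|\vec x\|_{1,\deg}^2/\|\vec x\|_{2,\deg}^2$ and $\|B_{d+1}^\top\vec x\|_1^2/\|B_{d+1}^\top\vec x\|_2^2$. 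The upper bound $\lambda_{I_d}(\Delta_d^{up})\le C\cdot h(S_d)$ follows from Proposition \ref{pro:rough-Cheeger} directly once we know $\vol(S_d)\le C$; but in general $\vol(S_d)$ grows with the number of simplices, so here is the key point where uniformity of the triangulation is used: for a $C_0$-uniform triangulation of a fixed manifold $M$, every $d$-simplex $\tau$ has bounded degree $\deg_\tau=\#\{\sigma\in S_{d+1}:\tau\subset\sigma\}$, bounded by a constant depending only on $d$ and $C_0$ (each $(d+1)$-simplex containing $\tau$ occupies a definite fraction of solid angle around $\tau$, and uniformity forbids arbitrarily many such simplices). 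With uniformly bounded degrees, the relevant local sparsity comes into play: in the Rayleigh quotient one should localize, replacing the global $\#S_{d+1}$ by the maximal number of $(d+1)$-simplices meeting a single $d$-simplex, which is again bounded by a uniform constant.

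More precisely, I would prove the two inequalities separately. For the easy direction ($\lambda\le Ch$): pick a minimizer $\vec x$ of $\|B_{d+1}^\top\vec x\|_1/\|\vec x\|_{1,\deg}$ with $\vec x\bot^1\mathrm{Image}(B_d^\top)$; since $B_{d+1}^\top$ has at most $(d+2)$ nonzero entries per row, $\|B_{d+1}^\top\vec x\|_2^2\le\|B_{d+1}^\top\vec x\|_1\cdot\max_\sigma|(B_{d+1}^\top\vec x)_\sigma|\le\|B_{d+1}^\top\vec x\|_1^2$, and by bounded degree $\|\vec x\|_{2,\deg}^2\ge\frac1{C_1}\|\vec x\|_{1,\deg}^2/\|\vec x\|_\infty\cdot\dots$; a cleaner route is to test the Rayleigh quotient of $\Delta_d^{up}$ directly on a near-optimal $\{-N,\dots,N\}$-valued vector furnished by Theorem \ref{piecewise-linear-vertex} applied to the $1$-Laplacian pair, controlling $\|B_{d+1}^\top\vec x\|_2^2\le(d+2)\|B_{d+1}^\top\vec x\|_1\le\ldots$ and $\|\vec x\|_{2,\deg}^2\ge\|\vec x\|_{1,\deg}$ since entries are integers, giving $\lambda\le(d+2)h$. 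For the hard direction ($h^2\le C\lambda$): start from a minimizer $\vec x$ of the $2$-Laplacian Rayleigh quotient and run a co-area / thresholding argument on the cochain $\vec x$ (in the spirit of the graph Cheeger inequality, here on the signed graph $G_{up}^-(S_d)$ whose $k$-th Cheeger constant is $\tfrac1{d+1}h_k(S_d)$, using Theorem \ref{thm:anti-signed-Cheeger}), but with the crucial adaptation that after slicing one must project back modulo $\mathrm{Image}(B_d^\top)$ to land in the admissible class $\{\vec x\bot^1\mathrm{Image}(B_d^\top)\}$; the uniformity guarantees that the filling norm $\|\cdot\|_{\mathrm{fil}}$ and the ordinary $\|\cdot\|_{1,\deg}$-norm on $\mathrm{Image}(\delta_d)$ are comparable up to a uniform constant, which is exactly the content of the filling-profile estimate. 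Assembling these, $h^2(S_d)\le C(d+1)^2\,(d+2-\lambda_{n+1-k'}(\Delta^{up}_d))\le\ldots\le C'\lambda_{I_d}(\Delta_d^{up})$.

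Finally, the homological equivalence $h(S_d)>0\iff H_1(K)=0$ follows from the discussion preceding the theorem: $h(S_d)=0$ iff $\tilde H^{d}(K,\R)\ne0$ (by \eqref{eq:combinatorial-h(S_d)} and Proposition \ref{pro:lambda(d+1)}), and for a triangulation of the $(d+1)$-manifold $M$ one has $\tilde H^{d}(K,\R)\cong\tilde H^{d}(M,\R)$, which by Poincaré duality on the orientable closed manifold $M$ is isomorphic to $H_1(M,\R)\cong H_1(K,\R)$; combined with the fact that $\lambda_{I_d}$ is the $(k+1)$-th min-max eigenvalue, $h(S_d)>0$ is equivalent to $H_1(K)=0$. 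I expect the main obstacle to be the hard direction of the Cheeger inequality: the classical thresholding argument does not immediately respect the orthogonality constraint $\vec x\bot^1\mathrm{Image}(B_d^\top)$, and controlling the filling norm of the thresholded cochain uniformly over all uniform triangulations of $M$ — i.e. showing that Gromov's filling profile $\|\delta_d^{-1}\|_{\mathrm{fil}}$ stays bounded — is where the geometric input (uniform simplex shapes, bounded local combinatorics, and the fixed topology of $M$) must be invoked carefully.
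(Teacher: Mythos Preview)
Your approach is genuinely different from the paper's, and the gap you yourself identify at the end is real and not resolved by your outline.

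The paper does \emph{not} attempt a direct $\ell^1$--$\ell^2$ norm comparison sharpened by bounded local combinatorics. Instead it proceeds indirectly through two reductions. First, by the spectral duality $\lambda_{I_d}(\Delta_d^{up})=\tfrac{d+2}{2}\lambda_2(\Delta_{d+1}^{down})$, it converts the problem to the ordinary graph Cheeger inequality on the down-adjacency graph $G$ on $S_{d+1}$, yielding $\tfrac{d+2}{4}h_{down}^2(S_{d+1})\le\lambda_{I_d}(\Delta_d^{up})\le(d+2)h_{down}(S_{d+1})$. Second, and this is the key step you are missing, it shows $h(S_d)\sim h_{down}(S_{d+1})$ by relating \emph{both} constants to the manifold Cheeger constant $h(M)$: for $h_{down}(S_{d+1})$ this is the approximation theory of Trillos--Slep\v{c}ev for Cheeger cuts on uniform nets, giving $\tfrac1\epsilon h_{down}(S_{d+1})\sim h(M)$; for $h(S_d)$ the paper uses the duality of Lemma~\ref{lem:dual} / Proposition~\ref{pro:p-Lap-dual} to identify $h(S_d)$ with the smallest nontrivial $\infty$-Laplacian eigenvalue on $G$, which equals $1/\mathrm{diam}(G)$ explicitly, and then $\tfrac1{\epsilon\,\mathrm{diam}(G)}\sim\tfrac1{\mathrm{diam}(M)}\sim h(M)$ by uniformity.

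Your direct route runs into trouble precisely where you flag it. Bounded vertex degrees do \emph{not} allow you to replace the global factors $\#S_{d+1}$ or $\vol(S_d)$ in Proposition~\ref{pro:rough-Cheeger} by a uniform constant: the inequality $\|\vec y\|_1^2\le m\|\vec y\|_2^2$ on $\R^m$ is saturated by constant vectors regardless of sparsity, and similarly $\|\vec x\|_{1,\deg}^2/\|\vec x\|_{2,\deg}^2$ can be of order $n$. Your upper-bound argument via integer-valued test vectors gives $\lambda\le(d+2)N\,h$, but the integer $N$ from Theorem~\ref{piecewise-linear-vertex} is not shown to be uniform in $K$. For the lower bound, the thresholding-plus-projection scheme you sketch would require exactly the uniform filling estimate whose difficulty you acknowledge; the paper sidesteps this entirely via the closed-form identity $h(S_d)=1/\mathrm{diam}(G)$. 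Your treatment of the homological equivalence $h(S_d)>0\iff H_1(K)=0$ via Poincar\'e duality is correct and matches the paper.
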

\begin{proof}
By Proposition \ref{pro:rough-Cheeger}, $\lambda_{I_{d}}(\Delta_{d}^{up})=0$ if and only if  $h(S_d)=0$. So, it suffices to assume that $h(S_d)>0$, i.e., $\tilde{H}^d(M)=\tilde{H}^d(K)=0$. Since $M$ and $K$ are of  dimension $(d+1)$, Poincar\'e duality implies that  $\tilde{H}_1(M)=\tilde{H}^d(M)=0$.  

We may assume  without loss of generality that $M$ is  simply connected, and the  triangulation is $C$-uniform for some $C>1$, and $S_d(K)$ has $n$ elements, where $n$ is a sufficiently large integer. 

For any $\epsilon>0$, there exist $N>0$ such that any $C$-uniform triangulation with at least $N$ facets satisfies  $\frac{1}{3C^2}\epsilon<\mathrm{diam}(\triangle)<\epsilon$, $\forall \triangle$. Here, we also regard the uniform triangulation as a uniform  $\epsilon$-net.

\begin{enumerate}
\item[Claim 1] For the down Cheeger constant, we have
$$ \frac{d+2}{4}h^2_{down}(S_{d+1})\le \lambda_{I_d}(\Delta_d^{up}) \le (d+2)h_{down}(S_{d+1}). $$

Proof: This is derived by the Cheeger inequality
$$ \frac{h^2_{down}(S_{d+1})}{2}\le \lambda_{2}(\Delta_{d+1}^{down}) \le 2h_{down}(S_{d+1}) $$
proposed in Remark \ref{remark:down-Cheeger}, 
and the duality property 
$\lambda_{I_d}(\Delta_d^{up})=\frac{d+2}{2}\lambda_{2}(\Delta_{d+1}^{down})$. 
\item[Claim 2] The Cheeger constant $h(S_d)$ and the down Cheeger constant  $h_{down}(S_{d+1})$ satisfy $h(S_d)\sim  h_{down}(S_{d+1})$,  
i.e., there exists a uniform constant  $C>1$ such that 
$\frac1C h_{down}(S_{d+1})\le h(S_d)\le C  h_{down}(S_{d+1})$.

The proof is divided into the following two claims.
\begin{enumerate}
\item[Claim 2.1] $\frac1\epsilon h_{down}(S_{d+1})\sim  h(M)$

Proof: Let $G$ be the graph with $n:=\#S_{d+1}$ vertices located in the barycenters of all $(d+1)$-simplexes, such that two vertices form an edge in $G$ if and only if these two $d$-simplexes are down adjacent. We may call $G$ the underlying graph of the  triangulation. 

Note that $ h_{down}(S_{d+1})$ also indicates the Cheeger constant of the  unweighted underlying graph $G$. 
An approximation  approach developed in \cite{TMT20,TrillosSlepcev-16} implies  that the Cheeger constant of a   uniform triangulation should approximate the
Cheeger constant of the manifold when we equip the edges of the underlying graph of the triangulation with  appropriate  weights (related to $\epsilon$).   In fact, since $G$ is  a underlying graph of the triangulation, we may assume that   $G$ is embedded in the manifold $M$, and the  distribution of the vertices of $G$ is uniform\footnote{The vertices of $G$ are well-distributed on $M$.}. Then, according to the approximation theorems in \cite{TMT20,TrillosSlepcev-16}, by adding  appropriate  weights (related to $\epsilon$)\footnote{The weight of an edge $\{u,v\}$ is determined by the distance of $u$ and $v$ in $M$, which is about $O(\epsilon)$.} on $G$, the Cheeger constant of $G$ (with appropriate  edge weights) would approximate $h(M)$ (i.e., the difference of $h(M)$ and the Cheeger constant of the weighted graph $G$ is bounded by  $h(M)/2$ whenever $\epsilon$ is sufficiently small). 
We can then adopt the same approximation  approach in
\cite{TMT20,TrillosSlepcev-16} (i.e., a slight modification of the  approximation theorem 
in \cite{TMT20,TrillosSlepcev-16,TrillosSlepcev-15}) 
 to derive that $ \frac1\epsilon h_{down}(S_{d+1})\sim  h(M)$.

\item[Claim 2.2] $\frac1\epsilon h(S_d)\sim  h(M)$ whenever $H_1(M)=0$.

Proof: It is well-known that  $H_1(M)=0$ if and only if $H^d(M)=0$ if and only if $\mathrm{Ker}(\delta_d)=\mathrm{Im}(\delta_{d-1})$,  since $M$ is a compact closed manifold of dimension $(d+1)$. Thus,  $$h(S_d)= \min\limits_{x\not\in  \mathrm{Ker}(\delta_{d})}\frac{\sum\limits_{\sigma\in S_{d+1}}|\sum\limits_{\tau\in S_d}\mathrm{sgn}(\tau,\partial\sigma)x_\tau|}{\min\limits_{z\in \mathrm{Ker}(\delta_d)}\sum\limits_{\tau\in S_d}2|x_\tau+z_\tau|}.$$
By the duality theorem  (see Lemma \ref{lem:dual} and Proposition \ref{pro:p-Lap-dual}), we further have
$$h(S_d)=\frac{\max\limits_{\sigma \mathop{\sim}\limits^{\text{down}} \sigma'}\frac12|y_\sigma-y_{\sigma'}|}{\min\limits_{t\in\R}\max\limits_{\sigma\in S_{d+1}}|y_\sigma+t|}$$
and by Theorem \ref{piecewise-linear-vertex}, there is no difficulty to check that the  optimization in the right hand side coincides with  
$$\min\limits_{\min\limits_{\sigma} y_\sigma+\max\limits_\sigma y_\sigma=0}\frac{\max\limits_{\sigma \mathop{\sim}\limits^{\text{down}} \sigma'}|y_\sigma-y_{\sigma'}|}{2\max\limits_{\sigma}|y_\sigma|}=\frac{1}{\mathrm{diam}(G)}$$
where $\mathrm{diam}(G)$ indicates the combinatorial diameter of $G$. We remark here that we indeed rewrite   $h(S_d)$ as the smallest nontrivial eigenvalue of the $\infty$-Laplacian, which agrees with  $1/\mathrm{diam}(G)$.  This argument is similar to a theorem  in  \cite{JLM-99}.

Finally, since the  triangulation is $C$-uniform, it is easy to see that 
$$\frac1\epsilon h(S_d)= \frac{1}{\epsilon\cdot \mathrm{diam}(G)}\sim \frac{1}{ \mathrm{diam}(M)} .$$
Hence,  $\frac1\epsilon h(S_d)\sim h(M)$.
\end{enumerate}
\end{enumerate}
The proof is then  completed by combining all the statements above.
\end{proof}
\begin{remark}
\begin{itemize}
\item The constant $C$ in  Theorem \ref{thm:Cheeger-manifold-complex} depends
  on the uniform parameters of the triangulation, and the ambient
  manifold. We hope that it is possible to find a new approach to get a uniform
  constant only depends on the dimension $d$.
\item Under the same condition of Theorem \ref{thm:Cheeger-manifold-complex},
  we further have $\frac{\lambda_{k_{d}}(\Delta_{d,1}^{up})^2}{C}\le
  \lambda_{k_{d}}(\Delta_{d}^{up}) \le
  C\lambda_{k_{d}}(\Delta_{d,1}^{up})$. This inequality  coincides with the
  Cheeger inequality in Theorem \ref{thm:Cheeger-manifold-complex} if and only
  if $H_1(M)=0$.

\item A modification of the proof can deduce that $\frac{1}{\mathrm{diam}(G)}\sim \lambda_2(G)$ whenever $G$ can be uniformly embedded  into such a typical manifold, where $\lambda_2(G)$ is the second smallest  eigenvalue of the normalized  Laplacian on $G$. To some extent, this 
can be regarded as a higher dimensional analog of the main result 
in \cite{LouderSouto}. 
{
\item Inspired by the approximation theory for Laplacians on triangulations  of manifolds proposed by Dodziuk \cite{Dodziuk76} and  Dodziuk-Patodi \cite{DP76}, we hope that it is possible to develop an approximation theory for our Cheeger constants on triangulations of manifolds. }

\end{itemize}

\end{remark}

Motivated by the above results and  discussions, we then present the following open problem for Cheeger inequalities on simplicial complexes.

\textbf{Conjecture}: There exists $C_d>0$ which only depends on $d\in\mathbb{N}$, such that
$$\frac{h^2(S_d)}{C_d}\le \lambda_{I_d}(\Delta_d^{up}) \le C_dh(S_d),\;\;\text{ and }\;\; \frac{h^2_{down}(S_d)}{C_d}\le \lambda_{I_d}(\Delta_d^{down}) \le C_dh_{down}(S_d).$$

\subsection{Other applications on extension and duality}

We show new equalities based on the theory of duality in Section
\ref{sec:dual}, and by employing these equalities, we immediately get  the
dual optimization of the inner problem in the Dinkelbach-type scheme
\cite{JostZhang-PL}, and the dual formulation of the $l^p$-polarization
(Chebyshev)  constant \cite{AmbrusNietert}.  In addition, applying the dual
principle to Lov\'asz  extension, we obtain  new equivalent continuous
representations of the Cheeger constant, maxcut, dual Cheeger  quantity on a graph.  
\begin{pro}\label{pro:dual-inner-problem}
Let $F:\R^m\to[0,+\infty)$ be a positive-definite and one-homogeneous convex function, and let $T:\R^m\to\R^n$ be a linear transformation. For any convex body $\mathbb{B}\subset \R^n$ that contains $\vec0$ as its inner point, and for any $\vec u\in \R^n$, we have
$$\min\limits_{x\in \mathbb{B}}(F(T\vec x)-\vec x\cdot\vec u)=-\min\limits_{F^*(\vec y)\le 1}h_{\mathbb{B}}(\vec u-T^\top\vec y)\,\text{ and }\,\max\limits_{x\in \mathbb{B}}(F(T\vec x)-\vec x\cdot\vec u)=  \max\limits_{F^*( y)\le 1}h_{\mathbb{B}}(T^\top\vec y-\vec u)  ,$$
where $h_{\mathbb{B}}$ is the support function of ${\mathbb{B}}$, and $F^*$ is the dual function of $F$.
\end{pro}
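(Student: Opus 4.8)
The plan is to recognize Proposition~\ref{pro:dual-inner-problem} as a Fenchel-type duality statement and to prove it by directly manipulating the relevant convex conjugates. First I would record the basic identity that for a positive-definite one-homogeneous convex function $F$, the convex conjugate $F^\star$ is the $\{0,+\infty\}$-valued indicator of the dual unit ball $\{\vec y:F^*(\vec y)\le 1\}$; this is the usual fact that $F(\vec z)=\sup\{\langle \vec y,\vec z\rangle: F^*(\vec y)\le 1\}$, which is exactly the definition of $F^*$ unwound. Likewise the support function $h_{\mathbb B}$ is the conjugate of the indicator $\iota_{\mathbb B}$ of the convex body $\mathbb B$. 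With these two dictionary entries in hand, the quantity $\min_{\vec x\in\mathbb B}(F(T\vec x)-\langle \vec x,\vec u\rangle)$ can be rewritten as $\min_{\vec x\in\R^n}\big(\iota_{\mathbb B}(\vec x)+F(T\vec x)-\langle\vec x,\vec u\rangle\big)$, i.e.\ as minus the value of a Fenchel conjugate of the sum $\iota_{\mathbb B}+F\circ T$ evaluated at $\vec u$.

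Next I would invoke Fenchel--Rockafellar duality (equivalently, the infimal-convolution/conjugate-of-a-sum formula together with the conjugate-of-a-composition formula): for closed proper convex functions, $(\iota_{\mathbb B}+F\circ T)^\star(\vec u)=\inf_{\vec y}\big(\iota_{\mathbb B}^\star(\vec u-T^\top\vec y)+(F\circ T)^\star\big)$ reduces, using $(F\circ T)^\star(\vec w)=F^\star((T^\top)^{-1}\vec w)$ on the range and $F^\star=\iota_{\{F^*\le1\}}$, to $\inf_{F^*(\vec y)\le 1}h_{\mathbb B}(\vec u-T^\top\vec y)$. Since $\vec 0$ is an interior point of $\mathbb B$ and $F\circ T$ is finite everywhere, the constraint qualification (relative-interior condition) for Fenchel duality holds, so there is no duality gap and the infimum is attained; this yields $\min_{\vec x\in\mathbb B}(F(T\vec x)-\langle\vec x,\vec u\rangle)=-\min_{F^*(\vec y)\le1}h_{\mathbb B}(\vec u-T^\top\vec y)$, which is the first identity. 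For the second identity, $\max_{\vec x\in\mathbb B}(F(T\vec x)-\langle\vec x,\vec u\rangle)$ is a maximum of a convex function over the compact convex set $\mathbb B$; here I would instead argue more elementarily. Writing $F(T\vec x)=\max_{F^*(\vec y)\le1}\langle\vec y,T\vec x\rangle=\max_{F^*(\vec y)\le1}\langle T^\top\vec y,\vec x\rangle$, we get $\max_{\vec x\in\mathbb B}(F(T\vec x)-\langle\vec x,\vec u\rangle)=\max_{\vec x\in\mathbb B}\max_{F^*(\vec y)\le1}\langle T^\top\vec y-\vec u,\vec x\rangle=\max_{F^*(\vec y)\le1}\max_{\vec x\in\mathbb B}\langle T^\top\vec y-\vec u,\vec x\rangle=\max_{F^*(\vec y)\le1}h_{\mathbb B}(T^\top\vec y-\vec u)$, where the interchange of the two suprema is trivially valid.

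The main obstacle I anticipate is the first identity, specifically justifying the \emph{absence of a duality gap} and the attainment of the dual minimum, since this is where the hypotheses ($\mathbb B$ a convex body with $\vec 0$ in its interior, $F$ positive-definite and hence coercive on the relevant subspace) genuinely enter; the interior hypothesis guarantees $\mathrm{ri}(\mathrm{dom}\,\iota_{\mathbb B})\cap\mathrm{ri}(\mathrm{dom}\,F\circ T)\ne\varnothing$, while positive-definiteness of $F$ together with compactness of $\mathbb B$ ensures the relevant sublevel sets are bounded so the dual problem's infimum is finite and attained. A clean way to sidestep heavy machinery is to prove the first identity by the same double-supremum trick applied to $-F$: one shows $\min_{\vec x\in\mathbb B}(F(T\vec x)-\langle\vec x,\vec u\rangle)=\min_{\vec x\in\mathbb B}\max_{F^*(\vec y)\le1}\langle T^\top\vec y-\vec u,\vec x\rangle$ and then applies Sion's minimax theorem (Theorem~\ref{thm:strong-min-max}, with $\mathbb B$ compact convex, $\{F^*\le1\}$ convex, and the bilinear integrand continuous, concave in $\vec x$ and convex in $\vec y$) to swap $\min$ and $\max$, obtaining $\max_{F^*(\vec y)\le1}\min_{\vec x\in\mathbb B}\langle T^\top\vec y-\vec u,\vec x\rangle=\max_{F^*(\vec y)\le1}\big(-h_{\mathbb B}(\vec u-T^\top\vec y)\big)=-\min_{F^*(\vec y)\le1}h_{\mathbb B}(\vec u-T^\top\vec y)$. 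I would present the proof in this Sion-based form, as it keeps everything within the toolkit already developed in the paper and makes the role of each hypothesis transparent.
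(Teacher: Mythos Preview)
Your proposal is correct and, in the Sion-based form you settle on at the end, is essentially identical to the paper's proof: the paper also writes $F(T\vec x)=\max_{F^*(\vec y)\le 1}\langle T^\top\vec y,\vec x\rangle$, swaps the two maxima trivially for the second identity, and swaps $\min$ and $\max$ for the first identity (without explicitly citing Sion). Your initial detour through Fenchel--Rockafellar is unnecessary but harmless, and your explicit invocation of Sion's theorem for the min-max swap is in fact a slight improvement in rigor over the paper's bare assertion of that step.
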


\begin{proof}
We only need to prove the following equivalent statement:

Let $F:\R^m\to[0,+\infty)$ and $G:\R^n\to[0,+\infty)$ be positive-definite and one-homogeneous convex functions. For any matrix of order $m\times n$, and for any $\vec u\in \R^n$, we have
$$\min\limits_{G(x)\le 1}(F(T\vec x)-\vec x\cdot\vec u)=-\min\limits_{F^*(\vec y)\le 1}G^*(\vec u-T^\top\vec y)$$
and
$$\max\limits_{G(x)\le 1}(F(T\vec x)-\vec x\cdot\vec u)=  \max\limits_{F^*( y)\le 1}G^*(T^\top\vec y-\vec u)  .$$

The proof is direct. In fact, by the definition of duality, there holds
\begin{align*}
\min\limits_{G(x)\le 1}(F(T\vec x)-\vec x\cdot\vec u)&=\min\limits_{G(x)\le 1}\left( \max\limits_{F^*( y)\le 1}  T\vec x\cdot \vec y-\vec x\cdot\vec u\right)
=\min\limits_{G(x)\le 1} \max\limits_{F^*( y)\le 1}  \vec x\cdot (T^\top\vec y-\vec u)\\&=\max\limits_{F^*( y)\le 1} \min\limits_{G(x)\le 1}  \vec x\cdot (T^\top\vec y-\vec u)=\max\limits_{F^*( y)\le 1} (-\max\limits_{G(x)\le 1}  \vec x\cdot (\vec u-T^\top\vec y))\\&=\max\limits_{F^*( y)\le 1}-G^*(\vec u-T^\top\vec y)=-\min\limits_{F^*( y)\le 1}G^*(\vec u-T^\top\vec y)    
\end{align*}
and
\begin{align*}
\max\limits_{G(x)\le 1}(F(T\vec x)-\vec x\cdot\vec u)&=\max\limits_{G(x)\le 1}\left( \max\limits_{F^*( y)\le 1}  T\vec x\cdot \vec y-\vec x\cdot\vec u\right)
=\max\limits_{G(x)\le 1} \max\limits_{F^*( y)\le 1}  \vec x\cdot (T^\top\vec y-\vec u)\\&=\max\limits_{F^*( y)\le 1} \max\limits_{G(x)\le 1}  \vec x\cdot (T^\top\vec y-\vec u)=\max\limits_{F^*( y)\le 1}G^*(T^\top\vec y-\vec u) .   
\end{align*}
The proof is completed.
\end{proof}

\begin{example}
In the Dinkelbach-type scheme, 
we work on  a convex optimization 
$$\vec  x^{k+1}\in \argmin\limits_{\vec x\in \mathbb{B}} \{F_1(\vec x)+r^k G_2(\vec x) -(\langle \vec u^k,\vec x\rangle+r^k \langle \vec v^k,\vec x\rangle) + H_{\vec x^k}(\vec x)\},$$ 
and by Proposition \ref{pro:dual-inner-problem}, 
the equivalent dual problem of this optimization  is 
$$\vec y^{k+1}\in \argmin\limits_{\vec y\in\Omega _k}\|\vec u^k+r^k\vec v^k-\vec y\|_2^2,\;\;\;\; \vec x^{k+1}=\frac{\vec u^k+r^k\vec v^k-\vec y^{k+1}}{\|\vec u^k+r^k\vec v^k-\vec y^{k+1}\|_2}$$
where we take $\mathbb{B}$ as the $l^2$-ball, and $\Omega_k$ is the dual convex body of $\{\vec x:F_1(\vec x)+r^kG_2(\vec x)+H_{\vec x^k}(\vec x)\le 1\}$. 

\end{example}

\begin{remark}
Another equivalent formulation of Proposition \ref{pro:dual-inner-problem} can be written as 
\begin{equation}\label{eq:max-dual-FGu}
\max\limits_{x\ne0}\frac{F(T\vec x)-\vec x\cdot\vec u}{G(\vec x)}=\max\limits_{y\ne0}\frac{G^*(T^\top\vec y-\vec u) }{F^*(\vec y)},\;\;\forall \vec u    
\end{equation}
and
$$ \min\limits_{x\ne0}\frac{F(T\vec x)-\vec x\cdot\vec u}{G(\vec x)}= -\min\limits_{y\ne0}\frac{G^*(\vec u-T^\top\vec y) }{F^*(\vec y)}\;\;\text{ whenever }\vec u\not\in \mathrm{int}(T^\top \nabla F(\vec 0)).$$
Also, \eqref{eq:max-dual-FGu} can be formulated as
\begin{align*}
\max\limits_{x\in \mathbb{B}_G}F(T\vec x)-\vec x\cdot\vec u&=\max\limits_{x\in \mathrm{Ext}(\mathbb{B}_G)}F(T\vec x)-\vec x\cdot\vec u
\\&=\max\limits_{y\in \mathbb{B}_{F^*}}G^*(T^\top\vec y-\vec u)= \max\limits_{y\in \mathrm{Ext}(\mathbb{B}_{F^*})}G^*(T^\top\vec y-\vec u),\;\;\forall \vec u ,
\end{align*}
$$$$
where  $\mathrm{Ext}(\mathbb{B}_G)$ and $\mathrm{Ext}(\mathbb{B}_{F^*})$ are extreme  sets of the convex bodies $\mathbb{B}_G:=\{\vec x:G(\vec x)\le1\}$ and  $\mathbb{B}_{F^*}:=\{\vec y:F^*(\vec y)\le1\}$, respectively.  
This allows  us to prove many results  in a  short  and elegant way. 
\end{remark}

\begin{example}
We may simply call the vertex $p$-Laplacian   on an oriented  hypergraph the hypergraph $p$-Laplacian, and we call the (hyper-)edge  $p$-Laplacian   on an oriented  hypergraph the dual hypergraph $p$-Laplacian. By Proposition \ref{pro:p-Lap-dual}, we only need to concentrate on  the hypergraph $p$-Laplacian for $p\in [1,2]$. 
\end{example}

\begin{example}
Given $\vec v_i\in\R^n$, $i=1,\cdots,m$, and $p\ge 1$ with $p^*$ as its H\"older conjugate, we have
$$\max\limits_{\|\vec x\|_p\le 1}\sum_{i=1}^m|\vec v_i\cdot\vec x|= \max\limits_{\varepsilon_i\in\{-1,1\}}\|\sum_{i=1}^m\varepsilon_i\vec v_i\|_{p^*}.$$
For $p=2$, the above equality reveals a dual form of the $l^1$-Chebyshev  constant  (Proposition 3  in \cite{AmbrusNietert}).
We can similar obtain a dual form of  the $l^p$-polarization  (Chebyshev)  constant via the inequality
$$\max\limits_{\|\vec x\|_2\le 1}\sum_{i=1}^m|\vec v_i\cdot\vec x|^p= \max\limits_{\sum_{i=1}^m|\varepsilon_i|^{p^*}=1}\|\sum_{i=1}^m\varepsilon_i\vec v_i\|_2^p.$$
\end{example}

\begin{example}
Let $G=(V,E)$ be a simple graph without bipartite component, then the dual Cheeger constant of $G$ possesses  the new  continuous representation:
$$h_1^+(G)=1-\min\limits_{x\ne 0}\frac{\sum_{i\sim j}|x_i+x_j|}{\deg_i|x_i|}=1-\min\limits_{\vec y:\exists i\text{ s.t. }\sum_{e\ni i}y_e\ne 0}\frac{\max\limits_{i\in V}\frac{1}{\deg_i}|\sum_{e\ni i}y_e|}{\min\limits_{\vec z:\sum_{e\ni i}z_e= 0,\forall i}\|\vec y+\vec z\|_\infty}.$$
\end{example}

\begin{example}
Given a simple graph $G$, for any edge $e=\{i,j\}$, we let $\epsilon_{ie}\in\{-1,1\}$ be such that $\epsilon_{ie}=-\epsilon_{je}$, which indeed assigns an  orientation on $G$.  
Then, the  maxcut of  $G$ has the following equivalent  continuous formulation:
$$\max\limits_{x\ne 0}\frac{\sum_{i\sim j}|x_i-x_j|}{\|\vec x\|_\infty}=\max\limits_{y\ne 0}\frac{\sum_{i\in V}|\sum_{e\ni i} \epsilon_{ie}y_e|}{\|\vec y\|_\infty}.$$
\end{example}

\begin{example}
For a simple and connected graph $G$, its Cheeger constant equals 
$$\min\limits_{x\ne\mathrm{const}}\frac{\sum_{i\sim j}|x_i-x_j|}{\min\limits_{t\in R}\sum_{i\in V}\deg_i|x_i+t|}=\min\limits_{\vec y:\exists i\text{ s.t. }\sum_{e\ni i}\epsilon_{ie}y_e\ne 0} \frac{\max_{i\in V}\frac{1}{\deg_i}|\sum_{e\ni i} \epsilon_{ie}y_e|}{\min\limits_{\vec z:\sum_{e\ni i}\epsilon_{ie}z_e= 0,\forall i}\|\vec y+\vec z\|_\infty}.$$
\end{example}

{ \linespread{0.95} \small 
}

\end{document}